\documentclass[11pt]{article}

\usepackage[T1]{fontenc}
\usepackage[utf8]{inputenc}
\usepackage{lmodern,amsmath,amssymb,amsthm,mathtools,booktabs,array}
\usepackage[a4paper,margin=25mm]{geometry}
\usepackage{microtype,needspace}
\usepackage[hidelinks]{hyperref}
\hypersetup{pdftitle={The Conway--Wales lattice and the Rudvalis and Tits groups},pdfauthor={Gerald H\"ohn},pdfsubject={Self-dual even code, integral reflection, derived frame exchange, canonical-frame counting, and simplicity}}
\usepackage{fancyhdr}
\numberwithin{equation}{section}
\newtheorem{theorem}{Theorem}[section]
\newtheorem{proposition}[theorem]{Proposition}
\newtheorem{lemma}[theorem]{Lemma}
\newtheorem{corollary}[theorem]{Corollary}
\theoremstyle{definition}\newtheorem{definition}[theorem]{Definition}
\theoremstyle{remark}\newtheorem{remark}[theorem]{Remark}
\DeclareMathOperator{\Aut}{Aut}
\DeclareMathOperator{\Stab}{Stab}
\DeclareMathOperator{\pr}{pr}
\DeclareMathOperator{\wt}{wt}
\newcommand{\GT}{\sigma}
\newcommand{\id}{1} 
\newcommand{\Z}{\mathbb Z}
\newcommand{\R}{\mathbb R}
\newcommand{\C}{\mathbb C}
\newcommand{\F}{\mathbb F}
\newcommand{\cD}{\mathcal D}
\newcommand{\cC}{\mathcal C}
\newcommand{\cH}{\mathcal H}
\newcommand{\Ru}{\mathrm{Ru}}
\newcommand{\cF}{\mathcal F}
\newcommand{\cX}{\mathcal X}

\newcommand{\DD}{\Delta_1}
\title{The Conway--Wales lattice and the Rudvalis and Tits groups}
\author{Gerald H\"ohn\\[2pt]
  {\small Kansas State University}\\
  {\small\href{mailto:gerald@monstrous-moonshine.de}{\texttt{gerald@monstrous-moonshine.de}}}}
\date{27 September 2026}
\begin{document}
\maketitle
\begin{abstract}
We construct the Conway--Wales lattice from a self-dual even code $C$ over
$\Z_4$ whose binary layers interweave the Hamming codes $\cH_7$ and $\cH_8$.
Explicit signed symmetries lift every automorphism of the binary residue,
and the full signed code group is the coordinate-frame stabilizer
of the Gaussian lattice. An integral reflection in an embedded scaled
self-dual sublattice supplies an additional symmetry, from which the
classical frame exchange is recovered. Counting intrinsically defined
frames determines the full Gaussian isometry group, of order
$583\,704\,576\,000$. Its quotient by the four Gaussian scalars is the
Rudvalis simple group, and the derived subgroup of a cross stabilizer
is the Tits simple group of order $17\,971\,200$. The construction,
automorphism-group determination, and both simplicity proofs are carried
out within the lattice, without assuming the known group orders or using
a recognition or classification theorem.
\end{abstract}

\section{Introduction}
We study the Conway--Wales lattice as a positive-definite Hermitian lattice
of rank $28$ over $\Z[i]$. Its underlying real lattice is unimodular even of
rank $56$. Throughout, a lattice automorphism means a $\Z[i]$-linear isometry
of the specified Hermitian form, so the Gaussian structure is part of the
lattice data.

The point of view is analogous to Conway's treatment of the Leech lattice
\cite{ConwayLeech}: the lattice is the primary object, and its symmetry
group is derived from its geometry. A quaternary lift of two interwoven
Hamming codes gives the lattice and its coordinate-frame stabilizer;
an integral reflection supplies the additional symmetry. An intrinsic
frame count then determines the full Gaussian isometry group and its order.
The projective quotient and the derived subgroup of a cross stabilizer
are proved simple, giving the Rudvalis and Tits groups within the classical
Conway--Wales realization.

Rudvalis proposed the group as a rank-three permutation group and
announced it in 1973 \cite{Rud73}; Conway and Wales gave the first
construction that year \cite{CW}. Conway credits M.~J.~T.~Guy
with finding the coordinate frame that led to the quaternionic
construction \cite[pp.~69--70]{Conway}; see also
\cite[p.~535]{Wilson}. Our explicit construction uses the two seeds reproduced by Duncan
\cite[\S5.1, Tables~1--2]{DuncanII}, expressed below by quadratic-residue
formulas; the original arrays and the additional transformation are in
\cite[Tables~1--3, \S5]{Conway}.
Wilson's account \cite[\S1]{Wilson} describes the associated
Rudvalis--Tits geometry. We reconstruct from the code the classical
quartet--dozen--base incidence described by Conway \cite[pp.~76--79]{Conway}.

\paragraph{Notation.}
The Hamming codes are $\cH_7$ and $\cH_8$. The quaternary code is $C$;
its binary residue and torsion codes are
$\cD=C\bmod2$ and $\cC=\{a\in\F_2^{56}:2a\in C\}$, respectively.
The constructed lattice group is $G$, and the coordinate-frame stabilizer
is $H$. The coordinate frame is $F$ and the second frame is $F'$.
Affine coordinates are $(p,\,e)$, four-block labels are functionals
$\ell\in E^*$, and block translations are indexed by
$\tau\in\operatorname{End}(E)$. A couple is denoted by $\kappa$;
its dozen is $\Delta(\kappa)$, and the canonical frame of a quartet $Q$
is $F(Q)$.
The row kernel of $H$ is $P_0$, the Frobenius operator is $\GT$, and
$R_0$ is the specified monomial row involution. Superscript $\top$
denotes transpose. Supports are denoted by $\Sigma$; $\Omega$ is reserved
for the five quartets through the reference couple. Single indices
$X_1$, $\ldots$, $X_{28}$ number the coordinate crosses, with four consecutive
indices per row.
We write $I$ for the prescribed orthogonal complex structure on
$\R^{56}$, with $I^2=-\id$, and $\id$ for the identity operator on the
space in question. The scalar $i\in\C$ acts as $I$ when real coordinates
are used; bold $\mathbf1$ is reserved for all-one words and indicator
functions, not identity maps.
In particular, the Gaussian isometry group is the centralizer of $I$
in the real lattice isometry group. The group $G$ below is this
Gaussian group.

The seven blocks form a Fano plane and each carries an affine $\cH_8$;
their incidence determines the binary residue and torsion layers.
Two integer seeds specify the quaternary lift, whose self-duality follows
from their correlations and binary span. The signed symmetries lift the
binary group: conjugates of one rank-one translation, together with the
complex structure, generate the block kernel, and the sign kernel then
determines the signed group and its order.
\begin{theorem}\label{thm:main}
The code $C$ defined in \eqref{eq:seedcode} is self-dual and even, with
$C\cong\Z_4^{19}\oplus\Z_2^{18}$ and minimum Euclidean weight $16$.
Its full signed automorphism group $H$ has order $2^{13}\cdot168$,
with the concrete generators of Proposition~\ref{prop:H0}.
Every element of $H$ commutes with the signed permutation $I$ satisfying
$I^2=-\id$.

The coordinate lift in \eqref{eq:lattice}, with complex multiplication
given by $I$, defines the Gaussian lattice $L$. Its underlying real lattice is
unimodular even of rank $56$ and minimum $4$, and $H$ is its full
coordinate-frame stabilizer. The heavy seed gives the second frame
$F'$ in \eqref{eq:secondframe}. The three blocks of its $\GT$-stable
Fano line $\lambda$ contain a sublattice $\Lambda_\lambda$ with
$\Lambda_\lambda^*=\tfrac12\Lambda_\lambda$, giving the integral reflection
$r_*=2\pr_{V'_\lambda}-\id$.
The comparison map $S$ in \eqref{eq:S} satisfies
$S=-E_2(E_2r_*)^3$ and is therefore a complex-linear lattice isometry, with
\[
 S^2=\id,\qquad SP=P^{-1}S,\qquad S(F)\cap F=\varnothing.
\]
\end{theorem}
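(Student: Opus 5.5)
The plan is to treat the statement as a summary of results proved in the body, and to establish it in the order: code, lattice, frame stabilizer, second frame, reflection.

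\textbf{The code.} First I prove that $C$ is self-dual and even directly from the definition \eqref{eq:seedcode}.

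\begin{itemize}
\item Since $C$ is generated by the two seeds under the symmetries of the Fano and affine structure, every pairwise inner product mod $4$ reduces to seed correlations. These vanish, and each seed has Euclidean weight $\equiv 0\pmod 8$, so $C\subseteq C^\perp$ and $C$ is even.
\item The binary residue $\cD$ and torsion code $\cC$ are read off from the block incidence: each of the seven blocks carries an affine $\cH_8$, glued along the Fano plane. A dimension count gives $\dim\cD=19$ and $\cC=\cD^\perp$ of dimension $37$. Hence $|C|=4^{19}2^{18}=2^{56}=4^{28}$, and this size together with $C\subseteq C^\perp$ forces self-duality.
\item For the minimum Euclidean weight $16$, I split by layer. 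Words in $2\cC$ have weight $4\wt$, and the minimum weight of $\cC$ is $4$. Words with nonzero residue $d\in\cD$ have weight at least $\wt(d)$, and the Hamming-code minimum of $\cD$ gives $\ge 16$ once combined with the weights from the $2$-part.
\end{itemize}

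\textbf{The group $H$ and the lattice.} I take the generators of Proposition~\ref{prop:H0} and verify that each preserves $C$ and commutes with $I$.

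\begin{itemize}
\item The order comes from the exact sequence $1\to(\text{sign kernel})\to H\to\Aut(\cD)$. The image is all of $\Aut(\cD)\cong L_3(2)$ of order $168$, since the explicit signed lifts cover it.
\item The sign kernel is computed as the set of $\pm1$-diagonal maps preserving $C$. These are the characters trivial on the appropriate layer, and the block-kernel argument (conjugates of a rank-one translation together with $I$) gives order $2^{13}$.
\item Unimodularity and evenness of $L=\{x\in\Z^{56}: x\bmod 4\in C\}/2$ (suitably normalised as in \eqref{eq:lattice}) are the standard Construction-A consequences of self-dual evenness.
\item Minimum $4$ holds because vectors either are frame vectors $2e_j$ of norm $4$ or come from codewords of weight $\ge16$, giving norm $\ge 16/4$.
\item The frame stabilizer in the lattice is exactly the signed automorphism group of $C$, since the frame vectors of norm $4$ determine the coordinates up to sign. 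The $\Z[i]$-linearity requirement adds nothing, because $H$ already commutes with $I$.
\end{itemize}

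\textbf{Second frame and reflection.} For $F'$ I verify from \eqref{eq:secondframe} that its $56$ vectors are norm-$4$ lattice vectors, mutually orthogonal up to sign pairs. For $\Lambda_\lambda$ I restrict to the $24$ coordinates of the three blocks on the $\GT$-stable line. I would show $\Lambda_\lambda$ is $\sqrt2$ times a unimodular lattice by exhibiting a basis whose Gram matrix is $2\times$ unimodular. Then $\Lambda_\lambda^*=\frac12\Lambda_\lambda$, and for $x\in L$ the projection satisfies $2\pr_{V'_\lambda}x\in 2\Lambda_\lambda^*=\Lambda_\lambda\subseteq L$, so $r_*$ preserves $L$. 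This requires $(x,\Lambda_\lambda)\subseteq\Z$, which holds because $L$ is integral. Complex-linearity of $r_*$ holds if $V'_\lambda$ is $I$-stable, which I check on the generators.

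\textbf{The map $S$ (main obstacle).} The hardest step is the identity $S=-E_2(E_2r_*)^3$. My approach is to compute both sides on a basis adapted to the decomposition $V'_\lambda\oplus V_\lambda'^{\perp}$.

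\begin{itemize}
\item On $V_\lambda'^{\perp}$ we have $r_*=-1$. So $(E_2r_*)^3$ there reduces to an expression in $E_2$, which I expect to be $-E_2^{-1}$ on that part, using $E_2^2$ from its definition.
\item On $V'_\lambda$, I would use the $24$-dimensional block structure and the order of $E_2r_*$ restricted there, which I expect to be $6$ or $3$ up to sign. I would verify the matrix identity there, on the $24$ coordinates, by direct computation.
\end{itemize}

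Once the identity holds, $S$ is a product of complex-linear lattice isometries, so it is itself one. The remaining properties then follow:

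\begin{itemize}
\item $S^2=\id$ follows from the explicit form \eqref{eq:S}, or from $(E_2r_*)^6=\id$ together with $E_2^2=\id$.
\item $SP=P^{-1}S$ is checked on the generators of $P$.
\item $S(F)\cap F=\varnothing$ follows by noting $S(F)=F'$ up to signs, and that $F'$ contains no coordinate vector $\pm2e_j$, by inspecting \eqref{eq:secondframe}.

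\end{itemize}
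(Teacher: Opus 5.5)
Your plan for $S=-E_2(E_2r_*)^3$ would fail as designed.

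\textbf{The frame exchange.} Working on $V'_\lambda\oplus V_\lambda'^{\perp}$ presupposes that $E_2$ preserves this splitting. In that case $E_2$ would commute with $r_*$, and the right side would collapse to $-r_*$. But $-r_*$ preserves $V'_\lambda=S(V_\lambda)$, so this would force $V_\lambda=V'_\lambda$, which is false. In fact $E_2r_*$ has order $5$, not $3$ or $6$. Hence $(E_2r_*)^6=E_2r_*\neq\id$, and your second route to $S^2=\id$ is also invalid. $S^2=\id$ has to come from unitarity together with self-adjointness. Self-adjointness can be read off from the reverse-circulant form $(Sv)_r=\frac14\sum_s\mathsf Q_{r+s}v_s$, whose blocks are Hermitian.

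The idea you are missing is that $S$ is by definition a unitary map with $S(V_\lambda)=V'_\lambda$, so $r_*=SE_2S^{-1}=SE_2S$. Then $E_2r_*=(E_2S)^2$, and the claimed identity is equivalent to the single relation $(E_2S)^5=-\id$ between two involutions. The paper proves this as follows.
\begin{itemize}
\item The three-block compression $A_D=\pr_{V_\lambda}S|_{V_\lambda}$ satisfies $4A_D^2-2A_D-\id=0$, by a block computation with the $K_a$.
\item This gives $V_\lambda\cap SV_\lambda=0$, and $u+u^{-1}=2A_D$ on $V_\lambda$ for $u=E_2S$.
\item Since $u+u^{-1}$ commutes with $S$, on the $24$-dimensional space $V_\lambda\oplus SV_\lambda$ the operator $u$ is annihilated by $t^4-t^3+t^2-t+1$. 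This polynomial divides $t^5+1$, so $u^5=-\id$ there.
\item On the $4$-dimensional complement $E_2=-\id$, and $\operatorname{tr}S=4$ forces $S=\id$, so again $u^5=-\id$.
\end{itemize}

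\textbf{The order of $H$.} Your computation of $|H|$ is wrong in both factors, even though the product comes out right.
\begin{itemize}
\item The diagonal sign changes preserving $C$ are only the $16$ block-constant patterns indexed by $\cH_7$.
\item $\Aut(\cD)$ is not $L_3(2)$. It is an extension of $\operatorname{GL}_3(2)$ by the $2^9$ in-block translations $(p,e)\mapsto(p,e+\tau p)$, $\tau\in\operatorname{End}(E)$.
\item The group of order $2^{13}$ you describe is the block kernel $P_0$, which is not diagonal.
\end{itemize}
The substantive step is the upper bound, which has two parts. First, every automorphism of $\cD$ lies in this group of order $2^9\cdot168$. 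This uses the blocks recovered from the weight-$8$ words and the Fano plane recovered from the four-block weight-$16$ supports. It also uses that block-fixing maps are affine with a common linear part, which the heavy words force to be $\id$, and with offsets linear in $p$. Second, the sign kernel has order exactly $16$.

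\textbf{The minimum weight.} Your argument rests on a false premise. $\cD$ has minimum weight $8$, attained by the seven block indicators. Also, $\cC$ has no nonzero word of weight at most $4$, so its minimum is not $4$. What must be shown is that every lift in $C$ of a block indicator has an entry $2$ in each of the other six blocks. The paper obtains this from self-orthogonality against a fixed lift $d_r$ whose other row sums are $2\bmod 4$, which gives Euclidean weight at least $32$.

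\textbf{Two further gaps.} The count $|C|=2^{56}$ needs a proof that the seed residues actually span $\cD$. A Gram-matrix check for $\Lambda_\lambda$ is meaningful only once you specify which lattice vectors you use. The paper takes the frame vectors $b_{r,j}$, $r\in\mathcal R_\lambda$, and glues them by the self-dual length-$24$ code $\cC_\lambda=\cD|_{\lambda\times E}$. It then proves the glue lies in $L$ by one explicit six-vector identity together with the $\langle\mathcal J,\GT\rangle$-orbit of the hexad.
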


A \emph{cross} is the set of four Gaussian-unit multiples of a minimal
vector. A \emph{frame} consists of $28$ mutually Hermitian-orthogonal crosses.
Let $F$ be the coordinate frame. The code calculation proves that $H$ is its
full stabilizer. Put $G=\langle H,\,r_*\rangle=\langle H,\,S\rangle$.
\begin{theorem}\label{thm:groupintro}
The lattice has $4060$ minimal crosses. The group $G$ is transitive on
crosses, ordered orthogonal pairs, and intrinsic quartets. There are
$424125$ canonical frames, forming one $G$-orbit; each orthogonal couple lies in $45$ of
them and each cross in $2925$. Every couple defines an intrinsic
reflection in $G$. The group $G$ is generated by $H$ and the reflection
in the seed couple $\{X(B/2),\,X(J_1 B/2)\}$.
The operator $-S$ is also the reflection in an intrinsic dozen. Moreover
\[
 G=\Aut_{\Z[i]}(L,\,h),\qquad
 |G|=424125\,(2^{13}\cdot168)=583\,704\,576\,000.
\]
Its centre is $\mu_4$, and the quotient $R=G/\mu_4$ is nonabelian simple of
order $145\,926\,144\,000$. For a cross $x$, the stabilizer $R_x$ has
order $35\,942\,400$ and a nonabelian simple derived subgroup of index two
and order $17\,971\,200$. If $f\in x$, then $G_x=\mu_4\times G_f$
and $G_f\cong R_x$, where $G_f$ is the absolute vector stabilizer.
\end{theorem}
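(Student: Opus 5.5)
The proof proceeds from minimal vectors through frames to the group order, and then to simplicity. Everything rests on Theorem~\ref{thm:main}: the code $C$, the frame stabilizer $H$ with $|H|=2^{13}\cdot168$, the reflection $r_*$, and the exchange $S$ with $S(F)\cap F=\varnothing$.

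\textbf{Step 1: minimal vectors.} Because $L$ is the coordinate lift of $C$, a norm-$4$ vector has one of a few coordinate shapes: $\pm2$ on two coordinates, or $\pm1$ on the support of a Euclidean-weight-$16$ codeword, and so on. Counting minimum-weight words of $C$ by $H$-orbits gives the number of minimal vectors. Dividing by $4$ gives $4060$ crosses; I expect $16240$ vectors. Using the action of $H$ together with $S$, I would show that the $H$-orbits on crosses are fused into a single $G$-orbit. Since $S$ moves $F$ off itself, it joins the frame orbit to the non-frame orbits. Transitivity on ordered orthogonal pairs and on quartets is obtained the same way, by computing the $H$-orbit decomposition and exhibiting the fusing elements $r_*$ and $S$.

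\textbf{Step 2: intrinsic frames.} Call a frame \emph{canonical} if it arises from a quartet by the intrinsic construction $F(Q)$ (the dozen/quartet incidence reconstructed from the code). I would show three things:
\begin{enumerate}
\item Every couple $\kappa$ determines a dozen $\Delta(\kappa)$.
\item The sublattice spanned by its crosses is of the type $\Lambda_\lambda$, scaled self-dual, so $2\pr-\id$ is an integral reflection. This makes it an intrinsic reflection lying in $G$, because the seed-couple instance equals a conjugate of $r_*$.
\item Comparing with Theorem~\ref{thm:main} identifies $-S$ with the reflection in a dozen.
\end{enumerate}
Double counting then fixes the number of frames. The number of ordered orthogonal couples is $4060\cdot c$, where $c$ is the number of crosses orthogonal to a given one, read off from the $H$-orbits. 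Each canonical frame contains $\binom{28}{2}$ couples, and the $H$-orbit data for $F$ shows that each couple lies in $45$ frames. This gives $424125$ frames and $2925$ frames per cross, via $424125\cdot28/4060=2925$. $G$-transitivity on frames follows from transitivity on couples together with transitivity of the couple stabilizer on the $45$ frames through it.

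\textbf{Step 3: the full group.} Any $\Z[i]$-isometry permutes minimal crosses and preserves the intrinsic definitions, so it permutes canonical frames. The stabilizer of $F$ in the full group is $H$ by Theorem~\ref{thm:main}, and $G$ is already transitive on frames. Hence $\Aut_{\Z[i]}(L,h)=G\,H=G$, and $|G|=424125\cdot|H|$. Generation by $H$ and the seed-couple reflection holds because that reflection together with $H$ already moves $F$ into every frame. For the centre: an element centralizing $G$ fixes every cross, hence acts as a scalar on each one; orthogonality and connectivity of the cross graph make that scalar constant, so the centre is $\mu_4$. Since $-1$ does not fix $f$, while the stabilizer $G_x$ contains $\mu_4$ acting regularly on $x$, we get $G_x=\mu_4\times G_f$ and $G_f\cong R_x$.

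\textbf{Step 4: simplicity, the main obstacle.} For $R$ I would use Iwasawa's lemma on the primitive action on crosses. Primitivity follows from rank-three data: the suborbits are $1$, $c$ and $4059-c$, and these lengths admit no block sizes. Next, $G$ is perfect modulo scalars because it is generated by conjugates of the intrinsic reflections, and a reflection is a commutator, either via $S$ and a relation like $S=-E_2(E_2r_*)^3$ or via products of dozens. Finally, the point stabilizer has a normal subgroup whose conjugates generate; I expect this to be hard, and I would try to build a normal solvable subgroup of $G_f$ from the block-translation kernel $P_0$ and its conjugates. For $R_x$ the same method applies to its action on the $c$ orthogonal crosses, or on the quartets through $x$. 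The derived subgroup has index $2$ because some reflection fixing $x$ has odd sign in that action, and simplicity of $R_x'$ follows again from Iwasawa plus perfection. This perfection verification, carried out without recognition theorems, is the step I expect to need the most explicit computation.
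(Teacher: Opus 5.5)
\textbf{Your frame count is circular.} Step~2 has no independent input for the number $424125$. The $H$-orbit data for $F$ only describe the inside of one frame. The number of canonical frames through a couple equals $|\cF|\binom{28}{2}/N_2$, and computing it from stabilizers needs $|G_\kappa|$, which is the order you are trying to find. As written, $|G|=|\cF|\,|H|$ is therefore undetermined.

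The missing idea is the quartet fibre count:
\begin{itemize}
\item First show that the quartets through $\{x,y\}$ are exactly the five coordinate quartets $Q_1,\dots,Q_5$. This uses the triple-in-a-frame lemma, together with elements of $G_{x,y}$ outside $H$ that move the noncoordinate candidates to triples lying in no quartet. It gives $6N_Q=5N_2$.
\item Then show that exactly seven quartets determine each canonical frame: $F(aQ_1)=aF=F$ forces $a\in H$, so $aQ_1$ is a row quartet.
\end{itemize}
This yields $|\cF|=4060\cdot1755\cdot5/(2\cdot6\cdot7)$. The numbers $45$ and $2925$ are then consequences rather than inputs, and frame transitivity follows directly from quartet transitivity. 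Separately, $r_*$ and $S$ do not fix $x$, so they cannot fuse the $H_x$-orbits of sizes $3,24,192,1536$ on $Y$. The paper builds $t=gVgW\in G_{x,y}$ with $g=SUS$ and uses one fusion, $tf_3=-if_{20}$. It then excludes the partial unions $219=3\cdot73$ and $1563=3\cdot521$ by the bound $p-1\le56$, and the union of size $27$ by the frame-stabilizer calculation.

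\textbf{Step 4 cannot work as designed.} The statement you are proving says that $G_f\cong R_x$ has a nonabelian simple derived subgroup of index two. The paper shows this is its only proper nontrivial normal subgroup. So $G_f$ has no nontrivial solvable normal subgroup, and nothing built from $P_0$ can be normal in it; Iwasawa's lemma on the cross action has no input. You also assert rank three without proof, although primitivity needs only transitivity on $Y$ and the divisibility by $64$ of the $H_x$-orbits on nonorthogonal crosses.

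The paper's argument for $R$ runs as follows:
\begin{itemize}
\item A nontrivial $N\lhd R$ is transitive, so $|R/N|$ divides $|R_x|$, which is prime to $7$.
\item This kills the $\operatorname{GL}_3(2)$ quotient of $\overline H$, and perfectness of $\overline H$ then kills the remaining $2$-group, so $\overline H\le N$.
\item Finally $R/N$ is generated by the image of the involution $r_*$ and has order dividing the odd index $424125$, so $N=R$.
\end{itemize}

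Your parity route to $[R_x:R_x']=2$ also fails on $Y$. The element $\alpha=qW\notin R_x'$ acts on $Y$ with three fixed points, twelve transpositions and $432$ four-cycles, which is an even permutation. Since $R_x'$ is the only subgroup of index two, the sign character of $R_x$ on $Y$ is trivial. The obvious candidates, the coordinate reflections $E_\ell\in G_x$, already lie in $R_x'$.

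The paper handles $R_x$ in three steps:
\begin{itemize}
\item It detects the index-two subgroup by transferring the multiplier character of $R_{x,y}^\Omega\cong\operatorname{AGL}_1(5)$ to $\nu:R_x\to\F_5^\times$.
\item It bounds every nontrivial normal subgroup to index at most two, using primitivity on $Y$ and the small quotient of $\overline H_x$.
\item It proves $R_x'$ simple because $13$ divides $|R_x'|$ only once, which rules out a decomposition $R_x'=A\times A^{\alpha}$.
\end{itemize}
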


\paragraph{The reflection construction.}
The signed symmetries give a second orthogonal frame from the heavy seed.
A six-vector relation embeds the glue generator of a binary self-dual code
of length $24$; its orbit then gives the three-block sublattice $\Lambda$.
The inclusion $2\Lambda^*\subseteq\Lambda$ makes the associated reflection
integral. Self-adjointness, the trace, and a three-block overlap identity
recover the classical frame exchange $S$ from two reflections.
Both $r_*$ and $-S$ are subsequently identified as reflections in dozens.

\paragraph{The counting strategy.}
The code determines the minimal shell and the coordinate-frame stabilizer
$H$. At a coordinate cross $x$, the orthogonal crosses form four
$H_x$-orbits of sizes $3$, $24$, $192$, $1536$. A move joining the two coordinate
orbits gives possible enlarged orbit sizes $27$, $219$, $1563$, $1755$.
The frame-stabilizer calculation excludes $27$, while the primes $73$
and $521$ in the next two sizes are incompatible with the rank-$56$
integral representation. This proves transitivity on the orthogonal
shell, and hence on crosses and couples.

The remaining input is local geometry: five quartets through a couple,
a canonical frame determined by each quartet, and seven quartets determining
each frame. Counting these incidences gives
\[
 \frac{4060\cdot1755}{2}
 \;\frac{5}{6}\;\frac{1}{7}=424125.
\]
Here $2$ forgets the ordering of a couple, $6$ counts couples in a quartet,
and $7$ is the fibre size of the canonical-frame construction, not the
number of all quartets contained in a frame. Transitivity on this intrinsic
family and its stabilizer $H$ give $G=\Aut_{\Z[i]}(L,\,h)$ and
$|G|=424125\,|H|$, completing the analogy with the Leech-lattice order argument.

Section~\ref{sec:framecode} constructs the binary geometry, determines the
signed lift and group, and counts its least-weight words in coordinates
suited to the lattice. Section~\ref{sec:construction} builds the second
frame and integral reflection before recovering $S$.
Section~\ref{sec:geometryorder} proves orbit fusion, local quartet geometry,
and the canonical-frame count. Sections~\ref{sec:simplicity}
and~\ref{sec:titsmain} give the two simplicity proofs.

\section{The quaternary code and its signed automorphisms}\label{sec:framecode}
\subsection{The Fano plane and its eight-point blocks}\label{sec:hammingweave}
Let $E$ be a three-dimensional vector space over $\F_2$. Its projective
plane is canonically $\Pi=E\setminus\{0\}$: the Fano lines are the triples
$\{p,\,q,\,p+q\}$ for distinct nonzero $p$, $q$, so incidence is determined by
addition in $E$. Conversely, adjoining $0$ to an abstract Fano
plane recovers $E$, with $p+q$ the third point on the line through distinct
$p$, $q$. The coordinates $(p,\,e)\in\Pi\times E$ may be viewed as the affine
line $\{e,\,e+p\}$ with endpoint $e$ marked and direction $p$.
There are four affine lines in each of the seven directions, hence $28$
unmarked lines and $56$ marked lines. The endpoint switch
$\iota(p,\,e)=(p,\,e+p)$ is a fixed-point-free involution whose $28$ pairs
will be the Gaussian coordinate directions. These two-point affine lines
are distinct from the three-point Fano lines. The complex structure will
be a signed lift of $\iota$ with square $-\id$.

The two Hamming codes are
\[
 \cH_7=\{s\in\F_2^\Pi:\textstyle\sum_p s(p)p=0\},\qquad
 \cH_8=\{e\mapsto a+\ell(e):a\in\F_2,\ \ell\in E^*\}.
\]
They have parameters $[7,\,4,\,3]$ and $[8,\,4,\,4]$. Their geometric generators
are the Fano lines and the affine planes, respectively; $\cH_7^\perp$
consists of the linear functions on $\Pi$. Set
\begin{equation}\label{eq:hammingweave}
 \cD_0=(\F_2^\Pi\otimes\langle\mathbf1_8\rangle)
             +(\cH_7^\perp\otimes\cH_8).
\end{equation}
Equivalently, it is generated by full blocks and the rectangles
$(\Pi\setminus\lambda)\times A$, with $\lambda$ a Fano line and $A$ an
affine plane. Its words are uniquely $\epsilon_p+(Mp)\cdot e$, where
$\epsilon\in\F_2^\Pi$ and $M\in\operatorname{Hom}(E,\,E^*)$; hence
$\dim\cD_0=7+9=16$.

The remaining gluing also comes from incidence. For each $p$, choose a
Fano line $\lambda_p$ not through $p$ and put $T_p=\{p\}\cup\lambda_p$.
The point-sum map from even subsets of $T_p$ to $E$ is an isomorphism:
no pair has sum zero, and $\sum_{e\in T_p}e=p\ne0$.
Let $S_p(c)$ be the unique even subset with sum $c$, and let
$\theta_p(c)$ be its indicator. Thus $S_p(0)=\varnothing$, $S_p(p)=T_p$,
and otherwise $S_p(c)$ is the unique pair in $T_p$ with sum $c$.
The map $c\mapsto\theta_p(c)$ is linear.

An even subset of $E$ has indicator of Boolean degree at most two.
Its point sum is its quadratic class: modulo $\cH_8$, the class $c$
has polar form $\det(c,\,e,\,e')$. In particular our incidence sections satisfy
\begin{equation}\label{eq:hammingsections}
\begin{aligned}
 \theta_p(c)(0)&=0,\\
 \theta_p(c)(e+e')+\theta_p(c)(e)+\theta_p(c)(e')&=\det(c,\,e,\,e'),\\
 \theta_p(c)(c)&=\mathbf1_{\{p=c\}}.
\end{aligned}
\end{equation}
We may use any linear sections with these properties. For later
calculations, in a basis of $E$ write
\begin{equation}\label{eq:eta}
 q_c(e)=c_1e_2e_3+c_2e_1e_3+c_3e_1e_2,\qquad
 \theta_p(c)(e)=q_c(e)+\eta_c(p)\cdot e.
\end{equation}
Every invertible matrix over $\F_2$ has determinant one, so the polar
form is independent of the chosen basis.

\begin{lemma}\label{lem:hammingsections}
Any two normalized choices give equivalent binary codes
\begin{equation}\label{eq:hammingglue}
 \cD=\cD_0+\{(\theta_p(c)(e))_{p,\,e}:c\in E\},\qquad
 \cC=\cD^\perp,
\end{equation}
by changing the affine origin in each block. Every Fano collineation
therefore lifts to an affine coordinate permutation of $\cD$.
\end{lemma}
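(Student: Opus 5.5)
The plan is to compare two normalized choices directly. Let $\theta$ and $\theta'$ both satisfy \eqref{eq:hammingsections} and be linear in $c$, and put $\delta_p(c)=\theta'_p(c)-\theta_p(c)$, a function on $E$ that is linear in $c$. Subtracting the three identities in \eqref{eq:hammingsections} gives:
\begin{itemize}
\item $\delta_p(c)(0)=0$;
\item $\delta_p(c)(e+e')=\delta_p(c)(e)+\delta_p(c)(e')$, because the polar terms $\det(c,e,e')$ cancel;
\item $\delta_p(c)(c)=0$.
\end{itemize}
So $\delta_p(c)$ is a linear functional on $E$, say $\delta_p(c)(e)=\mu_p(c)(e)$ with $\mu_p\in\operatorname{Hom}(E,E^*)$. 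The third condition says that the bilinear form $(c,e)\mapsto\mu_p(c)(e)$ is alternating. On the three-dimensional space $E$, every alternating form is $(c,e)\mapsto\det(v,c,e)$ for a unique $v\in E$, since $\Lambda^2E^*\cong E$ via $\det$. Hence there are unique $v_p\in E$ with $\delta_p(c)(e)=\det(v_p,c,e)$ for all $c,e$. This identification of the difference with a cross-product form is the key step. The only point needing care is that the alternating condition uses $\delta_p(c)(c)=0$ for all $c$, including $c=p$, where both sections take the value $1$.

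Next I will show that the change of affine origin $(p,e)\mapsto(p,e+v_p)$ in each block $p$ carries $\cD$ (built from $\theta$) to $\cD'$ (built from $\theta'$). On $\cD_0$ this is clear: a word $\epsilon_p+(Mp)\cdot e$ goes to $\epsilon_p+(Mp)\cdot v_p+(Mp)\cdot e$. The constant $(Mp)\cdot v_p$ on block $p$ is absorbed into $\F_2^\Pi\otimes\langle\mathbf1_8\rangle$, so $\cD_0$ is mapped onto itself. For the glue words, the polar identity in \eqref{eq:hammingsections} gives
\[
 \theta_p(c)(e+v_p)=\theta_p(c)(e)+\theta_p(c)(v_p)+\det(c,e,v_p).
\]
Here $\det(c,e,v_p)=\det(v_p,c,e)=\delta_p(c)(e)$. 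The term $\theta_p(c)(v_p)$ is a constant on block $p$ and therefore lies in $\cD_0$. Hence the translated glue word for $c$ equals the $\theta'$-glue word for $c$ modulo $\cD_0$, and since the map is a bijective coordinate permutation, $\cD\mapsto\cD'$. A coordinate permutation preserves the standard inner product, so $\cC=\cD^\perp$ is carried to $\cD'^\perp$ as well.

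For the collineation statement, I will use that every Fano collineation is some $g\in GL(E)$ and act by $(p,e)\mapsto(gp,ge)$. This permutation preserves $\cD_0$: it replaces $\epsilon$ by $\epsilon\circ g^{-1}$ and $M$ by the conjugate $g^{-\top}Mg^{-1}$, with the obvious transport. It sends the glue word of $\theta$ for $c$ to the glue word of the transported family $\theta''_{gp}(gc)(ge)=\theta_p(c)(e)$. I then check that $\theta''$ is again normalized:
\begin{itemize}
\item linearity in $c$ and vanishing at $0$ are immediate;
\item the polar form is preserved because $\det(gc,ge,ge')=\det g\,\det(c,e,e')=\det(c,e,e')$, as every invertible matrix over $\F_2$ has determinant one;
\item $\theta''_{gp}(gc)(gc)=\mathbf1_{\{p=c\}}=\mathbf1_{\{gp=gc\}}$.
\end{itemize}
By the first part, $\theta''$ and $\theta$ give codes related by blockwise origin translations. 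Composing $g$ with these translations gives an affine coordinate permutation $(p,e)\mapsto(gp,\,ge+w_{gp})$ preserving $\cD$, and hence $\cC$. I expect no serious obstacle beyond the alternating-form identification; the rest is bookkeeping of constants, which all fall into the full-block part of $\cD_0$.
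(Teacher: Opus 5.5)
Your proposal is correct and follows essentially the same route as the paper's proof. The difference of two normalized sections is an alternating bilinear form $\det(v_p,c,e)$, translation by $v_p$ produces it up to a block constant absorbed in $\cD_0$, and $\operatorname{GL}(E)$-transport preserves the normalization, so a change of origins returns the transported code to the original one. You supply more of the bookkeeping than the paper does: the image of $\cD_0$, the transport of $\cC=\cD^\perp$, and the explicit checks on the transported family.
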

\begin{proof}
Two sections differ by a bilinear form in $c$, $e$ with zero diagonal,
hence by $\det(c,\,t_p,\,e)$ for a unique $t_p\in E$. Translation by $t_p$
adds this term and the block constant $\theta_p(c)(t_p)$, which is
absorbed in $\cD_0$. Simultaneous application of $a\in\operatorname{GL}(E)$
to $p$, $c$, $e$ preserves \eqref{eq:hammingsections}; a change of origins
therefore identifies the resulting sections with the original ones.
\end{proof}

\begin{proposition}\label{thm:binarynormal}
The codes $\cD$, $\cC$ have dimensions $19$, $37$, with $\cD\subseteq\cC$.
The code $\cD$ is doubly even, contains $\mathbf1_{56}$, and its words are
uniquely
\begin{equation}\label{eq:binarynormal}
 d(p,\,e)=\epsilon_p+(Mp+\eta_c(p))\cdot e+q_c(e),\qquad
 \epsilon\in\F_2^\Pi,\quad M\in\operatorname{Hom}(E,\,E^*),\quad c\in E.
\end{equation}
The support in every block has point sum $c$, and
$\cD/\cD_0\cong\mathcal E_8/\cH_8\cong E$, where $\mathcal E_8$ is
the even-weight code on $E$.
\end{proposition}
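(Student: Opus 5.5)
The plan is to work directly from the explicit generators. First I will establish the normal form \eqref{eq:binarynormal}. Every word of $\cD_0$ is uniquely $\epsilon_p+(Mp)\cdot e$, and by \eqref{eq:eta} the glue word attached to $c$ is $\theta_p(c)(e)=q_c(e)+\eta_c(p)\cdot e$. Because $c\mapsto\theta_p(c)$ is linear, every element of $\cD$ has the form \eqref{eq:binarynormal}.

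For uniqueness, restrict a word to a single block $p$. There it is a function of $e$ of Boolean degree at most two, and its quadratic part modulo affine functions is $q_c$. The map $c\mapsto q_c$ is injective, since its polar form is $\det(c,\cdot,\cdot)$, which is nonzero for $c\ne0$. So $c$ is determined by any one block. Once $c$ is known, subtracting the glue word leaves an element of $\cD_0$, where the decomposition is already unique. Counting parameters gives $\dim\cD=7+9+3=19$.

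The support statement follows from the paragraph before Lemma~\ref{lem:hammingsections}: the point sum of an even subset of $E$ equals its quadratic class. Adding affine functions of $e$ does not change the point sum, because each block word of $\cH_8$ has point sum $0$. Hence every block support has point sum $c$. This also gives the identification $\cD/\cD_0\cong E$ via $c$, and it matches $\mathcal E_8/\cH_8\cong E$, since both quotients have dimension $7-4=3$ and are parametrized by the quadratic class.

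Next I will show that $\cD$ is doubly even. This follows from two facts: the generators are doubly even, and they are pairwise orthogonal. Weights of the generators:
\begin{itemize}
\item full blocks have weight $8$;
\item the rectangles $(\Pi\setminus\lambda)\times A$ have weight $4\cdot4=16$;
\item the glue words $\theta_p(c)$ are supported on $S_p(c)\times$(its block), with weight $\sum_p|S_p(c)|$.
\end{itemize}
For the glue weight with $c\ne0$: $|S_p(c)|=4$ when $p=c$. For $p\ne c$ it is $2$ or $0$, according to whether $c$ is a sum of a pair in $T_p$. I will count this using \eqref{eq:hammingsections}: the pair sums of $T_p=\{p\}\cup\lambda_p$ run over all of $E\setminus\{0,p\}$, so $|S_p(c)|=2$ for every $p\ne c$. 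The total weight is then $4+6\cdot2=16$.

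For orthogonality I will compute the inner product blockwise, as $\sum_p\langle f_p,g_p\rangle$ over $E$:
\begin{itemize}
\item affine against affine on $E$ gives an even intersection, and summing the $\cH_7^\perp$ patterns over $p$ handles the rest;
\item quadratic against affine reduces to $\sum_e q_c(e)\ell(e)$, which is even because it is the weight of a degree-$3$ function on $\F_2^3$;
\item glue against glue reduces to $\sum_p|S_p(c)\cap S_p(c')|$.
\end{itemize}
The glue-against-glue case is the main obstacle. I will handle it with the $\operatorname{GL}(E)$-equivariance from Lemma~\ref{lem:hammingsections}, together with an explicit count over the $7$ blocks for one representative pair $c\ne c'$. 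Self-orthogonality then gives $\cD\subseteq\cD^\perp=\cC$, and $\dim\cC=56-19=37$. Finally, $\mathbf1_{56}$ is the sum of the seven full blocks, $\epsilon=\mathbf1$.
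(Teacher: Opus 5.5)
\textbf{Gap in the orthogonality step.} Your normal form, the uniqueness argument, the dimension count, and the point-sum and quotient statements are fine; reading $c$ off the quadratic part instead of the point sum is equivalent. The route through doubly even, pairwise orthogonal generators is also fine.

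The step that fails is ``quadratic against affine.'' A Boolean function on $\F_2^3$ has odd weight exactly when its algebraic normal form contains $e_1e_2e_3$. In $q_c(e)\ell(e)$ that coefficient is $c\cdot\ell$. For instance, $c=(1,0,0)$ and $\ell(e)=e_1$ give $q_c\ell=e_1e_2e_3$, of weight one.

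So on block $p$ the glue word pairs with $(Mp)\cdot e$ to $c\cdot Mp$, which is generally $1$. Orthogonality of the glue to $\cD_0$ is therefore not a blockwise fact, and the reason you give is false. The missing idea is to sum over blocks:
\[
\sum_{p\in\Pi}c\cdot Mp=c\cdot M\Bigl(\sum_{p\in\Pi}p\Bigr)=0,
\]
since the seven nonzero vectors of $E$ sum to zero. In terms of your rectangle generators, each of the four blocks of $\Pi\setminus\lambda$ contributes the same value $c\cdot m$, with $m$ the normal of $A$, so the total $4(c\cdot m)$ vanishes. This is exactly the paper's argument.

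For glue against glue, your equivariance-plus-one-count plan is workable. The paper instead uses the block identity $\sum_e\theta_p(c)(e)\theta_p(d)(e)=\det(c,d,p)$ and the same summation over $p$.

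\textbf{A smaller point.} The weight $4+6\cdot2$ is computed for the incidence sections $\mathbf1_{S_p(c)}$. The normal form, however, is stated for arbitrary normalized sections, and there a light block can have weight six. For example, with Table~\ref{tab:eta}, block $p=2$ and class $c=1$ give $\theta_2(1)(e)=e_2e_3+e_2+e_3$, of block weight six. You can handle this in either of two ways:
\begin{itemize}
\item transfer through Lemma~\ref{lem:hammingsections}, as the paper does in its last sentence; or
\item note that for $p\ne c$ the normalization makes $\theta_p(c)$ invariant under $e\mapsto e+c$. Then every light block has weight $2$ or $6$, and the total is still divisible by four.
\end{itemize}
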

\begin{proof}
The common point sum recovers $c$, then the linear terms recover $M$
and the constants recover $\epsilon$. Two identities will be used throughout:
\begin{align}
 q_c(c)+c\cdot\eta_c(p)&=\mathbf1_{\{p=c\}},\label{eq:balancedidentity}\\
 \sum_e\theta_p(c)(e)\theta_p(d)(e)&=\det(c,\,d,\,p).\label{eq:productidentity}
\end{align}
The first is the normalization. For the second use the incidence sections:
the six pair sums in $T_p$ are $E\setminus\{0,\,p\}$, and two distinct
pairs meet in one point exactly when their sums together with $p$ are
independent. The cases involving $0$, $p$, or equal sums give zero on both
sides. Translating sections and adding constants preserves these pairings,
since all sections have even weight.

Two affine functions have product summing to zero on a block. The pairing
of $\theta_p(c)$ with $(Mp)\cdot e$ is $c\cdot Mp$; two quadratic terms
are handled by \eqref{eq:productidentity}. Summing over $p$ gives zero,
since $\sum_p p=0$. Thus $\cD$ is self-orthogonal and $\dim\cC=37$.
Since $\mathbf1_{56}\in\cD$, the code $\cC$ is even. Block indicators and
rank-one bilinear words have weights eight and sixteen. The incidence
word $(\theta_p(c))_p$, for $c\ne0$, has pattern $(4,\,2^6)$ and weight
sixteen. These words span $\cD$; self-orthogonality makes their span
doubly even. Equivalence of normalized sections proves the general case.
\end{proof}
Every linear block translation preserves $\cD$: for $\tau\in\operatorname{End}(E)$,
\begin{equation}\label{eq:tracetranslation}
 \theta_p(c)(e+\tau p)+\theta_p(c)(e)
     =\det(c,\,\tau p,\,e)+\theta_p(c)(\tau p).
\end{equation}
The first term is bilinear in $p$, $e$, and the second is a block constant.

\subsection{Binary weights and automorphisms}\label{sec:binaryaut}

\begin{theorem}[The least residue weights]\label{thm:WD}
The only nonzero words of $\cD$ of weight less than $16$ are the seven
block indicators, of weight eight. The weight-$16$ words have block
patterns and multiplicities
\[
 (8,\,8,\,0^5):21,\qquad (4^4,\,0^3):784,\qquad(4,\,2^6):896.
\]
\end{theorem}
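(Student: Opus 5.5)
We split on the invariant $c\in E$ from the normal form \eqref{eq:binarynormal} of Proposition~\ref{thm:binarynormal}. Write a word as $d=(d_p)_{p\in\Pi}$ with $d_p(e)=\epsilon_p+(Mp+\eta_c(p))\cdot e+q_c(e)$, and count block by block: each $d_p$ is a word of $\mathcal E_8$ lying in a fixed coset of $\cH_8$ determined by $c$. Since $\cD$ is doubly even, the total weight is a multiple of four, so it suffices to list the patterns of total weight at most $16$.

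\emph{Case $c=0$.} Each $d_p$ is affine, so its weight is $0$, $8$ or $4$; it is $4$ exactly when the linear part $Mp\neq0$. If $M=0$ the word is a sum of block indicators, so its weight is $8k$. This gives the $7$ block indicators and the $\binom72=21$ words of pattern $(8,8,0^5)$. If $M\ne0$, then $\{p:Mp\ne0\}$ is the complement of $\ker M$ in $\Pi$. Its size is $4$ when $M$ has rank one, and at least $6$ when $M$ has rank $\ge2$; in the latter case the weight is at least $24$. For rank-one $M$ the weight is $16$ only when $\epsilon_p=0$ on the three points of the Fano line $\ker M\setminus\{0\}$, while the constants on the four remaining blocks are free. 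There are $7\cdot7=49$ rank-one maps $M=\ell'\otimes\ell$ in $\operatorname{Hom}(E,E^*)$, and $2^4$ choices of constants, giving $49\cdot16=784$ words of pattern $(4^4,0^3)$.

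\emph{Case $c\neq0$.} Each $d_p$ lies in the nonzero coset of $\cH_8$ in $\mathcal E_8$ with point sum $c$. The only even subsets of weight $\le 2$ with point sum $c$ are the $4$ pairs $\{e,e+c\}$, so every block has weight at least $2$ and the total weight is at least $14$. Double evenness forces weight $\ge16$, and weight $16$ requires a block pattern of the form $(2^6,4)$ or $(2^7)$ with one block adjusted. Since $7\cdot2=14\equiv2\pmod4$, the pattern $(2^7)$ is excluded. So exactly one block has weight $4$, and the other six blocks are pairs (weight $6$ or $8$ in a block would push the total to at least $18$ or $20$).

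Counting these words is the main obstacle. For a fixed block $p$, the weight-$2$ words in the coset are the $4$ pairs, and the weight-$4$ words are the $\binom{8}{4}/2\cdot$-type count of $4$-sets with sum $c$ outside $\cH_8$. We must count the tuples $(\epsilon,M)$ for which the six pairs and the one $4$-set are compatible, i.e.\ lie in the normal form. The linear map $p\mapsto Mp+\eta_c(p)$ together with the constants must select, in each block, a specific affine shift of $q_c$. Each block datum $(\epsilon_p, Mp+\eta_c(p))$ ranges over $\F_2\times E^*$, with $16$ choices. The constraint is that $M$ is linear while the $\eta_c(p)$ are fixed. The cleanest way around this is to use translations \eqref{eq:tracetranslation} and the normalization \eqref{eq:balancedidentity}: block translations by $\tau\in\operatorname{End}(E)$ act on each block by $e\mapsto e+\tau p$. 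We would show that the $2^7\cdot 2^9$ words with a fixed $c$ split into orbits under constants, linear terms and translations, and use the identity $q_c(c)+c\cdot\eta_c(p)=\mathbf1_{\{p=c\}}$ to see that the distinguished weight-$4$ block must be $p=c$.

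The count per nonzero $c$ should then be $896/7=128=2^7$: one free sign per pair in each of the six pair-blocks together with a binary choice in the block $c$, consistent with a linear-algebra dimension count of the admissible $(\epsilon,M)$. We would verify this by checking that the conditions "$d_p$ is a pair" for $p\ne c$ cut out an affine subspace of $(\epsilon,M)$-space of dimension $7$ and automatically force $d_c$ to have weight $4$. Finally, a consistency check: the weight enumerator term $A_{16}=21+784+896=1701$ can be confirmed against the MacWilliams identity with $\cC=\cD^\perp$, if needed.
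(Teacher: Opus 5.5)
Your case $c=0$ is complete and agrees with the paper. For $c\neq0$, your lower bound is also correct: every block is a nonempty even set with point sum $c$, so the total weight is at least $14$, and double evenness raises this to $16$. So is your deduction that weight $16$ forces the pattern $(4,2^6)$.

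The gap is the multiplicity $896$, which you call the main obstacle and then do not prove. Saying the count "should be $896/7$" reads the answer off the statement. The description "one free sign per pair in each of the six pair-blocks" does not match the actual degrees of freedom. Worse, the verification you propose would fail as stated, because the admissible $(\epsilon,M)$ do not form an affine subspace. For a light block, the constant $\epsilon_p$ that makes $d_p$ a pair rather than a six-set depends quadratically on $L_p=Mp+\eta_c(p)$. For instance, for $c=1$ one has $q_c(e)=e_2e_3$. A block with $L_{p,1}=0$ is then a union of $c$-pairs, and it is a single pair exactly when $\epsilon_p=L_{p,2}L_{p,3}$. The admissible set is therefore the graph of a non-affine map: it has $2^7$ elements but is not an affine subspace. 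The MacWilliams check is not a substitute either, since the weight enumerator of $\cC$ is not available.

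The missing idea is to differentiate in the radical direction $c$. Since the polar form of $q_c$ is $\det(c,e,e')$ and $\det(c,e,c)=0$, one has $d_p(e+c)+d_p(e)=d_p(c)+d_p(0)$. By \eqref{eq:balancedidentity}, this constant equals $\mathbf 1_{\{p=c\}}+(M^{\top}c)\cdot p$.

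If the constant is $1$, block $p$ meets each $c$-pair once and has weight $4$. If it is $0$, the support is a union of $c$-pairs, an odd number of them because the point sum is $c$. The weight is then $2$ or $6$, and replacing $\epsilon_p$ by $\epsilon_p+1$ exchanges the two.

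The set of weight-four blocks is therefore $\{c\}$ symmetric-differenced with the support of the linear function $p\mapsto(M^{\top}c)\cdot p$. So it has $1$, $3$ or $5$ elements, which already gives the bound $16$ without double evenness. The pattern $(4,2^6)$ occurs precisely when $M^{\top}c=0$, which leaves $2^6$ matrices. The heavy block is then $c$, each of the six light constants is forced, and the heavy constant is free. This gives $2\cdot2^6=128$ words for each of the seven nonzero classes, hence $896$; this is the paper's argument.
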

\begin{proof}
For $c=0$ in \eqref{eq:binarynormal}, a matrix $M$ of rank $r$ has
$2^{3-r}-1$ zero blocks and $8-2^{3-r}$ balanced blocks.
The former have weights zero or eight, and the latter weight four.
Thus a word below weight sixteen is a block indicator. At weight
sixteen, $M=0$ gives two blocks, in $\binom72=21$ ways;
otherwise $M=\ell'\ell^{\top}$ has rank one, giving
$7\cdot7\cdot2^4=784$ words.

For $c\ne0$, a block is balanced exactly when the quadratic function
has difference one in its radical direction $c$. By
\eqref{eq:balancedidentity}, the balanced-block indicator is
\[
 p\longmapsto\mathbf1_{\{p=c\}}+(M^{\top}c)\cdot p.
\]
Its possible weights are $1$, $3$, $5$. Every unbalanced block has weight
at least two, so the total weight is at least sixteen, with equality
only for one balanced block and six blocks of weight two.
This requires $M^{\top}c=0$, giving $2^6$ matrices, and there are two
choices of the heavy-block constant. Thus there are $128$ words for
each of the seven nonzero classes $c$.
\end{proof}
The supports of pattern $(4,\,2^6)$ are called \emph{heavy}; their unique
weight-four block is the \emph{heavy block}. A signed word on a support
has entries $\pm1$ there and zero elsewhere.

\begin{proposition}[The torsion code]\label{prop:torsionmin}
The puncturing of $\cD$ onto any block is the full even-weight
binary code of length eight. The code $\cC=\cD^\perp$ has no nonzero
word of weight at most $4$.
\end{proposition}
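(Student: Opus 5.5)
The plan has two parts. First we compute the punctured code on a fixed block $p_0$. Then we use the standard duality between puncturing and shortening to turn that into a statement about light words of $\cC$.

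\emph{Puncturing.} By \eqref{eq:binarynormal}, the restriction of a word of $\cD$ to block $p_0$ is
\[
 e\mapsto\epsilon_{p_0}+(Mp_0+\eta_c(p_0))\cdot e+q_c(e).
\]
Since $p_0\ne0$, the vector $Mp_0$ ranges over all of $E^*$ as $M$ varies in $\operatorname{Hom}(E,E^*)$. Together with the free constant $\epsilon_{p_0}$, the punctured code therefore contains $\cH_8$. Adding the section $\theta_{p_0}(c)$ contributes one representative of every class $c\in E$ of $\mathcal E_8/\cH_8\cong E$, because the support of $\theta_{p_0}(c)$ has point sum $c$ (Proposition~\ref{thm:binarynormal}). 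So the projection is onto $\mathcal E_8$. Conversely, every word of $\cD$ is doubly even and in particular meets each block in an even number of points, since its restriction is of the form above; hence the puncturing is exactly $\mathcal E_8$.

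\emph{Light words of $\cC$.} Take $0\ne a\in\cC$ with $\wt(a)\le4$. First, $a$ cannot be supported inside a single block: any word of $\mathcal E_8$ pairs with it, and since $\mathcal E_8^\perp=\langle\mathbf1_8\rangle$ within that block, the restriction would have to be $0$ or $\mathbf1_8$, and $\mathbf1_8$ has weight $8>4$. So $a$ meets at least two blocks. Let $a_p$ be its restriction to block $p$. Because $\mathbf1_{\text{block}}\in\cD$, each $a_p$ has even weight. With total weight at most $4$, this leaves two cases: exactly two blocks $p\ne p'$ each carrying a pair, or one block carrying a pair (ruled out because a single odd pattern elsewhere is impossible by parity).

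\emph{The two-pair case.} Write $a_p=\{e_1,e_2\}$ and $a_{p'}=\{f_1,f_2\}$, and put $u=e_1+e_2\ne0$, $v=f_1+f_2\ne0$. We test $a$ against words of $\cD$:
\begin{itemize}
\item Against the words with $c=0$, which are affine on each block with linear part $Mp\cdot e$, orthogonality gives $Mp\cdot u+Mp'\cdot v=0$ for all $M$. Since $p,p'$ are independent, we may choose $M$ with $Mp$ arbitrary and $Mp'=0$. This forces $u=0$, a contradiction.
\end{itemize}
So no such $a$ exists. The only real check here is that $p,p'$ are linearly independent over $\F_2$, which holds because they are distinct nonzero vectors.

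The main obstacle I expect is bookkeeping in the parity step, namely that even weight per block follows from the block indicators lying in $\cD$. Everything else reduces to the freedom of $M$ in \eqref{eq:binarynormal}.
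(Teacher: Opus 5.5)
Your proof is correct and follows the paper's argument step for step. The free constant, the linear part $Mp_0$ and the quadratic class give the puncturing; the shortening of $\cC$ to a block is then the repetition code; block indicators force even block weights; and a rank-one $M$ with $Mp$ arbitrary and $Mp'=0$ (the paper's words $\ell(p')a(e'')$) rules out the two-pair case.

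The only blemishes are expository. Per-block evenness in $\cD$ comes from the degree-two form of the restriction, or from self-orthogonality against block indicators, not from the whole word being doubly even. And the one-block case was already excluded by your shortening argument, not by the parity remark.
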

\begin{proof}
The constant, linear coefficients, and quadratic class in a block
can be prescribed arbitrarily, proving the puncturing assertion.
By duality, the shortening of $\cC$ to a block is the repetition code.
Orthogonality to the block indicators forces even weight in every block,
while a nonzero word supported in one block has weight eight.
Thus a word of positive weight at most four would select two distinct
positions in each of two blocks $p\ne q$; call those in block $p$ $e$, $e'$.
Choose $\ell\in E^*$ with $\ell(p)=1$ and $\ell(q)=0$.
Orthogonality to every word $\ell(p')a(e'')\in\cD_0$, $a\in E^*$,
then gives $a(e+e')=0$ for all $a$, a contradiction.
\end{proof}

\paragraph{The full binary automorphism group.}
The Hamming gluing determines the full permutation group directly. Automorphisms
of $\cD$ in this subsection are coordinate permutations.
\begin{proposition}\label{prop:binaryaut}
Every automorphism of $\cD$ permutes the seven blocks as an element of
$\operatorname{GL}_3(2)$. The subgroup fixing all seven blocks consists exactly
of the translations
\[
 (p,\,e)\longmapsto(p,\,e+\tau p),\qquad \tau\in\operatorname{End}(E).
\]
Writing $\Gamma=\Aut(\cD)$ and $N$ for this translation subgroup, one has
\[
 1\longrightarrow N\longrightarrow\Gamma
 \longrightarrow\operatorname{GL}(E)\longrightarrow1,
 \qquad |\Gamma|=2^9\cdot168.
\]
In particular, $\Gamma$ is doubly transitive on the seven blocks.
\end{proposition}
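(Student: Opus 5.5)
The plan is to recover the seven blocks intrinsically from $\cD$, so that any automorphism must permute them, and then to analyse the block-fixing kernel through the normal form \eqref{eq:binarynormal}.

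\textbf{Blocks are intrinsic.} By Theorem~\ref{thm:WD}, the only nonzero words of weight less than $16$ are the seven block indicators. An automorphism $g$ of $\cD$ preserves weights, so it permutes these seven words and hence the seven blocks. The induced permutation must preserve the Fano incidence, because the Fano lines are recoverable from the code. I would read them off from $\cD_0$ and the weight-16 words: the rectangles $(\Pi\setminus\lambda)\times A$ have pattern $(4^4,0^3)$, with support the complement of the Fano line $\lambda$. The $784$ words of this pattern are intrinsic by Theorem~\ref{thm:WD}, and their zero-block triples are exactly the Fano lines, since $M=\ell'\ell^\top$ kills the blocks with $\ell'(p)=0$. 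Hence $g$ induces a collineation, that is, an element of $\operatorname{GL}(E)$. Surjectivity onto $\operatorname{GL}(E)$ is the last sentence of Lemma~\ref{lem:hammingsections}: every collineation lifts to an affine coordinate permutation.

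\textbf{The kernel.} Let $g$ fix every block, and write $g$ on block $p$ as a permutation $\pi_p$ of $E$. The restriction of $\cD$ to a single block is the full even-weight code $\mathcal E_8$ by Proposition~\ref{prop:torsionmin}. That alone gives no constraint, so the coupling between blocks has to be used.

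First consider the $784$ words of pattern $(4^4,0^3)$. Their block-$p$ components are affine hyperplanes $\{\ell(e)=a\}$, and these words must map to words of the same pattern. I would argue that the quadratic summands cannot arise. A word whose support pattern is $(4^4,0^3)$ has $c=0$ by the proof of Theorem~\ref{thm:WD}, so its components are affine, $\epsilon_p+\ell'(p)\ell(e)$. Consequently each $\pi_p$ maps affine hyperplanes of $E$ to affine hyperplanes, because every hyperplane $\ell$ occurs at $p$ with $\ell'(p)=1$. A permutation of $\F_2^3$ preserving the $14$ affine planes is affine, so $\pi_p(e)=A_pe+t_p$.

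Next I use that the images of the rank-one words $\ell'(p)\ell(e)$ must again be bilinear words $\epsilon_p+(M'p)\cdot e$. The component at block $p$ becomes $\ell'(p)\,\ell(A_p^{-1}e)+\text{const}$, and the linear part $p\mapsto \ell'(p)\,A_p^{-\top}\ell$ must be linear in $p$ for every $\ell$. This forces $A_p$ to be independent of $p$: for a line $p+q=r$, the condition $A_p^{-\top}\ell+A_q^{-\top}\ell=A_r^{-\top}\ell$ applied with suitable $\ell'$ gives $A_p=A_q=A_r$. So $A_p=A$ for all $p$.

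Finally, the incidence words $\theta_p(c)$ must map into $\cD$. The quadratic class of the image block is $Ac$, and this must be the same in every block, which is automatic. The quadratic part must be $q_{Ac}$ in a way compatible with \eqref{eq:hammingsections}. Using $\theta_p(c)(c)=\mathbf1_{\{p=c\}}$ and the polar form $\det$, I expect to get $\det(Ac,Ae,Ae')=\det(c,e,e')$, which always holds. The real constraint comes instead from the fact that $A$ permutes nothing at the block level, since $g$ fixes the blocks. Then the balanced-block indicator $p\mapsto\mathbf1_{\{p=c\}}+\dots$ of Theorem~\ref{thm:WD} for class $c$ must be balanced at block $Ac$. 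The weight-16 heavy words have their heavy block at $p=c$, the heavy block is intrinsic, and block $c$ is fixed, so $Ac=c$ for all $c$, giving $A=1$. The translations $t_p$ are then constrained by \eqref{eq:tracetranslation}: a translation by $t_p$ adds $\det(c,t_p,e)+\text{const}$, which must be bilinear in $(p,e)$. This forces $p\mapsto t_p$ to be linear, $t_p=\tau p$. Conversely \eqref{eq:tracetranslation} shows that every such $\tau$ works, so $N\cong\operatorname{End}(E)$ and $|N|=2^9$.

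The order $|\Gamma|=2^9\cdot168$ follows, and double transitivity follows from that of $\operatorname{GL}_3(2)$ on $\Pi$. I expect the main obstacle to be the kernel step that proves $A_p$ constant and $t_p$ linear. The linearity bookkeeping, especially in the case $t_0$ where there is no block $0$, needs care and will reduce to the line condition $t_p+t_q=t_{p+q}$.
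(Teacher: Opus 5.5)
Your proposal is correct and follows the paper's proof step for step: the blocks and Fano lines are recovered from Theorem~\ref{thm:WD}, affinity on each block comes from the affine planes of $\cH_8$, and $A=\id$ follows because the heavy block of class $c$ sits at block $c$. Likewise, linearity of $p\mapsto t_p$ follows from $\det(c,\,t_p+t_q+t_{p+q},\,e)=0$, and surjectivity from Lemma~\ref{lem:hammingsections}.

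The one variation is how you show $A_p$ is independent of $p$. You use linearity of the bilinear coefficient $p\mapsto\ell'(p)A_p^{-\top}\ell$ of the images of rank-one words. On a line $\{p,\,q,\,r\}$ the usable instance is $\ell'(p)=\ell'(r)=1$, $\ell'(q)=0$, which gives $A_p=A_r$. Your displayed three-term identity cannot occur literally, since $\ell'$ is never $1$ on a whole line. The paper gets the same conclusion more quickly: an image word has common point sum $A_pc$ in every block.
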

\begin{proof}
The seven weight-eight words recover the blocks. The active block sets of
weight-sixteen words having four nonzero blocks are precisely the complements
of the seven Fano lines. Hence block permutations preserve the Fano plane,
whose permutation group is $\operatorname{GL}_3(2)$: a line is exactly a triple
$\{p,\,q,\,p+q\}$, so its permutations extend additively after fixing zero.

Let an automorphism fix each block. The subcode $\cD_0$ is intrinsic: it is
spanned by block indicators and the weight-sixteen words with four active
blocks. The latter give every rank-one bilinear term, and these span all
bilinear terms. The restriction of $\cD_0$ to a block is the affine-function
code $\cH_8$. A permutation preserving this code is affine,
since its three coordinate functions must be affine and independent.
Write its action on block $p$ as $e\mapsto A_pe+t_p$.

An even subset with point sum $c$ has point sum $A_pc$ after this
map; the translation contributes zero. Every word of $\cD$ has the same
point sum in all blocks, and every $c\in E$ occurs. Hence $A_pc$ is
independent of $p$ for every $c$, so $A_p=A$ for all $p$.
For $c\ne0$, a heavy word of class $c$ has its heavy block at $c$,
by Theorem~\ref{thm:WD} and \eqref{eq:balancedidentity}. Its image has
class $Ac$ and the same heavy block. Thus $Ac=c$ for all $c$, giving
$A=\id$.

Finally, translation by $(t_p)$ changes the linear part of
$\theta_p(c)$ by $e\mapsto\det(c,\,t_p,\,e)$. Preservation of $\cD$ requires
this linear coefficient to depend linearly on $p$ for every $c$.
For distinct nonzero $p$, $q$, put $\delta=t_p+t_q+t_{p+q}$.
Then $\det(c,\,\delta,\,e)=0$ for all $c$, $e$, which forces $\delta=0$.
Thus $p\mapsto t_p$ extends to a linear map on $E$. Conversely every such
translation preserves $\cD$ by \eqref{eq:tracetranslation}. There are
$2^9$ of them. Every Fano collineation lifts by
Lemma~\ref{lem:hammingsections}, so the image on the blocks is all of
$\operatorname{GL}(E)$. This proves the exact sequence and order.
Any two distinct nonzero vectors of $E$ are independent, giving double
transitivity.
\end{proof}

\subsection{The quaternary lift and its sign kernel}\label{sec:binary}
Let
$\pi:\Z_4^{\Pi\times E}\to\F_2^{\Pi\times E}$ be reduction modulo two.
For a quaternary word $u$ define its \emph{Euclidean weight} by
\[
 \operatorname{wt}_E(u)=n_1(u)+4n_2(u)+n_3(u),
\]
where $n_a$ counts entries equal to $a\in\Z_4$.
A quaternary code is \emph{even} if all these weights are divisible by
eight. Equivalently, every integer lift $\widetilde u$ has normalized
squared norm $\frac14\sum_\omega\widetilde u_\omega^2\in2\Z$;
this condition is unchanged by adding multiples of four to its entries.
Self-duality uses the standard dot product modulo four.

The binary pair specifies the odd positions and the ambiguity between
quaternary lifts of the same residue.
\begin{lemma}\label{lem:binaryduality}
For any self-dual quaternary code with residue $\cD$, its binary
torsion code is $\cC=\cD^\perp$:
\begin{equation}\label{eq:binarylayers}
 \cD=\pi(C),\qquad
 \cC=\{a\in\F_2^{\Pi\times E}:2a\in C\}.
\end{equation}
Consequently there is an exact sequence of additive groups
\begin{equation}\label{eq:binaryexact}
 0\longrightarrow\cC\xrightarrow{\ a\mapsto2a\ }C
   \xrightarrow{\ \pi\ }\cD\longrightarrow0.
\end{equation}
\end{lemma}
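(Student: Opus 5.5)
The argument rests on the general structure theory of $\Z_4$-linear codes, together with the facts $\dim\cD=19$, $\dim\cC=37$ and $\cD\subseteq\cC$ from Proposition~\ref{thm:binarynormal}. Let $C$ be self-dual with $\pi(C)=\cD$, and define its torsion code $\operatorname{Tor}(C)=\{a:2a\in C\}$.

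First, the sequence $0\to\operatorname{Tor}(C)\to C\to\pi(C)\to0$ is exact for any quaternary code. The map $a\mapsto 2a$ is well defined on binary words (choose any lift; twice it is independent of the lift) and is injective. The kernel of $\pi$ restricted to $C$ is $C\cap2\Z_4^n$, which is $2\operatorname{Tor}(C)$. Hence $|C|=|\pi(C)|\,|\operatorname{Tor}(C)|=2^{19}|\operatorname{Tor}(C)|$.

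Next, show that $\operatorname{Tor}(C)\subseteq\cD^\perp$. If $2a\in C$ and $u\in C$, self-duality gives $2a\cdot u\equiv0\pmod4$. This means $a\cdot\pi(u)\equiv0\pmod2$, so $a$ is orthogonal to every word of $\cD=\pi(C)$.

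For the reverse containment, count. Self-duality gives $|C|^2=4^{56}$, so $|C|=2^{56}$ and therefore $|\operatorname{Tor}(C)|=2^{56-19}=2^{37}=|\cD^\perp|$. The containment is then an equality, so $\operatorname{Tor}(C)=\cC$. Alternatively, one can prove the reverse inclusion directly: if $a\perp\cD$ then $2a$ is orthogonal to all of $C$ modulo $4$, so $2a\in C^\perp=C$. This direct route avoids counting altogether. The exact sequence \eqref{eq:binaryexact} then follows by substituting $\operatorname{Tor}(C)=\cC$ into the general sequence.

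There is no real obstacle. The only point needing care is that $a\mapsto 2a$ and the pairing $2a\cdot u\bmod 4$ depend only on $a\bmod2$, and this is immediate.
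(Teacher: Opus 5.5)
Your proposal is correct, and its ``direct route'' (that $a\perp\cD$ holds exactly when $2a\cdot u\equiv0\pmod 4$ for all $u\in C$, which by self-duality means exactly $2a\in C$) is precisely the paper's one-line proof. The counting route through $|C|=2^{56}$ is valid but unnecessary. Your identification $\ker(\pi|_C)=C\cap2\Z_4^{56}=\{2a:2a\in C\}$ spells out the exactness that the paper leaves implicit.
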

\begin{proof}
A binary word $a$ is orthogonal to $\cD$ exactly when $2a\cdot u=0$
modulo four for every $u\in C$. By self-duality this is equivalent to
$2a\in C$, giving both assertions.
\end{proof}
Two lifts of a residue differ by $2\cC$; $2C=2\cD$. We retain
$\cC$, $\cD$ as in the framed-VOA notation of \cite{DGH}.
For a support $\Sigma$, put $\cC_\Sigma=\cC\cap\F_2^\Sigma$.
Whenever signed lifts on $\Sigma$ exist, they form an affine space over
$\cC_\Sigma$. This is the sign freedom used in the subsequent vector count.

\paragraph{The sign kernel.}
For quaternary codes, $\Aut(C)$ means the full group of signed
coordinate permutations preserving $C$.
\begin{theorem}\label{thm:signkernel}
For every self-dual lift of $\cD$, the full sign kernel consists of
the block-constant patterns indexed by $\cH_7$ and has order $16$.
\end{theorem}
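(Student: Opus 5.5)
The plan is to reduce the sign-kernel question to a purely binary condition on $\cD$ and $\cC$, and then solve that condition with the normal form of Proposition~\ref{thm:binarynormal}.

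\textbf{Reduction to a binary condition.} An element of the sign kernel is a diagonal matrix with entries $\pm1$. Encode it by its sign set $s\in\F_2^{\Pi\times E}$, the positions carrying $-1$. For $u\in C$, negating an entry $u_\omega$ changes it by $-2u_\omega\equiv 2\,\pi(u)_\omega \pmod 4$. Hence the sign change sends
\[
 u\longmapsto u+2\,(s\ast\pi(u)),
\]
where $\ast$ is the coordinatewise product. By Lemma~\ref{lem:binaryduality}, $\pi(C)=\cD$ and $2a\in C$ exactly when $a\in\cC=\cD^\perp$. So $s$ preserves $C$ if and only if $s\ast d\in\cC$ for every $d\in\cD$. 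Equivalently, the triple product $s\ast d\ast d'$ has even weight for all $d,d'\in\cD$. This condition involves only the binary layers, so the answer is the same for every self-dual lift, which is the first claim of the theorem.

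\textbf{Block constancy.} Take $d$ to be the indicator of block $p$, and let $d'$ run over $\cD$. By Proposition~\ref{prop:torsionmin}, the restrictions $d'|_p$ run over the whole even-weight code $\mathcal E_8$ of length eight. The condition then says that $s|_p$ is orthogonal to $\mathcal E_8$, so $s|_p\in\mathcal E_8^\perp=\{0,\mathbf1_8\}$. Thus $s=\epsilon\otimes\mathbf1_8$ for some $\epsilon\in\F_2^\Pi$, and the condition becomes
\[
 \sum_p \epsilon_p\,\langle d_p,\,d'_p\rangle = 0 \quad\text{for all } d,d'\in\cD,
\]
where $\langle d_p,d'_p\rangle$ is the block inner product mod $2$.

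\textbf{Evaluating the block pairings.} Write words in the normal form \eqref{eq:binarynormal} and expand bilinearly over the generators $\epsilon'_p\mathbf1_8$, $(Mp)\cdot e$, and $\theta_p(c)$. The block values are as follows.
\begin{itemize}
 \item Two affine functions on a block pair to zero, as noted in the proof of Proposition~\ref{thm:binarynormal}.
 \item Constants pair to zero with every section $\theta_p(c)$, since each section has even weight.
 \item The section $\theta_p(c)$ pairs with $(Mp)\cdot e$ to give $c\cdot Mp$.
 \item Two sections pair to give $\det(c,d,p)$, by \eqref{eq:productidentity}.
\end{itemize}
The condition therefore becomes
\[
 c\cdot M\Bigl(\sum_p\epsilon_p p\Bigr)=0 \quad\text{and}\quad \det\Bigl(c,\,d,\,\sum_p\epsilon_p p\Bigr)=0
\]
for all $c,d\in E$ and $M\in\operatorname{Hom}(E,E^*)$. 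Because $M$ ranges over all of $\operatorname{Hom}(E,E^*)$, the first equation holds exactly when $\sum_p\epsilon_p p=0$, that is, when $\epsilon\in\cH_7$. The second equation is then automatic. Conversely, every $\epsilon\in\cH_7$ satisfies all the pairings, so the kernel is exactly $\{\epsilon\otimes\mathbf1_8:\epsilon\in\cH_7\}$, of order $|\cH_7|=2^4=16$.

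\textbf{Main obstacle.} I expect the most delicate step to be the first reduction. One must check that the identity $-u_\omega\equiv u_\omega+2\,\pi(u)_\omega \pmod 4$ is used correctly, and that $\cC$ is the torsion code of \emph{every} self-dual lift. The latter is exactly what Lemma~\ref{lem:binaryduality} supplies. After that reduction, the remaining steps are bookkeeping with the identities already established.
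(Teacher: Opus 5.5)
Your proposal is correct and follows the paper's proof essentially step for step. The chain is the same: reduce to $s\ast d\in\cC$ for all $d\in\cD$ via Lemma~\ref{lem:binaryduality}; obtain block constancy from the full even-weight puncturing; let the pairing of $\theta_p(c)$ with $(Mp)\cdot e$ force $\sum_p\epsilon_p p=0$; and use \eqref{eq:productidentity} to show this condition is also sufficient.
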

\begin{proof}
For binary words, $*$ denotes coordinatewise multiplication.
Signs $s$ send $u$ to $u-2(s*\pi(u))$. By
Lemma~\ref{lem:binaryduality}, they preserve the code exactly when
$s*d\in\cC$ for all $d\in\cD$, or equivalently when
\[
 s\in\langle d*v:d,\,v\in\cD\rangle^\perp.
\]
Multiplication by a block indicator shows that $s$ is orthogonal on
that block to the puncturing of $\cD$, which is the full even-weight
binary code. Hence $s$ is constant on each block, with values $s_p$.
Products of two affine functions sum to zero on a block. The block
sum of $(Mp)\cdot e$ times $\theta_p(c)(e)$ is $c\cdot Mp$.
Since $c$, $M$ are arbitrary, the necessary condition is
$\sum_p s_pp=0$. This condition is sufficient as well:
\eqref{eq:productidentity} handles the products of two quadratic terms,
and the preceding calculations handle the remaining products.
The kernel is therefore exactly $\cH_7$.
\end{proof}
Denote the sign kernel by $E_C$. For $\ell\in E^*$ write
\[
 (E_\ell v)(p,\,e)=(-1)^{\ell(p)}v(p,\,e).
\]
Thus $E_C=\langle-\id,\,E_1,\,E_2,\,E_4\rangle$; a numerical subscript
uses the binary encoding of the functional $\ell$. These are precisely
the block signs from Theorem~\ref{thm:signkernel}.

\subsection{The signed seeds and their coordinates}\label{sec:seeds}
For the seed construction and the monomial matrices, choose the sections
below and the coordinates of Table~\ref{tab:hamminglabels}.
We encode $E=\F_2^3$ by $0$, $\ldots$, $7$, with the least significant bit
first. The parameter $c\mapsto\eta_c(p)$ is linear, so its values on $1$, $2$, $4$
determine it; all entries are three-bit vectors.
\begin{table}[htbp]\centering\small
\begin{tabular}{c@{\qquad}rrr}\toprule
$p$&$\eta_1(p)$&$\eta_2(p)$&$\eta_4(p)$\\\midrule
1&3&4&3\\
2&6&2&3\\
3&4&5&3\\
4&0&0&7\\
5&6&5&2\\
6&2&5&0\\
7&6&1&1\\\bottomrule
\end{tabular}
\caption{One choice of affine origins and quadratic sections.}\label{tab:eta}
\end{table}
With the convention \eqref{eq:eta}, the table gives
\[
 q_c(c)+c\cdot\eta_c(p)=\mathbf1_{\{p=c\}},\qquad
 \theta_{T_Pp}(T_Pc)(T_Pe)=\theta_p(c)(e),
 \quad T_P=(5,\,3,\,2).
\]
The polar form of $q_c$ is $\det(c,\,e,\,e')$, so these sections satisfy
\eqref{eq:hammingsections}. The two seed residues in
\eqref{eq:traceseedresidues} follow by expressing the seeds in
\eqref{eq:seedarrays} in the signed positions of
Table~\ref{tab:hamminglabels}. These tables specify a coordinate
representative of the abstract Hamming gluing.

\paragraph{The seeds and signed coordinates.}
Choose signed coordinates for the two seeds and the operators below,
using the sections in Table~\ref{tab:eta}. Order the seven blocks as
\[
 (p_0,\,\ldots,\,p_6)=(6,\,1,\,5,\,7,\,4,\,2,\,3).
\]
In each block let $(\mathbf e_0,\,\ldots,\,\mathbf e_7)$ be an ordered coordinate
basis. Table~\ref{tab:hamminglabels} identifies the geometric coordinates
with the signed basis
$(\mathbf e_1,\,-\mathbf e_0,\,\mathbf e_2,\,\mathbf e_3,\,
\mathbf e_4,\,\mathbf e_5,\,\mathbf e_6,\,\mathbf e_7)$.
All products of operators are composed from right to left.

\begin{table}[htbp]\centering\small
\begin{tabular}{c r rrrrrrrr}\toprule
$r$&$p$&\multicolumn{8}{c}{$e$ in the above eight real positions}\\\midrule
0&6&0&6&2&4&7&1&3&5\\
1&1&0&1&3&2&4&5&6&7\\
2&5&0&5&6&3&2&7&1&4\\
3&7&0&7&1&6&3&4&5&2\\
4&4&0&4&5&1&6&2&7&3\\
5&2&0&2&7&5&1&3&4&6\\
6&3&0&3&4&7&5&6&2&1\\\bottomrule
\end{tabular}
\caption{Hamming coordinates. The two positions paired by $I$ are
$e$ and $e+p$.}\label{tab:hamminglabels}
\end{table}
The row shift $(Pv)_r=v_{r-1}$ is used to specify a short list of lifts;
row subscripts in this coordinate realization are taken modulo seven.
It sends $(p,\,e)$ to $(T_Pp,\,T_Pe)$, where the columns of $T_P$ are $(5,\,3,\,2)$.
The $T_P$-invariance verified above shows that $P$ preserves the binary pair. The following signed permutations act on
each eight-tuple:
\begin{equation}\label{eq:signedgenerators}
\begin{aligned}
  I(v)&=(-v_1,\,v_0,\,-v_3,\,v_2,\,-v_5,\,v_4,\,-v_7,\,v_6),\\
 J_1(v)&=(-v_2,\,-v_3,\,v_0,\,v_1,\,v_6,\,v_7,\,-v_4,\,-v_5),\\
 J_2(v)&=(-v_4,\,-v_5,\,-v_6,\,-v_7,\,v_0,\,v_1,\,v_2,\,v_3).
\end{aligned}
\end{equation}
They satisfy $I^2=J_1^2=J_2^2=-\id$,
$IJ_1=J_1 I$, $IJ_2=J_2 I$, and
$J_1J_2=-J_2J_1$. Write
\[
 J_0=\id,\qquad J_3=J_2J_1.
\]
The group $\mathcal J=\langle I,\,J_1,\,J_2\rangle$ consists of the
sixteen operators $\pm I^\epsilon J_s$.
Their coordinate permutations are linear block translations
$e\mapsto e+\tau p$, as read from Table~\ref{tab:hamminglabels}, and therefore
preserve $\cD$, $\cC$ by \eqref{eq:tracetranslation}.
In particular, $I$ is a signed lift of the endpoint switch $\iota$:
on each paired real coordinate plane it acts as $(a,\,b)\mapsto(-b,\,a)$.

Pair positions by $I$ and write $iv=Iv$. Put
$\mathcal Q=\{1,\,2,\,4\}\subset\Z/7\Z$. The complex coordinate basis is
\begin{equation}\label{eq:complexbasis}
 (w_0,\,w_2,\,w_4,\,w_1)=(\mathbf e_1,\,\mathbf e_2,\,\mathbf e_4,\,\mathbf e_6),
\end{equation}
so the eight-tuple $(v_0,\,\ldots,\,v_7)$ has coordinates
$(v_1-iv_0,\,v_2+iv_3,\,v_4+iv_5,\,v_6+iv_7)$.
For real vectors let $(v,\,w)$ be the Euclidean dot product. On a complex
block, and by summing over blocks on the whole space, use the Hermitian
form linear in the second argument
\begin{equation}\label{eq:hermitianform}
 h(v,\,w)=(v,\,w)+i(Iv,\,w)=\overline v^{\top}w.
\end{equation}
The two integer seeds are defined, for $r\in\mathcal Q$, by
\begin{equation}\label{eq:seedarrays}
\begin{aligned}
 B_0&=w_0+w_1+w_2+w_4,&
 A_0&=-iw_0+w_1+w_2+w_4,\\
 B_r&=w_0-w_r,& A_r&=0,\\
 B_{-r}&=w_{2r}-w_r,&
 A_{-r}&=w_0+iw_r-w_{2r}+w_{4r}.
\end{aligned}
\end{equation}
These formulas specify the original eight real entries in each row;
the chosen phase of $w_0$ makes all four complex coordinates of every $B_r$ real.
Their residues display the two parts of the Hamming gluing. With
$\ell=(0,\,1,\,0)\in E^*$, they are
\begin{equation}\label{eq:traceseedresidues}
\begin{aligned}
 A(p,\,e)\bmod2&=\mathbf1_{\{p=6\}}+\ell(p)+\ell(p)\ell(e),\\
 B(p,\,e)\bmod2&=\theta_p(6)(e)+\mathbf1_{\{p\in\{1,\,4,\,5,\,6\}\}}.
\end{aligned}
\end{equation}
The first residue belongs to $\cD_0$, and the second has a nonzero
common quadratic class.

Define the quaternary code by
\begin{equation}\label{eq:seedcode}
 C=2\cC+\big\langle aA,\,aB:a\in\langle P,\,\mathcal J\rangle\big\rangle_{\Z_4}
       \subseteq\Z_4^{56}.
\end{equation}
The real integer coordinates of the seeds are reduced modulo four
in this formula. The summand $2\cC$
specifies the all-even layer, while the displayed lifts specify the signs
above the residue code. The operators $P$ and $\mathcal J$ preserve $C$:
they preserve $2\cC$ and permute the signed seed translates.
All residues lie in $\cD$. We prove self-duality and evenness first,
then determine the signed symmetries needed in the lattice geometry.

Up to sign, the two orbits have the $112$ representatives
$P^rI^\epsilon J_sA$, $P^rI^\epsilon J_sB$ with $0\le r\le6$,
$0\le s\le3$ and $\epsilon=0$ or $1$. Each has squared Euclidean norm $16$.
Their residues generate $\cD$, as proved next.

\subsection{Self-duality of the lift}\label{sec:unimod}
\begin{lemma}[The order-three seed symmetry]\label{lem:frobenius}
Set
\[
 \psi(v_0,\,\ldots,\,v_7)=(v_0,\,v_1,\,v_6,\,v_7,\,v_2,\,v_3,\,v_4,\,v_5),
\]
and define
\begin{equation}\label{eq:frobenius}
 (\GT v)_{2r}=\psi(v_r)\qquad(r\in\Z/7\Z).
\end{equation}
Then $\GT$ is a coordinate permutation of order three fixing $A$, $B$.
It centralizes $I$, satisfies $\GT P\GT^{-1}=P^2$, and cyclically
conjugates $J_1$, $J_2$, $J_3$.
\end{lemma}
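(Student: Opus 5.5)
The lemma is a finite check. I would organize it through the complex coordinates \eqref{eq:complexbasis}, so that $\GT$ becomes the ``Frobenius'' index doubling $j\mapsto2j$ on $\Z/7\Z$, applied both to rows and to complex coordinates.

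\emph{Step 1: $\GT$ is a coordinate permutation of order three.} By definition $\GT$ is a coordinate permutation. It moves row $r$ to row $2r$ and applies $\psi$ inside the row. The map $\psi$ fixes the real pair $(v_0,v_1)$ and cyclically permutes the three pairs $(v_2,v_3)$, $(v_4,v_5)$, $(v_6,v_7)$, so $\psi^3=\id$. Since $2^3=8\equiv1$ modulo seven, $\GT^3$ fixes every row and acts there by $\psi^3=\id$. Because $\GT\ne\id$, its order is three.

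\emph{Step 2: $\GT$ commutes with $I$ and satisfies $\GT P\GT^{-1}=P^2$.} The operator $I$ acts by the same rule $(a,b)\mapsto(-b,a)$ on each of the four real pairs of every row. The permutation $\psi$ moves whole pairs, preserving the order inside each pair, so $\psi I=I\psi$ row by row, and hence $\GT I=I\GT$. For the second relation I compute $(\GT P\GT^{-1}v)_{2r}=\psi\bigl((P\GT^{-1}v)_r\bigr)=\psi\bigl((\GT^{-1}v)_{r-1}\bigr)=v_{2r-2}$. This is the row shift by two, that is $P^2$.

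\emph{Step 3: translate $\GT$ into complex coordinates.} By \eqref{eq:complexbasis}, the real pairs starting at positions $2,4,6$ carry the complex coordinates $w_2,w_4,w_1$, and $\psi$ cycles these pairs. I expect $\psi$ to act as $w_j\mapsto w_{2j}$ with $w_0$ fixed. The orientation must be read off carefully from where $\psi$ puts each entry: $v_6$ goes to slot $2$, $v_2$ to slot $4$, $v_4$ to slot $6$, and so on. The phase convention $v_1-iv_0$ for $w_0$ is respected because the first pair is untouched. Granting this, $\GT$ sends $c\,w_j$ in row $r$ to $c\,w_{2j}$ in row $2r$. Then \eqref{eq:seedarrays} gives:
\begin{itemize}
\item $B_r=w_0-w_r\mapsto w_0-w_{2r}=B_{2r}$;
\item $B_{-r}=w_{2r}-w_r\mapsto w_{4r}-w_{2r}=B_{-2r}$;
\item $A_{-r}\mapsto w_0+iw_{2r}-w_{4r}+w_{8r}=A_{-2r}$, using $8r=r$;
\item $A_r=0$ goes to $A_{2r}=0$;
\item row $0$ is fixed, and $A_0$, $B_0$ are symmetric in $w_1,w_2,w_4$.
\end{itemize}
Since $r\mapsto2r$ permutes $\mathcal Q$ and $-\mathcal Q$, this shows $\GT A=A$ and $\GT B=B$. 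If the orientation turned out to be $j\mapsto4j$ instead, the same computation goes through after replacing $2$ by $4$.

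\emph{Step 4: conjugating $J_1$, $J_2$, $J_3$.} Since $J_1$ and $J_2$ act identically in every row and $\GT$ only relabels rows, it suffices to conjugate by $\psi$ within one eight-tuple. I would compute $\psi J_1\psi^{-1}$ and $\psi J_2\psi^{-1}$ from \eqref{eq:signedgenerators} and compare them with $J_2$, $J_3=J_2J_1$, and $J_1$, in the cyclic order dictated by $\psi$. The relations $J_s^2=-\id$ and $J_1J_2=-J_2J_1$ reduce the work: it is enough to identify the images of $J_1$ and $J_2$, after which $\psi J_3\psi^{-1}=\psi J_2\psi^{-1}\,\psi J_1\psi^{-1}$ follows automatically.

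\emph{Main obstacle.} The hard part is bookkeeping: getting the direction of $\psi$ on the complex indices right, and tracking the signs in the conjugates of $J_1$ and $J_2$. A sign error here would produce $-J_s$ instead of $J_s$, so I would verify each conjugate entrywise on all eight coordinates.
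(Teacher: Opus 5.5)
Your proposal is correct and follows the paper's route. You identify $\psi$ as the complex map $w_j\mapsto w_{2j}$ fixing $w_0$, which your slot data $v_6\to2$, $v_2\to4$, $v_4\to6$ already confirms. You then read $\GT A=A$, $\GT B=B$ off \eqref{eq:seedarrays}, and get the order, $\GT I=I\GT$, $\GT P\GT^{-1}=P^2$ and the $J_s$-conjugations from the row formula and \eqref{eq:signedgenerators}. Your Step~4 comes out as $\psi J_1\psi^{-1}=J_2$ and $\psi J_2\psi^{-1}=J_3$, hence $\psi J_3\psi^{-1}=J_3J_2=J_1$.

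Drop your fallback about $j\mapsto4j$, though: it would not work. Because $\GT$ doubles the row index, the coordinate index must be doubled as well. With $w_j\mapsto w_{4j}$, row $2r$ of $\GT B$ would be $w_0-w_{4r}\ne B_{2r}$.
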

\begin{proof}
The map $\psi$ fixes $w_0$ and sends $w_r$ to $w_{2r}$ for
$r\in\mathcal Q$. Thus \eqref{eq:seedarrays} gives
$A_{2r}=\psi(A_r)$ and $B_{2r}=\psi(B_r)$.
The conjugations of $I$, $J_1$, $J_2$, $J_3$ follow from
\eqref{eq:signedgenerators}; the row formula gives the order and
conjugation of $P$. In particular $\GT$ permutes the signed seed translates.
\end{proof}

For a complex block vector $v$, let $\mathsf Q(v)$ be the unique linear
map commuting with $J_1$, $J_2$ and sending $w_0$ to $v$. Its columns
in the basis \eqref{eq:complexbasis} are
$v$, $-iJ_1 v$, $-iJ_2 v$, $-iJ_3v$: the quaternion relations show that this
matrix commutes with both operators, and commutation determines every
column. In particular $\mathsf Q(w_0)=\id$.
Put $K_a=\mathsf Q(w_a)$ for $a\in\mathcal Q$.
The defining conditions make $\mathsf Q$ linear, so
\[
 \mathsf Q\!\left(v_0w_0+\sum_{a\in\mathcal Q}v_aw_a\right)
       =v_0\id+\sum_{a\in\mathcal Q}v_aK_a.
\]
The $K_a$ are traceless Hermitian involutions, and the signed formulas give
\begin{equation}\label{eq:Qblock}
 K_a^2=\id,\qquad K_aK_{2a}=-iK_{4a}\quad(a\in\mathcal Q).
\end{equation}
Writing $\mathsf Q_k=\mathsf Q(B_k)$, the seed formulas become
\begin{equation}\label{eq:Pauliblocks}
 \mathsf Q_0=\id+K_1+K_2+K_4,\qquad
 \mathsf Q_a=\id-K_a,\qquad
 \mathsf Q_{-a}=K_{2a}-K_a\quad(a\in\mathcal Q).
\end{equation}

\begin{lemma}[The difference-set correlation]\label{lem:differencecorrelation}
For every $m\in\Z/7\Z$ one has
\begin{equation}\label{eq:blockcorrelation}
 \sum_{k=0}^6\mathsf Q_k\mathsf Q_{k+m}=16\delta_{m,\,0}\id.
\end{equation}
\end{lemma}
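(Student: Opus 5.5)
The plan is to treat the identity as a computation inside the four-dimensional real algebra spanned by $\id$, $K_1$, $K_2$, $K_4$, reducing it to periodic correlations of a few integer sequences of length seven.

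\emph{Algebraic structure.} From \eqref{eq:Qblock} I first derive the full Pauli-type multiplication table. Each $K_a$ is Hermitian, so taking adjoints in $K_aK_{2a}=-iK_{4a}$ gives $K_{2a}K_a=iK_{4a}$. Hence distinct $K_a$ anticommute, and every product $K_aK_b$ with $a\ne b$ equals $\pm iK_c$, where $\{a,b,c\}=\mathcal Q$ and the sign depends only on whether $b=2a$ or $b=4a$.

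\emph{Expanding the correlation.} Write $\mathsf Q_k=x_{k0}\id+\sum_{a\in\mathcal Q}x_{ka}K_a$ with integer coefficients read off from \eqref{eq:Pauliblocks}:
\begin{itemize}
\item $x_{00}=x_{0a}=1$;
\item for $b\in\mathcal Q$: $x_{b0}=1$, $x_{bb}=-1$, and the other coefficients of $\mathsf Q_b$ vanish;
\item for $b\in\mathcal Q$: $x_{-b,0}=0$, $x_{-b,b}=-1$, $x_{-b,2b}=1$, and $x_{-b,4b}=0$.
\end{itemize}
The sum $\sum_k\mathsf Q_k\mathsf Q_{k+m}$ then splits into three parts:
\begin{itemize}
\item a scalar part $\sum_k\bigl(x_{k0}x_{k+m,0}+\sum_a x_{ka}x_{k+m,a}\bigr)\id$;
\item a linear part $\sum_a\sum_k(x_{k0}x_{k+m,a}+x_{ka}x_{k+m,0})K_a$;
\item a commutator part collecting $\sum_k(x_{ka}x_{k+m,b}-x_{kb}x_{k+m,a})$ times $K_aK_b$ over unordered pairs $a\ne b$, using anticommutation.
\end{itemize}
Since the $\pm iK_c$ are independent of $\id$ and the linear $K_c$ terms only over $\C$, I would simply show that each real integer coefficient vanishes separately, apart from the scalar at $m=0$. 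Strictly, the commutator and linear parts both land on multiples of $K_c$, with coefficients $\pm i$ and $1$ respectively; since those coefficients are real integers, separate vanishing is the right claim.

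\emph{The case $m=0$.} The linear part vanishes term by term after cancellation, and the commutator part vanishes by anticommutation. The scalar part is the total squared norm: $4$ from $\mathsf Q_0$, plus $2$ from each of the six $\mathsf Q_{\pm b}$, for a total of $16$.

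\emph{The case $m\ne0$.} I use the multiplier $k\mapsto2k$. It permutes the index set of $\mathcal Q$ cyclically, and by Lemma~\ref{lem:frobenius} (with $\psi$ sending $w_r\mapsto w_{2r}$) it transports the coefficients via $x_{2k,2a}=x_{ka}$. Conjugation by $\psi$ sends $K_a$ to $K_{2a}$ and preserves the relations \eqref{eq:Qblock}, so the correlation at $2m$ is the $\psi$-conjugate of the correlation at $m$. Thus it suffices to treat $m=1$ and $m=-1$. Moreover, $m=-1$ is the adjoint of $m=1$ after reindexing $k\mapsto k-1$, because each $\mathsf Q_k$ is Hermitian; so only $m=1$ remains. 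For $m=1$ I will list the seven products $\mathsf Q_k\mathsf Q_{k+1}$, tabulate the coefficients, and check that the scalar, each $K_a$, and each commutator coefficient sum to zero. This is a finite check resting on the quadratic-residue pattern: $\mathcal Q$ is a $(7,3,1)$ difference set, so each nonzero difference occurs exactly once.

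\emph{Main obstacle.} The step I expect to be hardest is the bookkeeping of the commutator terms for $m=1$: signs from $K_aK_b=\pm iK_c$ must cancel across different $k$, rather than within a single product. I would organise this by grouping the contributions to each fixed $K_c$ by the pair $(k,k+1)$ and confirming that they cancel in pairs.
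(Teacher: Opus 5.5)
Your proposal is correct, but it reaches the identity by a different route from the paper.

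\textbf{The paper's route.} The paper never separates coefficients. It forms $\widehat{\mathsf Q}(t)=\sum_k\mathsf Q_kt^k$ with a central variable $t$, $t^7=1$, and notes the factorization $\widehat{\mathsf Q}(t)=d(t)(\id+u(t))$, where $d(t)=1+t+t^2+t^4$ and $u(t)=\sum_{a\in\mathcal Q}(t^{3a}-t^{-3a})K_a$. Three facts then finish the proof:
\begin{itemize}
\item the $(7,4,2)$ difference-set identity $d(t)d(t^{-1})=2+2n(t)$;
\item the antisymmetry $u(t^{-1})=-u(t)$;
\item the Clifford relation $u(t)^2=(n(t)-7)\id$.
\end{itemize}
Together they give $\widehat{\mathsf Q}(t)\widehat{\mathsf Q}(t^{-1})=16\,\id$, which settles all seven shifts at once.

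\textbf{Your route.} You expand each correlation into scalar, linear and commutator coefficients, and settle $m=0$ by the norm count $4+6\cdot2=16$. You then reduce the nonzero shifts to $m=1$ using two symmetries:
\begin{itemize}
\item the multiplier symmetry $\mathsf Q_{2k}=\psi\mathsf Q_k\psi^{-1}$, whose orbits on nonzero shifts are $\{1,2,4\}$ and $\{3,5,6\}$;
\item the adjoint relation between the shifts $m$ and $-m$.
\end{itemize}
Both reductions are sound.

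The deferred check for $m=1$ also works. The seven products $\mathsf Q_k\mathsf Q_{k+1}$ have scalar parts summing to $0$, and for each $K_c$ the real and imaginary parts of the coefficient both sum to zero. They do not always cancel in pairs, as you anticipate; for example, $\mathsf Q_6\mathsf Q_0=-(1+i)K_1+(1-i)K_2+2iK_4$.

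\textbf{Comparison.} Your route is entirely elementary and uses only symmetries the paper exploits elsewhere, but it verifies the identity by bookkeeping. The difference-set property you cite only as a heuristic is exactly what the paper's factorization turns into the proof, so it needs no case split or coefficient tables.
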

\begin{proof}
Work with a central variable $t$ satisfying $t^7=1$, and set
\[
 d(t)=1+t+t^2+t^4,\quad n(t)=1+t+\cdots+t^6,\quad
 u(t)=\sum_{a\in\mathcal Q}(t^{3a}-t^{-3a})K_a.
\]
The set $\{0,\,1,\,2,\,4\}$ is a $(7,\,4,\,2)$ difference set: each nonzero
residue occurs twice among its ordered differences. Thus
$d(t)d(t^{-1})=2+2n(t)$ and $n(t)^2=7n(t)$.
The coefficient of $K_a$ in $\sum_k\mathsf Q_kt^k$ is
\[
 1-t^a-t^{-a}+t^{3a}=d(t)(t^{3a}-t^{-3a}),
\]
so $\widehat{\mathsf Q}(t):=\sum_k\mathsf Q_kt^k=d(t)(\id+u(t))$.
The $K_a$ square to $\id$ and anticommute in pairs. Since the six
residues $\pm6a$, $a\in\mathcal Q$, exhaust the nonzero residues,
\[
 u(t^{-1})=-u(t),\qquad
 u(t)^2=\sum_{a\in\mathcal Q}(t^{6a}+t^{-6a}-2)\id=(n(t)-7)\id.
\]
Consequently
\[
 \widehat{\mathsf Q}(t)\widehat{\mathsf Q}(t^{-1})
   =(2+2n(t))(8-n(t))\id=16\cdot\id.
\]
Comparing the coefficient of $t^{-m}$ proves the assertion.
\end{proof}

\begin{lemma}\label{lem:codeorthogonal}
The code $C$ is self-orthogonal and even.
\end{lemma}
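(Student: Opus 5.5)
Since $C=2\cC+\langle aA,\,aB\rangle_{\Z_4}$, self-orthogonality and evenness reduce to three kinds of checks: pairings within $2\cC$, pairings of $2\cC$ with the seed translates, and pairings and norms of the seed translates themselves. The first two are automatic. The pairing $2a\cdot2b$ vanishes modulo four. The pairing of $2a$ with a seed translate $u$ is $2(a\cdot\pi(u))$, which vanishes because $\pi(u)\in\cD=\cC^\perp$. Here $\pi(u)\in\cD$ holds because the residues \eqref{eq:traceseedresidues} lie in $\cD$ and $P$, $\mathcal J$ preserve $\cD$.

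For evenness, recall that a code spanned by self-orthogonal words with Euclidean weights divisible by eight is even. This follows from
\[
\wt_E(u+v)\equiv\wt_E(u)+\wt_E(v)+2\,(u\cdot v)\pmod 8,
\]
where $u\cdot v$ is computed with integer lifts. It therefore suffices to show two things for integer lifts of the generators: every norm is divisible by eight after the normalization $\frac14\sum\widetilde u^2\in2\Z$, and every pairwise integer inner product is divisible by four.

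Each seed translate has squared norm $16$, so the first condition holds, and the elements of $2\cC$ have doubly even support. The only substantive step is therefore
\[
(aX,\,bY)\equiv0\pmod4\qquad(X,\,Y\in\{A,\,B\},\ a,\,b\in\langle P,\,\mathcal J\rangle).
\]
I would organize this via the Hermitian form: by \eqref{eq:hermitianform}, the real inner product is $\operatorname{Re}h$. The operators in $\langle P,\mathcal J\rangle$ are unitary, so it suffices to show that $h(X,\,cY)\in2\Z[i]\cdot(1+i)$, or at least that its real part is divisible by four, for all $c$ in the group. I would first try to prove the stronger statement that $h(X,\,cY)\in 2(1+i)\Z[i]$.

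For $B$ against $B$, I would write the blockwise pairing with $J_s$ and $P^m$ through the quaternionic matrices $\mathsf Q_k$. The pairing $\sum_k h(B_k,\,J_sB_{k+m})$ is a matrix coefficient of $\sum_k\mathsf Q_k^{*}\mathsf Q_{k+m}$-type sums. Lemma~\ref{lem:differencecorrelation} identifies these sums as $16\delta_{m,0}\id$: the $\mathsf Q_k$ are Hermitian, and $J_s$ commutes with $\mathsf Q$. Hence every $B$–$B$ pairing is either $0$ or a coefficient of $16\,\id$, and is divisible by four. The factors $I^\epsilon$ only multiply by $i$.

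For $A$ against $A$ and $A$ against $B$, I would use the symmetry $\GT$ of Lemma~\ref{lem:frobenius}, which fixes $A$ and $B$, normalizes $\langle P\rangle$ and permutes the $J_s$. It reduces the shifts $m$ to the orbit representatives $\{0\},\{1,2,4\},\{3,5,6\}$ and the $J_s$ to $J_0$ and $J_1$. What remains is a short explicit computation on the support of $A$, which has only the rows $0$ and $-\mathcal Q$ nonzero. Each $A_{-r}$ is a signed sum of four unit vectors, so each blockwise Hermitian product is a small Gaussian integer, and the finitely many sums can be listed.

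The main obstacle is the $A$–$B$ and $A$–$A$ pairings, because no correlation identity like Lemma~\ref{lem:differencecorrelation} is available for them. I would try to encode $A$ similarly, as $\mathsf Q(A_k)=\mathsf Q'_k$ with $\mathsf Q'_{-a}=\id+iK_a-K_{2a}+K_{4a}$ up to the phase of $w_0$. I would then compute $\sum_k\mathsf Q'^{*}_k\mathsf Q_{k+m}$ with the same generating-function method. The aim is to show that this sum is $4$ times a matrix with entries in $\Z[i]$, using the relations \eqref{eq:Qblock}.
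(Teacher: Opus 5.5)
\paragraph{Verdict: genuine gap.} Your framework is the paper's. You reduce to the seed translates, dispose of $2\cC$ via $\cD=\cC^\perp$, use the block encoding $\mathsf Q$, apply Lemma~\ref{lem:differencecorrelation} for the $B$--$B$ pairings, and use $\GT$ to reduce the shifts to $d=0,1,3$. But the proposal stops exactly where the lemma has content.

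For the $A$--$A$ and $A$--$B$ pairings you only say you would try to show that $\mathsf M_{vv'}(d)=\frac14\sum_r\mathsf Q(v_r)^*\mathsf Q(v'_{r+d})$ has Gaussian-integer coefficients with respect to $\id,K_1,K_2,K_4$. As you note, no correlation identity makes this automatic; the answers are not scalar. It has to be checked by direct multiplication, using $\mathsf Q(A_0)=-i+K_1+K_2+K_4$, $\mathsf Q(A_a)=0$, $\mathsf Q(A_{-a})=\id+iK_a-K_{2a}+K_{4a}$, the relations \eqref{eq:Qblock}, and pairwise anticommutation. The paper records the results:
\[
\begin{aligned}
 \mathsf M_{AA}(0)&=4+K_1+K_2+K_4,&\mathsf M_{AA}(1)&=iK_1,&\mathsf M_{AA}(3)&=-iK_4,\\
 \mathsf M_{AB}(0)&=i(\id+K_1+K_2+K_4),&\mathsf M_{AB}(1)&=K_2-iK_4,&\mathsf M_{AB}(3)&=\id-iK_2.
\end{aligned}
\]
The remaining shifts follow by $\GT$-conjugation ($d\mapsto2d$, $K_a\mapsto K_{2a}$). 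The reversed pairings follow from $\mathsf M_{BA}(d)=\mathsf M_{AB}(-d)^*$. For $d\neq0$, each $\mathsf M_{AA}(d)$ is a sum of only two products, because the row support $\{0,3,5,6\}$ of $A$ is again a $(7,4,2)$ difference set. Until these products are actually computed, neither self-orthogonality nor evenness is proved.

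Three intermediate claims also need correcting.
\begin{enumerate}
\item The criterion $h(X,cY)\in2(1+i)\Z[i]$ is not sufficient: $2+2i$ lies in that ideal but has real part $2$. Since $c$ runs over a group containing $I$, what you need is $h(X,cY)\in4\Z[i]$. That is exactly the $\Z[i]$-coefficient statement above, and it is your own final aim.
\item Your $\GT$-reduction of $J_s$ to $J_0$, $J_1$ is valid only at $d=0$. Because $\GT$ sends $d$ to $2d$, at $d=1$ and $d=3$ all four $J_s$ must be handled. This is automatic if you compute the whole operator $\mathsf M(d)$, since its four coefficients are the four $J_s$-pairings.
\item $\cC$ is even but not doubly even: a doubly even code is self-orthogonal, whereas $\dim\cC=37>28$. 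Evenness of $\cC$, which follows from $\mathbf1_{56}\in\cD$, is all you need for $\|2\widetilde a\|^2=4\wt(a)\in8\Z$.
\end{enumerate}
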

\begin{proof}
For seeds $v$, $v'$ in $\{A,\,B\}$ put
\[
 \mathsf M_{vv'}(d)=\frac14\sum_{r=0}^6
       \mathsf Q(v_r)^*\mathsf Q(v'_{r+d}).
\]
These operators lie in the algebra spanned by $\id$, $K_1$, $K_2$, $K_4$;
scalars below denote scalar operators. If $\mathsf M_{vv'}(d)=m_0+\sum_{a\in\mathcal Q}m_aK_a$, commutation
with $J_s$ and the identities $J_1w_0=iw_2$, $J_2w_0=iw_4$,
$J_3w_0=iw_1$ give
\[
 \frac14\left(\sum_r h(v_r,\,J_sv'_{r+d})\right)_{s=0}^3
       =(m_0,\,im_2,\,im_4,\,im_1).
\]
Indeed each component equals $h(w_0,\,J_s\mathsf M_{vv'}(d)w_0)$.
The Hermitian blocks $\mathsf Q_r=\mathsf Q(B_r)$ and
Lemma~\ref{lem:differencecorrelation} give
$\mathsf M_{BB}(d)=4\delta_{d,\,0}$, and hence
\begin{equation}\label{eq:BBcorrelation}
 \sum_r h(B_r,\,J_sB_{r+d})=16\delta_{d,\,0}\delta_{s,\,0}.
\end{equation}
For the other two cases, the seed formulas give
\[
\begin{aligned}
 \mathsf Q(A_0)&=-i+K_1+K_2+K_4,\qquad \mathsf Q(A_a)=0,\\
 \mathsf Q(A_{-a})&=\id+iK_a-K_{2a}+K_{4a}\qquad(a\in\mathcal Q).
\end{aligned}
\]
Multiplication using \eqref{eq:Qblock} and \eqref{eq:Pauliblocks} yields
\[
\begin{aligned}
 \mathsf M_{AA}(0)&=4+K_1+K_2+K_4,&
 \mathsf M_{AB}(0)&=i(\id+K_1+K_2+K_4),\\
 \mathsf M_{AA}(1)&=iK_1,&
 \mathsf M_{AB}(1)&=K_2-iK_4,\\
 \mathsf M_{AA}(3)&=-iK_4,&
 \mathsf M_{AB}(3)&=\id-iK_2.
\end{aligned}
\]
For example, $\mathsf Q(A_0)^*\mathsf Q(A_0)=4$ and
$\mathsf Q(A_{-a})^*\mathsf Q(A_{-a})=4(\id+K_{4a})$ give the first equality.
The order-three seed symmetry conjugates $K_a$ to $K_{2a}$ and sends
$d$ to $2d$. Thus these representatives of
$\{0\}$, $\{1,\,2,\,4\}$, $\{3,\,6,\,5\}$ give Gaussian-integer coefficients for
all shifts. The displayed matrix coefficients and their reversed
pairings therefore show that all signed seed translates pair to
multiples of four, since $\mathcal J$ is a group commuting with $P$.
The generators $2\cC$ pair trivially modulo four with these translates,
whose residues lie in $\cD$, and with one another. This proves
self-orthogonality.
The seed translates have squared norm sixteen; a representative $2u$,
$u\in\cC$, has squared norm $4\wt(u)\in8\Z$. Twice every cross-pairing
is divisible by eight. Thus every integer combination has squared norm
in $8\Z$, unchanged by reduction modulo four, proving evenness.
\end{proof}

\paragraph{Generation of the binary layer.}
The two Hamming layers give a direct generation argument over $\F_2$.

\begin{lemma}\label{lem:seedresidues}
The residues of the signed cyclic translates of $A$, $B$ span $\cD$.
Consequently $C$ is self-dual and even, with residue $\cD$, torsion
$\cC$, and additive structure $\Z_4^{19}\oplus\Z_2^{18}$.
\end{lemma}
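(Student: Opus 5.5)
The plan is to let the translation group $N$ and the row shift act on the two seed residues \eqref{eq:traceseedresidues}, and to recover $\cD$ inside the span layer by layer, following the normal form \eqref{eq:binarynormal}: first the $\cD_0$ part, then one quadratic class.

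\emph{Step 1: the module structure.} Let $W\subseteq\cD$ be the $\F_2$-span of the residues of all $aA$, $aB$ with $a\in\langle P,\mathcal J\rangle$. Signs do not change residues, so $W$ is stable under the coordinate permutations underlying $P$ and $\mathcal J$. These act as the linear block translations $e\mapsto e+\tau p$ for $\tau$ in a subgroup $N_{\mathcal J}$ of $N$ of order $4$, together with the order-seven collineation $T_P$ of $E$. The seed symmetry $\GT$ fixes $A$ and $B$ and normalizes $\langle P,\mathcal J\rangle$ by Lemma~\ref{lem:frobenius}, so $W$ is also $\GT$-stable. Hence $W$ is a module for $\langle T_P,\GT\rangle$, the Frobenius group of order $21$, acting through the semidirect product with $N_{\mathcal J}$.

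\emph{Step 2: the $\cD_0$ part.} By \eqref{eq:tracetranslation}, for the residue $b$ of $B$ (class $c=6$) and $\tau\in N_{\mathcal J}$, the difference between $b$ and its translate is
\[
 \det(6,\tau p,e)+\theta_p(6)(\tau p),
\]
which lies in $\cD_0$ and has $M$-part $p\mapsto\det(6,\tau p,\cdot)$. The residue of $A$ also lies in $\cD_0$, with $M=\ell\ell^\top$ of rank one and $\epsilon$-part $\mathbf1_{\{p=6\}}+\ell(p)$. Applying the $T_P$-orbit gives conjugates $T_P^{-k\top}\ell\ell^\top T_P^{-k}$ of this matrix. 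I would show that the subspace $M(W\cap\cD_0)\subseteq\operatorname{Hom}(E,E^*)\cong E^*\otimes E^*$ is all nine dimensions. The field $\F_8$ acts through $T_P$, and $E^*\otimes E^*$ decomposes under the cyclic group of order $7$ into three $3$-dimensional irreducibles. So it suffices to exhibit nonzero components in each. The rank-one orbit of $\ell\otimes\ell$ spans the symmetric square, of dimension $6$, or part of it. The $\det(6,\tau p,\cdot)$ terms contribute the alternating part, which together with part of the symmetric square gives all nine. This reduces to a small check of a few character components.

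Next the block indicators $\epsilon$. Once every bilinear $M$ is in $W$, subtracting gives the pure $\epsilon$-parts. $W\cap\F_2^\Pi\otimes\mathbf1_8$ is a $T_P$-stable subspace of $\F_2^7$, so it is a cyclic code of length $7$, which has dimension $0,1,3,4,6$ or $7$. The element $\mathbf1_{\{p=6\}}$ arising from $A$ after cancelling its bilinear part, or the constants $\mathbf1_{\{p\in\{1,4,5,6\}\}}$ and $\theta_p(6)(\tau p)$, should show that the space contains a weight-one vector. Its $T_P$-orbit then spans everything. This is the step I expect to be the main obstacle: constants can hide in the choice of lift, so I would pin down explicitly one word with a single block indicator, e.g. $A$ minus the bilinear word $\ell(p)\ell(e)+\ell(p)$ obtained in Step 2.

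\emph{Step 3: the quadratic classes and the remaining claims.} Having $\cD_0\subseteq W$, the quotient $W/\cD_0\subseteq\cD/\cD_0\cong E$ contains the class $6\neq0$ by \eqref{eq:traceseedresidues}. It is $T_P$-stable, and $T_P$ acts transitively on $E\setminus\{0\}$, so $W/\cD_0=E$ and therefore $W=\cD$, of dimension $19$ by Proposition~\ref{thm:binarynormal}. For the rest of the statement: $C$ contains $2\cC$, and $\pi(C)=\cD$. Then $|C|\ge|\cD|\,|\cC|=2^{19+37}=4^{28}$, using that the kernel of $\pi$ on $C$ contains $2\cC$. Self-orthogonality from Lemma~\ref{lem:codeorthogonal} gives $|C|\le4^{28}$, so $C$ is self-dual, and evenness is already proven. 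The exact sequence \eqref{eq:binaryexact} gives $2C=2\cD$, so $C$ has $\dim\cD=19$ cyclic summands of order $4$ and $37-19=18$ of order $2$, i.e. $C\cong\Z_4^{19}\oplus\Z_2^{18}$.
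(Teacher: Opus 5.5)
Your Step~3 and the closing count are correct and agree with the paper: $|C|\ge2^{19}\cdot2^{37}$ against self-orthogonality, and $2C=2\cD$ for the invariants. However, Step~2 does not establish $\cD_0\subseteq W$, and the gap is the block-constant layer.

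\textbf{The block constants.} At best Step~2 shows that the map $W\cap\cD_0\to\operatorname{Hom}(E,E^*)$, $d\mapsto M$, is onto. The block constants in $W$ are exactly the kernel of this map, and surjectivity says nothing about the kernel. Your proposed repair is circular: $\ell(p)\ell(e)+\ell(p)$ is the residue of $A$ plus $\mathbf1_{\{p=6\}}$, so knowing it lies in $W$ is the same as knowing $\mathbf1_{\{p=6\}}\in W$.

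The missing idea is to take translation differences of $A$, not only of $B$. If $a$ is the residue of $A$, it has no quadratic part, so $a(p,\,e+\tau p)+a(p,\,e)=\ell(p)\ell(\tau p)$ is a pure block-constant word. The available translations form all of $\F_2[T_P]$, of order eight rather than four, since $I$, $J_2$, $J_1$ induce $\tau=\id$, $T_P$, $T_P^4$. The choice $\tau=\id$ coming from $I$ is what yields the linear constants $\ell(p)$. Because $\F_2[T_P]$ is a field, $\ell\circ\tau$ runs over all of $E^*$, and the cyclic shifts then give every product $\ell(p)m(p)$. These span the six-dimensional space of functions of degree at most two vanishing at $0$, all of even weight on $\Pi$. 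The seventh, odd-weight direction comes from the sum of the seven cyclic translates of $A$, whose residue is $\mathbf1_{56}$.

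\textbf{The bilinear part.} This step is also misargued, though it is repairable. As a $\langle T_P\rangle$-module, $\operatorname{Hom}(E,E^*)\cong E^*\otimes E^*$ is a sum of three three-dimensional irreducibles, two of which are isomorphic. So nonzero components in each summand of some decomposition do not imply generation. Moreover, in characteristic two the alternating forms lie inside the symmetric ones, so there is no ``alternating part'' complementary to the symmetric square.

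The paper's argument works instead. The $T_P$-orbit of $\ell\ell^\top$ consists of all seven rank-one symmetric forms, which span the whole six-dimensional symmetric space. The quotient by that space is irreducible of dimension three, so a single non-symmetric $M$-part suffices, for instance $\det(6,\,T_Pp,\,e)$ from $J_2B-B$. This form is non-symmetric: symmetry would make $T_P$ self-adjoint for $\det(6,\,\cdot,\,\cdot)$, hence force it to preserve the radical $\langle 6\rangle$, whereas $T_P$ has no invariant line.

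Combined with the constants above, this gives $\cD_0\subseteq W$, and your Step~3 then completes the proof.
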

\begin{proof}
The permutation parts of $I$, $J_2$, $J_1$ are the translations
$(p,\,e)\mapsto(p,\,e+\tau p)$ with $\tau=\id$, $T_P$, $T_P^4$.
Since $T_P^3=T_P+\id$, these span $\F_2[T_P]$.
The polynomial $t^3+t+1$ is irreducible, so every nonzero member
of this three-dimensional algebra is invertible. Consequently, for
$\ell\ne0$, the forms $\ell\circ\tau$ run through $E^*$.

The seven cyclic translates of $A\bmod2$ have linear coefficients
$\ell(p)\ell(e)$ for the seven nonzero $\ell\in E^*$. Their span is the
six-dimensional space of symmetric bilinear forms. Translation differences
are the constants $\ell(p)\ell(\tau p)$; these span all six quadratic
functions on $\Pi$ with zero constant term. The sum of the seven cyclic
residues is $\mathbf1_{56}$ by \eqref{eq:traceseedresidues}, giving all
seven block constants.

The translates of $B$ supply the three quadratic classes. Their
translation differences with $\tau=T_P$ give, modulo constants,
the forms $\det(c,\,T_Pp,\,e)$ for $c\in E$.
These form a three-dimensional complement to the symmetric forms.
Indeed, if $\det(c,\,T_Pp,\,e)$ is symmetric in $p$, $e$, then $T_P$ is
self-adjoint for the alternating form $\det(c,\,p,\,e)$ and preserves
its radical. For $c\ne0$ that radical is $\langle c\rangle$, whereas
$T_P$ has no invariant line. Thus $c=0$, proving the complement assertion.

We have all seven constants, nine bilinear coefficients, and three
quadratic classes. The residues therefore span $\cD$. Self-orthogonality
bounds the torsion by $\cC$, and the defining presentation includes
$2\cC$. Hence $|C|=2^{19+37}=2^{56}$, proving self-duality.
Evenness and the additive structure follow from
Lemmas~\ref{lem:codeorthogonal} and~\ref{lem:binaryduality}.
\end{proof}
Since $\GT$ permutes the seed translates, this lemma shows that it
preserves $\cD$, hence $\cC$ and the full code $C$.

\subsection{The signed maps used in the lattice geometry}\label{sec:H0}
The following signed maps determine the full code group and supply the
elements used for the second frame, local moves, and simplicity proofs.
Their preservation of $C$ is proved directly from the defining seeds.

The following tables use the complex coordinate basis
\eqref{eq:complexbasis}. They specify the images of $w_0$, $w_2$, $w_4$, $w_1$;
the images of their partners follow by commuting with $I$, so each entry
describes a signed permutation of the eight real positions.

\paragraph{A noncyclic row symmetry and the binary image.}
We need one further, noncyclic row permutation. Table~\ref{tab:R0} specifies a
complex monomial involution $R_0$. The four entries are the images of the
source-row basis $(w_0,\,w_2,\,w_4,\,w_1)$ in the target row.
\begin{table}[htbp]\centering\small
\begin{tabular}{cc l}\toprule
source row&target row&images in the target row\\\midrule
0&0&$(w_0,\,w_2,\,w_1,\,w_4)$\\
1&2&$(w_0,\,iw_4,\,-iw_1,\,w_2)$\\
2&1&$(w_0,\,w_1,\,-iw_2,\,iw_4)$\\
3&6&$(iw_0,\,-iw_4,\,w_2,\,w_1)$\\
4&4&$(w_4,\,-w_1,\,w_0,\,-w_2)$\\
5&5&$(-w_1,\,w_2,\,w_4,\,-w_0)$\\
6&3&$(-iw_0,\,w_4,\,iw_2,\,w_1)$\\\bottomrule
\end{tabular}
\caption{An explicit lift of a transvection of the seven row labels.}
\label{tab:R0}
\end{table}
In the binary labels of Table~\ref{tab:hamminglabels}, the operators $P$, $R_0$
have respective linear parts
\[
 T_P:\ (1,\,2,\,4)\longmapsto(5,\,3,\,2),\qquad
 T_R:\ (1,\,2,\,4)\longmapsto(5,\,2,\,4).
\]
Here and below a triple gives the columns of a binary matrix.
More explicitly, $P(p,\,e)=(T_Pp,\,T_Pe)$ and
\[
 R_0(p,\,e)=(T_Rp,\,T_Re+\delta_{T_Rp}),\qquad
 (\delta_{p_0},\,\ldots,\,\delta_{p_6})=(0,\,0,\,0,\,7,\,6,\,4,\,3).
\]
These formulas are identities on the eight named coordinates in each row.
They also verify preservation of $\cD$. For $g=T_R$, Table~\ref{tab:eta} gives
\begin{equation}\label{eq:Rsectionidentity}
\begin{aligned}
 &\theta_{gp}(gc)(ge+\delta_{gp})+\theta_p(c)(e)\\
 &\qquad=\theta_{gp}(gc)(\delta_{gp})
   +p_1\bigl(c_1(e_2+e_3)+(c_2+c_3)e_1\bigr).
\end{aligned}
\end{equation}
The right side belongs to $\cD_0$ for fixed $c$, and the affine map
preserves $\cD_0$. Thus $R_0$ preserves $\cD$ and its dual $\cC$.

\Needspace{6\baselineskip}
\begin{lemma}[The signed row involution]\label{lem:rowinvolution}
The displayed involution $R_0$ preserves $C$.
\end{lemma}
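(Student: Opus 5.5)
Since $C=2\cC+\langle aA,\,aB:a\in\langle P,\mathcal J\rangle\rangle$, it suffices to show that $R_0$ preserves $2\cC$ and sends $A$ and $B$ into $C$. The first part is already done: $R_0$ preserves $\cC$ by \eqref{eq:Rsectionidentity}, and a signed permutation preserves $2\cC$ exactly when its underlying permutation preserves $\cC$. The same applies to the other seed translates. Once the images of $A$ and $B$ are known to lie in $C$, I would conjugate. The plan is to check that $R_0$ normalizes $\mathcal J$ together with $I$, for instance $R_0J_sR_0^{-1}\in\mathcal J$ up to the sign kernel $E_C$, since $E_C$ preserves $C$ by Theorem~\ref{thm:signkernel}. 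It also needs $R_0PR_0$ to lie in the group generated by $P$, $\mathcal J$, $E_C$ and the already established symmetries. The conjugation of $P$ may fail, because $R_0$ is noncyclic. In that case I would instead verify $R_0(aA),\,R_0(aB)\in C$ for all $112$ representatives directly, using the criterion below.

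The criterion for membership in $C$ is duality. Since $C$ is self-dual by Lemma~\ref{lem:seedresidues}, a vector $u\in\Z_4^{56}$ lies in $C$ if and only if $u\cdot x\equiv0\pmod4$ for every $x$ in a generating set of $C$. The generating set to test against is the set of signed seed translates together with $2\cC$. Pairing with $2\cC$ only requires $\pi(u)\in\cD$, and this holds because $R_0$ preserves $\cD$. So the actual content is the congruences
\[
 \sum_r h\bigl((R_0v)_r,\,J_sP^d v'_r\bigr)\in4\Z[i],\qquad v,\,v'\in\{A,\,B\},
\]
taken over all shifts $d$ and all $s$. Taking Gaussian pairings lets $I$ be handled at once, since $R_0$ commutes with $I$ by construction as a complex monomial map.

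To organize this calculation I would reuse the quaternion block formalism of Lemma~\ref{lem:codeorthogonal}. Table~\ref{tab:R0} writes each row action of $R_0$ as a complex monomial $4\times4$ matrix. The aim is to express each such matrix as $\mathsf Q$-type operators, that is, in the algebra generated by $K_a$, $J_s$, and a unit scalar. Then $\mathsf Q((R_0B)_r)$ becomes an explicit element of the algebra spanned by $\id,K_1,K_2,K_4$, and the correlations $\mathsf M$ reduce to finitely many products governed by \eqref{eq:Qblock}. The order-three symmetry $\GT$ does not commute with $R_0$, so I cannot reduce shifts to the three classes as before. The seven shifts $d$ have to be handled directly, but each is a short computation.

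The main obstacle is that the rows of $R_0$ do not all commute with $J_1$ and $J_2$; rows $4$ and $5$ mix $w_0$ with $w_1$ and $w_4$. For these rows, $R_0$ acts on $\mathsf Q(v)$ by left multiplication combined with a conjugation, and this must be tracked carefully. If that bookkeeping becomes unwieldy, I would fall back to computing $R_0A$ and $R_0B$ explicitly and exhibiting each as an explicit $\Z_4$-combination of seed translates plus an element of $2\cC$. The residue is fixed first, using the decomposition into constants, bilinear terms and quadratic class from the proof of Lemma~\ref{lem:seedresidues}. The remaining difference then has to be checked to be twice a word of $\cC$.
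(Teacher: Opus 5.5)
Your membership test is the right one, and it is the one the paper uses. By self-duality a vector lies in $C$ once its residue is in $\cD$ and it pairs to $0\bmod4$ with a generating set, and Gaussian pairings absorb $I$.

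The gap is the reduction to the two seeds. Conjugation fails not only for $P$ but also for $J_1$ and $J_2$. The binary image of $R_0J_2R_0^{-1}$ is translation by $T_RT_PT_R^{-1}$. Suppose this lay in $\F_2[T_P]$, which is the translation image of $\mathcal J E_C$ and even of $\langle E_C,\mathcal J,P,\GT\rangle$. Then the involution $T_R$ would normalize the Singer cycle $\langle T_P\rangle$, whose normalizer in $\operatorname{GL}_3(2)$ has odd order $21$. So no conjugation argument carries $R_0A\in C$ over to $R_0(aA)\in C$.

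The same mismatch defeats the $\mathsf Q$-bookkeeping, and not only in rows $4$ and $5$. Already in row $0$, $R_0$ fixes $w_0,w_2$ and swaps $w_4,w_1$, so it does not commute with $J_1$.

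Consequently your displayed congruences, with $v\in\{A,B\}$ untranslated on the $R_0$ side, prove only $R_0A,\,R_0B\in C$. Your fallback over all $112$ representatives would need $h(R_0av,\,a'v')$ for all pairs $a$, $a'$, because $a$ cannot be moved past $R_0$. That is valid, but it is a different system from the one you wrote down. It is a Hermitian $56\times56$ computation with $1596$ entries on or above the diagonal.

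The missing idea is the reduction the paper makes first. Since $\ker(\pi|_C)=2\cC$ (Lemma~\ref{lem:binaryduality}), $C$ is generated by $2\cC$ together with any lifts whose residues span $\cD$. The paper picks fifteen translates $u_1,\ldots,u_7$ and $v_1,\ldots,v_8$. Their residues, together with those of their $I$-multiples, realize all $7+9+3$ parameters of \eqref{eq:binarynormal}.

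For the eight $v_k$, the image $R_0v_k$ is itself a seed translate times a Gaussian unit and a Hamming block sign. For example $R_0v_5=-iE_6v_6$, so it lies in $C$ with no pairing check.

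For the seven $u_j$, self-adjointness of $R_0$ gives $h(R_0u_j,\,v_k)=h(u_j,\,R_0v_k)\in4\Z[i]$ for free. So only the $7\times7$ Hermitian matrix $\frac14h(u_j,\,R_0u_k)$, that is $28$ entries, must be shown to be Gaussian-integral. Self-duality then gives $R_0u_j\in C$, and $R_0(2\cC)=2\cC$ finishes the proof.
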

\begin{proof}
We first reduce the generators using the binary normal form. Put
\[
\begin{aligned}
 (u_1,\,\ldots,\,u_7)&=(A,\,PA,\,PJ_1A,\,J_1B,\,J_2B,\,PB,\,PJ_1B),\\
 (v_1,\,\ldots,\,v_8)&=(J_2A,\,P^2J_1A,\,P^2J_2A,\,P^3J_2A,\\
 &\hspace{34mm}P^5J_3A,\,P^6J_2A,\,B,\,P^4J_2B).
\end{aligned}
\]
Together with $2\cC$, these vectors and their $I$-multiples generate
$C$. To prove this, consider their binary residues.
Their $A$-terms have the six linear forms $\ell=1$, $2$, $3$, $5$, $6$, $7$ in
$\ell(p)\ell(e)$. The seven rank-one forms $\ell\ell^\top$ span the
six-dimensional symmetric space and sum to zero; hence any six are
independent. The differences $(1+I)$ applied to these terms give
all linear block constants. The three differences
\[
 (1+J_2)A,\qquad P^2(J_1+J_2)A,\qquad P(1+J_1)A
\]
have block-constant residues
$p_2+p_2p_3$, $p_2+p_1p_2$, $p_3+p_1p_3$, respectively.
They give all six even-weight block constants. The residue of
\[
 (1+I)B+P^3J_2A+P^5J_3A+P^6J_2A
\]
is the indicator of blocks $\{1,\,3,\,6\}$, supplying the seventh dimension.
The differences $(1+J_1)B$, $(1+J_2)B$, $P(1+J_1)B$ have bilinear matrices
with column triples $(1,\,7,\,6)$, $(7,\,7,\,1)$, $(4,\,2,\,0)$. Their $M+M^\top$
have their nonzero off-diagonal entries in positions $12$, $23$, $13$,
respectively, and hence complement the symmetric forms.
Finally $B$, $PB$, $P^4J_2B$ have quadratic classes $6$, $1$, $4$, a basis of $E$.
Thus all $7+9+3$ parameters of \eqref{eq:binarynormal} occur.
Equality of the quaternary codes follows because $2\cC$ is included.

The images of eight of these generators are seed translates
with Hamming block signs. The four-column rule in Table~\ref{tab:R0}
and \eqref{eq:seedarrays} give
\[
\begin{aligned}
 (R_0v_1,\,R_0v_2,\,R_0v_3,\,R_0v_4)
   &=(-E_3v_1,\,-E_2v_2,\,-E_3v_3,\,E_3v_4),\\
 (R_0v_5,\,R_0v_6,\,R_0v_7,\,R_0v_8)
   &=(-iE_6v_6,\,iE_7v_5,\,-E_3v_7,\,E_1v_8).
\end{aligned}
\]
They therefore lie in $C$. For the remaining seven generators, the
same formulas give the following Hermitian pairings:
\begin{equation}\label{eq:Rpairings}
 \frac14\bigl(h(u_j,\,R_0u_k)\bigr)_{j,\,k=1}^7=
 \begin{pmatrix}
 1&0&i&0&0&i&0\\
 0&1&0&0&1&-1&i\\
 -i&0&1&0&0&0&0\\
 0&0&0&0&1&0&0\\
 0&1&0&1&0&0&i\\
 -i&-1&0&0&0&1&i\\
 0&-i&0&0&-i&-i&1
 \end{pmatrix}.
\end{equation}
This matrix is Hermitian, so only its $28$ entries on or above the
diagonal need checking. Since $R_0$ is self-adjoint and commutes with $I$,
pairings of $R_0u_j$ with $v_k$, $Iv_k$ are divisible by four because
$R_0v_k\in C$ and $C$ is self-orthogonal. Equation~\eqref{eq:Rpairings}
handles the pairings with $u_k$, $Iu_k$.
Pairings with $2\cC$ are automatic because the binary permutation
preserves $\cD$. Self-duality now gives $R_0u_j\in C$.
Also $R_0(2\cC)=2\cC$, so the generating reduction proves $R_0C=C$.
\end{proof}

The matrices $T_R$, $T_PT_RT_P^{-1}$, $T_P^4T_RT_P^{-4}$ are, respectively,
$\id+E_{31}$, $\id+E_{23}$, $\id+E_{12}$. Their commutators give the other elementary
transvections, so they generate $\operatorname{GL}_3(2)$, of order
$(8-1)(8-2)(8-4)=168$.

\paragraph{Three derived monomial involutions.}
Define
\begin{equation}\label{eq:Uword}
 U=-E_1\,\GT R_0\GT^{-1}R_0\GT.
\end{equation}
Its action on the four basis vectors in each row is the $U$ column
of Table~\ref{tab:UVW}. Put $U_m=P^mUP^{-m}$, and let
$E_5\in E_C$ be the row sign change $(-1)^{5\cdot p}$, negating rows
$0$, $1$, $4$, $6$. Set
\begin{equation}\label{eq:VWwords}
\begin{aligned}
 V&=J_1J_2 U_0U_4U_3U_1U_2,\\
 W&=E_5J_2 U_1U_2U_3.
\end{aligned}
\end{equation}
Multiplication gives the $V$, $W$ columns in the same table.
All three maps preserve $C$ as products of known automorphisms;
the table also shows that they are block-preserving involutions.
The entries in a row give the images of the local basis, not the output
coefficients of a vector.
\begin{table}[htbp]\centering\small
\setlength{\tabcolsep}{4pt}
\begin{tabular}{c@{\quad}lll}\toprule
$r$&$U(w_0,\,w_2,\,w_4,\,w_1)$&$V(w_0,\,w_2,\,w_4,\,w_1)$&$W(w_0,\,w_2,\,w_4,\,w_1)$\\\midrule
0&$(-w_0,\,-w_2,\,-w_4,\,-w_1)$&$(-w_0,\,-w_2,\,w_4,\,w_1)$&$(w_0,\,-w_2,\,w_4,\,-w_1)$\\
1&$(w_1,\,iw_4,\,-iw_2,\,w_0)$&$(w_4,\,iw_1,\,w_0,\,-iw_2)$&$(w_0,\,w_2,\,w_4,\,w_1)$\\
2&$(w_2,\,w_0,\,-iw_1,\,iw_4)$&$(-iw_1,\,w_4,\,w_2,\,iw_0)$&$(w_1,\,-iw_4,\,iw_2,\,w_0)$\\
3&$(iw_2,\,-iw_0,\,w_1,\,w_4)$&$(iw_4,\,w_1,\,-iw_0,\,w_2)$&$(w_0,\,w_2,\,-w_4,\,-w_1)$\\
4&$(-w_0,\,w_2,\,-w_4,\,w_1)$&$(w_0,\,w_2,\,w_4,\,w_1)$&$(iw_2,\,-iw_0,\,-w_1,\,-w_4)$\\
5&$(w_0,\,-w_2,\,-w_4,\,w_1)$&$(w_0,\,-w_2,\,-w_4,\,w_1)$&$(-w_4,\,iw_1,\,-w_0,\,-iw_2)$\\
6&$(-iw_4,\,w_1,\,iw_0,\,w_2)$&$(w_2,\,w_0,\,iw_1,\,-iw_4)$&$(iw_1,\,w_4,\,w_2,\,-iw_0)$\\\bottomrule
\end{tabular}
\caption{The derived monomial involutions $U$, $V$, $W$.}
\label{tab:UVW}
\end{table}

The coordinate permutations of $I$, $U$, $V$, $W$ are
$t_\tau:(p,\,e)\mapsto(p,\,e+\tau p)$, with
\begin{equation}\label{eq:nineA}
 \tau_I=\id,\qquad \tau_U=(6,\,0,\,0),\qquad
 \tau_V=(4,\,0,\,0),\qquad \tau_W=(0,\,1,\,1).
\end{equation}
The triples specify columns. In particular $\tau_U$ has rank one and
square zero. Its conjugates and $\tau_I$ generate all translations,
by the following elementary observation.

\begin{lemma}[The translation span]\label{lem:translationspan}
The $\operatorname{GL}(E)$-conjugates of any nonzero rank-one
square-zero endomorphism span the trace-zero endomorphisms. This
subspace is also spanned by all conjugation differences
$a\tau a^{-1}-\tau$, and is complementary to $\langle\id\rangle$.
\end{lemma}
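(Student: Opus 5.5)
The plan is to work in $\operatorname{End}(E)\cong M_3(\F_2)$ with the trace pairing $\langle x,y\rangle=\operatorname{tr}(xy)$. This pairing is nondegenerate and invariant under conjugation by $\operatorname{GL}(E)$. The trace-zero subspace $\mathfrak{sl}(E)$ has dimension $8$. Complementarity with $\langle\id\rangle$ is immediate, because $\operatorname{tr}(\id)=3=1\ne0$ in $\F_2$. So the content of the lemma is the two spanning statements.

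\textbf{Conjugates of $\tau$.} Let $\tau\ne0$ have rank one with $\tau^2=0$. Then $\tau=v\,\phi$ with $v\in E$, $\phi\in E^*$ both nonzero and $\phi(v)=0$, and in particular $\operatorname{tr}\tau=\phi(v)=0$. Since $\operatorname{GL}(E)$ acts transitively on such pairs $(v,\phi)$, the conjugates are exactly all $v\phi$ with $\phi(v)=0$, $v\ne0$, $\phi\ne0$.

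To show they span $\mathfrak{sl}(E)$, I take their orthogonal complement. The element $x$ lies in it exactly when $\phi(xv)=0$ for all $v\ne0$ and all $\phi\ne0$ with $\phi(v)=0$. For fixed $v$, the forms vanishing on $v$ cut out exactly $\langle v\rangle$. Hence $xv\in\langle v\rangle$ for every $v$, so every vector is an eigenvector of $x$, and $x$ is a scalar.

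The complement is therefore $\langle\id\rangle$, and by nondegeneracy the span of the conjugates has dimension $8$. It is contained in $\mathfrak{sl}(E)$, so it equals $\mathfrak{sl}(E)$. If an explicit argument is preferred, one can instead exhibit the six elementary matrices $E_{ij}$ ($i\ne j$) and the sums $E_{ij}+E_{ji}+E_{ii}+E_{jj}$-type conjugates giving diagonal trace-zero elements; the duality argument avoids this bookkeeping.

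\textbf{Conjugation differences.} The span $D$ of all $a\tau a^{-1}-\tau$, with $a\in\operatorname{GL}(E)$ and $\tau\in\operatorname{End}(E)$, lies in $\mathfrak{sl}(E)$ because trace is a conjugation invariant. For the reverse inclusion I again compute the orthogonal complement. The element $y$ lies in it exactly when $\operatorname{tr}((a^{-1}ya-y)\tau)=0$ for all $\tau$, that is, when $y$ commutes with every $a\in\operatorname{GL}(E)$.

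The group $\operatorname{GL}_3(2)$ spans $M_3(\F_2)$, since it contains $\id$ and the elementary transvections $\id+E_{ij}$. Hence its centralizer is the centre of the matrix algebra, namely the scalars. So $D^\perp=\langle\id\rangle$ and $D=\mathfrak{sl}(E)$.

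Alternatively, for a rank-one square-zero $\tau_0$ the differences $a\tau_0a^{-1}-\tau_0$ already span all differences of conjugates. Adding $\tau_0$ itself, which lies in $\mathfrak{sl}(E)$, then recovers the first statement, so $D\supseteq$ a hyperplane-or-all of the span; the complement argument is cleaner.

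\textbf{Main obstacle.} The only delicate point is characteristic two. There $\id$ is trace-free in dimension $2$ but not in dimension $3$, and the trace form is nondegenerate on $M_3$. I would check both of these explicitly; everything else is linear algebra.
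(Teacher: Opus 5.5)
Your proof is correct, but it takes a different route from the paper. The paper works with explicit elements. For the first assertion it notes that the conjugates $u\ell$ include the off-diagonal units $E_{ij}$ and the square-zero matrices $E_{ii}+E_{ij}+E_{ji}+E_{jj}$, and adding $E_{ij}+E_{ji}$ to these gives the trace-zero diagonal directions. For the second it observes that the span of differences is conjugation-stable and contains the rank-one square-zero element $E_{ij}=(\id+E_{ij})E_{jj}(\id+E_{ij})^{-1}-E_{jj}$, so the first assertion finishes. You instead compute orthogonal complements for the nondegenerate invariant trace form. The complement of the conjugates consists of maps for which every vector is an eigenvector, and the complement of the differences is the centralizer of $\operatorname{GL}(E)$; both are $\langle\id\rangle$. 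Your version is coordinate-free, proves the two spanning statements independently of each other, and works verbatim for $\operatorname{GL}_n$ over any field. The paper's version is shorter and reuses the matrix-unit bookkeeping it needs anyway, e.g.\ for the transvection commutators in Lemma~\ref{lem:frameperfect}.

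One justification needs repair. The elements $\id$ and $\id+E_{ij}$ span only a $7$-dimensional subspace, so ``contains $\id$ and the transvections'' does not by itself show that $\operatorname{GL}_3(2)$ spans $M_3(\F_2)$. You can note that the linear span of a group is a subalgebra, hence contains $E_{ij}E_{ji}=E_{ii}$. Alternatively, check directly that commuting with every $E_{ij}$, $i\ne j$, forces a scalar. Your ``Alternatively'' remark does not conclude as written, since it yields only codimension at most one, and it is unnecessary given the complement argument. The paper closes exactly that gap by writing $E_{ij}$ as a difference of conjugates of the idempotent $E_{jj}$.
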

\begin{proof}
The conjugates are exactly the maps $u\ell$ with $u\ne0$, $\ell\ne0$ and
$\ell(u)=0$. They include the off-diagonal matrix units $E_{ij}$ and,
for $i\ne j$, the rank-one square-zero matrix
$E_{ii}+E_{ij}+E_{ji}+E_{jj}$. Adding $E_{ij}+E_{ji}$ to the latter
also gives every trace-zero diagonal direction. All these maps have
trace zero, proving the first assertion.

The span of conjugation differences is conjugation-stable and has trace zero.
It contains $E_{ij}$, since
$(\id+E_{ij})E_{jj}(\id+E_{ij})^{-1}-E_{jj}=E_{ij}$.
The first assertion therefore identifies it with the trace-zero subspace.
Finally $\operatorname{tr}(\id)=1$ in dimension three over $\F_2$,
which gives the stated complement.
\end{proof}

\begin{proposition}[The full signed code group]\label{prop:H0}
The full signed automorphism group is
\[
 H=\Aut(C)=\langle E_C,\,\mathcal J,\,P,\,\GT,\,R_0\rangle,
 \qquad |H|=2^{13}\cdot168.
\]
Every element commutes with $I$. Its block kernel $P_0$ has order $2^{13}$,
and the sequences
\begin{equation}\label{eq:codeautexact}
\begin{aligned}
 1&\longrightarrow E_C\longrightarrow H\longrightarrow\Gamma\longrightarrow1,\\
 1&\longrightarrow E_C\longrightarrow P_0
        \longrightarrow\operatorname{End}(E)\longrightarrow1
\end{aligned}
\end{equation}
are exact. Here $E_C\cong\cH_7$ centralizes $P_0$, and every block
translation has exactly sixteen signed lifts.
\end{proposition}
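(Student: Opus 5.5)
The plan is to squeeze $H$ between two exact sequences. First map $H$ to $\Gamma=\Aut(\cD)$ by reduction modulo two, then bound the kernel and the image separately. A signed permutation preserving $C$ preserves its residue $\cD=\pi(C)$ by Lemma~\ref{lem:binaryduality}. So forgetting signs gives a homomorphism $H\to\Gamma$. Its kernel consists of pure sign changes preserving $C$, and Theorem~\ref{thm:signkernel} identifies this kernel with $E_C\cong\cH_7$, of order $16$. This gives the first sequence up to surjectivity, together with the bound $|H|\le16\,|\Gamma|=2^{13}\cdot168$ from Proposition~\ref{prop:binaryaut}.

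For surjectivity I would show that the subgroup $H_1=\langle E_C,\mathcal J,P,\GT,R_0\rangle\le H$ maps onto $\Gamma$. Every listed generator preserves $C$: the block signs by Theorem~\ref{thm:signkernel}, $\mathcal J$ and $P$ by construction, $\GT$ by the remark after Lemma~\ref{lem:seedresidues}, and $R_0$ by Lemma~\ref{lem:rowinvolution}. The image on the blocks contains $T_P$ and $T_R$, and the remark after Lemma~\ref{lem:rowinvolution} shows their conjugates generate $\operatorname{GL}(E)$. So the image of $H_1$ surjects onto $\Gamma/N\cong\operatorname{GL}(E)$. It remains to show that the image contains all of $N\cong\operatorname{End}(E)$. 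The image contains $\tau_I=\id$ and $\tau_U$, which is rank one with square zero. Here $U$ lies in $H_1$ by \eqref{eq:Uword}, since $E_1\in E_C$. Conjugating $U$ by lifts of elements $a\in\operatorname{GL}(E)$ lying in $H_1$ produces the translation $t_{a\tau_Ua^{-1}}$, because the block-fixing translations are normalized by the affine lifts with linear part $a$. Lemma~\ref{lem:translationspan} then says these translations, together with $\tau_I$, span $\operatorname{End}(E)$. Hence $H_1\to\Gamma$ is onto, and comparing orders gives $H=H_1$ with $|H|=2^{13}\cdot168$.

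The block kernel $P_0$ is the preimage of $N$, so $|P_0|=16\cdot2^9=2^{13}$, and the second sequence is exact. It follows that each translation has exactly $|E_C|=16$ signed lifts. To see that $E_C$ centralizes $P_0$, note that $E_C$ consists of block-constant signs, and elements of $P_0$ preserve every block. Conjugating a block-constant sign by a monomial map that fixes blocks therefore returns the same sign pattern.

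Commutation with $I$ is checked on generators. For $\mathcal J$ this is \eqref{eq:signedgenerators}. For $E_C$ it holds because $I$ fixes blocks and the signs are constant on blocks. For $P$ it holds because $I$ acts identically in every row. For $\GT$ it is Lemma~\ref{lem:frobenius}. For $R_0$ it holds because Table~\ref{tab:R0} is written complex-linearly, with partner images defined by commuting with $I$.

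The main obstacle is the surjectivity onto $N$, specifically making the conjugation step precise. The conjugate $hUh^{-1}$ has translation part $a\tau_Ua^{-1}$ only up to the affine origin shifts $\delta$ in the lift of $a$. I would argue that any origin shift commutes with the block-fixing translations in the abelian group $N$, so the linear part of the conjugated translation is exactly $a\tau_Ua^{-1}$. Alternatively, I would use only conjugation by $P$, $\GT$ and $R_0$ together with their linear parts; this suffices because these linear parts generate $\operatorname{GL}(E)$.
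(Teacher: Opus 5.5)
Your proposal is correct and follows the paper's proof essentially step for step. The steps are: the sign-kernel bound $|H|\le16|\Gamma|$; surjectivity of $\langle E_C,\mathcal J,P,\GT,R_0\rangle$ onto $\Gamma$ via the transvections from $P$, $R_0$ and the conjugates of $\tau_U$ together with $\tau_I=\id$ (Lemma~\ref{lem:translationspan}); $P_0$ as the preimage of $N$; and commutation with $I$ checked on generators. One small correction to your obstacle paragraph: the origin shifts $\delta$ need not lie in $N$ (those of $R_0$ are not linear in $p$), but they do lie in the abelian group of all blockwise translations $(p,e)\mapsto(p,e+t_p)$, and that is all you need for the offsets to cancel, which is exactly what the paper asserts.
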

\begin{proof}
Every displayed generator preserves $C$, by the defining presentation,
the sign-kernel theorem, and the preceding signed-map construction.

Let $K$ be the subgroup generated by the displayed maps; it contains
$U$ by \eqref{eq:Uword}. Its binary image maps onto
$\operatorname{GL}(E)$, by the transvections obtained from $P$, $R_0$.
Conjugating $t_\tau$ by an affine map with common linear part $g$ on $p$, $e$
gives $t_{g\tau g^{-1}}$; the affine offsets cancel. The image of $K$
therefore contains $t_\tau$ for every conjugate of $\tau_U$.
By Lemma~\ref{lem:translationspan}, these translations span the
trace-zero endomorphisms. The translation $\tau_I=\id$ supplies the
remaining direction. Thus the binary image of $K$ is all of
$\Gamma$.

Every signed automorphism has binary image in $\Gamma$, and its sign
kernel is exactly $E_C$ by Theorem~\ref{thm:signkernel}. As $K$ contains
this kernel, $K=\Aut(C)$ and
$|H|=16|\Gamma|=2^{13}\cdot168$. Taking the inverse image of the
translation subgroup gives $P_0$ and both exact sequences.
Block-constant signs commute with block-preserving signed permutations,
so $E_C$ centralizes $P_0$. Finally every displayed generator commutes
with $I$, either by \eqref{eq:signedgenerators} or by its complex monomial
definition, proving the assertion for $H$.
\end{proof}
The scalar group is included in this count: $I$ induces the translation
$\tau=\id$.

\subsection{The words of least Euclidean weight}\label{sec:lifts}
The following block-parity functionals distinguish the residue supports
that admit minimum words.

\paragraph{A block lift and its norm bound.}
The following explicit lift of a block indicator gives the norm bound
and the minimum-support obstructions needed below. The array uses
the signed basis of Table~\ref{tab:hamminglabels}.
Consider the integer generator combination
\[
 d_*=J_1 A+J_1 PA+ I P^2A+P^3A+P^4A+P^5A+P^6A.
\]
Its reduction modulo four is
\begin{equation}\label{eq:blocklift}
\begin{pmatrix}
1&1&1&1&1&1&1&1\\
2&2&2&0&0&2&2&0\\
0&0&0&2&0&2&0&2\\
2&0&0&2&0&0&2&0\\
2&0&0&2&0&0&2&0\\
0&0&0&2&2&0&2&0\\
0&2&2&2&2&2&0&0
\end{pmatrix}.
\end{equation}
Let $d_r=P^rd_*\bmod4$. Its binary residue is the indicator of row $r$;
its coordinate sum in every other row is $2\bmod4$.

\begin{lemma}[Block parity and the norm bound]\label{lem:blocknormbound}
For $p=p_r$, write $d_r=\mathbf1_p+2w^{(p)}$, where $\mathbf1_p$ is the block indicator
and $w^{(p)}\in\F_2^{56}$. The functional $\phi_p(d)=w^{(p)}\cdot d$ on $\cD$
is independent of the chosen lift of $\mathbf1_p$. If $u\in C$ has residue $d$
vanishing on block $p$, then
\begin{equation}\label{eq:blockparity}
 \#\{e:u(p,\,e)=2\}\equiv\phi_p(d)\pmod2.
\end{equation}
In particular, every lift $u\in C$ of a block indicator satisfies
\begin{equation}\label{eq:blocknormbound}
 \operatorname{wt}_E(u)\ge8+6\cdot4=32.
\end{equation}
\end{lemma}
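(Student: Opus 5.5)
The plan follows the three assertions of Lemma~\ref{lem:blocknormbound} in order: independence of $\phi_p$, the parity congruence \eqref{eq:blockparity}, and then the norm bound \eqref{eq:blocknormbound}.

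\emph{Independence of $\phi_p$.} Any two lifts of $\mathbf1_p$ in $C$ differ by $2a$ with $a\in\cC$, by the exact sequence \eqref{eq:binaryexact}. Hence the two choices of $w^{(p)}$ differ by $a$ plus a word supported in block $p$; the latter accounts for the freedom in writing the odd entries of $d_r$ as $\pm1$. Since $\cC=\cD^\perp$, the term $a\cdot d$ vanishes for $d\in\cD$. So independence holds on the subspace of $d$ vanishing on block $p$, which is the subspace actually used. I would state the functional on that subspace, or note that the block-$p$ ambiguity is fixed by the convention that odd entries are written as $1$.

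\emph{The parity congruence.} Let $u\in C$ with residue $d$ vanishing on block $p$. Self-orthogonality of $C$ (Lemma~\ref{lem:seedresidues}) gives $d_r\cdot u\equiv0\pmod4$. I would split this inner product by blocks.
\begin{itemize}
\item On block $p$, $d_r$ is odd and $u$ is even, equal to $2\times$(indicator of entries equal to $2$). This contributes $2\#\{e:u(p,e)=2\}$ modulo $4$.
\item Off block $p$, $d_r=2w^{(p)}$. This contributes $2\,w^{(p)}\cdot\pi(u)=2\phi_p(d)$ modulo $4$.
\end{itemize}
Dividing the resulting relation by $2$ gives \eqref{eq:blockparity}.

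\emph{The norm bound.} Let $u$ lift $\mathbf1_p$. Its entries in block $p$ are odd, contributing exactly $8$ to the Euclidean weight. For each other block $q$, the residue of $u$ vanishes on $q$, so I apply \eqref{eq:blockparity} at $q$ with $d=\mathbf1_p$. This shows that $u$ has an odd number, hence at least one, of entries equal to $2$ in block $q$ whenever $\phi_q(\mathbf1_p)=1$. Each such entry contributes $4$, so it suffices to prove $\phi_q(\mathbf1_p)=1$ for all $q\ne p$. By definition $\phi_q(\mathbf1_p)=w^{(q)}\cdot\mathbf1_p$, which is the coordinate sum of $d_{r'}$ in row $p$ (with $q=p_{r'}$), divided by $2$ modulo $2$. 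The statement that this sum is $2\bmod4$ is exactly the displayed property of $d_r$: every non-$r$ row of \eqref{eq:blocklift} has coordinate sum $2\bmod4$ (rows with four or six $2$'s, i.e. sums $8,12$? — here I must check carefully).

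\emph{Main obstacle: the sign conventions in the matrix check.} Rows with four $2$'s sum to $8\equiv0$, so the condition must be read in the signed basis of Table~\ref{tab:hamminglabels}, where the entry $-\mathbf e_0$ carries a sign. One should also note that $2\equiv-2$ modulo $4$, so it is really the number of $2$'s, weighted by these conventions, that must be checked. I would therefore first recompute $d_*$ from the seeds and its translates under $P$, rather than trust the displayed matrix, and confirm that in each row $q\ne r$ the count of entries equal to $2$ is odd as a pairing against $\mathbf1_p$. Equivalently, by $P$-symmetry it suffices to verify the single row $r=0$ of $d_*$ against the six other blocks, i.e. $w^{(p_0)}\cdot\mathbf1_{p_j}=1$ for $j\ne0$. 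Once that parity is established, the six other blocks each contribute at least $4$, giving $\operatorname{wt}_E(u)\ge8+24=32$.
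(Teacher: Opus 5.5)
Your argument is the same as the paper's. Independence holds because changing the lift changes $w^{(p)}$ by a word of $\cC=\cD^\perp$. The parity comes from $d_r\cdot u\equiv0\pmod4$, split into block $p$ and its complement. The bound then follows from $\phi_q(\mathbf1_p)=1$ for all $q\ne p$.

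The ``main obstacle'' you describe does not exist; it comes from miscounting \eqref{eq:blocklift}. Rows $1,\ldots,6$ of that matrix contain $5,3,3,3,3,5$ entries equal to $2$, all odd. Their sums are therefore $10$ or $6$, both $\equiv2\pmod4$. No row has four or six twos, and no sign convention enters, since $-2\equiv2\pmod4$, as you noted. Because $P$ moves whole rows unchanged, this single check on $d_*$ gives $\phi_q(\mathbf1_p)=1$ for every $d_r$, exactly as in your reduction.

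There is also no block-$p$ ambiguity in $w^{(p)}$. With $\mathbf1_p$ the $0/1$ indicator, $w^{(p)}$ is determined by $d_r$: in block $p$ it records the entries equal to $3$. So $\phi_p$ is well defined on all of $\cD$, not only on words vanishing on block $p$.
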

\begin{proof}
Changing the lift changes $w^{(p)}$ by a word of $\cC=\cD^\perp$, so
$\phi_p$ is well defined. Self-orthogonality gives
$0=d_r\cdot u=\sum_eu(p,\,e)+2w^{(p)}\cdot d\pmod4$, proving
\eqref{eq:blockparity}. For $p\ne q$, the row sums in
\eqref{eq:blocklift} give $\phi_p(\mathbf1_q)=1$. A lift of $\mathbf1_q$ therefore
has eight odd entries in block $q$ and at least one two in each of the
other six blocks. This proves the bound.
\end{proof}

For a four-block residue
$d(p,\,e)=\epsilon_p+p_1\ell'(e)$, with $\epsilon_p=0$ when $p_1=0$,
put $\varepsilon=\sum_{p:p_1=1}\epsilon_p$.
For the inactive labels $q\in\{2,\,4,\,6\}$, the same block lift gives
\begin{equation}\label{eq:obstructiontable}
\begin{aligned}
 \gamma_q&=\sum_{p:p_1=1}\sum_{e\in E}w^{(q)}(p,\,e)e,
       & (\gamma_2,\,\gamma_4,\,\gamma_6)&=(5,\,3,\,1),\\
 \phi_q(d)&=\varepsilon+\ell'(\gamma_q).
\end{aligned}
\end{equation}
Indeed every active-row sum of $w^{(q)}$ is odd; its first moments are
obtained from \eqref{eq:blocklift} using Table~\ref{tab:hamminglabels}.
In these vector labels $\gamma_q=q+7$, so
$\gamma_p+\gamma_q=p+q$ for distinct inactive labels $p$, $q$.

\begin{lemma}\label{lem:lowexclusions}
Let $z\in\Z^{56}$ satisfy $z\bmod4\in C$ and $\sum z_j^2=16$.
If $z\bmod2\ne0$, then $z$ has precisely sixteen nonzero entries, all equal
to $\pm1$, and their support is not the union of two complete rows.
\end{lemma}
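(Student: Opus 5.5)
We must prove: if $z\in\Z^{56}$ with $z\bmod 4\in C$, $\sum z_j^2=16$ and nonzero residue $d=z\bmod2$, then all nonzero entries of $z$ are $\pm1$, there are sixteen of them, and their support is not a union of two complete rows. The plan is to compare the odd entries of $z$ with the weights of $\cD$ and to use the block-parity functionals $\phi_p$ of Lemma~\ref{lem:blocknormbound}.

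\emph{Step 1 (pure norm bound).} Let $n_{\mathrm{odd}}=\wt(d)$. Each odd entry contributes at least $1$ to $\sum z_j^2$, so $\wt(d)\le16$. By Theorem~\ref{thm:WD}, $d$ is therefore either a block indicator (weight $8$) or a weight-sixteen word. In the second case the sixteen odd entries already use the full norm $16$. Hence every odd entry is $\pm1$ and every even entry is $0$, which gives the first assertion immediately.

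\emph{Step 2 (excluding block indicators).} If $d=\mathbf1_q$, then $u=z\bmod4$ lifts a block indicator, and \eqref{eq:blocknormbound} gives $\wt_E(u)\ge32$. Since $\sum z_j^2\ge\wt_E(u)$ (an entry $\equiv2\bmod4$ has square at least $4$, and an odd entry has square at least $1$), this contradicts $\sum z_j^2=16$. So $d$ has weight sixteen, and Step 1 applies.

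\emph{Step 3 (excluding pattern $(8,8,0^5)$).} Suppose $d=\mathbf1_p+\mathbf1_q$ with $p\ne q$. By Step 1, $z$ is zero on the five other blocks, so $u$ has no entries equal to $2$ there. Pick an inactive block $r$. By \eqref{eq:blockparity}, the number of twos in block $r$ must be $\equiv\phi_r(d)\pmod2$. Linearity gives $\phi_r(\mathbf1_p+\mathbf1_q)=\phi_r(\mathbf1_p)+\phi_r(\mathbf1_q)=1+1=0$, using $\phi_r(\mathbf1_s)=1$ for $s\ne r$ from the row sums in \eqref{eq:blocklift}. This is consistent, so parity alone gives no contradiction.

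We therefore need a finer argument, and this is the main obstacle. The plan is to use a second-order constraint: pair $u$ against other codewords whose residue meets blocks $p$ and $q$. Take a weight-sixteen word $c\in\cC$ or a lift $v\in C$ of a rank-one four-block residue. Self-orthogonality requires $u\cdot v\equiv0\pmod4$. Since $u$ is a $\pm1$ vector on blocks $p$ and $q$, this sign pattern must be orthogonal modulo $4$ to all of $C$ restricted there.

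Equivalently, the sign vector $s$ (with $u=\mathbf1_{p\cup q}-2s$) must satisfy a system analogous to the sign-kernel calculation in Theorem~\ref{thm:signkernel}. The idea is to start from one known lift $u_0$ of $\mathbf1_p+\mathbf1_q$ — for instance $d_p+d_q$, corrected by $2\cC$ — and observe that all lifts of $d$ differ by $2\cC$. A lift supported only on the two rows exists exactly when $u_0$'s twos outside $p\cup q$ form a word of $\cC$ modulo the restriction $\cC|_{p\cup q}$.

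From \eqref{eq:blocklift}, $d_p+d_q$ has twos in every other row, forming the pattern $w^{(p)}+w^{(q)}$ off $p\cup q$. I would show this pattern is not congruent modulo $\cC$ to a word supported on $p\cup q$. To do this, pair it with a suitable word of $\cD$ vanishing on $p\cup q$ — for example a rank-one bilinear word $\ell(p')\ell'(e)$ with $\ell(p)=\ell(q)=0$ — and evaluate the pairing via the first moments $\gamma$ of \eqref{eq:obstructiontable}. By the identity $\gamma_p+\gamma_q=p+q\ne0$, one can choose $\ell'$ so that the pairing is $1$, which gives the contradiction. Transitivity of $\Gamma$ (Proposition~\ref{prop:binaryaut}) lets us reduce to $p,q$ both inactive for a fixed $\ell$, matching the setting of \eqref{eq:obstructiontable}.
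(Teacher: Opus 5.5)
Your argument is correct and is essentially the paper's. Both proofs rest on the identity $\phi_p(d')+\phi_q(d')=\ell'(\gamma_p+\gamma_q)=\ell'(p+q)$ from \eqref{eq:obstructiontable}, applied to the four-block residue $d'(r,e)=r_1\ell'(e)$, which vanishes on both rows. The only difference is bookkeeping: the paper pairs $z$ directly with a lift of $d'$ and counts that lift's twos on the two rows, while you pair $d'$ with the two-pattern of $d_p+d_q$ off the rows, after noting that the lifts of $\mathbf1_p+\mathbf1_q$ form the coset $d_p+d_q+2\cC$.

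For the reduction to $p,q\in\{2,4,6\}$, use symmetries that preserve $C$ itself. Powers of $P$ suffice, as in the paper, or elements of $H$, which surjects onto $\Gamma$ by Proposition~\ref{prop:H0}. A bare element of $\Gamma$ only preserves $\cD$, whereas the functionals $\phi_p$ depend on $C$.
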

\begin{proof}
The block-lift bound \eqref{eq:blocknormbound} excludes a residue of
weight eight. A nonzero residue therefore has weight sixteen by
Theorem~\ref{thm:WD}; the norm leaves exactly sixteen entries $\pm1$.

For two complete rows, powers of $P$ reduce their labels to distinct
$p$, $q$ in $\{2,\,4,\,6\}$, since $P$ cycles the seven Fano lines.
Choose $\ell'\in E^*$ with $\ell'(p+q)=1$ and take any lift $u\in C$
of the residue $d(r,\,e)=r_1\ell'(e)\in\cD_0$.
This residue vanishes on both rows, and
\eqref{eq:blockparity}--\eqref{eq:obstructiontable} give
\[
 \#\{(r,\,e):r\in\{p,\,q\},\ u(r,\,e)=2\}
 \equiv\phi_p(d)+\phi_q(d)=\ell'(p+q)=1\pmod2.
\]
Hence $u\cdot z=2\pmod4$ for every signed word on these two rows,
contrary to self-orthogonality. This excludes the two-row support.
\end{proof}

\begin{lemma}\label{lem:foursupports}
Among the $784$ four-block supports, write $M=\ell'\ell^{\top}$ with $\ell$, $\ell'$ in $E^*\setminus\{0\}$.
A signed lift of Euclidean weight $16$ is possible only when
\[
 \ell'=\ell,\qquad \sum_{p:\ell(p)=1}\epsilon_p=1.
\]
There are exactly $56$ such supports, each with exactly $32$ signed lifts.
For each fixed label $\ell$, the row kernel $P_0$ is transitive on
its eight supports and all their signed lifts.
\end{lemma}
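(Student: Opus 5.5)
We will split the argument into a necessity step, proved by the block-parity obstruction of Lemma~\ref{lem:blocknormbound} and \eqref{eq:obstructiontable}, and an existence and counting step driven by the row kernel $P_0$.

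\emph{Necessity.} A four-block support of weight $16$ has residue $d(p,\,e)=\epsilon_p+\ell(p)\ell'(e)$, active on the four blocks with $\ell(p)=1$; each active block carries a balanced affine half, and the three inactive blocks form the Fano line $\ker\ell$. After conjugating by a power of $P$ (or by $H$, which acts transitively on the seven nonzero $\ell$ through $\operatorname{GL}(E)$), we may assume $\ell(p)=p_1$, so the inactive labels are $\{2,\,4,\,6\}$ and \eqref{eq:obstructiontable} applies directly. Now let $z$ be a signed lift of weight $16$, all entries $\pm1$ on the support. Then $z$ has no entry equal to $2$ on any inactive block, so \eqref{eq:blockparity} forces $\phi_q(d)=0$ for every inactive $q$, that is,
\[
 \varepsilon+\ell'(\gamma_q)=0,\qquad q\in\{2,\,4,\,6\}.
\]
Adding two of these equations gives $\ell'(\gamma_p+\gamma_q)=\ell'(p+q)=0$ for all distinct inactive $p$, $q$. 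The sums $p+q$ of distinct inactive labels span the two-dimensional space $\ker\ell$, so $\ell'$ vanishes on $\ker\ell$, and since $\ell'\ne0$ we get $\ell'=\ell$. Then $\gamma_q=q+7$ gives $\ell(\gamma_q)=\ell(q)+\ell(7)=0+1=1$, and the displayed condition becomes $\varepsilon=1$.

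\emph{Counting supports.} For each of the seven $\ell$ the constants on the four active blocks range over $2^4$ choices, and the parity condition keeps exactly half of them, giving $8$ supports per label and $56$ in total.

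\emph{Existence and transitivity.} A translation $t_\tau$ changes $\epsilon_p$ by $\ell(\tau p)$ on active blocks, so it changes $\varepsilon$ by $\sum_{\ell(p)=1}\ell(\tau p)$. The active points are the affine hyperplane $\ell=1$, whose point sum is $0$, so this change vanishes and translations preserve the condition. They act on the eight admissible constant vectors through the functions $p\mapsto\ell(\tau p)$ restricted to $\{\ell=1\}$. These are exactly the affine functions on that hyperplane, an $8$-element space acting regularly on the admissible constant vectors, so $P_0$ is transitive on the eight supports of each label. It therefore suffices to exhibit one lift of weight $16$ for one admissible support. The natural candidate is the difference $(1+I)A$, or a nearby combination such as the residues computed in the proof of Lemma~\ref{lem:rowinvolution}. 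Concretely, $A_a=0$ for $a\in\mathcal Q$ and each $A_{-a}$ has four unit entries, so $A$ itself has norm $16$ supported on four rows with unit entries. This should give $A$ as such a lift, with $\ell=(0,\,1,\,0)$ by \eqref{eq:traceseedresidues}.

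\emph{Counting lifts.} Lifts on a fixed support $\Sigma$ form an affine space over $\cC_\Sigma$, and $\cC_\Sigma$ consists of words of $\cC$ supported on four active half-blocks of size four. We expect $\cC_\Sigma$ to have dimension $5$, giving $32$ signed lifts. This can be checked by orthogonality: on one affine plane $\cD$ punctures to an even-weight $[4,\,3]$ code, so $\cC_\Sigma$ restricted to each half-block lies in $\langle\mathbf1_4\rangle$, giving $2^4$ block-constant patterns. These are cut down by orthogonality to words of $\cD$, and the count must then be reconciled with the extra freedom coming from $I$ and the signs $E_C$, which requires care. Transitivity on lifts then follows because the stabilizer in $P_0$ of one support, which contains $E_C$ and the translations $\tau$ with $\ell\circ\tau$ vanishing on $\{\ell=1\}$, acts on the lifts. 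We will show that this action is regular on the $32$ sign patterns by tracking the sign changes $-\id$, $E_m$, and $I$, together with lifts of translations moving within each half-block.

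We expect the last step to be the main obstacle: computing $\dim\cC_\Sigma=5$ exactly and proving that the stabilizer acts transitively on all $32$ signed lifts. This requires explicit sign bookkeeping with the lifted translations from Proposition~\ref{prop:H0}.
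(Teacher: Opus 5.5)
Your necessity argument is correct and is the paper's own argument: you reduce to $\ell=1$ by a power of $P$, note that $\phi_2$, $\phi_4$, $\phi_6$ vanish because a weight-$16$ lift has no entries $2$, and then use $\gamma_p+\gamma_q=p+q$. The count of eight admissible constant vectors per label, support transitivity through the eight affine functions $\ell\circ\tau$ on $\{\ell=1\}$, and $A$ itself as base lift also match the paper. (Drop $(1+I)A$: since $\tau_I=\id$, $I$ moves each selected half-block to its complement, so $(1+I)A$ has $32$ odd entries.)

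The gap is the fibre, and the route you sketch there fails. Your claim that every word of $\cC_\Sigma$ restricts to a multiple of $\mathbf1_4$ on each half-block confuses restriction with shortening. In fact $\cD$ punctures onto a half-block as all of $\F_2^4$, because its block puncture is the full even-weight code of length $8$. If the claim held it would force $\dim\cC_\Sigma\le4$. No appeal to $I$ or $E_C$ can repair that, because the number of lifts is simply $|\cC_\Sigma|$.

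Orthogonality to $\cD_0$ actually permits parallel pairs. It means an even selection in each active row with a common first moment $s\in\ker\ell$, since the four active labels satisfy only the relation that their sum vanishes. Hence $\dim\cC^0_\Sigma=6$ and $\dim\cD_0|_\Sigma=10$. What is missing is the quadratic layer, in two parts:
\begin{itemize}
\item You must show that the classes $c\in\ker\ell$ restrict into $\cD_0|_\Sigma$. Their restrictions to the selected planes are affine, but compatibility across the four blocks is a genuine check with the chosen sections. It uses $\sum\epsilon_p=1$ together with $\sum_{p_1=1}\eta_2(p)=5$ and $\sum_{p_1=1}\eta_4(p)=3$.
\item A class with $\ell(c)=1$ adds exactly one dimension. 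A selected-plane indicator annihilates $\cD_0$ but pairs with $\theta_p(c)$ as $\ell(c)$.
\end{itemize}
Only then do you get $\dim\cD|_\Sigma=11$ and $\dim\cC_\Sigma=5$.

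For transitivity on lifts, the ingredients you name are insufficient:
\begin{itemize}
\item $E_C$ restricts to only the eight even block-constant patterns on $\Sigma$, all of common moment zero.
\item $I$ does not stabilize $\Sigma$ at all.
\item The support stabilizer in $P_0$ has order $2^{10}$, so its action on $32$ lifts cannot be regular.
\end{itemize}
The missing idea is the common-moment map $\cC_\Sigma\to\ker\ell$. A word of moment zero selects, in each row, either nothing or the whole half-block. Pairing with a class $\ell(c)=1$ then forces an even pattern. So the kernel is exactly the three-dimensional image of $E_C$, and the map is onto once $\dim\cC_\Sigma=5$ is known.

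You then need explicit support-fixing elements of $P_0$ whose sign differences have moments spanning $\ker\ell$. The paper uses $U_1$, $U_2$ on $\operatorname{supp}(A)$, with moments $1$ and $5$. Finally, support-fixing translations preserve first moments of even selections. So these moment shifts add under composition and, together with $E_C$, reach all $32$ lifts.
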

\begin{proof}
The cyclic row permutation is transitive on the seven nonzero forms $\ell$.
We may therefore assume $\ell=1$.
A lift of Euclidean weight $16$ has no twos, so all three functionals
in \eqref{eq:obstructiontable} vanish. Subtracting their equations gives
$\ell'(6)=\ell'(2)=0$.
Since $\ell'\ne0$, this forces $\ell'=1$, and then $\varepsilon=1$.
This proves necessity and the count $7\cdot8=56$.

The seed $A$ has one such support. The translations in $P_0$ change the
active constants by $\ell(\tau p)$, whose restrictions are all eight even-weight
functions on the affine plane $\ell(p)=1$. They are therefore transitive on the
eight odd choices. The cyclic row map connects the seven labels $\ell$. Every
admissible support consequently has at least one signed lift.

It remains to count the fibre and prove sign transitivity. For an
admissible support $\Sigma$, put
\[
 \cC^0_\Sigma=\cD_0^\perp\cap\F_2^\Sigma,\qquad
 \cC_\Sigma=\cD^\perp\cap\F_2^\Sigma.
\]
The selected plane in an active row is $\ell(e)=1+\epsilon_p$.
A word $v$ supported in $\Sigma$ annihilates $\cD_0$ exactly when it
selects an even number of positions in each active row and its moments
\[
 s_p=\sum_{e\in\Sigma_p}v(p,\,e)e,\qquad
 \Sigma_p=\{e:(p,\,e)\in\Sigma\},
\]
are a common vector $s\in \ker\ell$. Indeed
$\sum_p p\otimes s_p=0$, and the four active row labels have just one
linear relation, their sum. An even subset of a four-point affine plane
with prescribed first moment has two choices, differing by the whole
plane. Hence $\dim\cC^0_\Sigma=2+4=6$.
For $c\in\ker\ell$, the restriction of $\theta_p(c)$ to each selected
plane is affine. These restrictions also satisfy the compatibility
between blocks. Indeed, reduce to $\ell=1$. Table~\ref{tab:eta} gives
\[
 \sum_{p:p_1=1}\eta_2(p)=5,\qquad
 \sum_{p:p_1=1}\eta_4(p)=3.
\]
Since $\sum_{p:p_1=1}\epsilon_p=1$, the four affine slopes sum to zero:
\[
 \sum_{p:p_1=1}\bigl(\theta_p(c)(s)
       +\det(c,\,(1+\epsilon_p)1,\,s)\bigr)=0
       \qquad(c,\,s\in\ker\ell).
\]
Here the binary label $1$ denotes $(1,\,0,\,0)\in E$. For $c=2$ the two
sums are both $s_3$, and for $c=4$ both are $s_2$; linearity in $c$
gives the stated cancellation.
The four active row labels have just their sum as a relation, so these
slopes extend linearly in $p$. Hence the restrictions for $c\in\ker\ell$
belong to $\cD_0|_\Sigma$.
A class with $\ell(c)=1$ contributes one new dimension: the indicator
of a selected plane annihilates $\cD_0$ but pairs with $\theta_p(c)$
as $\ell(c)$, by the polar form. Since
$\dim(\cD_0|_\Sigma)=16-6=10$, this proves
\begin{equation}\label{eq:liftfiber}
 \dim(\cD|_\Sigma)=11,\qquad
 \dim\cC_\Sigma=5,\qquad
 \#\{\text{signed lifts on }\Sigma\}=2^5=32.
\end{equation}

For sign transitivity, reduce to $\Sigma=\operatorname{supp}(A)$,
with label $\ell=2$. The restrictions of $E_C$ give all three dimensions
of even block-constant patterns; their kernel consists of the indicator
of the three inactive rows and zero. The common-moment map on
$\cC_\Sigma$ therefore has this three-dimensional kernel and is onto
$\ker\ell$. The support-fixing maps $U_1$, $U_2$ give sign differences
$(A-U_1A)/2$, $(A-U_2A)/2$ with common moments $1$, $5$, a basis of
$\ker\ell$, by Table~\ref{tab:UVW}.
A support-fixing translation leaves torsion moments unchanged because
each row has even weight, so these moment shifts add under composition.
They reach all four moment classes, and $E_C$ is transitive on each
kernel fibre. All these maps lie in $P_0$; the cyclic row symmetry
transports the assertion to every label.
\end{proof}

\begin{lemma}\label{lem:heavysupports}
Each of the $896$ heavy supports has exactly sixteen signed lifts.
For each fixed heavy label $c$, $P_0$ is transitive on its $128$ supports
and on all $2048$ vectors on them.
\end{lemma}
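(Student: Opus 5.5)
The argument has the same shape as the proof of Lemma~\ref{lem:foursupports}: compute the sign fibre $\cC_\Sigma$, show existence of one lift on one orbit of supports, and get transitivity from $P_0$ acting on supports and on signs.

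\emph{Supports.} First reduce to a single class. The row shift $P$ permutes the seven nonzero classes $c$, since $P$ sends $(p,e)\mapsto(T_Pp,T_Pe)$ and $T_P$ acts transitively on $E\setminus\{0\}$. It therefore suffices to treat $c=6$, the class of the seed $B$ by \eqref{eq:traceseedresidues}. By Theorem~\ref{thm:WD}, the $128$ heavy supports of class $c$ are parametrized by two data: the $2^6$ matrices $M$ with $M^\top c=0$, and the constant on the heavy block. A translation $\tau\in\operatorname{End}(E)$ changes a heavy residue by \eqref{eq:tracetranslation}. This adds the bilinear term $\det(c,\tau p,e)$ together with the block constants $\theta_p(c)(\tau p)$. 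Composing with the sign group $E_C$ does not move supports. I would show that these translations, modulo those fixing the support, act transitively on the $128$ supports. The linear term $\det(c,\tau p,\cdot)$ ranges over all $M$ with $M^\top c=0$; this is a $6$-dimensional space, and $\tau\mapsto\det(c,\tau\,\cdot,\cdot)$ has kernel the $\tau$ with image in $\langle c\rangle$, of dimension $3$, so its image is $9-3=6$-dimensional. The heavy-block constant is then toggled by a $\tau$ with image in $\langle c\rangle$ and $\tau c\ne0$: the term $\theta_c(c)(\tau c)=\theta_c(c)(c)=1$ by \eqref{eq:hammingsections}. The remaining effect of such a $\tau$ on the light blocks has to be checked to be absorbed, i.e.\ to be a support-preserving change; this is the point to check against \eqref{eq:balancedidentity}. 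Since $B$ itself is a signed lift of Euclidean weight $16$ on one heavy support, every heavy support then carries a lift.

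\emph{Fibre count.} For a heavy support $\Sigma$, I compute $\cC_\Sigma=\cD^\perp\cap\F_2^\Sigma$. Orthogonality to the block indicators forces even weight in each block. Each light block is a pair $\{e,e'\}$ with $e+e'\ne0$, and the heavy block is four points with point sum $c$. Arguing as in Proposition~\ref{prop:torsionmin}, orthogonality to the rank-one words of $\cD_0$ makes the moments $s_p=\sum v(p,e)e$ equal to a common vector $s$. Counting the even subsets block by block then gives $\dim\cC^0_\Sigma$: a light pair contributes moment $0$ or $e+e'$, and the heavy block offers $8$ even subsets. Orthogonality to the quadratic word $\theta_p(c')$ should cut this down, using \eqref{eq:productidentity}, to $\dim\cC_\Sigma=4$, hence exactly $2^4=16$ signed lifts. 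Equivalently, I would verify $\dim(\cD|_\Sigma)=20-4=16$ directly, checking that the restrictions of the $19$ parameters of \eqref{eq:binarynormal} to $\Sigma$ have a $3$-dimensional kernel. The main obstacle is making this dimension count precise: deciding exactly which quadratic classes $c'$ restrict nontrivially on the pairs $S_p(c)$. I would first try the concrete case $c=6$ with Table~\ref{tab:eta}, exactly as the slope-sum computation was done in Lemma~\ref{lem:foursupports}.

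\emph{Sign transitivity.} Fix the support of $B$. The restriction of $E_C\cong\cH_7$ to $\Sigma$ is injective, because $\Sigma$ meets every block. It therefore supplies a $4$-dimensional, hence full, subgroup of sign changes, which is all of $\cC_\Sigma$ if the count above gives $\dim\cC_\Sigma=4$. In that case $E_C\subseteq P_0$ alone is transitive on the $16$ lifts, and no analogue of the moment argument with $U_1,U_2$ is needed. Combined with the transitivity on supports, $P_0$ is transitive on all $128\cdot16=2048$ vectors of class $c$.
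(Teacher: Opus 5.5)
Your treatment of the supports and your closing $E_C$-transitivity step agree with the paper. The fibre count, however, is the actual content of ``exactly sixteen,'' and it has a genuine gap: the step you propose for it is false.

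Orthogonality to the bilinear words of $\cD_0$ does \emph{not} make the moments common on a heavy support. In Lemma~\ref{lem:foursupports} that conclusion came from the four active rows forming an affine plane, whose labels satisfy a single linear relation. A heavy support is active in all seven rows, and the condition is only $\sum_p p\otimes s_p=0$, i.e.\ every coordinate of $(s_p)_p$ lies in $\cH_7$. Your own last paragraph refutes the common-moment claim. For $\ell\ne0$ the restriction of $E_\ell$ to $\Sigma$ lies in $\cC_\Sigma$, yet its moment is $c$ on the four blocks with $\ell(p)=1$ and $0$ on the other three. In fact common moments would leave only $0$ and $\mathbf 1_\Sigma$, so no later cut could produce dimension four.

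Your fallback count is also off. Here $|\Sigma|=4+6\cdot2=16$, so $\dim\cC_\Sigma=4$ means $\dim(\cD|_\Sigma)=12$ and a $7$-dimensional restriction kernel. A $3$-dimensional kernel would give $\cD|_\Sigma=\F_2^\Sigma$ and $\cC_\Sigma=0$. That is impossible, since $\mathbf 1_\Sigma\in\cD\subseteq\cC$.

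The missing idea is the position of $\Sigma$ relative to $c$. From $M^\top c=0$ and \eqref{eq:balancedidentity}, one has $d(p,e+c)+d(p,e)=\mathbf 1_{\{p=c\}}$. So each light block of $\Sigma$ is one pair with difference $c$, and the heavy block is a transversal of the four cosets of $\langle c\rangle$. Hence $s_p=\epsilon_pc$ on light blocks, and the bilinear condition becomes $c\otimes s_c+\sum_{p\ne c}\epsilon_p\,p\otimes c=0$. Reducing the second factor modulo $\langle c\rangle$ gives $s_c\in\langle c\rangle$. No two points of the transversal have sum $0$ or $c$, so the heavy block is empty or full, and the equation collapses to $\sum_p\epsilon_pp=0$.

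Thus the $\cD_0$ conditions alone already put $\cC_\Sigma$ inside the sixteen block-constant restrictions of $\cH_7$. Contrary to your expectation, the quadratic words impose nothing further. Theorem~\ref{thm:signkernel} gives the reverse inclusion, and your $E_C$ argument then finishes the proof exactly as in the paper.
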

\begin{proof}
In \eqref{eq:binarynormal}, heaviness means $M^{\top}c=0$.
Translation by $\tau p$ changes $M$ by $[c\times]\tau$, giving all $64$
matrices with that property. The remaining translations $\tau=c\ell^{\top}$
change only the heavy-block constant, by $\ell(c)$, so both constants
occur. The seed $B$ supplies one signed lift, and powers of $P$ give
one in every heavy family. Since all the translations have signed lifts
in $P_0$, every heavy support occurs.

Fix such a support $\Sigma$. In a light block its pair has difference $c$.
In the heavy block its four points have sum $c$ and contain one point
of each pair modulo $\langle c\rangle$, by \eqref{eq:balancedidentity}.
A word of $\cC_\Sigma$ selects an even subset in each block. Let $s_c$
be its first moment in the heavy block, and let $\epsilon_p$ record whether
it selects the whole pair in a light block $p$. Orthogonality to bilinear
words gives
\[
 c\otimes s_c+\sum_{p\ne c}\epsilon_p p\otimes c=0.
\]
It forces $s_c=0$ or $c$. No pair in the heavy block has difference $c$,
so that block, like every light block, is empty or full. The equation
reduces to $\sum_p\epsilon_pp=0$. Conversely all sixteen Hamming block
signs restrict to words of $\cC_\Sigma$, by Theorem~\ref{thm:signkernel}.
Their restrictions are distinct since every block occurs. Thus the
sixteen signed lifts form a free transitive $E_C$-orbit. Together with
support transitivity this proves the assertion.
\end{proof}

\begin{theorem}\label{thm:codewords16}
The minimum Euclidean weight of $C$ is $16$. Its words of that
weight have sixteen entries $\pm1$ and all remaining entries zero. They form
two $H$-orbits, consisting of $56\cdot32=1792$ four-block words and
$896\cdot16=14336$ heavy-block words.
\end{theorem}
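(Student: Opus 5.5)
The plan has four steps: bound the Euclidean weight, classify the weight-$16$ words by residue, count them, and then fuse the counts into two $H$-orbits.

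\textbf{Lower bound and nonzero residues.} Let $u\in C$ be nonzero and let $z$ be an integer lift with entries in $\{-1,0,1,2\}$, so that $\operatorname{wt}_E(u)=\sum z_j^2$.

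If the residue $\pi(u)=0$, then $u=2a$ with $a\in\cC$. Then $\operatorname{wt}_E(u)=4\wt(a)$. By Proposition~\ref{prop:torsionmin}, $\wt(a)\ge6$, and since $\cC$ is even, $\operatorname{wt}_E(u)\ge24$.

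If the residue is nonzero, it has weight at least $8$ by Theorem~\ref{thm:WD}. A block indicator residue is excluded below weight $32$ by \eqref{eq:blocknormbound}. Every other nonzero residue has weight at least $16$, and each odd entry contributes $1$, so $\operatorname{wt}_E(u)\ge16$. Evenness of $C$ rules out the values in between, since weights are multiples of eight.

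Equality forces no entry $2$ and exactly sixteen entries $\pm1$. This is the content of Lemma~\ref{lem:lowexclusions}, which I will apply to the lift $z$ of squared norm $16$. So the minimum is $16$ and the minimal words are exactly the signed words on weight-$16$ supports.

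\textbf{Classification by residue.} The residue of a minimal word is a weight-$16$ word of $\cD$, so its pattern is one of the three in Theorem~\ref{thm:WD}.

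Pattern $(8,8,0^5)$, the union of two complete rows, is excluded by Lemma~\ref{lem:lowexclusions}.

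Pattern $(4^4,0^3)$ gives exactly the $56$ admissible supports of Lemma~\ref{lem:foursupports}, each with $32$ signed lifts. This yields $1792$ words.

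Pattern $(4,2^6)$ gives the $896$ heavy supports of Lemma~\ref{lem:heavysupports}, each with $16$ signed lifts. This yields $14336$ words.

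\textbf{Orbit fusion.} Within each class, I will combine the fibre transitivity from the two lemmas with transitivity on labels. Lemma~\ref{lem:foursupports} makes $P_0$ transitive on the supports and signed lifts for each fixed $\ell$. Lemma~\ref{lem:heavysupports} does the same for each fixed heavy label $c$. The cyclic row map $P$ permutes the seven nonzero labels transitively in both cases: it cycles the functionals $\ell$ and the heavy labels $c$, since $T_P$ acts as a Singer cycle. Hence each class is a single $H$-orbit.

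The two classes cannot fuse, because $H$ permutes blocks and therefore preserves the block pattern of the support.

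\textbf{Main obstacle.} The only delicate point is the first step's use of the block-lift bound: a residue-$\mathbf1_p$ lift might otherwise reach weight $16$ with $8$ odd entries and two $2$'s. This is exactly what Lemma~\ref{lem:blocknormbound} rules out, so here it should be a matter of invoking the earlier lemmas in the right order.
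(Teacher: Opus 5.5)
Your proposal is correct and takes essentially the same route as the paper. You exclude zero residues with Proposition~\ref{prop:torsionmin} and block-indicator residues with \eqref{eq:blocknormbound}. Lemmas~\ref{lem:lowexclusions}--\ref{lem:heavysupports} then classify and count the weight-$16$ words, and the cyclic row map fuses the labels. The differences are cosmetic: you handle both zero-residue cases at once using evenness of $\cC$ (Euclidean weight at least $24$), and the existence of a weight-$16$ word comes from your counts rather than from the seed $A$.
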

\begin{proof}
By evenness, a positive weight below sixteen would be eight.
Its residue would be zero or a block indicator, by Theorem~\ref{thm:WD}.
The first case would require a weight-two word of $\cC$, excluded by
Proposition~\ref{prop:torsionmin}; the second contradicts
\eqref{eq:blocknormbound}. Since $A$ has weight sixteen, this is the minimum.
A word of weight $16$ with zero residue would give a weight-four word
of $\cC$, also excluded. Lemmas~\ref{lem:lowexclusions}--\ref{lem:heavysupports}
therefore exhaust the possibilities and give the stated counts. The heavy
words form one $H$-orbit by Lemma~\ref{lem:heavysupports} and the cyclic row map.
The four-block assertion follows from
Lemma~\ref{lem:foursupports} and the cyclic row symmetry.
\end{proof}

\section{The Gaussian lattice and its integral reflection}\label{sec:construction}
Let $C$ be the self-dual even quaternary code of \eqref{eq:seedcode}.
We define
\begin{equation}\label{eq:lattice}
 L=\tfrac12\{z\in\Z^{56}:z\bmod4\in C\}.
\end{equation}
This section equips $L$ with its Gaussian structure, identifies its
coordinate-frame stabilizer, and constructs an additional integral
reflection from a second orthogonal frame. The frame exchange is then
recovered from two reflections.

\subsection{The lattice and its coordinate frame}
On the real coordinate space $\R^{56}$, the signed permutation $I$ is
orthogonal and satisfies $I^2=-\id$. Regard it as multiplication by $i$.
Use the Hermitian form $h$ of \eqref{eq:hermitianform}.
Then $2\Z^{56}\subseteq L\subseteq\tfrac12\Z^{56}$ and $iL=L$.
The defining presentation consists of the coordinate vectors
$2\mathbf e_{r,\,j}$, the halves of the signed seed translates in
\eqref{eq:seedcode}, and the integer lifts of $\cC$.

\begin{corollary}\label{cor:unimod}
The underlying real lattice is unimodular even of rank $56$. The lattice
$L$ is Gaussian integral and has minimum four.
\end{corollary}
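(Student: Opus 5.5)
The corollary follows from the standard Construction~A dictionary for quaternary codes, combined with the code properties already established. I will treat the unimodular and even properties first, then Gaussian integrality, then the minimum.

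\emph{Unimodularity.} Put $L_0=\{z\in\Z^{56}:z\bmod4\in C\}$, so that $L=\tfrac12L_0$. Then $4\Z^{56}\subseteq L_0\subseteq\Z^{56}$ and $[L_0:4\Z^{56}]=|C|$. Hence
\[
 \det L_0=4^{112}/|C|^2=2^{224}/2^{112}=2^{112},
\]
using $|C|=2^{56}$ from Lemma~\ref{lem:seedresidues}. Scaling by $\tfrac12$ divides the Gram determinant by $4^{56}=2^{112}$, so $\det L=1$.

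\emph{Integrality and evenness.} For $x=\tfrac12z$ and $y=\tfrac12z'$ we have $(x,y)=\tfrac14\,z\cdot z'$. Self-orthogonality of $C$ gives $z\cdot z'\equiv0\pmod4$, so $(x,y)\in\Z$. For norms, $(x,x)=\tfrac14\sum z_j^2$, and this is even precisely by the definition of an even quaternary code, noting that the normalized norm is unchanged modulo $2$ when multiples of four are added to entries. Lemma~\ref{lem:seedresidues} supplies that $C$ is even. An integral lattice of determinant one is unimodular, and we have just shown it is even.

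\emph{Gaussian integrality.} Since $I$ is a signed permutation preserving $C$ (it lies in $\mathcal J\subseteq H$), we get $IL=L$, and $L$ is a $\Z[i]$-module. By \eqref{eq:hermitianform},
\[
 h(x,y)=(x,y)+i(Ix,y),
\]
and both $(x,y)$ and $(Ix,y)$ are integers because $Ix\in L$. Hence $h$ takes values in $\Z[i]$.

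\emph{Minimum.} The coordinate vectors $2\mathbf e_{r,j}$ lie in $L$ and have norm $4$, so the minimum is at most $4$. Evenness already rules out norm $0$ for nonzero vectors, so it remains to exclude norm $2$. Suppose $x=\tfrac12z\ne0$ with $\sum z_j^2=8$.

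If $z\bmod4=0$, then every entry of $z$ is a multiple of $4$. A nonzero such entry contributes at least $16$ to the sum, a contradiction.

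Otherwise $z\bmod4$ is a nonzero word of $C$, and its Euclidean weight satisfies $\wt_E(z\bmod4)\le\sum z_j^2=8$. This holds because entries congruent to $\pm1$ contribute at least $1$, entries congruent to $2$ contribute at least $4$, and entries divisible by $4$ contribute at least $0$. But Theorem~\ref{thm:codewords16} states that the minimum Euclidean weight of $C$ is $16$, a contradiction.

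This comparison between $\sum z_j^2$ and the Euclidean weight is the only step needing care: one must check that reducing entries modulo $4$ cannot increase the weight. It does not, since $|z_j|^2$ is always at least the minimal square of a representative of its residue class.
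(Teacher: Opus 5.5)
Your proof is correct and follows the paper's argument essentially step for step. The paper uses the same ingredients: the index $|C|=2^{56}$ of the lattice over its scaled integer sublattice for unimodularity, self-orthogonality and evenness of $C$ for an even integral lattice, $IL=L$ for Gaussian integrality, and the zero/nonzero residue dichotomy with Theorem~\ref{thm:codewords16} for the minimum. One small wording slip: positive definiteness, not evenness, is what excludes norm $0$ for nonzero vectors. Evenness excludes norms $1$ and $3$, which leaves only norm $2$ to rule out.
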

\begin{proof}
Self-orthogonality of $C$ gives integrality of $L$, and
$\operatorname{wt}_E(C)\subseteq8\Z$ gives evenness. The sublattice $2\Z^{56}$ has covolume $2^{56}$,
and its index in $L$ is $|C|=2^{56}$, so $L$ is unimodular.
Since $I L=L$, integrality of both $(v,\,w)$ and $( I v,\,w)$ proves
Gaussian integrality. A nonzero doubled vector of squared norm less than
$16$ would either lie in $4\Z^{56}$, where this is impossible, or represent
a nonzero codeword of Euclidean weight less than $16$. Theorem~\ref{thm:codewords16}
excludes the latter possibility. The coordinate vectors have squared norm
four, so the minimum is four.
\end{proof}

A \emph{cross} is $X(v)=\mu_4v$ for a minimal vector $v$, where
$\mu_4=\{1,\,-1,\,i,\,-i\}$. Write $\cX$ for the set of crosses. A frame consists of $28$
mutually Hermitian-orthogonal crosses. Set
\[
 (f_{4r+1},\,\ldots,\,f_{4r+4})=2e_r(w_0,\,w_2,\,w_4,\,w_1)
       \quad(0\le r\le6),\qquad
 X_j=X(f_j),
\]
where $e_rv$ means that $v$ occupies row $r$, and $1\le j\le28$.
Thus each consecutive group of four indices is one coordinate row;
this single numbering is used for vectors and crosses throughout.
The coordinate frame and the fixed local data are
\[
 F=\{X_1,\,\ldots,\,X_{28}\},\qquad
 x=X_1,\quad y=X_2,\quad Q_1=\{X_1,\,X_2,\,X_3,\,X_4\}.
\]
Subscripts on stabilizers indicate that listed crosses are fixed
individually; a quartet or frame appearing as a subscript is fixed setwise.

\begin{corollary}\label{cor:framestabilizer}
The code group $H$ is the full Gaussian stabilizer of $F$.
\end{corollary}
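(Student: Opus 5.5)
We have to show two things: every element of $H$ stabilizes $F$ and is a Gaussian lattice automorphism, and conversely every Gaussian isometry $g$ of $L$ with $g(F)=F$ lies in $H$.

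The first inclusion is immediate. Elements of $H$ are signed coordinate permutations, so they permute the vectors $\pm2\mathbf e_{r,j}$. By Proposition~\ref{prop:H0} they commute with $I$, so they are $\Z[i]$-linear and preserve $h$. They preserve $C$, hence they preserve $L=\tfrac12\{z:z\bmod4\in C\}$, and they map $F$ to itself.

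For the converse, let $g\in\Aut_{\Z[i]}(L,h)$ with $g(F)=F$.
\begin{itemize}
\item Since $g$ permutes the $28$ crosses of $F$, it permutes their union. This union is the set of $112$ vectors $\pm f_j$, $\pm if_j$. Because $iw_a$ is a signed real coordinate vector, this set is exactly the $112$ vectors $\pm2\mathbf e_{r,k}$ of squared norm four along the $56$ real axes.
\item The real-linear map $g$ therefore sends each scaled real basis vector $2\mathbf e_{r,k}$ to $\pm2\mathbf e_{r',k'}$. So $g$ is a signed permutation matrix on $\R^{56}$.
\item It commutes with $I$ because it is $\Z[i]$-linear.
\item A signed permutation preserves $\Z^{56}$ and reduction modulo four. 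From $gL=L$ we get that the induced signed permutation on $\Z_4^{56}$ maps $C$ onto $C$. Here we use that $L\supseteq2\Z^{56}$ and that $C=2L/4\Z^{56}$ under the identification $z\mapsto z\bmod 4$.
\item Hence $g\in\Aut(C)$, and $\Aut(C)=H$ by Proposition~\ref{prop:H0}.
\end{itemize}

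There is one point to note. Proposition~\ref{prop:H0} shows that every element of $\Aut(C)$ commutes with $I$, so the full signed automorphism group coincides with its Gaussian part. Nothing is lost by imposing complex linearity.

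The only step needing care is the identification of the union of the crosses of $F$ with the real coordinate axes. Writing it out: $f_{4r+1}=2e_rw_0$ with $w_0=\mathbf e_1$, and $iw_0=I\mathbf e_1=\mathbf e_0$, since $I$ sends $v_0\mapsto$ position $1$ with sign given by \eqref{eq:signedgenerators}. Similarly the pairs $(w_2,iw_2)$, $(w_4,iw_4)$, $(w_1,iw_1)$ cover positions $2,3$, $4,5$ and $6,7$ up to sign. So the $28$ crosses account for all $56$ axes, each exactly once up to sign, and the argument above is complete. No minimal-vector count beyond Corollary~\ref{cor:unimod} is needed.
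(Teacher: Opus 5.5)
Your proposal is correct and follows the paper's own argument. The forward inclusion uses Proposition~\ref{prop:H0}: every signed automorphism of $C$ commutes with $I$, hence preserves $L$, $h$ and $F$. For the converse, a Gaussian isometry preserving $F$ permutes the $56$ signed real axes, so it preserves $C=2L/4\Z^{56}$ and lies in $\Aut(C)=H$. The one slip is harmless: \eqref{eq:signedgenerators} gives $I\mathbf e_1=-\mathbf e_0$ rather than $\mathbf e_0$, but only the axes up to sign enter.
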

\begin{proof}
By Proposition~\ref{prop:H0}, all signed automorphisms of $C$
commute with $I$. They therefore preserve $L$, $h$ and $F$.
Conversely, a Gaussian isometry preserving $F$ permutes the $56$ real
coordinate directions up to signs. Reducing $2L$ modulo four shows that
it preserves $C$. It consequently belongs to $H$.
\end{proof}
The element $\GT\in H$ fixes $f=f_1$, so $\GT\in H_x$.

\begin{theorem}\label{thm:shellproved}
The minimal shell has $112+1792+14336=16240$ vectors and
$28+448+3584=4060$ Gaussian crosses. The group $H$ is transitive on each
of these three families.
\end{theorem}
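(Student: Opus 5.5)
The plan is to translate minimal vectors of $L$ into codewords of $C$ and read off the counts from Theorem~\ref{thm:codewords16}. A vector $v\in L$ of squared norm four has the form $v=\tfrac12 z$ with $z\in\Z^{56}$, $z\bmod4\in C$ and $\sum z_j^2=16$. Split into two cases according to whether $z\bmod2$ vanishes.

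\emph{Case $z\bmod 2=0$.} Then $z=2y$ with $\sum y_j^2=4$, so either $y=\pm2\mathbf e_j$ or $y$ has four entries $\pm1$. In the first subcase $z=\pm4\mathbf e_j$, giving the $112$ vectors $\pm2\mathbf e_j$ of the coordinate frame. In the second subcase $y\bmod2$ would be a weight-four word of $\cC$, which Proposition~\ref{prop:torsionmin} excludes.

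\emph{Case $z\bmod2\ne0$.} Then $z\bmod4$ is a nonzero codeword of Euclidean weight at most $16$. By Theorem~\ref{thm:codewords16} its weight is exactly $16$, with sixteen entries $\pm1$. Since $z$ has squared norm $16$ and sixteen odd entries, every entry of $z$ is $\pm1$ or $0$. Hence $z$ is the unique lift with entries in $\{0,\pm1\}$, and conversely each minimum word lifts this way. This identifies the remaining minimal vectors bijectively with the $1792+14336$ minimum codewords.

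For the counts, note that $\mu_4$ acts freely on nonzero vectors, so the number of crosses is $16240/4=4060$. Each family is $I$-stable: the coordinate vectors are, and $H\ni I$ preserves the two code orbits. This splits the crosses as $28+448+3584$.

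For transitivity I expect little resistance. On the $112$ coordinate vectors, the group $H$ permutes blocks transitively by Proposition~\ref{prop:binaryaut}, and it contains the translations and sign kernel. Within a block, the eight real positions $\pm\mathbf e$ form one orbit, because the translations $e\mapsto e+\tau p$ for $\tau\in\operatorname{End}(E)$ reach every $e$, and $-\id\in H$ supplies the signs. On the other two families, transitivity is exactly the orbit statement of Theorem~\ref{thm:codewords16}, transported through the $H$-equivariant bijection $z\mapsto\tfrac12 z$. Transitivity on vectors in each family implies transitivity on the corresponding crosses.

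The only delicate point is the vector–codeword correspondence. It needs that a vector with zero residue and norm $16$ cannot produce anything beyond the coordinate vectors, which rests on the torsion minimum of Proposition~\ref{prop:torsionmin}. It also needs that odd-residue lifts are forced to have entries $\pm1$, which follows from Lemma~\ref{lem:lowexclusions}.
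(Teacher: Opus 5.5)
Your proposal is correct and follows the paper's proof. Like the paper, you identify the non-coordinate minimal vectors with the unique $\{0,\pm1\}$-lifts of the minimum codewords of Theorem~\ref{thm:codewords16}, then conclude with the $H$-orbit statements and the free action of $\mu_4$.

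The differences are cosmetic:
\begin{itemize}
\item You split on $z\bmod 2$ rather than $z\bmod 4$, so you handle the zero-residue case directly with Proposition~\ref{prop:torsionmin}.
\item You obtain transitivity on coordinate vectors from lifts of the binary group $\Gamma$ together with $-\id$, where the paper uses the explicit elements $\mathcal J$ and $P$.
\end{itemize}
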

\begin{proof}
A minimal vector whose doubled array reduces to zero modulo four is one
of the $112$ coordinate vectors. Every other doubled array has squared norm
$16$, so its codeword has Euclidean weight $16$ and is one of the two families
in Theorem~\ref{thm:codewords16}. Its entries are $\pm1$, so each such
codeword has a unique lift of squared norm $16$. This gives the vector counts.
The two codeword families are $H$-orbits. The coordinate vectors also form
one orbit, since $\mathcal J$ is transitive on the sixteen signed coordinate
vectors of a block and $P$ cycles the seven blocks. Finally $\mu_4$ acts
freely on nonzero vectors, giving the cross counts and transitivity.
\end{proof}

\begin{remark}[Comparison with the classical lattice]\label{rem:identify}
In the displayed signed coordinates, $L$ is the classical Conway--Wales
lattice normalized to minimum four; the seeds are those reproduced in
\cite[\S5.1, Tables~1--2]{DuncanII} from \cite{Conway}.
This comparison is separate from the construction, automorphism-group
order, and simplicity proofs below, none of which uses a classical
lattice or group identification.
\end{remark}

\subsection{The second orthogonal frame}\label{sec:S}
Put $f=f_1$ and $b=B/2$. Define $F'$ as the orbit of the heavy seed
cross under $\langle P,\,\mathcal J\rangle$, with the labeling
\begin{equation}\label{eq:secondframe}
 F'=\langle P,\,\mathcal J\rangle\cdot X(b)
   =\{X(P^{-r}J_sb):0\le r\le6,\ 0\le s\le3\}.
\end{equation}
Its $56$ real representatives, used for the binary glue, are indexed
in the original eight real positions:
\begin{equation}\label{eq:secondrealframe}
 (b_{r,\,0},\,\ldots,\,b_{r,\,7})
 =P^{-r}(-ib,\,b,\,-iJ_1 b,\,J_1 b,\,-iJ_2 b,\,J_2 b,\,-iJ_3b,\,J_3b).
\end{equation}

\begin{lemma}[The labeled second frame]\label{lem:secondframe}
The set $F'$ is a frame of heavy minimal crosses, disjoint from $F$.
The vectors $b_{r,\,j}$ form a real orthogonal basis of squared norm four
and belong to $L$. The operators $\mathcal J$ and $\GT$ permute them
with the same signed index actions as on the coordinate representatives
$2\mathbf e_{r,\,j}$.
\end{lemma}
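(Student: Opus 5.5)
The plan is to use the seed correlation already proved in Lemma~\ref{lem:codeorthogonal}, namely \eqref{eq:BBcorrelation}, together with the fact that $P$ and $\mathcal J$ are Gaussian isometries preserving $L$.

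\textbf{Membership and minimality.} The heavy seed $B$ is a Euclidean-weight-$16$ word of $C$, so $b=B/2\in L$ with squared norm four. It is therefore minimal by Corollary~\ref{cor:unimod}. Its residue is heavy by \eqref{eq:traceseedresidues}. Since $P$, $I$ and the $J_s$ lie in $H$, which preserves $L$ and the heavy family (Theorem~\ref{thm:shellproved}), every $P^{-r}J_sb$ and every $-iP^{-r}J_sb$ is a heavy minimal vector of $L$. In particular no cross of $F'$ is a coordinate cross, so $F'\cap F=\varnothing$.

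\textbf{Orthogonality and the real basis.} For the orthogonality I will evaluate
\[
h(P^{-r}J_sb,\,P^{-r'}J_{s'}b).
\]
Since $P$ and the $J_s$ are Gaussian isometries and $J_s^{-1}J_{s'}=\pm J_{s''}$ (with $s''=0$ iff $s=s'$), this reduces to $\tfrac14\sum_k h(B_k,\,J_{s''}B_{k+d})$ up to a unit, where $d=r-r'$. By \eqref{eq:BBcorrelation} this equals $4\delta_{d,0}\delta_{s'',0}$. Hence the $28$ vectors are mutually Hermitian-orthogonal, and the $28$ crosses are distinct and form a frame.

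Hermitian orthogonality of $u$ and $v$ gives $(u,v)=(Iu,v)=0$. Also $(u,Iu)=0$ because $I$ is skew. Thus the $56$ vectors $b_{r,j}$ in \eqref{eq:secondrealframe} are real-orthogonal of squared norm four, and they form a real orthogonal basis.

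\textbf{Equivariance.} For the signed index actions, I first note that $J_t$ commutes with $P$ and with $I$, and that $J_tJ_s=\pm I^{\epsilon}J_{s'}$ by the quaternion relations. Applying $J_t$ to the tuple \eqref{eq:secondrealframe} therefore permutes its entries with signs. I will check that this agrees with the signed permutation $J_t$ induces on $(\mathbf e_0,\dots,\mathbf e_7)$ via \eqref{eq:signedgenerators}. This is exactly the statement that the map $\mathbf e_{r,j}\mapsto b_{r,j}/2$ intertwines $J_t$. The check is a finite one on generators $I$, $J_1$, $J_2$, made possible by the ordering $(-ib,b,-iJ_1b,J_1b,\dots)$.

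For $\GT$, Lemma~\ref{lem:frobenius} gives $\GT b=b$, $\GT P\GT^{-1}=P^2$, and $\GT J_s\GT^{-1}=J_{s'}$ cyclically, with $\GT$ commuting with $I$. Hence
\[
\GT P^{-r}J_sb=P^{-2r}J_{s'}b.
\]
This matches the row map $r\mapsto 2r$ combined with $\psi$ on indices, which is exactly how $\GT$ acts on $2\mathbf e_{r,j}$ through \eqref{eq:frobenius}.

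\textbf{Expected obstacle.} The main risk is sign and index bookkeeping. The row direction must be consistent: $P^{-r}$ in $F'$ against $(Pv)_r=v_{r-1}$ on coordinates. The signs in $J_tJ_s$ must also match \eqref{eq:signedgenerators}. I would verify these on the generators $I$, $J_1$, $J_2$, $P$, $\GT$ only, since both actions are homomorphisms.
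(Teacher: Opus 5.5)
Your proposal is correct and follows the paper's argument. Membership and heaviness come from $b$ and its $H$-images. Orthogonality comes from moving the quaternion operators across the Hermitian form to reach \eqref{eq:BBcorrelation}. Equivariance comes from the commutation relations of Lemma~\ref{lem:frobenius}.

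The one difference is in the equivariance step. You plan to check the signs against \eqref{eq:signedgenerators} and $\psi$ directly. The paper avoids this: applying the same operator tuple to $f$, with $P^r$ in place of $P^{-r}$, gives the coordinate representatives $2\mathbf e_{r,j}$, and $\GT$ fixes $f$ as well as $b$. So the signed index actions agree automatically.
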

\begin{proof}
Every displayed vector is an $H$-image of $b$, so it belongs to $L$ and
is a heavy minimum by Theorem~\ref{thm:shellproved}. To compare two
members of \eqref{eq:secondframe}, move their signed quaternion operators
to one side of the Hermitian pairing. Since $\mathcal J$ is a group and
commutes with $P$, the result, up to a Gaussian unit, is one of the
$BB$ correlations in \eqref{eq:BBcorrelation}. It is zero for distinct
indices and four on the diagonal. Thus the $28$ crosses are mutually
orthogonal. Taking a representative and its $i$-multiple on each cross
gives the $56$ real orthogonal vectors in \eqref{eq:secondrealframe}.
None of their crosses is coordinate, because the heavy family is disjoint
from the coordinate family.

For the signed index actions, replacing $b$ by $f$ and $P^{-r}$ by $P^r$
in \eqref{eq:secondrealframe} gives the coordinate representatives
$2\mathbf e_{r,\,j}$. Left multiplication by a member of $\mathcal J$
permutes the eight listed operators up to signs, independently of whether
they are applied to $f$ or to $b$. This proves the assertion for
$\mathcal J$. By Lemma~\ref{lem:frobenius}, $\GT$ fixes both $f$ and
$b$, normalizes $\mathcal J$, and conjugates $P$ to $P^2$. It therefore
sends $r$ to $2r$ and has the same signed action on the inner index in
both frames. This proves the assertion for $\GT$.
\end{proof}

We use the same seven-block labels for $F'$ as for $F$.
For a Fano line $\lambda\subset\Pi$ write
$\mathcal R_\lambda=\{r:p_r\in\lambda\}$. Its $24$ real frame vectors span a
complex $12$-space. For the line chosen below, the binary glue embeds a
sublattice whose dual is half itself; the reflection in its span therefore
preserves $L$. The resulting frame-code identification is given in
Remark~\ref{cor:secondframecode}.

\subsection{A self-dual three-block code}\label{sec:threeblockcode}
For a Fano line $\lambda$ of block labels abbreviate
\[
 \cC_\lambda=\cC_{\lambda\times E},
\]
identifying words supported on $\lambda\times E$ with their restrictions
to these $24$ positions.
\begin{lemma}\label{lem:threeblockselfdual}
The shortened binary code $\cC_\lambda$ is self-dual of length $24$
and dimension $12$, and
\[
 \cC_\lambda=\cD|_{\lambda\times E}.
\]

\end{lemma}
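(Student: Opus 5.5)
The plan is to use the standard duality between shortening and puncturing, and to reduce everything to a dimension count and a self-orthogonality check for the punctured code $\cD|_{\lambda\times E}$. Write $\Sigma=\lambda\times E$. Since $\cC=\cD^\perp$ by Proposition~\ref{thm:binarynormal}, a word supported on $\Sigma$ lies in $\cC$ exactly when its restriction is orthogonal to every restriction $d|_\Sigma$ with $d\in\cD$. Hence
\[
 \cC_\lambda=(\cD|_\Sigma)^\perp\subseteq\F_2^{24}.
\]
It therefore suffices to prove two things: $\dim\cD|_\Sigma=12$, and $\cD|_\Sigma$ is self-orthogonal. Then $\cD|_\Sigma\subseteq(\cD|_\Sigma)^\perp=\cC_\lambda$, and both sides have dimension $12$. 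This gives equality, so $\cC_\lambda$ is self-dual of length $24$ and dimension $12$.

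For the dimension, I compute the kernel of restriction to $\Sigma$ using the normal form \eqref{eq:binarynormal}. A word $d$ with class $c\ne0$ has, by Proposition~\ref{thm:binarynormal}, point sum $c\neq0$ in every block. Its support therefore meets every block, so it cannot vanish on the blocks of $\lambda$. Thus $c=0$, and the word is $\epsilon_p+(Mp)\cdot e$. Vanishing on block $p\in\lambda$ forces $Mp=0$ and $\epsilon_p=0$. Since $\lambda\cup\{0\}$ is a $2$-dimensional subspace of $E$, the condition on $M$ is that $M$ kills this plane. That leaves $\dim\operatorname{Hom}(E/\langle\lambda\rangle,E^*)=3$ free parameters. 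The constants $\epsilon_p$ for the four blocks outside $\lambda$ remain free. The kernel thus has dimension $3+4=7$, and $\dim\cD|_\Sigma=19-7=12$.

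For self-orthogonality, I reuse the blockwise pairing computation from the proof of Proposition~\ref{thm:binarynormal}. Take $d,d'\in\cD$ with parameters $(\epsilon,M,c)$ and $(\epsilon',M',c')$. The pairing of their restrictions to a single block $p$ is computed term by term:
\begin{itemize}
\item block constants pair to zero against anything, since every block restriction has even weight;
\item two affine terms pair to zero on a block;
\item $\theta_p(c)$ against $(M'p)\cdot e$ gives $c\cdot M'p$, and symmetrically $\theta_p(c')$ against $(Mp)\cdot e$ gives $c'\cdot Mp$;
\item two quadratic terms give $\det(c,c',p)$ by \eqref{eq:productidentity}.
\end{itemize}
So the block-$p$ pairing is $\beta(p)=c\cdot M'p+c'\cdot Mp+\det(c,c',p)$. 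This is a linear functional of $p\in E$. For a Fano line $\lambda=\{p,q,p+q\}$ the restricted pairing is
\[
 \beta(p)+\beta(q)+\beta(p+q)=0 .
\]

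The only delicate step is confirming that the block pairing really is linear in $p$ once the section-dependent terms $\eta_c(p)\cdot e$ are included. These terms are linear in $e$, so they pair trivially with the other affine terms. Their pairing with the quadratic parts is already absorbed into identity \eqref{eq:productidentity}, which was proved for the full sections $\theta_p(c)$ and is invariant under the normalization changes of Lemma~\ref{lem:hammingsections}. Once this is accepted, the argument is purely formal. The conclusion holds for every Fano line $\lambda$, in particular for the $\GT$-stable line used later.
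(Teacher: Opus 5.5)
Your proposal is correct and follows essentially the paper's route: self-orthogonality of $\cD|_{\lambda\times E}$ from the blockwise pairing $c\cdot M'p+c'\cdot Mp+\det(c,c',p)$, which is linear in $p$ and so sums to zero over a Fano line, followed by the dimension count and the duality $\cC_\lambda=(\cD|_{\lambda\times E})^\perp$ between shortening and puncturing. The one difference is that you get $\dim\cD|_{\lambda\times E}=12$ as $19-7$ from the kernel of restriction, whereas the paper counts the twelve parameters directly; your version makes their independence explicit.
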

\begin{proof}
The restriction $\cD|_{\lambda\times E}$ has three block constants,
six independent linear coefficients, and three common quadratic parameters. It is therefore
$12$-dimensional. The products of affine functions sum to zero on each
block. The other pairings vanish on summing over $\lambda$, by
\eqref{eq:productidentity} and $\sum_{p\in\lambda}p=0$.
Thus the restriction is self-orthogonal and hence self-dual. Duality of
puncturing and shortening gives the asserted identification with
$\cC_{\lambda\times E}$. 
\end{proof}

We need only one orbit generator for this code. Choose the line
\[
 \lambda=\{1,\,4,\,5\}\subset\Pi,
 \quad\text{that is, coordinate rows }\{1,\,2,\,4\}.
\]
It is fixed by $\GT$. In the inner Hamming coordinates let $\mathbf q$ have support
\begin{equation}\label{eq:gluehexad}
 \{(1,\,5),\,(1,\,6),\,(5,\,0),\,(5,\,3),\,(4,\,1),\,(4,\,2)\}.
\end{equation}
In the original real basis $(\mathbf e_0,\,\ldots,\,\mathbf e_7)$ it selects
positions $5$, $6$ of row $1$, positions $1$, $3$ of row $2$, and positions
$3$, $5$ of row $4$. These are basis indices, not column numbers in the
signed ordering of Table~\ref{tab:hamminglabels}.

\begin{lemma}\label{lem:hexadorbit}
The permutation orbit of $\mathbf q$ under $\langle\mathcal J,\,\GT\rangle$
spans $\cC_\lambda$.
\end{lemma}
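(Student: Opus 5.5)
First I will check that the orbit lies in $\cC_\lambda$. By Lemma~\ref{lem:threeblockselfdual} it suffices to show $\mathbf q\in\cD|_{\lambda\times E}$, or equivalently that $\mathbf q$, extended by zero, is orthogonal to $\cD$. In each block of $\lambda$ the word $\mathbf q$ selects a pair, so it is orthogonal to the block indicators. I will verify the remaining orthogonality against the normal form \eqref{eq:binarynormal}. For the bilinear terms, the condition is that the pair differences $s_p$ in the blocks $p\in\lambda$ satisfy $\sum_{p\in\lambda}p\otimes s_p=0$. Since the three labels of $\lambda$ have only the relation $\sum p=0$, this means the three differences are a common vector $s$. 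For the quadratic classes, the sum $\sum_{p\in\lambda}\theta_p(c)$ over the selected pairs must vanish for each $c$ in the basis $1,2,4$. I will read both conditions off Table~\ref{tab:hamminglabels}, where the signed ordering must be converted to basis indices, and Table~\ref{tab:eta}. Once $\mathbf q$ is known to lie in $\cC_\lambda$, the whole orbit does as well, because $\mathcal J$ and $\GT$ preserve $\cC$ and fix the blocks of $\lambda$ setwise: $\mathcal J$ fixes every block, and $\GT$ fixes $\lambda$ as a line.

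For the spanning claim I will reuse the parametrization from the proof of Lemma~\ref{lem:threeblockselfdual}. There $\cD|_{\lambda\times E}$ is described by $3$ block constants, $6$ linear coefficients (the forms $M$ restricted to $\lambda$, modulo the relation of $\lambda$) and $3$ quadratic classes $c$. I will set up a filtration $\mathcal F_0\subset\mathcal F_1\subset\mathcal F_2=\cC_\lambda$. Here $\mathcal F_0$ is the span of the block constants. $\mathcal F_1$ is the subcode with $c=0$, which has dimension $9$ and is a copy of $\cD_0|_{\lambda\times E}$. The orbit is generated from $\mathbf q$ by the block translations $e\mapsto e+\tau p$ coming from $I$, $J_1$, $J_2$, whose $\tau$ are $\id$, $T_P^4$, $T_P$ and span $\F_2[T_P]$, together with $\GT$. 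The argument then follows the pattern of Lemma~\ref{lem:seedresidues}. First I determine the quadratic class $c_0$ of $\mathbf q$, which should be nonzero, so that $\mathbf q\notin\mathcal F_1$. Next, conjugation by $\GT$ acts on the class by a matrix of order three, and I check that the orbit of $c_0$ spans $E$, which gives all three quadratic dimensions modulo $\mathcal F_1$. Then, by \eqref{eq:tracetranslation}, a difference $t_\tau\mathbf q+\mathbf q$ has class zero and linear part $\det(c_0,\tau p,e)$ plus block constants. Letting $\tau$ run over $\F_2[T_P]$ and the class run over the $\GT$-images gives forms $\det(c,\tau p,e)$ restricted to $\lambda$. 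Finally, the differences $t_\tau\mathbf q'+\mathbf q'$ for $\mathbf q'\in\mathcal F_1$ produce the block constants $\ell(\tau p)$.

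I expect the main obstacle to be showing that these differences fill out all $6+3$ dimensions of $\mathcal F_1$. The forms $\det(c,\tau p,e)$ are skew-type and may not reach the symmetric part directly. I will first try a direct rank computation. It is only a $12\times12$ binary matrix, obtained by listing the orbit words as explicit subsets of the $24$ positions; the orbit of the hexad has size at most $16\cdot3$. If a structural argument is wanted instead, I will use the same trick as Lemma~\ref{lem:seedresidues}: since $T_P$ has no invariant line, $\det(c,T_Pp,e)$ cannot be symmetric for $c\ne0$. Iterating with $T_P^2$ and $\GT$ should then produce enough independent bilinear forms on $\lambda\times E$, after which second differences supply the constants. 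If the structural route stalls, the explicit rank check settles the lemma in any case.
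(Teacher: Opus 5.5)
Your skeleton is the paper's: check $\mathbf q\in\cC_\lambda$, use $\GT$ to move its quadratic class $3\mapsto2\mapsto7$ through a basis of $E$, take first translation differences to get the linear coefficients modulo constants, and take second differences to get the block constants. Your orthogonality check for membership is equivalent to the paper's normal-form check: class $3$, linear corrections $(7,\,4,\,3)$ summing to zero.

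The gap is exactly at the step you flag, and the structural route you suggest does not close it. The non-symmetry argument of Lemma~\ref{lem:seedresidues} concerns forms on $E\times E$. After restriction to $\widehat\lambda\times E$, with $\widehat\lambda=\lambda\cup\{0\}$, the linear coefficients are maps $\widehat\lambda\to E^*$, and there is no symmetric/skew splitting to exploit. The real danger is that a nonzero combination vanishes after restriction. What must be shown is that $\det(c,\,p,\,e)+\det(d,\,T_Pp,\,e)=0$ for all $p\in\widehat\lambda$ and all $e$ forces $c=d=0$. These six parameters then fill $\operatorname{Hom}(\widehat\lambda,\,E^*)$.

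The paper supplies this as follows. Since $T_P$ has no invariant line or plane, choose nonzero $u\in\widehat\lambda\cap T_P^{-1}\widehat\lambda$ and put $v=T_Pu$. Then $u$, $v$, $T_Pv$ is a basis of $E$, and $\widehat\lambda=\langle u,\,v\rangle$. At $p=u$, equality of the two functionals in $e$ says $c\wedge u=d\wedge v$ in $\Lambda^2E$, which forces $c,\,d\in\widehat\lambda$. At $p=v$ it says $c\wedge v=d\wedge T_Pv$. The left side is a multiple of $u\wedge v$, while the right side lies in $\langle u\wedge T_Pv,\,v\wedge T_Pv\rangle$. Hence $d=0$ and $c\in\langle v\rangle$, and the first equation then gives $c=0$. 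So $I$ and $J_2$ alone ($\tau=\id$ and $T_P$) already yield all six linear coefficients.

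For the constants, take $\tau=\id$ in the second differences. This gives $p\mapsto M(p)(p)$ for arbitrary $M:\widehat\lambda\to E^*$, i.e.\ all quadratic functions on the plane. Their values on its three nonzero points span $\F_2^\lambda$; state this explicitly rather than leaving it as generic constants $\ell(\tau p)$.

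Your fallback rank computation would settle this finite statement. As written, however, the proposal leaves its only nontrivial step unproved.
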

\begin{proof}
The word has common quadratic class $c=3$. In the section coordinates
its remaining constants, on block labels $1$, $5$, $4$, are $(0,\,1,\,0)$ and
its linear corrections are $(7,\,4,\,3)$; the latter sum to zero. Thus
$\mathbf q\in \cC_\lambda$. Frobenius sends its quadratic class through
$3$, $2$, $7$, which are a basis of $E$.

Let $\widehat\lambda=\lambda\cup\{0\}$. Translation differences using $I$, $J_2$ supply,
modulo block constants, the restrictions to $\widehat\lambda\times E$ of
\[
 \det(c,\,p,\,e)+\det(d,\,T_Pp,\,e),\qquad c,\,d\in E.
\]
These six parameters are independent on $\widehat\lambda$. Choose nonzero
$u\in \widehat\lambda\cap T_P^{-1}\widehat\lambda$ and put $v=T_Pu$.
Since $T_P$ has no invariant line or plane, $u$, $v$, $T_Pv$ are independent.
If the displayed form vanishes, its equation at $p=u$ forces
$c$, $d$ to lie in $\widehat\lambda=\langle u,\,v\rangle$. Its equation at $p=v$, compared in
this basis, then gives $d=0$ and $c\in\langle v\rangle$;
the first equation now gives $c=0$.
Thus the first differences give all six linear coefficient maps
$\widehat\lambda\to E^*$, modulo constants.

Apply the translation $e\mapsto e+p$ once more. These differences
give the block-constant functions $p\mapsto M(p)(p)$, with $M:\widehat\lambda\to E^*$
arbitrary. On a two-dimensional plane their restrictions to the three
nonzero points give all three independent functions (the two linear
monomials and their product suffice). Thus the generated code contains
all three constants, all six linear coefficients, and all three
quadratic classes. Its dimension is $12$, as required.
\end{proof}

\subsection{Integral reflections from self-dual binary glue}\label{sec:integralreflection}
The following lemma gives an integral reflection. The dual is taken in
the real Euclidean span of $\Lambda$.
\begin{lemma}\label{lem:modularreflection}
Let $L$ be integral and let $\Lambda\subseteq L$ be a sublattice with
$2\Lambda^*\subseteq \Lambda$. Then
\[
 r_\Lambda=2\pr_{\Lambda_{\R}}-\id
\]
preserves $L$. It is a Gaussian isometry when $\Lambda_{\R}$ is complex.
Primitivity of the embedding $\Lambda\subseteq L$ is not required.
\end{lemma}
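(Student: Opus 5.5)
Write $V=\Lambda_{\R}$ and $\pi=\pr_V$. Since $\pi$ is orthogonal, $r_\Lambda=2\pi-\id$ is an orthogonal involution: it fixes $V$ pointwise and negates $V^\perp$. It therefore suffices to show $r_\Lambda(v)\in L$ for every $v\in L$. Because $r_\Lambda(v)=2\pi(v)-v$ and $v\in L$, this reduces to proving $2\pi(v)\in L$.

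The key step is to place $\pi(v)$ in the dual lattice $\Lambda^*$. For every $w\in\Lambda$ we have $(\pi(v),\,w)=(v,\,w)$, because $v-\pi(v)\perp V$. Since $w\in\Lambda\subseteq L$ and $L$ is integral, this pairing lies in $\Z$. Hence $\pi(v)$ lies in $V$ and pairs integrally with $\Lambda$, so $\pi(v)\in\Lambda^*$. The hypothesis $2\Lambda^*\subseteq\Lambda\subseteq L$ now gives $2\pi(v)\in L$, and therefore $r_\Lambda(v)\in L$. The argument uses only $\Lambda\subseteq L$ and never that $L\cap V=\Lambda$, so primitivity of the embedding is not needed, as the statement says.

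For the Gaussian assertion, assume $V$ is a complex subspace, that is, $IV=V$. Then $V^\perp$ is also $I$-stable, because $I$ is orthogonal: for $u\in V^\perp$ and $x\in V$ we have $(Iu,\,x)=-(u,\,Ix)=0$. So $I$ preserves both summands of $\R^{56}=V\oplus V^\perp$ and commutes with $\pi$, hence with $r_\Lambda$. An orthogonal map commuting with $I$ preserves $h(v,\,w)=(v,\,w)+i(Iv,\,w)$, so $r_\Lambda$ is a $\Z[i]$-linear isometry of $(L,\,h)$.

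No step here is a real obstacle. The only point needing care is that the dual is taken inside $V$, which is exactly why $\pi(v)$, and not $v$ itself, lies in $\Lambda^*$.
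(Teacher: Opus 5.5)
Your proof is correct and follows essentially the same route as the paper: integrality places $\pr_{\Lambda_{\R}}v$ in $\Lambda^*$, the hypothesis $2\Lambda^*\subseteq\Lambda\subseteq L$ gives $2\pr_{\Lambda_{\R}}v\in L$, and involutivity upgrades $r_\Lambda L\subseteq L$ to equality. For the Gaussian assertion, you use that orthogonal projection onto an $I$-stable subspace commutes with $I$. You simply spell out in more detail why $V^\perp$ is $I$-stable and why commuting with $I$ preserves $h$.
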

\begin{proof}
For $v\in L$, integrality gives $\pr_{\Lambda_{\R}}v\in \Lambda^*$. Hence
$2\pr_{\Lambda_{\R}}v\in \Lambda\subseteq L$ and $r_\Lambda v\in L$. Since $r_\Lambda^2=\id$,
inclusion is equality. Projection onto a complex subspace commutes
with multiplication by $i$.
\end{proof}

\begin{proposition}\label{prop:secondframereflection}
For the $\GT$-stable line $\lambda=\{1,\,4,\,5\}$ of the second frame,
the real span of its $24$ frame vectors contains a sublattice
$\Lambda_\lambda\subseteq L$ with
$\Lambda_\lambda^*=\tfrac12\Lambda_\lambda$. Consequently the reflection fixing
these twelve complex directions and negating their complement is a
Gaussian lattice isometry.
\end{proposition}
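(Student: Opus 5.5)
The plan is to build $\Lambda_\lambda$ exactly as the coordinate lattice is built from a binary code, but in the second frame. Let $b_{r,j}$ with $r\in\mathcal R_\lambda=\{1,2,4\}$ and $0\le j\le7$ be the $24$ real orthogonal vectors of squared norm four from Lemma~\ref{lem:secondframe}. For a binary word $a\in\F_2^{\lambda\times E}$, transported to these $24$ labels by the common index bijection of that lemma, put $b_a=\frac12\sum a_{r,j}b_{r,j}$. Define
\[
 \Lambda_\lambda=\Big\langle b_{r,j}\ (r\in\mathcal R_\lambda),\ b_a\ (a\in\cC_\lambda)\Big\rangle_{\Z}.
\]
The scaled basis $u_{r,j}=b_{r,j}/2$ is orthonormal, and in it $\Lambda_\lambda=\{\sum z_{r,j}u_{r,j}: z\in\Z^{24},\ z\bmod2\in\cC_\lambda\}$. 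This is the Construction~A lattice of $\cC_\lambda$ scaled by $\sqrt2$. Since $\cC_\lambda$ is self-dual (Lemma~\ref{lem:threeblockselfdual}), the unscaled Construction~A lattice $\frac1{\sqrt2}\Lambda_\lambda$ is unimodular. Hence $\Lambda_\lambda^*=\frac12\Lambda_\lambda$; directly, a vector $\sum y_{r,j}u_{r,j}$ lies in the dual exactly when $y\in\Z^{24}/2$ with $2y\bmod2\in\cC_\lambda^\perp=\cC_\lambda$, so $2\Lambda_\lambda^*=\Lambda_\lambda$. The real span is $\C$-stable, because $b_{r,2k}=-ib_{r,2k+1}$ by \eqref{eq:secondrealframe}. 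Lemma~\ref{lem:modularreflection} then gives the Gaussian isometry $r_*$.

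The real content is the inclusion $\Lambda_\lambda\subseteq L$. The vectors $b_{r,j}$ lie in $L$ by Lemma~\ref{lem:secondframe}, so it suffices that $b_a\in L$ for a spanning set of $\cC_\lambda$ modulo $2$. I will use Lemma~\ref{lem:hexadorbit}, which says the $\langle\mathcal J,\GT\rangle$-orbit of the hexad word $\mathbf q$ spans $\cC_\lambda$. By Lemma~\ref{lem:secondframe}, $\mathcal J$ and $\GT$ act on the $b_{r,j}$ with the same signed index permutations as on the coordinate vectors, and the permutation parts preserve $\lambda\times E$ because $\GT$ fixes $\lambda$. Hence $g\,b_{\mathbf q}$ equals $b_{g\mathbf q}$ up to signs, that is, up to adding integer combinations of the $b_{r,j}$. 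Signs do not matter, since each $b_{r,j}\in L$. Lattice closure under $H\supseteq\langle\mathcal J,\GT\rangle$ then reduces everything to the single membership $b_{\mathbf q}\in L$.

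So the main obstacle, and the hinge of the statement, is the six-vector relation: showing that $\frac12\big(b_{1,5}+b_{1,6}+b_{2,1}+b_{2,3}+b_{4,3}+b_{4,5}\big)$ lies in $L$ for some choice of signs. I would compute this explicitly. Using \eqref{eq:secondrealframe}, write each of the six vectors as $P^{-r}$ applied to $\pm b$, $\pm ib$, $\pm J_sb$, $\pm iJ_sb$. Then double the sum and reduce modulo four, obtaining the integer array $z=\sum\pm P^{-r}(\cdots)B$ divided by $2$. The aim is to check that $z\bmod4\in C$ by exhibiting it as a combination of seed translates plus $2\cC$. I expect the six heavy words to cancel modulo $2$ in pairs, so that the residue is zero and $z$ is $2\times$ a binary word. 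The check then becomes whether that binary word lies in $\cC=\cD^\perp$, which can be tested against the $19$ generators of $\cD$ via the normal form \eqref{eq:binarynormal}.

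If the residue does not vanish, I would instead verify self-orthogonality directly. That means checking that $b_{\mathbf q}$ has integral inner products with the generators of $L$, namely the coordinate vectors $2\mathbf e$, the seed halves, and the lifts of $\cC$, and then concluding by unimodularity (Corollary~\ref{cor:unimod}). The Hermitian pairings $h(b_{r,j},\,\cdot)$ are available from the correlation data \eqref{eq:BBcorrelation} and the $\mathsf M_{AB}$ table in Lemma~\ref{lem:codeorthogonal}.
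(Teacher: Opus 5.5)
Your construction of $\Lambda_\lambda$ as $2\Z^{24}+\cC_\lambda$ in the basis $b_{r,j}/2$ matches the paper's argument. So do the duality computation, the orbit reduction through Lemmas~\ref{lem:secondframe} and~\ref{lem:hexadorbit}, and the appeal to Lemma~\ref{lem:modularreflection}. Your remark $b_{r,2k}=-ib_{r,2k+1}$ is a useful explicit check that the span is complex.

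The gap is the step you yourself call the hinge. You never establish the membership $b_{\mathbf q}\in L$, and the mechanism you expect for it is wrong. The residues of the six heavy words $2b_{r,j}$ on the hexad positions do sum to zero, but that only makes $2b_{\mathbf q}$ integral. The vector $2b_{\mathbf q}$ itself is never in $2\Z^{56}$.

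With the paper's signs $(-,+,+,+,-,-)$ one has $b_{\mathbf q}=f_8+P^6WU_2b$. This vector has fifteen entries $\pm\tfrac12$ and the entry $\tfrac32$ on $\mathbf e_{1,6}$. Hence $2b_{\mathbf q}\equiv P^6WU_2B\pmod 4$ is a heavy codeword with nonzero residue. Changing signs adds heavy words $2b_{r,j}$ and cannot remove this residue. In row~$6$, the real basis positions $0$ and $2$ are met only by $b_{1,5}$ and $b_{4,5}$. The product $(2b_{1,5})_\omega(2b_{4,5})_\omega$ is $-1$ at $\omega=0$ and $+1$ at $\omega=2$. So for every sign choice, $2b_{\mathbf q}$ has an odd entry at one of these two positions. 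Your planned test, whether a binary word lies in $\cC$, therefore never arises.

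Your fallback is logically sound: check integral pairings with a generating set of $L$, then use $L=L^*$. But it remains only a plan. The tables you cite cover only the seed halves: the $B$-halves are just $\pm b_{r,j}$, and the $A$-halves come from $\mathsf M_{AB}$. The pairings with lifts of $\cC$ amount to checking that the residue of $2b_{\mathbf q}$ lies in $\cD$, and no table supplies that; the residue has to be computed explicitly.

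What the paper supplies, and your proposal lacks, is the certificate \eqref{eq:hexadembedding}. It writes the glue vector as a coordinate vector plus the $H$-image $P^6WU_2b$ of the heavy seed. Membership is then immediate from $W,U_2\in H$, and the only verification left is the single integer identity \eqref{eq:hexadintegercheck}.
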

\begin{proof}
The row indices of this line are $\mathcal R_\lambda=\{1,\,2,\,4\}=\mathcal Q$. Write $\cC_\lambda$ in the real position order of these
blocks. Let $F_\lambda$ be the integer span of the $24$ mutually
orthogonal vectors $b_{r,\,j}$, $r\in \mathcal R_\lambda$, of norm four, and set
\begin{equation}\label{eq:secondframemodule}
 \Lambda_\lambda=F_\lambda+
 \left\{\frac12\sum_{(r,\,j)\in \mathcal R_\lambda\times\{0,\,\ldots,\,7\}}
          c_{r,\,j}b_{r,\,j}:c\in \cC_\lambda\right\}.
\end{equation}
This is a lattice because addition of binary words introduces only
integer frame vectors. In the orthonormal basis $b_{r,\,j}/2$ it is
$2\Z^{24}+\cC_\lambda$. A vector pairs integrally with $2\Z^{24}$
exactly when it has the form $u/2$, with $u\in\Z^{24}$; pairing it
with the glue requires $u\bmod2\in \cC_\lambda^\perp=\cC_\lambda$.
Consequently $\Lambda_\lambda^*=\tfrac12\Lambda_\lambda$. This establishes the
duality property from the binary code; it remains to prove that the
glue vectors belong to $L$.

For this embedding a single six-vector identity suffices:
\begin{equation}\label{eq:hexadembedding}
 \frac{-b_{1,\,5}+b_{1,\,6}+b_{2,\,1}+b_{2,\,3}-b_{4,\,3}-b_{4,\,5}}{2}
       =f_8+P^6WU_2b.
\end{equation}
Both terms on the right belong to $L$, since $W\in H$ and $U_2\in H$.
For an entirely integral check, use $J_3=J_2J_1$ and
\eqref{eq:secondrealframe}. The identity becomes
\begin{equation}\label{eq:hexadintegercheck}
 \bigl[-P^6J_2-iP^6J_3+P^5(\id+J_1)-P^3(J_1+J_2)
       -2P^6WU_2\bigr]B=8\mathbf e_{1,\,6}.
\end{equation}
Substitution of the displayed signed row maps and the seed $B$ gives
zero in every row except row $1$, where the entry in real position $6$
is $8$. This proves \eqref{eq:hexadembedding}.

Changing a sign in the numerator changes its half by an element of
$F_\lambda$. Thus \eqref{eq:hexadembedding} realizes the glue word
$\mathbf q$. By Lemma~\ref{lem:secondframe}, the
isometries $\mathcal J$, $\GT$ preserve this line of the second frame
and act on its indices as on the code. They realize its whole permutation orbit
up to frame vectors. Lemma~\ref{lem:hexadorbit} realizes every word
of $\cC_\lambda$, proving $\Lambda_\lambda\subseteq L$.
The reflection assertion is Lemma~\ref{lem:modularreflection}.
\end{proof}

Use this same line for the generating reflection. The corresponding
complex subspaces of the two frames are
\begin{equation}\label{eq:twotwelvespaces}
\begin{aligned}
 V_\lambda&=\langle P^rJ_sf:r\in\mathcal Q,\ 0\le s\le3\rangle_{\C},\\
 V'_\lambda&=\langle P^{-r}J_sb:r\in\mathcal Q,\ 0\le s\le3\rangle_{\C}.
\end{aligned}
\end{equation}
The coordinate row sign and the new reflection are, respectively,
\begin{equation}\label{eq:tworeflections}
 D=E_2=2\pr_{V_\lambda}-\id\in H,\qquad
 r_*=2\pr_{V'_\lambda}-\id\in\Aut_{\Z[i]}(L,\,h).
\end{equation}
Indeed $D$ has row signs $(-1)^{2\cdot p}$, and
Proposition~\ref{prop:secondframereflection} proves the assertion for
$r_*$. Both reflections have positive and negative eigenspaces of complex
dimensions $12$ and $16$, and are determined by the two subspaces.

Put
\[
 G=\langle H,\,r_*\rangle\le\Aut_{\Z[i]}(L,\,h).
\]
This group is finite: the isometry group of a positive-definite lattice
acts faithfully on a finite norm-bounded set containing a lattice basis.
We next recover the frame exchange from $D$ and $r_*$.

\subsection{Recovering the frame exchange}\label{sec:preservation}
Because the two labeled frames are orthogonal with the same norms, there
is a unique ambient unitary map satisfying
\begin{equation}\label{eq:S}
 S(P^rJ_sf)=P^{-r}J_sb
 \qquad(0\le r\le6,\ 0\le s\le3).
\end{equation}
In particular $S(F)=F'$ and $S(V_\lambda)=V'_\lambda$. Conjugating orthogonal
projection by this unitary map gives
\begin{equation}\label{eq:ambientreflection}
 r_*=SDS^{-1}.
\end{equation}
At this point the map $S$ need not be a lattice automorphism. The
integral operators are $D$ and $r_*$. To express $S$ in terms of them
we verify two facts about the overlap of the frames: $S$ is a
self-adjoint involution of trace four, and its compression to $V_\lambda$
satisfies one quadratic equation. The remainder of the recovery is a
two-involution argument.

\begin{lemma}[The frame-overlap operator]\label{lem:unitary}
The map $S$ is a self-adjoint unitary involution centralizing $\mathcal J$, and
\[
 \operatorname{tr}_{\C}S=4,\qquad
 SP=P^{-1}S,\qquad S\GT=\GT S.
\]
\end{lemma}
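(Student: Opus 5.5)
The plan is to compute everything in the orthonormal coordinate basis $f_j/2$ and to reduce each claim to the correlations already established for the seed $B$. By definition $S$ sends the orthogonal basis $P^rJ_sf$ (all of norm four) to the orthogonal basis $P^{-r}J_sb$ (also of norm four). So $S$ is unitary, and its matrix entries are
\[
 \tfrac14\,h\bigl(P^{r}J_sf,\ P^{-r'}J_{s'}b\bigr).
\]
Since $f$ is the basis vector $2w_0$ in row $0$, each such entry reads off one complex coordinate of $P^{-r-r'}J_s^{*}J_{s'}b$ in row $0$, after moving $P^r$ and $J_s$ across (both are unitary). In particular the matrix of $S$ has the form $\tfrac12\,\mathsf Q(B_{-(r+r')})$-type blocks. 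More precisely, the $(r,r')$ block of $S$ is $\tfrac12\mathsf Q(B_k)$, where $k$ is determined by $r+r'$ up to the sign convention coming from $P^{-r}$. I will check this index bookkeeping carefully, since it is the only place where errors can enter.

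\textbf{Centralizing $\mathcal J$.} For $a\in\mathcal J$ we have
\[
 S(aP^rJ_sf)=S\bigl(P^r(aJ_s)f\bigr)=P^{-r}aJ_sb=aS(P^rJ_sf),
\]
because $\mathcal J$ is a group commuting with $P$ and $aJ_s=\pm I^\epsilon J_{s'}$ while $S$ is complex-linear. The same computation gives $SP=P^{-1}S$ on the basis, since $S(P\cdot P^rJ_sf)=P^{-r-1}J_sb=P^{-1}S(P^rJ_sf)$.

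\textbf{Commuting with $\GT$.} By Lemma~\ref{lem:frobenius}, $\GT$ fixes $f$ and $b$, conjugates $P$ to $P^2$, and permutes $J_1,J_2,J_3$ cyclically. This gives
\[
 \GT P^rJ_sf=P^{2r}J_{\pi s}f\qquad\text{and}\qquad \GT P^{-r}J_sb=P^{-2r}J_{\pi s}b,
\]
so $S\GT$ and $\GT S$ agree on the basis. This uses that $\GT$ commutes with $I$, which makes it complex-linear.

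\textbf{Self-adjointness and $S^2=\id$.} A unitary operator is self-adjoint exactly when it is an involution, so it suffices to prove self-adjointness. I would show that the Gram entries are symmetric:
\[
 h\bigl(P^rJ_sf,\ P^{-r'}J_{s'}b\bigr)=\overline{h\bigl(P^{r'}J_{s'}f,\ P^{-r}J_sb\bigr)}.
\]
Using the centralizing property, this reduces to $s=0$ and the comparison of $h(P^rf,P^{-r'}J_{s'}b)$ with its transpose. Both sides depend only on $r+r'$. This is what the sign convention $P^{-r}$ in \eqref{eq:S} is designed to achieve: the matrix of $S$ is a Hankel-type matrix in the row index with blocks $\tfrac12\mathsf Q(B_{k})$. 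Its adjoint has blocks $\tfrac12\mathsf Q(B_k)^*$, and these equal $\tfrac12\mathsf Q(B_k)$ because every $\mathsf Q(B_k)$ is Hermitian: the $K_a$ are Hermitian and \eqref{eq:Pauliblocks} has real coefficients. So the adjoint matrix coincides with the original.

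\textbf{Trace.} The diagonal row blocks have $r+r'\equiv0$, that is $2r\equiv0$ in the relevant index, so only $r=0$ contributes, and its block is $\tfrac12\mathsf Q_0$. Since the $K_a$ are traceless,
\[
 \operatorname{tr}\tfrac12\mathsf Q_0=\tfrac12\operatorname{tr}(\id+K_1+K_2+K_4)=\tfrac12\cdot4=2\quad\text{per block.}
\]
This does not yet give $4$, so the index bookkeeping is off somewhere. I would recompute which $k$ appears on the diagonal. If the diagonal blocks are instead $\tfrac12\mathsf Q(B_{\pm2r})$ for all seven $r$, the total trace is
\[
 \tfrac12\bigl(4+6\cdot\operatorname{tr}\mathsf Q_{\ne0}\bigr),
\]
which involves $\operatorname{tr}(\id-K_a)=4$ and $\operatorname{tr}(K_{2a}-K_a)=0$. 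Over $\mathcal Q$ this gives $\tfrac12(4+3\cdot4+0)=8$ per complex dimension count. I would fix the normalization against the factor $\tfrac14$ coming from the norms to land on $4$.

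\textbf{Main obstacle.} The main obstacle is exactly this bookkeeping: pinning down the precise block formula for $S$, including the factor $\tfrac14$ versus $\tfrac12$, the sign of the row index, and the phase of $w_0$ in $f$. Once that formula is fixed, the trace is a short sum over the seven seed rows using tracelessness of the $K_a$, and self-adjointness is the Hermitian property of the $\mathsf Q_k$.
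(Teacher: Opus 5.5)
Your arguments for unitarity, for centralizing $\mathcal J$, for $SP=P^{-1}S$, for $S\GT=\GT S$, and for self-adjointness via Hermitian blocks that depend only on $r+r'$ follow the paper's route. The gap is the trace, which is one of the stated conclusions and is not proved. You end with two incompatible computations, giving $2$ and $8$. You then propose to ``fix the normalization \ldots{} to land on $4$'', which fits the answer rather than deriving it.

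Two concrete errors cause this.

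First, the block factor is $\tfrac14$, not $\tfrac12$. Row $r$ of $P^{-r'}J_{s'}b$ is $\tfrac12J_{s'}B_{r+r'}$, while $P^rJ_sf$ is $2J_sw_0$ placed in row $r$. In the orthonormal basis $P^rJ_sf/2$ the entry of $S$ is therefore $\tfrac14h(J_sw_0,\mathsf Q_{r+r'}J_{s'}w_0)$. This is the reverse-circulant formula $(Sv)_r=\tfrac14\sum_{s}\mathsf Q_{r+s}v_s$ of \eqref{eq:Sblocks}. You could have caught this internally. With blocks $\tfrac12\mathsf Q_{r+r'}$, Lemma~\ref{lem:differencecorrelation} gives $S^*S=4\cdot\id$, contradicting the unitarity you had already established from the two orthogonal frames.

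Second, the diagonal blocks are those with $r=r'$, not those with $r+r'\equiv0$. Each diagonal block has index $2r$, and $r\mapsto2r$ is a bijection of $\Z/7\Z$. So every $\mathsf Q_k$ occurs exactly once on the diagonal, whichever sign convention you use. Since $\operatorname{tr}\mathsf Q_0=\operatorname{tr}\mathsf Q_a=4$ and $\operatorname{tr}\mathsf Q_{-a}=0$ for $a\in\mathcal Q$, this gives $\operatorname{tr}_{\C}S=\tfrac14\sum_k\operatorname{tr}\mathsf Q_k=\tfrac14(4+3\cdot4+3\cdot0)=4$.

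With these two corrections your proof becomes the paper's.
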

\begin{proof}
Unitarity and the commutations follow from the compatible labeled
frames in Lemma~\ref{lem:secondframe}; the shift relation follows from
\eqref{eq:S}. For the remaining assertions use the Hermitian blocks $\mathsf Q_k$
in \eqref{eq:Pauliblocks}.
The correspondence \eqref{eq:S} now reads as the reverse-circulant formula
\begin{equation}\label{eq:Sblocks}
 (Sv)_r=\frac14\sum_{s=0}^6 \mathsf Q_{r+s}v_s.
\end{equation}
The blocks are Hermitian and their pattern is symmetric in $r$, $s$,
so $S^*=S$ and unitarity gives $S^2=\id$. Finally,
$\operatorname{tr}_{\C}S=\frac14\sum_k\operatorname{tr}\mathsf Q_k
=\frac14(4+3\cdot4)=4$.
\end{proof}
The same block pattern gives the following overlap and the local transport
identities in Section~\ref{sec:geometryorder}.

\begin{lemma}[The three-block overlap]\label{lem:compression}
Let $A_D=\pr_{V_\lambda}S|_{V_\lambda}$ be the compression of the frame
comparison to the space fixed by $D$. Then
\begin{equation}\label{eq:compressionquadratic}
 4A_D^2-2A_D-\id=0.
\end{equation}
\end{lemma}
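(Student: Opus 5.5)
Write $v\in V_\lambda$ by its three row components $v_r$, $r\in\mathcal Q=\{1,2,4\}$. The plan is to use the reverse-circulant block formula \eqref{eq:Sblocks} directly. The compression keeps only the rows in $\mathcal Q$, so
\[
 (A_Dv)_r=\frac14\sum_{s\in\mathcal Q}\mathsf Q_{r+s}v_s\qquad(r\in\mathcal Q).
\]
This is a $3\times3$ block matrix whose $(r,s)$ block is $\tfrac14\mathsf Q_{r+s}$. The indices $r+s$ with $r,s\in\mathcal Q$ are $2r$ on the diagonal, namely $2,4,1\in\mathcal Q$, and $r+s\in\{3,5,6\}=-\mathcal Q$ off the diagonal. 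Indeed $1+2=3=-4$, $1+4=5=-2$ and $2+4=6=-1$. So every off-diagonal sum $r+s$ equals $-t$, where $t$ is the third element of $\mathcal Q$. By \eqref{eq:Pauliblocks}:
\begin{itemize}
\item the diagonal blocks are $\tfrac14(\id-K_{2r})$;
\item the off-diagonal blocks are $\tfrac14\mathsf Q_{-t}=\tfrac14(K_{2t}-K_t)$, and $2t\in\mathcal Q$.
\end{itemize}

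The proof is then a direct computation of $(4A_D)^2=16A_D^2$ in the algebra spanned by $\id,K_1,K_2,K_4$. We must show it equals $8A_D+4\id$, that is, $2(4A_D)+4\id$. Set $N=4A_D$ with blocks $N_{rs}=\mathsf Q_{r+s}$. The goal is
\[
 \sum_{s\in\mathcal Q}\mathsf Q_{r+s}\mathsf Q_{s+u}=2\mathsf Q_{r+u}+4\delta_{ru}\id\qquad(r,u\in\mathcal Q).
\]
We compute these products using only $K_a^2=\id$, anticommutation of distinct $K$'s, and $K_aK_{2a}=-iK_{4a}$ from \eqref{eq:Qblock}.

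The order-three symmetry $\GT$ helps here. By Lemma~\ref{lem:frobenius} it conjugates $K_a$ to $K_{2a}$ and acts on rows by $r\mapsto2r$, and it stabilizes $\mathcal Q$. The identity is equivariant under this symmetry, so it suffices to check one diagonal entry, say $r=u=1$, and two off-diagonal entries, $(r,u)=(1,2)$ and $(2,1)$. Since all $\mathsf Q_k$ are Hermitian and the pattern is symmetric in $r,s$, the $(2,1)$ entry is the adjoint of the $(1,2)$ entry. So really only the $(1,2)$ entry is needed.

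For the diagonal entry $r=u=1$:
\[
 \mathsf Q_2^2+\mathsf Q_3^2+\mathsf Q_5^2=(\id-K_2)^2+(K_1-K_4)^2+(K_4-K_2)^2
 =2(\id-K_2)+2\id+2\id=6\id-2K_2.
\]
Here $(\id-K_2)^2=2(\id-K_2)$, and a difference of two anticommuting involutions squares to $2\id$. Taking $\mathsf Q_3=\mathsf Q_{-4}=K_1-K_4$ and $\mathsf Q_5=\mathsf Q_{-2}=K_4-K_2$, the target is $2\mathsf Q_2+4\id=6\id-2K_2$, which agrees.

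For the off-diagonal entry $(1,2)$, we need
\[
 \mathsf Q_2\mathsf Q_3+\mathsf Q_3\mathsf Q_4+\mathsf Q_5\mathsf Q_6=2\mathsf Q_3.
\]
We expand each product. The cross terms $K_aK_b$ turn into $\pm iK_c$ by \eqref{eq:Qblock} and its consequences ($K_{2a}K_a=iK_{4a}$, and so on). We then check that the imaginary parts cancel and the real parts sum to $2(K_1-K_4)$.

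The main obstacle is this last bookkeeping step. Sign conventions for $K_aK_b$ with $b\ne 2a$ must be derived consistently: for example $K_4K_1=K_{4}K_{2\cdot4}=-iK_2$, and $K_1K_4=iK_2$. An error in one sign would break the cancellation, so I would tabulate all six products $K_aK_b$ first. As a sanity check, $A_D$ is Hermitian with trace $\tfrac14\cdot3\cdot4=3$ on a $12$-dimensional space. Its eigenvalues are the roots $(1\pm\sqrt5)/4$ of $4t^2-2t-1=0$, and these are consistent with that trace for suitable multiplicities, which supports the identity being true.
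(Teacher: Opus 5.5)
Your proposal is correct and is essentially the paper's proof: the same $3\times3$ corner of \eqref{eq:Sblocks} in rows $1,2,4$, the same reduction to one diagonal and one off-diagonal block (via $\GT$ and self-adjointness), and the same two block identities. The off-diagonal bookkeeping you left open does close: with $K_2K_1=iK_4$, $K_2K_4=-iK_1$, $K_1K_4=iK_2$, $K_4K_2=iK_1$ and $K_4K_1=-iK_2$, the three products are $K_1-K_4-iK_1-iK_4$, $\id+K_1-K_4-iK_2$ and $-\id+iK_1+iK_2+iK_4$, whose sum is $2(K_1-K_4)$.
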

\begin{proof}
In the row order $1$, $2$, $4$, the corresponding corner of
\eqref{eq:Sblocks} is
\begin{equation}\label{eq:reflectioncompression}
 4A_D=
 \begin{pmatrix}
 \id-K_2&K_1-K_4&K_4-K_2\\
 K_1-K_4&\id-K_4&K_2-K_1\\
 K_4-K_2&K_2-K_1&\id-K_1
 \end{pmatrix}.
\end{equation}
The symmetry $\GT$ cycles these three rows and conjugates $K_a$ to
$K_{2a}$. Together with self-adjointness, this reduces
$(4A_D)^2=2(4A_D)+4$ to one diagonal and one off-diagonal block.
The identities $K_a^2=\id$ and pairwise anticommutation give
\begin{align*}
 &(\id-K_2)^2+(K_1-K_4)^2+(K_4-K_2)^2=6-2K_2,\\
 &(\id-K_2)(K_1-K_4)+(K_1-K_4)(\id-K_4)\\
 &\hspace{28mm}+(K_4-K_2)(K_2-K_1)=2(K_1-K_4).
\end{align*}
These are the corresponding blocks of $2(4A_D)+4$.
Dividing by four proves \eqref{eq:compressionquadratic}.
\end{proof}

\begin{lemma}[Two-involution recovery]\label{lem:dihedralrecovery}
One has
\begin{equation}\label{eq:reflectiongeneration}
 (DS)^5=-\id,\qquad S=-D(Dr_*)^3.
\end{equation}
Consequently $S$ preserves $L$ and
$G=\langle H,\,r_*\rangle=\langle H,\,S\rangle$.
\end{lemma}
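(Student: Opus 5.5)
We analyse the pair of involutions $D$ and $S$ through the spectral decomposition of $V=\C^{28}$ relative to the two subspaces $V_\lambda$ and $V_\lambda^\perp$. Both $D$ and $S$ are self-adjoint unitary involutions by \eqref{eq:tworeflections} and Lemma~\ref{lem:unitary}. Hence $DS$ is unitary, and the algebra generated by $D$ and $S$ is controlled by the compression $A_D=\pr_{V_\lambda}S|_{V_\lambda}$. Lemma~\ref{lem:compression} gives $4A_D^2-2A_D-\id=0$, so the eigenvalues of $A_D$ are $(1\pm\sqrt5)/4=\cos(\pi/5)$ and $\cos(3\pi/5)$.

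Split $V$ into invariant pieces for $\langle D,S\rangle$. Since $DS$ is unitary, $V$ decomposes into one-dimensional pieces, on which $D=\pm1$ and $S=\pm1$, and two-dimensional irreducible pieces. On a two-dimensional piece, $D$ and $S$ are reflections in lines at an angle $\alpha$, and $DS$ is a rotation by $2\alpha$; there $A_D$ acts on the $+1$ line of $D$ as the scalar $\cos2\alpha$. I would use this to show that every eigenvalue of $DS$ is a primitive tenth root of unity, which gives $(DS)^5=-\id$. The one-dimensional pieces need separate care.

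\emph{Main obstacle: excluding one-dimensional pieces.} On such a piece in $V_\lambda$, the eigenvalue of $A_D$ would be $\pm1$, which the quadratic forbids. For pieces inside $V_\lambda^\perp$ I would use a dimension count.
\begin{itemize}
\item Every two-dimensional piece meets $V_\lambda$ in exactly one dimension, and $\dim V_\lambda=12$ by \eqref{eq:tworeflections}. So there are exactly twelve two-dimensional pieces, leaving a $4$-dimensional remainder $W\subseteq V_\lambda^\perp$ on which $D=-\id$.
\item On each two-dimensional piece $\operatorname{tr}S=0$, since $S$ is a reflection there. Then $\operatorname{tr}S=4$ from Lemma~\ref{lem:unitary} forces $\operatorname{tr}(S|_W)=4$, hence $S|_W=\id$.
\item So $DS=-\id$ on $W$, consistent with $(DS)^5=-\id$.
\item On the two-dimensional pieces, $2\alpha\in\{2\pi/5,6\pi/5\}$ modulo sign. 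Then $(DS)^5$ is rotation by $2\pi$ or $6\pi$, which is $\id$, not $-\id$.
\end{itemize}
The last point contradicts the expected $-\id$, so the angle bookkeeping must be rechecked. One fix is that $\cos2\alpha=\cos(\pi/5)$ gives $2\alpha=\pi/5$, whose fifth power is rotation by $\pi$. I would recompute the angle convention here, making sure the compression equals $\cos^2$ or $\cos2\alpha$ as appropriate. With $A_D$ eigenvalue $\cos2\alpha$, the values $2\alpha=\pi/5$ and $3\pi/5$ both give $(DS)^5=-\id$ on those planes, matching $W$. The trace step is the delicate point, and I would verify it carefully.

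Given $(DS)^5=-\id$, the algebra is formal. From \eqref{eq:ambientreflection}, $r_*=SDS$ since $S^{-1}=S$, so $Dr_*=DSDS=(DS)^2$. Then $(Dr_*)^3=(DS)^6=-DS$, which gives $-D(Dr_*)^3=D\,DS=S$ because $D^2=\id$. Since $D\in H$ and $r_*$ preserves $L$ by Proposition~\ref{prop:secondframereflection}, $S$ preserves $L$ and $S\in\langle H,r_*\rangle$. Conversely, $r_*=SDS\in\langle H,S\rangle$, which gives the equality of groups.
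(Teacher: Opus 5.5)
Your argument is correct and is essentially the paper's. Your decomposition into one- and two-dimensional $\langle D,S\rangle$-pieces refines the paper's splitting $V_\lambda\oplus SV_\lambda\oplus(\text{4-dimensional complement})$, and it uses the same ingredients: exclusion of the $A_D$-eigenvalues $\pm1$, the trace count forcing $D=-\id$ and $S=\id$ on the complement, and your rotation-angle computation in place of the identity $(t+1)(t^4-t^3+t^2-t+1)=t^5+1$. Just delete the erroneous bullet. Since the compression on each plane is $\cos2\alpha$, one has $2\alpha\in\{\pm\pi/5,\pm3\pi/5\}$, so $e^{\pm2i\alpha}$ are primitive tenth roots of unity and $(DS)^5=-\id$ there, exactly as in your corrected version.
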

\begin{proof}
Put $p(t)=4t^2-2t-1$.
The compression identity gives $p(A_D)=0$.
If $v\in V_\lambda$ and $Sv\in V_\lambda$, then $A_D^2v=v$; since
$p(1)=1$ and $p(-1)=5$, this forces $v=0$.
Thus $V_\lambda\oplus SV_\lambda$ has dimension $24$. It is invariant under $S$ and
under $D$, since $DSv=2A_Dv-Sv$ for $v\in V_\lambda$.
The operator $S$ exchanges its two summands, so has trace zero there.
On its four-dimensional orthogonal complement, $D=-\id$; the total
trace $\operatorname{tr}S=4$ forces the self-adjoint involution $S$
to be $\id$ on this complement.

Put $u=DS$. On $V_\lambda$ one has
$(u+u^{-1})v=2A_Dv$. The operator $u+u^{-1}=DS+SD$ commutes with $S$,
so $p((u+u^{-1})/2)=0$ on $V_\lambda\oplus SV_\lambda$.
The Laurent polynomial identity
\[
\begin{aligned}
 t^2p\!\left(\frac{t+t^{-1}}2\right)
   &=t^4-t^3+t^2-t+1,\\
 (t+1)(t^4-t^3+t^2-t+1)&=t^5+1
\end{aligned}
\]
gives $u^5=-\id$ there. The same holds on the orthogonal complement,
where $u=-\id$. Hence $(DS)^5=-\id$ on the whole space.

Now $r_*=SDS$, so $Dr_*=(DS)^2$ and
\[
 -D(Dr_*)^3=-D(DS)^6=S.
\]
Every factor on the left preserves $L$, so $S\in\langle H,\,r_*\rangle$.
Conversely $r_*=SDS\in\langle H,\,S\rangle$, giving equality of the groups.
\end{proof}

\begin{corollary}\label{cor:reflectionoutsideH}
The reflection $r_*$ does not belong to $H$.
\end{corollary}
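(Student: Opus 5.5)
The plan is to argue by contradiction, using the frame stabilizer characterization of Corollary~\ref{cor:framestabilizer}: every element of $H$ preserves the coordinate frame $F$, hence permutes the $56$ real coordinate directions up to sign and is a signed permutation matrix. So it suffices to exhibit one coordinate vector whose image under $r_*$ is not (a Gaussian unit multiple of) a coordinate vector. Equivalently, since $r_*$ is a reflection with eigenspaces $V'_\lambda$ and $(V'_\lambda)^\perp$, we must show that these two subspaces are not spanned by coordinate crosses. Indeed, an element of $H$ that is also a reflection would have its $\pm1$ eigenspaces invariant under a signed permutation. I will not pursue this characterization, however, since the direct matrix-entry check below is simpler.

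The concrete step is to compute $r_*f=2\pr_{V'_\lambda}f-f$ for $f=f_1$. By \eqref{eq:twotwelvespaces}, $V'_\lambda$ has the orthogonal basis $P^{-r}J_sb$ for $r\in\mathcal Q$ and $0\le s\le3$, each vector of norm four. Hence
\[
 \pr_{V'_\lambda}f=\frac14\sum_{r\in\mathcal Q}\sum_{s=0}^3 h(P^{-r}J_sb,\,f)\,P^{-r}J_sb .
\]
The coefficient $h(P^{-r}J_sb,f)$ is determined by the row-$0$ entries of $P^{-r}J_sb$, which come from row $r$ of $b=B/2$. For $r\in\mathcal Q$ we have $B_r=w_0-w_r$. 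Thus for $s=0$ the coefficient is a nonzero unit multiple of the $w_0$-coordinate of $B_r$, divided by two and transported; it is $\pm1$ up to a unit, while the other values of $s$ give further entries. Consequently $\pr_{V'_\lambda}f$ is a nonzero combination of heavy vectors and has nonzero entries outside row $0$.

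A cleaner route uses the compression. Since $V_\lambda\oplus SV_\lambda$ and its complement were analysed in Lemma~\ref{lem:dihedralrecovery}, note that $f$ lies in row $0$, which is not in $\mathcal R_\lambda$, so $f\in V_\lambda^\perp$. The quickest invariant to check is then the inner product $h(f,r_*f)=2\|\pr_{V'_\lambda}f\|^2-4$. If $r_*f$ were a coordinate vector, this would lie in $\{0\}\cup 4\mu_4$, that is, $\|\pr f\|^2\in\{2,0,4\}$ (with $\pm4$ also possible in the real part). From the block formula \eqref{eq:Sblocks}, $\|\pr_{V'_\lambda}f\|^2$ equals $\frac{1}{16}\sum_{r\in\mathcal Q}\|\mathsf Q_r w_0\|^2$ times four. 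With $\mathsf Q_a=\id-K_a$ and $\|(\id-K_a)w_0\|^2=2$ (as $K_a$ is a traceless Hermitian involution with zero diagonal), this gives $\|\pr f\|^2=\frac{4}{16}\cdot 3\cdot 2\cdot\ldots$. Here I must track normalization carefully; the expected value is $\tfrac32$ or similar, which is non-admissible, so $h(f,r_*f)=-1\notin 4\mu_4\cup\{0\}$. In fact any non-multiple of four contradicts the requirement that distinct coordinate minimal vectors pair to $0$ and equal ones pair to $4\mu_4$.

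The main obstacle is getting this normalization exactly right. If the computed value happened to be admissible (for instance $0$), I would fall back on the entry argument: exhibit a row of $r_*f$ with entries not all in $\{0\}\cup$ one unit coordinate, using Lemma~\ref{lem:compression} and the fact that $S$ maps $F$ off itself ($S(F)\cap F=\varnothing$, since $F'$ consists of heavy crosses by Lemma~\ref{lem:secondframe}). An alternative fallback is structural: if $r_*\in H$, then $S=-D(Dr_*)^3\in H$ by Lemma~\ref{lem:dihedralrecovery}. But $H$ fixes $F$ while $S(F)=F'$ is disjoint from $F$, a contradiction. This last argument is in fact the one I would write, being immediate; the computational routes serve only as a cross-check.
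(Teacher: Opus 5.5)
Your final argument is exactly the paper's proof: if $r_*\in H$, then, since $D=E_2$ and $-\id$ lie in $H$, the recovery formula $S=-D(Dr_*)^3$ would put $S$ in $H$, which is impossible because $H$ stabilizes $F$ while $S(F)=F'$ is disjoint from $F$. Your computational cross-check also works once the normalization is tracked: $\|\pr_{V'_\lambda}f\|^2=\|\pr_{V_\lambda}Sf\|^2=\|\pr_{V_\lambda}b\|^2=\tfrac14\sum_{r\in\mathcal Q}\|B_r\|^2=\tfrac32$, so $h(f,\,r_*f)=-1\notin\{0\}\cup4\mu_4$, and therefore $r_*f$ is not a unit multiple of a coordinate vector, which gives an independent proof.
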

\begin{proof}
If $r_*\in H$, the recovery formula would give $S\in H$, since
$-\id\in H$ and $D\in H$. But $S(F)=F'$ is disjoint from $F$, whereas
$H$ stabilizes $F$.
This is a contradiction.
\end{proof}

\begin{proof}[Completion of Theorem~\ref{thm:main}]
The code, lattice, and full coordinate stabilizer were established
before the second frame was constructed. The self-dual three-block
glue and Lemma~\ref{lem:modularreflection} give the additional
integral reflection. Lemmas~\ref{lem:unitary} and~\ref{lem:dihedralrecovery}
then prove the assertions about $S$, including lattice preservation;
Lemma~\ref{lem:secondframe} gives $S(F)\cap F=\varnothing$.
\end{proof}

In the counting argument $S$ denotes the derived word $-D(Dr_*)^3$.
Corollary~\ref{cor:intrinsicgenerator} later identifies both $r_*$ and
$-S$ as reflections in intrinsic dozens.

\section{Counting canonical frames and the full lattice group}\label{sec:geometryorder}
The code gives $4060$ crosses and the stabilizer $H$ of $F$.
We prove transitivity by joining two coordinate orbits, determine the
quartets through a couple, and count canonical frames by incidence.
Their intrinsic definition and stabilizer then determine the lattice group.

\subsection{The intrinsic configurations}\label{sec:configurations}
A \emph{couple} is an unordered pair of distinct orthogonal crosses.
Recall that a frame consists of $28$ mutually orthogonal crosses.
We use the following form of Conway's quartet--dozen--base geometry
\cite[pp.~76--79]{Conway}.

\begin{definition}[Quartets]\label{def:quartet}
A \emph{quartet} is a set $Q$ of four mutually orthogonal minimal crosses
such that every cross outside $Q$ which is orthogonal to three members
is orthogonal to the fourth. Equivalently, the Hermitian support of an
external minimal vector on $Q$ never has size one.
\end{definition}
The exclusion of the four members of $Q$ is essential. A coordinate row
is a quartet: every other coordinate cross is orthogonal to the row,
and every noncoordinate minimum has zero, two, or four nonzero Gaussian
coordinates in it. In particular $Q_1$ is a quartet.

For a couple $\kappa$ and a quartet $Q$, set
\begin{align}
 \Delta(\kappa)&=\bigcup_{\substack{Q'\text{ a quartet}\\\kappa\subset Q'}}Q',
       \label{eq:intrinsicdozen}\\
 F(Q)&=\bigcup_{\kappa\in\binom Q2}\Delta(\kappa).
       \label{eq:canonicalbase}
\end{align}
We shall prove that $\Delta(\kappa)$ is a mutually orthogonal twelve-set,
called its \emph{dozen}, and that $F(Q)$ is a frame. These
are the \emph{canonical frames} (Conway's bases); write
$\cF=\{F(Q):Q\text{ a quartet}\}$.

Every Gaussian lattice isometry preserves the minimal shell and its
orthogonality relation. It therefore preserves quartets, and
\begin{equation}\label{eq:configurationequivariance}
 a\Delta(\kappa)=\Delta(a\kappa),\qquad
 aF(Q)=F(aQ)
 \quad\bigl(a\in\Aut_{\Z[i]}(L,\,h)\bigr).
\end{equation}

\subsection{The monomial point stabilizer}\label{sec:codeinput}
Let $p_0=6$ be the label of row $0$. The coordinate cross $x=X_1$
corresponds to the pair $\{0,\,p_0\}$ in its inner binary labels.
An element of $P_0$ fixes this cross exactly when
\[
 \tau p_0\in\langle p_0\rangle.
\]
This imposes two linear conditions on $\tau$, so $|P_{0,\,x}|=2^{11}$.
The map $H_x\longrightarrow\operatorname{GL}_3(2)_{p_0}$ is onto: any lift
of a row stabilizer can be adjusted by a translation to fix the chosen cross.
The quotient acts transitively on the three other coordinate crosses in that
row, on the other six rows, and on the three nonzero forms $\ell$ satisfying
$\ell(p_0)=0$.

For each fixed four-block label, $P_0$ is transitive on
$8\cdot32=256$ minimal vectors, hence on $64$ crosses. For each fixed
heavy label, it is transitive on $128\cdot16=2048$ vectors, hence on
$512$ crosses. These are the fixed-label transitivity statements of
Lemmas~\ref{lem:foursupports} and~\ref{lem:heavysupports}.

\begin{proposition}\label{prop:localorbits}
Put $Y=x^\perp\cap\cX$. Its four $H_x$-orbits are
\[
 Y=O_3\sqcup O_{24}\sqcup O_{192}\sqcup O_{1536},
\]
where $O_3=Q_1\setminus\{x\}$, $O_{24}=F\setminus Q_1$,
and $O_{192}$, $O_{1536}$ are the four-block and heavy crosses orthogonal
to $x$, respectively. Subscripts give the orbit sizes.
\end{proposition}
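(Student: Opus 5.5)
The plan is to split $Y$ by the three $H$-orbits of Theorem~\ref{thm:shellproved}, treat the coordinate crosses first, and then count the noncoordinate crosses orthogonal to $x$ in each family. The $H_x$-transitivity will come from combining two facts from \S\ref{sec:codeinput}: the quotient $H_x\to\operatorname{GL}_3(2)_{p_0}$ is surjective, and $P_0$ is transitive on each fixed label.

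\emph{Coordinate crosses.} Every other coordinate cross is orthogonal to $x$, so $F\setminus\{x\}\subseteq Y$. These $27$ crosses split as the $3$ crosses of $Q_1\setminus\{x\}$ and the $24$ crosses in the other rows. Each piece is an $H_x$-orbit, for three reasons.
\begin{itemize}
\item The quotient $\operatorname{GL}_3(2)_{p_0}$ is transitive on the three other crosses of row $0$.
\item The same quotient is transitive on the six other rows.
\item Inside a fixed row $q\ne p_0$, the translations $\tau$ with $\tau p_0\in\langle p_0\rangle$ still move $e\mapsto e+\tau q$ with $\tau q$ arbitrary, because $p_0$ and $q$ are independent. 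Together with the $\mathcal J$-signs they act transitively on the four crosses of that row.
\end{itemize}
The two pieces cannot fuse, since $H_x$ preserves the row of $x$.

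\emph{Four-block crosses.} A minimal vector with four-block residue $d=\epsilon_p+\ell(p)\ell(e)$ is supported on the four rows with $\ell(p)=1$. Its Gaussian coordinate on the pair $x=\{(p_0,0),(p_0,p_0)\}$ vanishes iff both positions are outside the support. If $\ell(p_0)=0$, the row is inactive and the cross is orthogonal to $x$. If $\ell(p_0)=1$, the selected plane $\ell(e)=1+\epsilon_{p_0}$ contains exactly one point of $\{0,p_0\}$, because $\ell(p_0)=1$. So every such cross has exactly one nonzero entry of modulus one on $x$ and is not orthogonal.

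This leaves the three labels with $\ell(p_0)=0$, each carrying $64$ crosses, for a total of $192$. For transitivity, $\operatorname{GL}_3(2)_{p_0}$ permutes these three labels transitively. For a fixed label I need $P_{0,x}$, not just $P_0$, to be transitive on the $64$ crosses. Here I would check that the stabilizing condition $\tau p_0\in\langle p_0\rangle$ costs nothing, because row $p_0$ is inactive. The translations used in Lemma~\ref{lem:foursupports} only matter through $\ell(\tau p)$ for $p$ with $\ell(p)=1$ and through the sign moments, and these can be realized by $\tau$ satisfying the two linear conditions. Alternatively, one can argue by index: $[P_0:P_{0,x}]=4$, and the $P_0$-orbit of $x$ consists of the four crosses of row $p_0$. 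Then use that $P_0$ acts transitively on the $64$ crosses of a label while each of them is orthogonal to the whole row $p_0$.

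\emph{Heavy crosses.} A heavy support of class $c$ has a pair with difference $c$ in each light block and a four-point set in the heavy block $c$. The four points contain one point of each coset of $\langle c\rangle$ by \eqref{eq:balancedidentity}.
\begin{itemize}
\item If $c=p_0$, the heavy block meets $\{0,p_0\}$ in exactly one point, so the cross is not orthogonal to $x$.
\item If $c\ne p_0$, block $p_0$ is light and its pair $\{e,e+c\}$ either avoids $\{0,p_0\}$ or meets it in exactly one point, since $c\ne p_0$. Among the four pairs of that block with difference $c$, exactly two avoid $\{0,p_0\}$, so half of the $512$ crosses of each class are orthogonal to $x$.
\end{itemize}
This gives $6\cdot256=1536$ crosses. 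For transitivity, $\operatorname{GL}_3(2)_{p_0}$ is transitive on the six classes $c\ne p_0$. Within a class I would show that $P_{0,x}$, of index $4$ in $P_0$, is transitive on the $256$ orthogonal crosses. The translations $\tau$ with $\tau p_0\in\langle p_0\rangle$ still realize every matrix change $[c\times]\tau$ needed in Lemma~\ref{lem:heavysupports} on the other blocks, and the light pair in block $p_0$ is then only moved between the two pairs avoiding $\{0,p_0\}$.

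\emph{Main obstacle.} The hard step is this restriction of the $P_0$-transitivity in Lemmas~\ref{lem:foursupports} and~\ref{lem:heavysupports} to the index-$4$ subgroup $P_{0,x}$. I would handle it by an orbit-counting argument: $P_0$ acts transitively on the $512$ crosses of class $c$ and on the four crosses of row $p_0$, so compare $|P_{0,x}|$ with the stabilizer of a heavy cross. If this fails, I would instead exhibit an explicit element of $P_0\setminus P_{0,x}$ fixing a chosen heavy cross orthogonal to $x$.

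Finally, the four sets lie in distinct $H$-orbits, so they are distinct $H_x$-orbits. The total $3+24+192+1536=1755$ accounts for all of $Y$.
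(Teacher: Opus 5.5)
Your coordinate case is fine, and your plan for the heavy family works as you suggest. The support stabilizer in the translation space is $\{c\ell^{\top}:\ell(c)=0\}$. The extra condition $\ell(p_0)=0$ cuts it to dimension one inside the seven-dimensional point-fixing space. Hence $P_{0,x}$ has an orbit of $2^6=64$ supports per class, and $E_C\le P_{0,x}$ is transitive on the lifts.

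\textbf{The gap.} It is in the four-block family, where the step you treat as free is false. A translation changes the active constants by $p\mapsto m(p)$ with $m=\ell\circ\tau\in E^*$. If $\tau p_0\in\langle p_0\rangle$, then $m(p_0)\in\{0,\ell(p_0)\}=\{0\}$. Row $p_0$ is inactive, but $p_0=p+(p+p_0)$ is a sum of two active labels, so the constraint forces $m(p)=m(p+p_0)$. Consequences:
\begin{itemize}
\item $P_{0,x}$ realizes only four of the eight even-weight changes on the active plane.
\item The parity $\epsilon_p+\epsilon_{p+p_0}$ is a $P_{0,x}$-invariant.
\item Each of the three labels splits into two $P_{0,x}$-orbits of $32$ crosses.
\end{itemize}
Your index argument does not rescue this. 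Transitivity of $P_0$ on the four crosses of row $p_0$ and on the $64$ crosses of a label gives transitivity of $P_{0,x}$ on the latter only if $P_0$ is transitive on the product of the two sets. That is exactly what fails.

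\textbf{What is missing.} You need an element of $H_x$ whose row image fixes the label and interchanges the two parity classes. This does not follow from surjectivity of $H_x\to\operatorname{GL}_3(2)_{p_0}$. Since $P_{0,x}\lhd H_x$, the stabilizer of a label (dihedral of order $8$) acts on its two $P_{0,x}$-orbits through a homomorphism to $\Z/2\Z$. If that homomorphism were trivial, $O_{192}$ would split as $96+96$.

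The paper settles this by direct computation with $\rho=P^{-2}R_0P$ and $\beta=\rho^{-1}R_0\rho^2$. Their actions on the six (label, parity) states are recorded in \eqref{eq:localsixstates}, and $\beta$ swaps the two states with $\ell=1$.

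\textbf{Sign transitivity.} Your claim that the signs are transitive inside the point stabilizer is also only asserted. On the six-dimensional space of support-fixing translations, the moment map onto $\ker\ell$ has four-dimensional kernel. The five-dimensional subspace cut out by $\tau p_0\in\langle p_0\rangle$ maps onto $\ker\ell$ only if it does not contain that kernel. The paper verifies this with $k_1=U_4U_5$ and $k_2=IU_6J_1U_1$, whose moment shifts $6$ and $2$ are independent in $\ker\ell$.
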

\begin{proof}
The two coordinate orbits have sizes $3$, $24$: the quotient acts on the three
nonzero cosets of $\langle p_0\rangle$ in the fixed row, while for $q\ne p_0$
the values $\tau q$ remain arbitrary under the condition on $\tau p_0$.

In the four-block family, a row containing $x$ is active exactly when
$\ell(p_0)=1$. Each active support contains one real position in every
Gaussian pair, so orthogonality is equivalent to $\ell(p_0)=0$.
There are three such labels, eight supports for each, and $32$ signed lifts
on each support. This gives $3\cdot8\cdot32/4=192$ crosses.
The subgroup $P_{0,\,x}$ alone changes the four active constants by linear forms $m$ with $m(p_0)=0$.
Its support orbit has size four, not eight.

To join the two support orbits, use the monomial maps
\begin{equation}\label{eq:couplemonomialgens}
 \rho=P^{-2}R_0P,\qquad \beta=\rho^{-1}R_0\rho^2.
\end{equation}
Both $R_0$ and $\beta$ fix $x$, $y$; their row permutations are
$(1\ 2)(3\ 6)$ and $(1\ 6\ 3\ 2)(4\ 5)$.
For later use, record their action on all six support states.
In row $p$ the selected plane is $\{e\in E:\ell(e)=1+\epsilon_p\}$.
For each $\ell\in\{1,\,6,\,7\}$ let $p$ be the smallest active row label.
The parity $\epsilon_p+\epsilon_{p+p_0}$ distinguishes the two
$P_{0,\,x}$-orbits and is unchanged by Gaussian scalars.
Number the pairs $(\ell,\,\epsilon_p+\epsilon_{p+p_0})$ as
\[
 1=(1,\,0),\ 2=(1,\,1),\ 3=(6,\,0),\ 4=(6,\,1),\,\
 5=(7,\,0),\ 6=(7,\,1).
\]
Substitution of the signed row maps gives
\begin{equation}\label{eq:localsixstates}
 R_0:(3\ 5)(4\ 6),\qquad
 \beta:(1\ 2)(3\ 6\ 4\ 5).
\end{equation}
In particular $\beta$ interchanges the two states with label $\ell=1$.
Together with the quotient action on the three labels, this proves
support transitivity in $H_x$.

For sign transitivity, the maps
$k_1=U_4U_5$ and $k_2=I U_6J_1 U_1$ fix $x$ and the support
of $v=P^3A$. Their sign differences from $v$ have common moments
$6$ and $2$. These are independent in $1^\perp$, and the moment
kernel is the even block-constant sign code, of order eight.
As in Lemma~\ref{lem:foursupports}, the moment action is additive.
Hence all $32$ lifts occur in the point stabilizer. Combined with
support transitivity, this proves the $192$-orbit.

For the heavy family, $c=p_0$ cannot be orthogonal to $x$, because the heavy
block meets every Gaussian pair. Let $c\ne p_0$. In row $p_0$ the supports run equally through the four affine lines parallel
to $c$. Two of these lines avoid both positions $0$, $p_0$, since $c\ne p_0$.
Thus half the $128$ supports avoid $x$, giving $64$ supports.
The stabilizer of a heavy support in the nine-dimensional translation space is
\[
 \tau=c\ell^{\top},\qquad \ell(c)=0.
\]
Imposing $\tau p_0\in\langle p_0\rangle$ adds $\ell(p_0)=0$; this leaves dimension
one since $c$, $p_0$ are independent. The point-fixing translation space has
dimension seven, so its orbit has $2^6=64$ supports. The sixteen signs from
$E_C$ are transitive on each fibre. Thus there are $64\cdot16/4=256$ crosses
per heavy label, and the quotient fuses the six labels. This gives $1536$.
\end{proof}

In particular,
\begin{equation}\label{eq:1755}
 |Y|=3+24+192+1536=1755,\qquad
 |H_x|=\frac{|H|}{28}=2^{14}\cdot3,\qquad
 |H_{x,\,y}|=\frac{|H|}{28\cdot3}=2^{14}.
\end{equation}

\subsection{Coordinate quartets and a local move}\label{sec:five}
In row $p$, a coordinate cross corresponds to a coset of $\langle p\rangle$
in $E$. Thus two crosses in that row correspond together to an affine plane.

\begin{lemma}[Coordinate quartets]\label{lem:coordinatequartets}
A four-set in $F$ is a quartet precisely when it is a coordinate row, or
consists of two crosses in each of two distinct rows $p$, $p'$, with each pair
corresponding to a coset of $\langle p,\,p'\rangle$ in $E$.
There are $91$ coordinate quartets, with $13$ through each coordinate cross.
\end{lemma}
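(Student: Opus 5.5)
The plan is to turn the quartet condition into a statement about Hermitian supports of the minimal vectors listed in Theorem~\ref{thm:shellproved}, and then to run through the possible row patterns of a four-set $Q\subseteq F$. Every coordinate cross outside $Q$ is orthogonal to all of $Q$, so it never violates the condition. By Definition~\ref{def:quartet}, $Q$ is therefore a quartet exactly when no four-block or heavy minimal vector has Gaussian support on $Q$ of size one.

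The first step records this support in each row, using the convention that a coordinate cross in row $p$ is a coset of $\langle p\rangle$.
\begin{itemize}
\item For a four-block word with label $\ell$, an active row ($\ell(p)=1$) selects the plane $\ell(e)=1+\epsilon_p$. This plane meets every coset of $\langle p\rangle$ once, so all four crosses of an active row are hit and no cross of an inactive row is hit.
\item For a heavy word of class $c$, the heavy row $p=c$ meets all four crosses, by the description in Lemma~\ref{lem:heavysupports}.
\item In a light row $p\ne c$ the support is a pair $\{e,\,e+c\}$. It hits exactly the two crosses whose union is the coset $e+\langle p,\,c\rangle$.
\end{itemize}
By Lemma~\ref{lem:heavysupports} and the transitivity of $P_0$ on each heavy label, every such support pattern that is compatible with the normal form \eqref{eq:binarynormal} actually occurs.

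Next comes the case analysis on the row sizes $(4)$, $(3,1)$, $(2,2)$, $(2,1,1)$, $(1,1,1,1)$.
\begin{itemize}
\item A full row meets every support in $0$, $2$ or $4$ crosses, so it is a quartet. This is also noted after Definition~\ref{def:quartet}.
\item If $Q$ contains a singleton row $p'$, I produce a vector hitting only that cross. The first choice is a four-block word with $\ell(p')=1$ and $\ell$ vanishing on the other rows of $Q$; such an $\ell$ exists in the $(3,1)$ case and whenever $p'$ is outside the span of the other labels.
\item Otherwise I use a heavy word whose class $c$ is not a label of $Q$. I choose the light-block pairs so that the $\langle p',\,c\rangle$-coset in row $p'$ contains the chosen cross, while the other rows of $Q$ are avoided. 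For a row of size one or two this can be done by taking the complementary coset of $\langle p,\,c\rangle$ when that coset contains neither chosen cross.
\item Checking that these row choices can be made simultaneously requires the freedom in $M$ (subject to $M^{\top}c=0$) from \eqref{eq:binarynormal}. Here I would use $H_x$ and the double transitivity of Proposition~\ref{prop:binaryaut} to reduce to a few normal forms.
\item For the pattern $(2,2)$ with rows $p$, $p'$, four-block words always give an even count. The heavy word with $c=p$ gives $2$ from row $p$ plus $|\text{pair}\cap(\text{coset of }\langle p,\,p'\rangle)|$ from row $p'$. Avoiding a total of $3$ forces the pair in row $p'$ to be a coset of $\langle p,\,p'\rangle$, and the class $c=p'$ gives the same condition in row $p$. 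This proves necessity.
\end{itemize}

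Sufficiency in the $(2,2)$ case is the main obstacle. Here one must show that when both pairs are cosets of $\langle p,\,p'\rangle$, heavy words of the other five classes $c\notin\{p,\,p'\}$ never give total one. In each of the two rows such a word hits exactly one chosen cross or none, so what is needed is that the two contributions have equal parity. I would extract this from the binary normal form. The light-block pair in row $p$ is determined by the value of the quadratic-plus-linear function $\theta_p(c)+(Mp)\cdot e$. Whether it meets a given $\langle p,\,p'\rangle$-coset is then a linear functional of $(M,\,\epsilon)$, and I expect the two row functionals to coincide, using $\sum p=0$ on the line through $p$, $p'$ together with \eqref{eq:productidentity}. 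Failing that, I would reduce by $H$ to $p=p_0$, $p'$ fixed, and verify directly on the $64$ supports per class via Lemma~\ref{lem:heavysupports}.

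The count is then immediate: $7$ rows plus $\binom72\cdot2\cdot2=84$ two-row quartets gives $91$. Through a fixed cross there are $1+6\cdot2=13$, since for each other row $p'$ the chosen cross lies in a unique coset of $\langle p,\,p'\rangle$ and there are two choices of coset in row $p'$.
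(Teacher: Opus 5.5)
There is a genuine gap, starting with the $(2,2)$ necessity step. You exclude a total of $3$, but Definition~\ref{def:quartet} forbids only support size one. For a heavy word with $c=p$ or $c=p'$, the heavy row alone already contributes $2$. Such words can therefore never witness failure, and your argument does not show that a $(2,2)$ set violating the plane condition is not a quartet.

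The missing observation is purely affine, and it also settles the sufficiency you left as an expectation. Suppose the chosen pair in a light row $p$ has difference $\delta\bmod\langle p\rangle$. In $E/\langle p\rangle$, both the chosen pair and the two crosses met by a $c$-line $\{e,\,e+c\}$ are affine lines, of directions $\bar\delta$ and $\bar c$. Hence that row contributes exactly one unless $c\in\{\delta,\,\delta+p\}$, in which case it contributes $0$ or $2$. This depends only on $c$, not on $(M,\,\epsilon)$.

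Every translation $e\mapsto e+\tau p$ lifts to $P_0$ and preserves the heavy class. So the $c$-lines in the independent rows $p$, $p'$ can be placed independently. A class $c\notin\{p,\,p'\}$ lying in exactly one of $\{\delta,\,\delta+p\}$, $\{\delta',\,\delta'+p'\}$ then yields support one. Hence a quartet satisfies $\{\delta,\,\delta+p\}\setminus\{p'\}=\{\delta',\,\delta'+p'\}\setminus\{p\}$. Since the two pairs have different sums $p\ne p'$, this forces $\{\delta,\,\delta+p\}=\{p',\,p+p'\}$, which is the plane condition.

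Conversely, under that condition the row contributions are $(1,\,1)$ for $c\notin\{p,\,p',\,p+p'\}$. For $c=p+p'$ both are even; they can be $0$ or $2$, not ``one or none'' as you wrote. Classes $c\in\{p,\,p'\}$ and four-block words give even totals, so the set is a quartet.

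The heavy-word constructions for $(2,1,1)$ and for the quadrangle $(1,1,1,1)$ are also only deferred. Your recipe takes a class outside the labels, hits the singleton, and avoids the other rows; it fails in one subcase. If the rows $p$, $q$, $r$ form a Fano line and the double pair is a coset of $\langle p,\,q\rangle$, then $\{\delta,\,\delta+p\}=\{q,\,r\}$. Every $c\notin\{p,\,q,\,r\}$ then forces the double row to contribute exactly one, so hitting the singleton gives support two. The uniform fix is to take $c\notin\{p,\,q,\,r,\,\delta,\,\delta+p\}$ and let the double row supply the single hit. Independence of rows $q$, $r$ then lets you avoid both singletons.

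For $p_4=p_1+p_2+p_3$ you still need the computation in $E/\langle c\rangle$, with $c$ outside the four labels. Hitting the first cross and avoiding the next two confine each $\overline{\tau p_i}$ to an affine line of direction $\bar p_i$. These directions span $E/\langle c\rangle$, so $\overline{\tau p_4}=\sum_i\overline{\tau p_i}$ is unrestricted and the fourth cross can be avoided as well.

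Your counts of $91$ and $13$ are correct.
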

\begin{proof}
A four-block minimum is nonzero in every Gaussian coordinate of its four
active rows. A heavy minimum of label $c$ is nonzero throughout row $c$;
in every other row $p$ its support is an affine $c$-line, meeting two
Gaussian coordinates. By Lemma~\ref{lem:heavysupports} every translation
$e\mapsto e+\tau p$ is available. At linearly independent row labels the
values $\tau p$ are arbitrary, so these $c$-lines can be prescribed independently.
We classify a four-set by its row multiplicities.

The pattern $(4)$ is a row quartet. For $(3,\,1)$, a four-block label
vanishing on the first row and nonzero on the second gives support one.

For $(2,\,2)$, let the pair differences in rows $p$, $p'$ be
$\delta\bmod\langle p\rangle$ and $\delta'\bmod\langle p'\rangle$.
For a heavy label $c\notin\{p,\,p'\}$, the contribution from row $p$ is always
one unless $c\in\{\delta,\,\delta+p\}$, when it is zero or two.
Independence of the two row translations therefore forces
\[
 \{\delta,\,\delta+p\}\setminus\{p'\}
 =\{\delta',\,\delta'+p'\}\setminus\{p\}.
\]
The two pairs have different sums $p$, $p'$, so this equality forces
$\{\delta,\,\delta+p\}=\{p',\,p+p'\}$ and
$\{\delta',\,\delta'+p'\}=\{p,\,p+p'\}$, the asserted plane condition.
Conversely, under that condition a heavy label outside $\{p,\,p'\}$
gives contributions $(1,\,1)$ unless $c=p+p'$, when both are even.
For $c=p$ or $p'$ one contribution is two and the other is even.
Four-block contributions are also even. Thus these sets are quartets.

For $(2,\,1,\,1)$ on rows $p$, $q$, $r$, choose a heavy label outside
$\{p,\,q,\,r,\,\delta,\,\delta+p\}$. Its contribution on the double row is always one;
the two single rows are independent, so their $c$-lines can both avoid
the chosen crosses. This gives support one.

For $(1,\,1,\,1,\,1)$, a row label outside the span of the other three is
separated from them by a four-block label. Otherwise, after numbering,
$p_1$, $p_2$, $p_3$ are independent and $p_4=p_1+p_2+p_3$.
Choose $c$ outside these four labels and pass to $E/\langle c\rangle$.
Starting with one heavy support, translate its lines by $\tau p_i$.
Hitting the first cross and avoiding the next two restrict each
$\overline{\tau p_i}$, $i=1$, $2$, $3$, to an affine line of direction
$\overline p_i$. These three directions span $E/\langle c\rangle$, so their
sum ranges over all of $E/\langle c\rangle$. Since $\tau p_4=\tau p_1+\tau p_2+\tau p_3$,
one can also avoid the fourth cross, again giving support one.
This exhausts the patterns. Finally there are
$7+\binom72\cdot2^2=91$ quartets, and transitivity of $H$ on $F$ gives
$4\cdot91/28=13$ through each cross.
\end{proof}

Let $\DD$ be the twelve coordinate crosses in rows $0$, $4$, $5$, of binary
labels $6$, $4$, $2$. The lemma gives exactly five coordinate quartets through
$x$, $y$:
\begin{equation}\label{eq:fivequartets}
\begin{aligned}
 Q_1&=\{X_1,\,X_2,\,X_3,\,X_4\},\\
 Q_2&=\{X_1,\,X_2,\,X_{17},\,X_{19}\},&
 Q_3&=\{X_1,\,X_2,\,X_{18},\,X_{20}\},\\
 Q_4&=\{X_1,\,X_2,\,X_{21},\,X_{24}\},&
 Q_5&=\{X_1,\,X_2,\,X_{22},\,X_{23}\}.
\end{aligned}
\end{equation}
Their union is $\DD$, the \emph{reference dozen}; its intrinsic
identification will follow by excluding noncoordinate completions.
Set
\begin{equation}\label{eq:ktdef}
 g=SUS,\qquad t=gVgW.
\end{equation}
The seed identity $UB=-B$ and $Sf=b$ give $gf=-f$, so $g$ is an
involution in $G_x$. Using \eqref{eq:Sblocks} and Table~\ref{tab:UVW}
gives the three local images
\begin{equation}\label{eq:onebridge}
 tf_1=-f_1,\qquad tf_2=-f_2,\qquad tf_3=-if_{20}.
\end{equation}
Thus $t\in G_{x,\,y}$ joins $O_3$ to $O_{24}$ and does not belong to $H$:
an element of $H$ fixing $x$ preserves its coordinate row.

\subsection{Arithmetic fusion and transitivity}\label{sec:coupletrans}
The rank-$56$ integral representation rules out partial orbit fusions.
Every subgroup considered here is finite and acts on $L\otimes_{\Z}\mathbb Q$.

\begin{lemma}[Cyclotomic prime obstruction]\label{lem:cyclotomicprime}
If a prime $p$ divides the order of a finite group of automorphisms
of a rank-$n$ lattice, then $p-1\le n$.
\end{lemma}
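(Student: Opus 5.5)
The plan is to pass from the group to a single element of order $p$ and read off its characteristic polynomial over $\mathbb Q$. Let $\Gamma$ be a finite group of automorphisms of a lattice $M\cong\Z^n$, and suppose the prime $p$ divides $|\Gamma|$. By Cauchy's theorem there is $g\in\Gamma$ of order exactly $p$. Choosing a $\Z$-basis of $M$, $g$ becomes a matrix in $\operatorname{GL}_n(\Z)$ with $g^p=\id$ and $g\ne\id$. The whole argument then takes place in the rational representation $M\otimes_\Z\mathbb Q\cong\mathbb Q^n$, as the paper indicates just before the lemma.

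First, the minimal polynomial $\mu$ of $g$ over $\mathbb Q$ divides
\[
 t^p-1=(t-1)\Phi_p(t),
\]
where $\Phi_p(t)=1+t+\cdots+t^{p-1}$ is the $p$-th cyclotomic polynomial. It is irreducible over $\mathbb Q$, by Eisenstein's criterion applied to $\Phi_p(t+1)$. Since $g\ne\id$, $\mu\ne t-1$. Irreducibility of $\Phi_p$ then forces $\Phi_p\mid\mu$. Because $\mu$ divides the characteristic polynomial of $g$, which has degree $n$, we get $\deg\Phi_p=p-1\le n$.

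A slightly more structural version of the same step is also available. Semisimplicity, which follows from Maschke or simply from $t^p-1$ being squarefree, splits $\mathbb Q^n$ as $\ker(g-\id)\oplus\ker\Phi_p(g)$. The second summand is nonzero and is a vector space over the field $\mathbb Q(\zeta_p)$, which has degree $p-1$ over $\mathbb Q$. Hence its dimension is a positive multiple of $p-1$, and in fact $p-1$ divides $n-\dim\ker(g-\id)$.

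I expect no real obstacle here. The only point needing care is the irreducibility of $\Phi_p$ over $\mathbb Q$; everything else is the divisibility chain $\Phi_p\mid\mu\mid\chi_g$. For the later application with $n=56$, this excludes $73$ and $521$, since $72>56$ and $520>56$. The lattice is needed only to guarantee that the characteristic polynomial has rational (indeed integer) coefficients, so that rational irreducibility of $\Phi_p$ applies.
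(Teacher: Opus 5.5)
Your proof is correct and follows the paper's argument. In both, Cauchy's theorem supplies an element of order $p$, and the irreducible polynomial $\Phi_p$ of degree $p-1$ is forced to divide that element's rational characteristic polynomial of degree $n$; the paper reaches $\Phi_p$ through a nontrivial $p$th-root-of-unity eigenvalue rather than through the minimal polynomial, which is the same step.
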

\begin{proof}
Cauchy's theorem gives an element of order $p$. It has a nontrivial
$p$th root of unity as an eigenvalue, whose minimal polynomial over
$\mathbb Q$ is $\Phi_p$ of degree $p-1$. This polynomial divides its
rational characteristic polynomial of degree $n$.
\end{proof}

\begin{proposition}[One fusion suffices]\label{prop:arithmeticfusion}
Let
\[
 H_x\le K\le\Stab_{\Aut_{\Z[i]}(L,\,h)}(x).
\]
If a $K$-orbit meets both $O_3$ and $O_{24}$, then $K$ is transitive
on $Y=x^\perp\cap\cX$.
\end{proposition}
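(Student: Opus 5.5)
Since $K\supseteq H_x$, every $K$-orbit on $Y$ is a union of the $H_x$-orbits $O_3$, $O_{24}$, $O_{192}$, $O_{1536}$ of Proposition~\ref{prop:localorbits}. By hypothesis one $K$-orbit contains $O_3\cup O_{24}$. Its size is therefore one of
\[
 27,\qquad 27+192=219,\qquad 27+1536=1563,\qquad 27+192+1536=1755.
\]
The plan is to exclude the first three values; the remaining value $1755=|Y|$ is exactly transitivity.

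For $219=3\cdot73$ and $1563=3\cdot521$ we use the orbit--stabilizer theorem. The orbit size divides $|K|$, so $73$ or $521$ divides $|K|$. But $K$ is a finite group of automorphisms of the rank-$56$ real lattice underlying $L$. Lemma~\ref{lem:cyclotomicprime} then requires $p-1\le56$, which fails for $p=73$ and $p=521$. These two cases are therefore impossible.

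The case of size $27$ is the main obstacle, since $27$ passes the arithmetic test. Here the orbit is $O_3\cup O_{24}=F\setminus\{x\}$. Thus $K$ preserves $F\setminus\{x\}$ and fixes $x$, so $K$ stabilizes $F$. Corollary~\ref{cor:framestabilizer} then gives $K\le H$, hence $K\le H_x$. But $H_x$ preserves the coordinate row of $x$, so it preserves $O_3$ and $O_{24}$ separately. This contradicts the assumption that a single $K$-orbit meets both.

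One point needs care: an orbit containing all of $O_3\cup O_{24}$ is exactly $F\setminus\{x\}$ only when no other $H_x$-orbit is fused into it. That holds in the size-$27$ case by construction. With the three smaller sizes excluded, the orbit has size $1755$ and equals $Y$.
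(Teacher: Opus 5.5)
Your proposal is correct and follows essentially the same route as the paper. Both list the four possible orbit sizes $27$, $219$, $1563$, $1755$, exclude $219$ and $1563$ via the primes $73$ and $521$ and Lemma~\ref{lem:cyclotomicprime}, and rule out $27$ by showing $K$ would stabilize $F$ and hence equal $H_x$ by Corollary~\ref{cor:framestabilizer}.
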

\begin{proof}
Because $H_x\le K$, each $K$-orbit is a union of the four $H_x$-orbits
in Proposition~\ref{prop:localorbits}. An orbit meeting both coordinate
orbits contains their union. Its only possible sizes are therefore
\[
 \begin{array}{c|c}
 \text{orbits included beyond }O_3\cup O_{24}&\text{size}\\\hline
 \text{none}&27\\
 O_{192}&27+192=219=3\cdot73\\
 O_{1536}&27+1536=1563=3\cdot521\\
 O_{192},\,O_{1536}&1755.
 \end{array}
\]
An orbit length divides $|K|$. The middle two possibilities would thus
force elements of prime orders $73$ and $521$, respectively, contrary
to Lemma~\ref{lem:cyclotomicprime}, since $72>56$ and $520>56$.

If the orbit had size $27$, it would be exactly
$O_3\cup O_{24}=F\setminus\{x\}$. Since $K$ fixes $x$, it would then
preserve $F$. Corollary~\ref{cor:framestabilizer}, which determined the
full frame stabilizer from the code, gives
\[
 K\le\Stab_{\Aut_{\Z[i]}(L,\,h)}(F)\cap
        \Stab_{\Aut_{\Z[i]}(L,\,h)}(x)=H_x.
\]
This contradicts fusion of two distinct $H_x$-orbits. Only size $1755$
remains, and that is all of $Y$.
\end{proof}

\begin{proposition}\label{prop:coupletrans}
The group $G$ is transitive on crosses and on ordered orthogonal pairs
of crosses. More precisely, $\langle H_x,\,t\rangle$ is transitive on $Y$,
and $\langle H,\,t\rangle$ already has both global transitivity properties.
\end{proposition}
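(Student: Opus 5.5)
The plan is to apply Proposition~\ref{prop:arithmeticfusion} with $K=\langle H_x,\,t\rangle$, and then pass from local to global transitivity using the transitivity of $H$ on crosses within each family. To use the proposition we need two facts: that $t$ fixes $x$, and that $t$ carries a member of $O_3$ into $O_{24}$. By \eqref{eq:ktdef}, the element $g=SUS$ lies in $G$, because $S\in G$ by Lemma~\ref{lem:dihedralrecovery} and $U\in H$. The seed identity $UB=-B$ together with $Sf=b$ gives $gf=-f$, so $g$ fixes the cross $x$. Since $V,\,W\in H$ both fix $f_1$ up to a unit, as read off from row $0$ of Table~\ref{tab:UVW}, we get $t=gVgW\in G_x$. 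The local images \eqref{eq:onebridge} then show that $t$ sends $X_3\in O_3$ to $X_{20}\in O_{24}$. So the $K$-orbit of $X_3$ meets both coordinate orbits, and Proposition~\ref{prop:arithmeticfusion} shows that $K$ is transitive on $Y$. The only computational input here is \eqref{eq:onebridge}, which the text before the statement treats as given via \eqref{eq:Sblocks}. This is the step I would double-check: $tf_3$ must be a coordinate vector in row $4$, which is block label $4$. I would verify this by transporting $f_3$ through $W$, then $g$ (that is, $S$, $U$, $S$), then $V$, then $g$ again, using the reverse-circulant blocks.

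Next we show that $\langle H,\,t\rangle$ is transitive on crosses. By Theorem~\ref{thm:shellproved}, the $H$-orbits on $\cX$ are the coordinate crosses ($28$), the four-block crosses ($448$) and the heavy crosses ($3584$). A cross of each type is orthogonal to $x$; these are the orbits $O_{24}$, $O_{192}$, $O_{1536}$ in $Y$. Take any cross $z$. If $z=x$ there is nothing to prove. Otherwise we may use $H$ to move $z$ to a cross orthogonal to $x$, since every $H$-orbit of noncoordinate crosses meets $Y$, and coordinate crosses are already in the orbit of $x$. Transitivity of $K$ on $Y$ then connects $z$ to $y\in O_3$. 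Finally $y=X_2$ lies in the coordinate $H$-orbit of $x$. Hence every cross lies in the $\langle H,\,t\rangle$-orbit of $x$.

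For ordered orthogonal pairs $(z,\,z')$, first use transitivity on crosses to move $z$ to $x$. This sends $z'$ into $Y$. Then use $K\le\langle H,\,t\rangle_x$, which is transitive on $Y$, to send $z'$ to $y$. So $\langle H,\,t\rangle$ is transitive on ordered orthogonal pairs. Since $t\in G$ and $H\le G$, the same transitivity holds for $G$.

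The only genuine obstacle is certifying $t\in G_x$ and $tX_3=X_{20}$. Everything else is formal bookkeeping with the fusion proposition and the orbit structure of $H$.
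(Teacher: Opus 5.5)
Your proposal is correct and follows essentially the same route as the paper. The bridge identity \eqref{eq:onebridge} supplies the hypothesis of Proposition~\ref{prop:arithmeticfusion} for $K=\langle H_x,\,t\rangle$, which gives transitivity on $Y$. Cross transitivity then follows because $Y$ meets all three $H$-orbits of Theorem~\ref{thm:shellproved}, and moving the first member of a pair to $x$ before applying $K$ gives transitivity on ordered orthogonal pairs. Your explicit check that $t$ fixes $x$ is a detail the paper leaves implicit: it uses $gf=-f$ together with row $0$ of Table~\ref{tab:UVW} for $V$ and $W$, and you carry it out correctly.
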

\begin{proof}
The identities \eqref{eq:onebridge} supply the hypothesis of
Proposition~\ref{prop:arithmeticfusion} for $K=\langle H_x,\,t\rangle$.
Thus $K$ is transitive on $Y$.

The shell $Y$ meets each of the three $H$-orbits of crosses: its
coordinate part is nonempty, as are its four-block and heavy parts.
All these crosses lie in one $\langle H,\,t\rangle$-orbit. Since $H$ is
transitive on each family by Theorem~\ref{thm:shellproved}, that orbit
contains all $4060$ crosses, proving cross transitivity.

Given two ordered orthogonal pairs, first take the first member of
each pair to $x$ by cross transitivity. Their second members are then
in $Y$, and an element of $K$ takes one to the other while fixing $x$.
Composing these maps proves transitivity on ordered pairs. In
particular it proves transitivity on unordered couples.
\end{proof}

\subsection{The common perpendicular shell}\label{sec:fivebases}
We continue with the standard couple $x=X_1$, $y=X_2$ and the reference
dozen $\DD$. The couple's four real positions form the plane
\[
 E_{x,\,y}=\langle2,\,4\rangle=\{0,\,2,\,4,\,6\}\subset E.
\]
Put $V_1=\langle\DD\rangle_{\C}$. It has complex dimension $12$,
with complement $V_1^\perp$ of dimension $16$.
In a span or an orthogonality condition, a cross denotes its complex line.

\begin{lemma}\label{lem:localshell}
The crosses orthogonal to both $x$ and $y$ split into
\begin{equation}\label{eq:730}
 10+80+640=730.
\end{equation}
The first ten are $\DD\setminus\{x,\,y\}$. The next eighty form the set
$Y_0$ of minimal crosses in $V_1^\perp$; the remaining $640$ are
\emph{mixed}, with nonzero projections to both $V_1$ and $V_1^\perp$.
\end{lemma}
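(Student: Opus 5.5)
The plan is to count the crosses orthogonal to both $x$ and $y$ directly, one $H$-family at a time, in the binary coordinates of Proposition~\ref{prop:localorbits}. No transitivity input is needed. Hermitian orthogonality of a minimal cross to a coordinate cross $X(f_j)$ means that its doubled array vanishes on the Gaussian pair of $f_j$. The reason is that the noncoordinate minima have entries $\pm1$, so a nonzero Gaussian coordinate gives a nonzero pairing. In row $0$, of label $p_0=6$, the cross $x$ is the coset $\{0,6\}$. By Table~\ref{tab:hamminglabels}, $y=X(2e_0w_2)$ sits at real position $2$ of row $0$, which carries label $2$, so $y$ is the coset $\{2,4\}$. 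Together the two cosets make up the plane $E_{x,\,y}=\{0,2,4,6\}$.

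\emph{Coordinate crosses.} All $26$ coordinate crosses other than $x$, $y$ are orthogonal to both.

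\emph{Four-block crosses.} By Proposition~\ref{prop:localorbits}, orthogonality to $x$ forces $\ell(p_0)=0$. Then row $0$ is inactive, so orthogonality to $y$ is automatic. This gives all $192$ crosses of $O_{192}$.

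\emph{Heavy crosses.} Take a label $c\ne p_0$. In row $0$ the support is a $c$-line, and it must avoid the whole plane $E_{x,\,y}$. If $c\notin E_{x,\,y}$, every $c$-line meets that plane. If $c\in\{2,4\}$, the two $c$-lines avoiding $\{0,6\}$ are exactly the two lines outside the plane. Hence for $c\in\{2,4\}$ all $256$ crosses counted in Proposition~\ref{prop:localorbits} survive, and for the other four labels none do. This gives $512$ heavy crosses. The total is $26+192+512=730$, which proves \eqref{eq:730} as a sum.

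Next I regroup the $730$ crosses according to $V_1$. Rows $0,4,5$ carry the labels $\{6,4,2\}$, which form the Fano line $\lambda_1=E_{x,\,y}\setminus\{0\}$. A cross lies in $V_1^\perp$ exactly when its array vanishes on these three rows.
\begin{itemize}
\item Among coordinate crosses, the $10$ in $\DD\setminus\{x,y\}$ lie in $V_1$. The $16$ in the other four rows lie in $V_1^\perp$.
\item A heavy cross meets every row, because light blocks have weight two. So none of the $512$ heavy crosses lies in $V_1^\perp$. None lies in $V_1$ either, since its support reaches the rows outside $\lambda_1$. All $512$ are therefore mixed.
\item A four-block cross with $\ell(6)=0$ lies in $V_1^\perp$ exactly when $\ell$ also vanishes on $2$ and $4$. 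That means $\ker\ell=E_{x,\,y}$, a single label $\ell$, giving $192/3=64$ crosses. The other two labels give $128$ crosses that are active in some row of $\lambda_1$ and in some row outside it, so they are mixed.
\end{itemize}

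This yields $Y_0$ of size $16+64=80$ and $128+512=640$ mixed crosses. No cross other than the ten coordinate ones lies entirely in $V_1$, because every noncoordinate minimum has support in at least four rows.

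The only delicate step is the heavy count. I must check that the $256$ crosses per admissible label $c\in\{2,4\}$ from Proposition~\ref{prop:localorbits} are exactly those whose row-$0$ line lies outside $E_{x,\,y}$. This holds because that proposition counted supports avoiding $\{0,6\}$, and for $c\in E_{x,\,y}$ those same lines also avoid $\{2,4\}$. I would also verify the identification of $y$ with the coset $\{2,4\}$ directly against Table~\ref{tab:hamminglabels} and \eqref{eq:complexbasis}.
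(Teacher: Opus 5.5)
Your proposal is correct and follows the paper's proof essentially step for step. You use the same family-by-family count $26+192+512$: coordinate crosses, four-block crosses with $\ell(p_0)=0$, and heavy crosses with $c\in\{2,4\}$ whose row-$0$ line lies outside $E_{x,\,y}$. You then regroup by whether the support meets rows $0$, $4$, $5$, giving the same $80=16+64$ and $640=128+512$; your remark that no noncoordinate minimum lies in $V_1$ only makes the trichotomy explicit.
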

\begin{proof}
There are $26$ other coordinate crosses. A four-block minimum avoids $x$, $y$
exactly when its label $\ell$ satisfies $\ell(p_0)=0$: an active row meets
both Gaussian coordinates. There are three such nonzero labels, giving
$3\cdot64=192$ crosses. The normal of $E_{x,\,y}$ gives $64$ crosses in $V_1^\perp$;
the other two labels give $128$ mixed crosses, each occupying two rows of
$V_1$ and two of $V_1^\perp$.

A heavy minimum cannot have heavy label $p_0=6$. In row $p_0$ its support
is an affine line parallel to the heavy label $c$. Such a line avoids $E_{x,\,y}$
precisely when $c=2$ or $4$ and the line is one of the two lying outside
$E_{x,\,y}$. The translation action distributes the $128$ supports of each heavy
label equally over its four affine lines in row $p_0$. Thus each of these
two labels gives $64$ real supports, each with sixteen signed lifts, or
$256$ crosses. Each meets both $V_1$ and $V_1^\perp$.
Consequently
\[
 26+192+512=730,\qquad 80=16+64,\qquad 640=128+512,
\]
as required.
\end{proof}

\subsection{The local orbits of the couple stabilizer}\label{sec:coupleorbits}
Recall the direction plane $E_{x,\,y}=\langle2,\,4\rangle$ of the
coordinate couple $\{x,\,y\}$.
The row image of $H_{x,\,y}$ is the order-eight subgroup fixing $p_0$ and $E_{x,\,y}$,
generated by $R_0$, $\beta$. Its row kernel is $P_{0,\,x}$: once a
translation fixes one Gaussian coordinate in row $p_0$, it fixes all four.

\begin{lemma}\label{lem:B0orbits}
The $H_{x,\,y}$-orbits on $Y$ have sizes
\[
 1,\,\quad2,\,\quad8,\,\quad16,\,\quad64,\,\quad128,\,\quad512,\,\quad1024.
\]
The first four are coordinate orbits, the next two four-block orbits,
and the last two heavy orbits.
\end{lemma}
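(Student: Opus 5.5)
We refine each $H_x$-orbit of Proposition~\ref{prop:localorbits} into $H_{x,y}$-orbits. Since $|H_{x,y}|=2^{14}$ by \eqref{eq:1755}, every orbit has $2$-power size. The target sizes sum to $3+24+192+1536$ in the pattern $1+2$, $8+16$, $64+128$, $512+1024$. For each family we exhibit invariants separating the pieces, and then prove transitivity on each piece using the row image $\langle R_0,\beta\rangle$ of order eight together with the kernel $P_{0,x}$.

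\emph{Coordinate crosses.} In row $p_0$ the three other crosses correspond to the nonzero cosets of $\langle p_0\rangle$. The cross $y$ is fixed, and the other two form one orbit. Indeed, some element of $H_{x,y}$ moves $X_3$ to $X_4$: by \eqref{eq:fivequartets}, a suitable translation in $P_{0,x}$ (a linear $\tau$ with $\tau p_0\in\langle p_0\rangle$) swaps the two cosets outside $E_{x,y}$ while fixing $E_{x,y}$ pointwise modulo $p_0$. This gives sizes $1$ and $2$.

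For the $24$ crosses in other rows, the invariant is whether the row label lies in $E_{x,y}=\{2,4,6\}$. Rows $4$ and $2$ give the $8$ crosses of $\DD\setminus Q_1$, which lie in $V_1$; the four remaining rows give $16$. For transitivity we use the fact that $P_{0,x}$ leaves $\tau q$ arbitrary for $q\ne p_0$ as $\tau$ ranges, as in the proof of Proposition~\ref{prop:localorbits}. On each row the four crosses are then permuted transitively. The order-eight row group swaps $\{4,2\}$ and is transitive on $\{1,3,5,7\}$, via $\beta=(1\,6\,3\,2)(4\,5)$ in row positions together with $R_0$.

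\emph{Four-block crosses.} Lemma~\ref{lem:localshell} splits the $192$ crosses by label: the normal label of $E_{x,y}$ gives $64$ crosses in $V_1^\perp$, and the other two labels give $128$ mixed crosses. This is a geometric invariant, so the split is forced. Transitivity within each piece comes from the proof of Proposition~\ref{prop:localorbits}. The six support states \eqref{eq:localsixstates} have $\beta$ interchanging the two states of label $1$, and $R_0$ and $\beta$ acting on labels $6,7$ as $(3\,5)(4\,6)$ and $(3\,6\,4\,5)$. Checking which of these is the normal label, one piece carries the two states of one label and the other piece carries the four states of the other two labels, all linked. We then need sign transitivity inside $H_{x,y}$. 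The elements $k_1$, $k_2$ fix $x$, and we verify that they also fix $y$. If one of them fails to, we replace it by a product with an element of $P_{0,x}$ whose row action is trivial, which automatically fixes $y$, since $P_{0,x}$ is the row kernel of $H_{x,y}$.

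\emph{Heavy crosses.} The heavy label $c\ne p_0$ satisfies either $c\in E_{x,y}$ (the labels $2$ and $4$) or $c$ among the four labels outside. This gives $2\cdot256=512$ and $4\cdot256=1024$. These two sets are separated by whether the cross meets the couple's plane in row $p_0$, which is an intrinsic property. The stabilizer $P_{0,x}$ is transitive on the $256$ crosses of each label, because the argument of Proposition~\ref{prop:localorbits} used only $P_{0,x}$, which lies in $H_{x,y}$. The row group acts on labels through its action on $E$ fixing $p_0$ and $E_{x,y}$. This action swaps $2$ and $4$ and is transitive on $\{1,3,5,7\}$, as already noted for rows.

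\emph{Main obstacle.} The delicate step is the four-block case: verifying that the state pairing in \eqref{eq:localsixstates} really splits as $2+4$ compatibly with the $V_1^\perp$ versus mixed dichotomy, and that full sign transitivity survives passage from $H_x$ to $H_{x,y}$. The plan here is to compute the moments of the sign differences for elements of $P_{0,x}$ directly. Since $P_{0,x}$ already fixes $y$, this avoids relying on $k_1$ and $k_2$.
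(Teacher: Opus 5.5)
Your route is the paper's: $H_{x,y}=P_{0,x}\langle R_0,\beta\rangle$, the $P_{0,x}$-orbits come from Proposition~\ref{prop:localorbits}, and the order-eight row image fuses them through its action on rows, on the six four-block states of \eqref{eq:localsixstates}, and on heavy labels. Your treatment of the six rows other than row~$0$ is correct. So is the four-block part: the normal label of $E_{x,y}=\{0,2,4,6\}$ is $\ell=1$, whose two states are swapped by $\beta$, and $\beta$ alone is a four-cycle on states $3,\ldots,6$. The heavy crosses are also handled correctly.

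The step that fails is your justification of the orbit $\{X_3,X_4\}$ in row~$0$. No element of $P_{0,x}$ moves $X_3$. Such an element acts on row $p_0$ by $e\mapsto e+\tau p_0$ with $\tau p_0\in\langle p_0\rangle$. This fixes every coset of $\langle p_0\rangle$, hence all four crosses of row~$0$. That is precisely why $P_{0,x}\le H_{x,y}$, which you invoke later, so your two claims about $P_{0,x}$ are incompatible.

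The swap has to come from the row image instead. The linear part $T_R$ of $R_0$ fixes $p_0=6$ and $2$ and sends $1\mapsto5$, and $\delta_{p_0}=0$. So $R_0$ interchanges the cosets $\{1,7\}$ and $\{3,5\}$. Table~\ref{tab:R0} indeed shows $w_4\leftrightarrow w_1$ in row~$0$, i.e.\ $X_3\leftrightarrow X_4$.

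Your contingency for $k_1$, $k_2$ is also unnecessary, and multiplying by a $y$-fixing element could not repair a map that moves $y$ anyway. Both are products of block-preserving maps fixing $x$. They therefore lie in $P_{0,x}\le H_{x,y}$, and the sign transitivity of Proposition~\ref{prop:localorbits} already takes place inside $H_{x,y}$. With these corrections your argument coincides with the paper's.
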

\begin{proof}
The cross $y$ is fixed, and the other two coordinate crosses in row $0$
form an orbit. The other rows in $E_{x,\,y}$ form a two-element orbit and those
outside $E_{x,\,y}$ a four-element orbit. Translations give all four coordinate
crosses in each such row.

For four-block crosses orthogonal to $x$, the possible labels are $\ell=1$, $6$, $7$.
The group $P_{0,\,x}$ has two orbits of $32$ for each label, as proved in
Proposition~\ref{prop:localorbits}. The action \eqref{eq:localsixstates} on these six states gives
orbits $2\cdot32=64$ and $4\cdot32=128$.

For each heavy label $c\ne p_0$, the point-fixing row kernel is transitive
on the $256$ orthogonal crosses of that label, again by
Proposition~\ref{prop:localorbits}. The row group has two orbits on these
labels, $E_{x,\,y}\setminus\{0,\,p_0\}$ and $E\setminus E_{x,\,y}$, of sizes two and
four. This gives $512$, $1024$. These exhaust $Y$.
\end{proof}

Two further row-energy calculations from \eqref{eq:Sblocks} give
\begin{equation}\label{eq:tlocalcolumns}
\begin{aligned}
 \bigl(4\|(tf_5)_r\|^2\bigr)_{r=0}^6&=(0,\,4,\,4,\,4,\,0,\,0,\,4),\\
 \bigl(4\|(t(P^5A/2))_r\|^2\bigr)_{r=0}^6&=(2,\,2,\,2,\,2,\,2,\,4,\,2).
\end{aligned}
\end{equation}
In the group $G_0=\langle H_{x,\,y},\,t\rangle$, these join the
$H_{x,\,y}$-orbits of sizes $16$, $64$ and $128$, $512$, respectively;
\eqref{eq:onebridge} joins those of sizes $2$, $8$.
The $1024$-orbit is exactly the part of $Y\setminus\{y\}$ nonorthogonal
to $y$: a heavy $c$-line in row $p_0$ avoiding $x$ meets $y$ precisely
when $c\notin E_{x,\,y}$. The other seven orbits are therefore $\{y\}$
and the common perpendicular shell.

\subsection{The five quartets through a couple}\label{sec:quartets}
\begin{lemma}[A triple in a frame]\label{lem:tripleframe}
If three members of a quartet lie in a frame $\Phi$, then its fourth
member also lies in $\Phi$.
\end{lemma}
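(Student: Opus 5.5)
The argument is a direct spanning argument using only the definition of a quartet and the fact that a frame is an orthogonal basis. No code or group-theoretic input is needed.

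Let $Q=\{a,\,b,\,c,\,d\}$ be a quartet with $a$, $b$, $c\in\Phi$, and suppose, aiming for a contradiction, that $d\notin\Phi$.

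First I record that a frame spans. The $28$ crosses of $\Phi$ are mutually Hermitian-orthogonal nonzero complex lines in the $28$-dimensional space $L\otimes_{\Z[i]}\C$. Choosing one minimal vector $z$ in each cross therefore gives an orthogonal basis. Since $h(z,\,z)=4$, every vector $v$ expands as
\[
 v=\tfrac14\sum_{z}h(z,\,v)\,z .
\]

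Next I apply the quartet condition of Definition~\ref{def:quartet} to each member of $\Phi$. Take $z\in\Phi\setminus\{a,\,b,\,c\}$.
\begin{itemize}
\item $z$ lies outside $Q$: it differs from $a$, $b$, $c$ by choice, and from $d$ because $d\notin\Phi$.
\item $z$ is orthogonal to the three members $a$, $b$, $c$, since all members of $\Phi$ are mutually orthogonal.
\end{itemize}
The defining property of a quartet then forces $z$ to be orthogonal to the fourth member $d$.

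Now let $v$ be a minimal vector representing $d$. The coefficients of $v$ at $a$, $b$, $c$ vanish because $d$ is orthogonal to the other members of $Q$. The coefficients at every other member of $\Phi$ vanish by the previous step. The expansion then gives $v=0$, contradicting $h(v,\,v)=4$. Hence $d\in\Phi$.

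The only point needing care is why each $z\in\Phi\setminus\{a,\,b,\,c\}$ counts as external to $Q$. This is precisely where the assumption $d\notin\Phi$ enters. The exclusion of the members of $Q$ in Definition~\ref{def:quartet} is exactly what makes the contradiction land on $d$ itself.
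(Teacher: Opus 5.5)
Your proposal is correct and follows essentially the same argument as the paper. Each member of $\Phi$ other than the three given ones is external to $Q$, so by the quartet condition it is orthogonal to the fourth member. The fourth member is then orthogonal to all of the spanning frame, which is a contradiction.
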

\begin{proof}
Suppose the fourth member $w$ of the quartet $Q$ is outside $\Phi$.
Every member of $\Phi\setminus(Q\setminus\{w\})$ is external to $Q$
and orthogonal to its other three members, hence to $w$ as well.
Thus $w$ is orthogonal to all of $\Phi$, which spans the space,
a contradiction.
\end{proof}

\begin{lemma}\label{lem:quartetexhaustion}
The quartets containing $x$, $y$ are precisely $Q_1$, $\ldots$, $Q_5$ in
\eqref{eq:fivequartets}. Every couple lies in exactly five quartets,
and $\Delta(\{x,\,y\})=\DD$.
\end{lemma}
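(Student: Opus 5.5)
Let $Q$ be a quartet containing $x$ and $y$, and write $Q=\{x,y,z,w\}$. The plan is to show that $Q\subseteq\DD$. Lemma~\ref{lem:coordinatequartets} then identifies $Q$ among the five coordinate quartets \eqref{eq:fivequartets}. Indeed, a coordinate quartet through $x$ and $y$ is either the row $Q_1$ or has two crosses in row $0$ forming a coset of $\langle p_0,p'\rangle$, and that coset is forced to be $E_{x,y}$. Transitivity on couples (Proposition~\ref{prop:coupletrans}) together with the equivariance \eqref{eq:configurationequivariance} then transports the count of five to every couple. The equality $\Delta(\{x,y\})=\DD$ is immediate once the five quartets are known, since their union is $\DD$.

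The crosses $z$ and $w$ lie in the common perpendicular shell of $730$ crosses from Lemma~\ref{lem:localshell}. By Lemma~\ref{lem:tripleframe}, if $z\in F$ then $w\in F$, and the coordinate classification finishes that case. So it remains to show that no quartet through $x,y$ has a noncoordinate third member. I would organise this by the orbits of $G_0=\langle H_{x,y},t\rangle$ on the common perpendicular shell. By the discussion after Lemma~\ref{lem:B0orbits}, $t$ fuses the $H_{x,y}$-orbits of sizes $2$ and $8$, of sizes $16$ and $64$, and of sizes $128$ and $512$. The element $t$ is an isometry fixing $x$ and $y$, so it maps quartets through $\{x,y\}$ to quartets through $\{x,y\}$.

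The key observation is that every such quartet has at least one of its two extra members in a coordinate-fused orbit. The orbits of size $16$ and $64$ contain coordinate crosses of $\DD$, and $Y_0$ has size $80=16+64$, so $Y_0$ is fused with coordinate crosses outside $\DD$. Similarly the orbit $2+8=10$ is $\DD\setminus\{x,y\}$, and the mixed crosses number $640=128+512$. Now suppose $z\notin F$. Choose $a\in G_0$ carrying $z$ into $F$; this is possible when $z$ lies in $Y_0$ (fused with the $16$-orbit) or is mixed of the four-block type, which is fused with the coordinate part only through the chain of orbits. Then $aQ$ is a quartet through $x$, $y$ and a coordinate cross, so by the first paragraph $aQ$ is one of $Q_1,\dots,Q_5$ and lies in $\DD$. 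Its members $az$ and $aw$ therefore lie in the $G_0$-orbit of $\DD\setminus\{x,y\}$, which is the $10$-orbit and contains no noncoordinate crosses. This forces $z\in\DD$, a contradiction.

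The main obstacle is the mixed crosses. The $128$- and $512$-orbits are fused with each other but apparently not with the coordinate crosses, and the same holds for the $1024$ part, though that part is nonorthogonal to $y$ and so is irrelevant here. For mixed $z$ I would instead use Definition~\ref{def:quartet} directly. I would exhibit, for a single representative $z$ of the fused $128\cup512$ orbit, a minimal cross $u$ that is orthogonal to $x$, $y$ and $z$, such that the crosses orthogonal to $\{x,y,z,u\}$ constrain $w$ to a set one can check. A cleaner route is to note that every $w$ completing the quartet must satisfy the support condition for all coordinate crosses $X_j$ outside $\DD$. Because $z$ has nonzero projection to $V_1^\perp$, one can pick a coordinate cross $X_j\subset V_1^\perp$ that is nonorthogonal to $z$ but orthogonal to $x$ and $y$, with $w$ forced orthogonal to $X_j$; the Hermitian support of $X_j$ on $Q$ then has size one unless $w$ meets the same row in a complementary way. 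Equivalently, I would check, using the explicit support descriptions in Lemma~\ref{lem:localshell}, that for the representative $z=P^5A/2$ (or its $t$-image) the rows of $V_1^\perp$ met by $z$ produce a coordinate cross with support size one on $\{x,y,z,w\}$ for every admissible $w$. This finite check on one orbit representative, transported by $G_0$, is where I expect the real work to lie.
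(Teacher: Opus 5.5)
Your coordinate case and your treatment of $Y_0$ follow the paper, but two small repairs are needed there. First, the claim that mixed four-block crosses can be carried into $F$ by $G_0$ is false: the $128$-orbit fuses with the heavy $512$-orbit, not with coordinate crosses, as you later concede. Second, in the $Y_0$ case you should send $z$ into the $16$-orbit, e.g.\ to $X_5$ via $tf_5$, so that $az\notin\DD$ contradicts $aQ\subseteq\DD$ at once. Saying that the $G_0$-orbit of $\DD\setminus\{x,y\}$ is the $10$-orbit uses Proposition~\ref{prop:couplelocalorbits}, which is derived from this lemma.

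The genuine gap is the mixed orbit of size $640$. The route you sketch for it cannot succeed, because coordinate test crosses never detect the failure. Take $z=X(P^5A/2)$, a four-block cross of label $7$ whose active rows have labels $1,2,4,7$. Take $w=X(E_6P^5A/2)$, which reverses the sign on exactly the active rows of labels $2$ and $4$.
\begin{itemize}
\item $w$ has the same support as $z$, hence is orthogonal to $x$ and $y$.
\item The real pairing of the two vectors is $\tfrac14(8-8)=0$.
\item The imaginary part vanishes: a four-block support contains one point of each Gaussian pair in its active rows, so $Iz$ and $w$ have disjoint supports.
\end{itemize}
Thus $\{x,y,z,w\}$ is mutually orthogonal, and every coordinate cross other than $x,y$ has Hermitian support $\varnothing$ or $\{z,w\}$ on it. The same occurs for heavy representatives, using the sign change from $E_C$ on a Fano line through the heavy label. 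So your plan would need noncoordinate test crosses and an exhaustive search over completions $w$, which you have not supplied.

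The idea you are missing is to stop fixing $y$ and move the whole triple $\{x,y,z\}$ into the coordinate frame by an element of $G$. The block formulas give $tX_1=X_1$, $tf_{19}=-if_3$ and $tf_6=-iP^3A/2$. Meanwhile $\GT^2$ fixes $X_1$, sends $X_3$ to $X_2$, and sends $P^3A$ to $P^5A$. Hence $\GT^2t$ maps $\{X_1,X_6,X_{19}\}$ onto $\{x,y,X(P^5A/2)\}$.

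The source consists of coordinate crosses in the three different rows $0$, $1$, $4$. By Lemma~\ref{lem:tripleframe} a quartet through it would be coordinate. By Lemma~\ref{lem:coordinatequartets} coordinate quartets have row pattern $(4)$ or $(2,2)$. So no quartet contains the source triple, and none contains $\{x,y,X(P^5A/2)\}$. Because $t$ fuses the $128$- and $512$-orbits, every mixed candidate is moved to $X(P^5A/2)$ by an element of $G_0$ fixing $x$ and $y$, which completes the exhaustion.
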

\begin{proof}
If a third member is coordinate, Lemma~\ref{lem:tripleframe} makes the
fourth coordinate as well. Lemma~\ref{lem:coordinatequartets} then gives
exactly the five displayed quartets.

The noncoordinate candidates form the $H_{x,\,y}$-orbits of sizes
$64$, $128$, $512$. By \eqref{eq:tlocalcolumns}, every member of the first
can be taken to $X_5$, and every member of the other two to $X(P^5A/2)$,
by an element of $G_0$ fixing $x$, $y$. The triple
$\{X_1,\,X_2,\,X_5\}$ lies in no coordinate quartet by
Lemma~\ref{lem:coordinatequartets}, and hence in no quartet by
Lemma~\ref{lem:tripleframe}. For the other representative the block
formulas give
\begin{equation}\label{eq:basictripletransport}
 tf_{19}=-if_3,\qquad tf_6=-iP^3A/2.
\end{equation}
Together with $tX_1=X_1$, these take $\{X_1,\,X_6,\,X_{19}\}$ to
$\{X_1,\,X_3,\,X(P^3A/2)\}$; applying $\GT^2$ gives
$\{X_1,\,X_2,\,X(P^5A/2)\}$. The original triple lies in three different
coordinate rows, so the same two lemmas exclude a quartet through it. Thus no noncoordinate candidate occurs.
Couple transitivity transports exhaustion to every couple, and taking
the union gives $\Delta(\{x,\,y\})=\DD$.
\end{proof}

\begin{proposition}\label{prop:couplelocalorbits}
The group $G_0=\langle H_{x,\,y},\,t\rangle$ has orbits on $Y$
of sizes $1$, $10$, $80$, $640$, $1024$. It is transitive on each of the three sets
in \eqref{eq:730}.
\end{proposition}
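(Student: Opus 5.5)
The plan is to assemble the orbit decomposition from the $H_{x,y}$-orbits of Lemma~\ref{lem:B0orbits} and the fusions already recorded, then check that no further fusion occurs. Since $H_{x,y}\le G_0$, every $G_0$-orbit on $Y$ is a union of the eight $H_{x,y}$-orbits of sizes $1,2,8,16,64,128,512,1024$. The element $t$ fixes $x$ and $y$, so it preserves $Y$, the cross $y$, and the set of crosses orthogonal to both $x$ and $y$. The cross $y$ is therefore a fixed point. The complement of $\{y\}$ in $Y$ splits $G_0$-invariantly into the common perpendicular shell of $730$ crosses and the $1024$-orbit of crosses nonorthogonal to $y$. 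The latter is a single $H_{x,y}$-orbit, hence a single $G_0$-orbit.

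Inside the $730$-shell, the coordinate part of $H_{x,y}$-orbits of sizes $2,8$ and the coordinate part of size $16$ together make up $26$ crosses. I would sort them by the decomposition \eqref{eq:730}. The ten crosses $\DD\setminus\{x,y\}$ are the $2$-orbit (row $0$) and the $8$-orbit (rows $4$, $5$, labels in $E_{x,y}$). They are fused by \eqref{eq:onebridge}, which sends $f_3$ to a multiple of $f_{20}$, giving a $G_0$-orbit containing those ten.

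The $16$-orbit (coordinate rows outside $E_{x,y}$) together with the $64$-orbit of four-block crosses lying in $V_1^\perp$ gives exactly the $80=16+64$ crosses of $Y_0$. The first line of \eqref{eq:tlocalcolumns} shows that $tf_5$ has nonzero row energy in four rows, so it is a four-block minimum. Its rows $1,2,3,6$ all lie outside $\DD$, so $tX_5$ lies in $V_1^\perp$, in the $64$-orbit, and the $16$- and $64$-orbits fuse. The remaining $128+512=640$ mixed crosses are fused by the second line of \eqref{eq:tlocalcolumns}. That line shows $t(P^5A/2)$ to be heavy, with heavy block row $5$, so it is a heavy cross of the $512$-orbit, while $X(P^5A/2)$ is a mixed four-block cross. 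I would check directly that $X(P^5A/2)$ is in the $128$-orbit rather than the $64$-orbit, since it meets $V_1$.

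It then remains to show that the three sets $10$, $80$, $640$ are not fused further. The cleanest invariant is intrinsic. By Lemma~\ref{lem:quartetexhaustion}, $\Delta(\{x,y\})=\DD$, and the equivariance \eqref{eq:configurationequivariance} shows that any element fixing $x$ and $y$ preserves $\DD$. Such an element therefore preserves $V_1$ and $V_1^\perp$, and with them the three classes: members of $\DD$, crosses in $V_1^\perp$, and mixed crosses. Combining this with the fusions gives orbits of sizes exactly $1,10,80,640,1024$.

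The main obstacle is the exact identification of which $H_{x,y}$-orbit each local image lies in. This means confirming, from \eqref{eq:Sblocks} and the rows of $tf_5$ and $t(P^5A/2)$, that the images land in the claimed families rather than merely having the right row patterns. For the heavy image, a $4$ in row $5$ and $2$'s elsewhere identifies the heavy family, and the remaining classification into orbits follows from mixedness, which is forced by the intrinsic invariance of $V_1$.
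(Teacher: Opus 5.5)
Your proposal is correct and follows essentially the same route as the paper's proof. Both assemble the $G_0$-orbits from the $H_{x,\,y}$-orbits of Lemma~\ref{lem:B0orbits} using the fusions $2+8$, $16+64$, $128+512$ from \eqref{eq:onebridge} and \eqref{eq:tlocalcolumns}. Both then rule out further fusion by the invariance of $\DD=\Delta(\{x,\,y\})$ from Lemma~\ref{lem:quartetexhaustion}, which preserves $V_1$ and $V_1^\perp$, and by nonorthogonality to $y$ for the $1024$-orbit.
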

\begin{proof}
The preceding lemma makes $\DD$ intrinsic, so $G_0$ preserves it,
its span, and the perpendicular shell $Y_0$. The fusions already proved
join $2+8$, $16+64$, and $128+512$; their unions are exactly
$\DD\setminus\{x,\,y\}$, $Y_0$, and the mixed set. The $1024$-orbit is
distinguished by nonorthogonality to $y$. Thus no further fusion occurs.
\end{proof}

\subsection{From a quartet to its canonical frame}\label{sec:canonicalbases}
Every dozen consists of twelve mutually orthogonal crosses: this holds
for $\DD$ and follows in general by couple transitivity and equivariance.
Put $\Omega=\{Q_1,\,\ldots,\,Q_5\}$.

\begin{proposition}[Canonical frames]\label{prop:canonicalbases}
The group $G$ is transitive on quartets. For a quartet $Q$, its six
couples determine three distinct dozens whose union $F(Q)$
is a frame. These canonical frames form one $G$-orbit, with
$F(Q_1)=F$, and exactly seven quartets determine each frame.
Moreover $G_{Q_1}=H_{Q_1}$ and $G_0$ is transitive on $\Omega$.
\end{proposition}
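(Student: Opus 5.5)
We reduce every assertion to the coordinate quartet $Q_1$ and the coordinate frame $F$, using couple transitivity (Proposition~\ref{prop:coupletrans}), the five-quartet exhaustion (Lemma~\ref{lem:quartetexhaustion}), and the equivariance \eqref{eq:configurationequivariance}.

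\emph{Transitivity on quartets.} Every quartet contains a couple, and $G$ is transitive on couples. It therefore suffices to show that the stabilizer $G_{\{x,\,y\}}$, or already $G_0$, is transitive on the five quartets $\Omega$ through $\{x,\,y\}$.
\begin{itemize}
\item The row image of $H_{x,\,y}$, generated by $R_0$ and $\beta$, permutes the four plane-type quartets $Q_2,\ldots,Q_5$. Their rows $4,5$ are $\{p_4,\,p_5\}$, and $\beta$ swaps these two rows while $R_0$ fixes them. Reading the coset conditions of Lemma~\ref{lem:coordinatequartets}, the translations in $P_{0,\,x}$ together with $\beta$ should act transitively on these four quartets, fusing $\{Q_2,Q_3\}$ with $\{Q_4,Q_5\}$.
\item The element $t$ sends $X_3\mapsto X_{20}$ by \eqref{eq:onebridge} and fixes $x$, $y$. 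Hence $tQ_1$ is a quartet through $x$, $y$ containing $X_{20}$, which must be $Q_3$ by Lemma~\ref{lem:quartetexhaustion}.
\end{itemize}
So $G_0$ is transitive on $\Omega$, and $G$ is transitive on quartets.

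\emph{The frame $F(Q_1)$.} The six couples of $Q_1$ are $\{X_i,\,X_j\}\subset$ row $0$. Applying Lemma~\ref{lem:coordinatequartets} to each couple lists its five quartets: the row itself, plus four two-row quartets using the two rows $p'$ for which the couple's pair is a coset of $\langle p_0,\,p'\rangle$. A couple in row $p_0$ spans a plane $\pi\ni p_0$, and the admissible $p'$ are the two other nonzero points of $\pi$ outside $\{p_0\}$, giving dozen rows $\pi\setminus\{0\}$ (a Fano line through $p_0$).

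The three couples-up-to-complement correspond to the three lines through $p_0$. Complementary couples in row $0$ give the same plane, hence the same dozen. So there are three distinct dozens, each consisting of row $0$ together with two further rows. Their union is all seven rows, so $F(Q_1)=F$. For $\{x,\,y\}$ this recovers $\DD$, consistent with Lemma~\ref{lem:quartetexhaustion}.

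By transitivity and equivariance, every $F(Q)$ is a frame of three distinct dozens, and $\cF$ is a single $G$-orbit.

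\emph{Fibre and stabilizer.} A quartet $Q'$ with $F(Q')=F$ lies inside $F$, so it is a coordinate quartet. The argument above shows that a row quartet $Q'$ gives $F(Q')=F$; there are seven of these.

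For a two-row quartet we must show $F(Q')\ne F$. Its couple $\{X,\,X'\}$ taken from a single row $p$ already has as its five quartets the row $p$ and four two-row quartets of the same shape. Taking instead a couple with one cross in each of rows $p$, $p'$, I would compute its quartets with Lemma~\ref{lem:coordinatequartets}. I expect the two-row quartet to be the only coordinate quartet through such a couple, so the other four quartets are noncoordinate and $F(Q')\not\subseteq F$. This requires showing that a mixed couple lies in exactly one coordinate quartet: the row pattern $(2,2)$ forces the rows $p$, $p'$ and the cosets, and $(1,1,\ldots)$ patterns do not occur. Because the total must be five, four quartets then lie outside $F$. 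Hence exactly seven quartets determine $F$.

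Finally, $G_{Q_1}$ preserves $F(Q_1)=F$, so $G_{Q_1}\le H$ by Corollary~\ref{cor:framestabilizer}, and therefore $G_{Q_1}=H_{Q_1}$.

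I expect the main obstacle to be the transitivity of $G_0$ on $\Omega$. Specifically, I must confirm that the row group $\langle R_0,\,\beta\rangle$ together with $P_{0,\,x}$ fuses the four plane-type quartets into one orbit. Translations fixing $x$ also fix $y$ and may permute cosets within rows $4$, $5$; I would check this directly from the listed quartets \eqref{eq:fivequartets} and the row permutation of $\beta$.
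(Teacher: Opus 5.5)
Your proposal is correct. Its skeleton is the paper's: couple transitivity plus transitivity of $G_0$ on $\Omega$, equivariance, and $G_{Q_1}\le H$ via Corollary~\ref{cor:framestabilizer}. The differences lie in three local steps.

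\emph{Computing $F(Q_1)$.} The paper transports the known dozen $\Delta(\{x,\,y\})=\DD$ to the other five couples of $Q_1$: a row-preserving translation exchanges complementary couples, and $\sigma$ cycles the three partitions. You instead recompute each dozen from Lemma~\ref{lem:coordinatequartets}. This works only because that lemma yields exactly five coordinate quartets through each couple of row $0$, and Lemma~\ref{lem:quartetexhaustion} says every couple lies in exactly five quartets. State that counting step explicitly, since the coordinate-quartet lemma by itself says nothing about noncoordinate quartets.

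\emph{Moving $Q_1$ within $\Omega$.} You use $tQ_1=Q_3$ from \eqref{eq:onebridge}; the paper only notes that $t\notin H$ cannot fix $Q_1$, since otherwise $tF=F$. The step you flagged as the main obstacle is immediate. Because $4$ and $p_0=6$ are independent, $P_{0,\,x}$ contains a translation with $\tau(4)\notin E_{x,\,y}$. It carries the coset pair $\{X_{17},\,X_{19}\}$ to $\{X_{18},\,X_{20}\}$, so it interchanges $Q_2$ and $Q_3$, while $\beta$ exchanges rows $4$ and $5$. This is exactly the paper's sentence.

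\emph{The fibre count.} Here the routes genuinely differ. The paper observes that $F(aQ_1)=F$ means $aF=F$, so $a\in H$ and $aQ_1$ is one of the seven row quartets, with no further computation. You exclude the $84$ two-row quartets combinatorially: a mixed couple lies in exactly one coordinate quartet, so its dozen leaves $F$. Your argument is valid and shows slightly more, namely that two-row quartets determine other canonical frames. The paper's reuse of the frame-stabilizer theorem, which you already invoke for $G_{Q_1}=H_{Q_1}$, makes that extra classification unnecessary.

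In either version, transport the fibre size from $F$ to every $aF$ via $F(Q)=aF\iff F(a^{-1}Q)=F$.
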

\begin{proof}
First examine $Q_1$. A
row-preserving translation interchanges a couple with its complementary
couple and preserves the corresponding row dozen. The map $\GT$ fixes
$x$ and cycles the three partitions of $Q_1$ into couples. Their dozens
therefore have row sets
\begin{equation}\label{eq:threedozenrows}
 \{0,\,4,\,5\},\qquad\{0,\,1,\,3\},\qquad\{0,\,2,\,6\}.
\end{equation}
They intersect pairwise in $Q_1$ and their union is $F$. Thus
$F(Q_1)=F$, and equivariance and the full frame-stabilizer
calculation give $G_{Q_1}=H_{Q_1}$.

The $H_{x,\,y}$-orbits on $\Omega$ are $\{Q_1\}$ and the other four
quartets: its row group exchanges labels $2$, $4$, while a point-fixing
translation with $\tau(4)\notin E_{x,\,y}$ interchanges the two cosets in
each of those rows. Since $t\notin H$, it cannot fix $Q_1$.
Consequently $G_0$ is transitive on $\Omega$. Taking a couple
of any quartet to $\{x,\,y\}$ now proves quartet transitivity.

For $Q=aQ_1$, equivariance gives
$F(Q)=aF(Q_1)=aF$. This transports the three dozens,
their intersections, and their orthogonal union, proving the frame
assertion and transitivity on canonical frames. If $F(Q)=F$,
then $aF=F$ and hence $a\in H$. The possible $Q=aQ_1$ are exactly
the seven row quartets; conversely each determines $F$. Transitivity
transports this fibre size to every canonical frame.
\end{proof}
The divisor seven counts quartets \emph{determining} a frame, not all
quartets \emph{contained} in it: $F$ contains $91$, but only its seven
row quartets determine $F$.

\begin{corollary}\label{cor:localgenerators}
The local stabilizers satisfy
\[
 G_{x,\,y}=G_0=\langle H_{x,\,y},\,t\rangle,\qquad
 [G_{x,\,y}:H_{x,\,y}]=5,\qquad
 G_x=\langle H_x,\,t\rangle=\langle H_x,\,g\rangle.
\]
\end{corollary}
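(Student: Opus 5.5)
Three claims need proof: $G_{x,y}=G_0$, $[G_{x,y}:H_{x,y}]=5$, and $G_x=\langle H_x,t\rangle=\langle H_x,g\rangle$. For the first two I would use the action of $G_{x,y}$ on the five quartets $\Omega$ through the couple $\{x,y\}$. By Lemma~\ref{lem:quartetexhaustion} these are exactly the quartets containing $x$ and $y$. By equivariance \eqref{eq:configurationequivariance}, every element of $G_{x,y}$ therefore permutes $\Omega$. Proposition~\ref{prop:canonicalbases} says $G_0\le G_{x,y}$ is already transitive on $\Omega$, so it suffices to identify the stabilizer of $Q_1$ in $G_{x,y}$. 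That stabilizer is $G_{x,y}\cap G_{Q_1}=G_{x,y}\cap H_{Q_1}=H_{x,y}$; the first equality is the statement $G_{Q_1}=H_{Q_1}$ of the same proposition, and the second holds because $H_{x,y}$ fixes $x$ and $y$ and preserves the coordinate row of $x$, namely $Q_1$. The orbit--stabilizer theorem then gives $[G_{x,y}:H_{x,y}]=|\Omega|=5$. The same argument applied to $G_0$, whose stabilizer of $Q_1$ also contains $H_{x,y}$ and lies inside $G_{x,y}\cap G_{Q_1}=H_{x,y}$, gives $[G_0:H_{x,y}]=5$. Since $G_0\le G_{x,y}$, the two groups are equal.

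For $G_x$, I would use the transitivity of $K=\langle H_x,t\rangle$ on $Y=x^\perp\cap\cX$, which is Proposition~\ref{prop:coupletrans}. Take any $a\in G_x$. Then $ay\in Y$, so some $k\in K$ satisfies $ky=ay$, and hence $k^{-1}a\in G_{x,y}=\langle H_{x,y},t\rangle\le K$. It follows that $a\in K$, and so $G_x=\langle H_x,t\rangle$.

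It remains to show $G_x=\langle H_x,g\rangle$. One inclusion is immediate: $t=gVgW$ with $V,W\in H$, and both fix $x$. Indeed they are block-preserving, and Table~\ref{tab:UVW} shows that they send $w_0$ in row $0$ to $\pm w_0$, so $V,W\in H_x$. Hence $t\in\langle H_x,g\rangle$. The reverse inclusion holds because $g\in G_x$, as already noted from $gf=-f$. Both generating sets therefore give $G_x$.

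The only delicate points are bookkeeping ones. First, $V$ and $W$ must fix the cross $x$; the table shows they send $f_1$ to $\pm f_1$. Second, one must check that $H_{x,y}$ fixes $Q_1$ rather than merely the couple, which holds because $H$ preserves the partition of $F$ into coordinate rows. I expect no real obstacle beyond these checks, since Propositions~\ref{prop:canonicalbases} and~\ref{prop:coupletrans} supply all the substantive input.
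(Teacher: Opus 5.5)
Your proposal is correct and follows essentially the same route as the paper. You use the action of $G_{x,\,y}$ on $\Omega$ with stabilizer $H_{x,\,y}$ at $Q_1$ (from $G_{Q_1}=H_{Q_1}$), together with the transitivity of $G_0$, to get $G_0=G_{x,\,y}$ of index $5$. You then use the transitivity of $\langle H_x,\,t\rangle$ on $Y$ to absorb $G_{x,\,y}$ and obtain $G_x$, and finish with $t=gVgW$ where $V,\,W\in H_x$.
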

\begin{proof}
The action of $G_{x,\,y}$ on $\Omega$ has stabilizer $H_{x,\,y}$ at $Q_1$,
since $G_{Q_1}=H_{Q_1}$. Its subgroup $G_0$ is transitive on $\Omega$
and contains this stabilizer, so equals $G_{x,\,y}$ and has the stated
index. By Proposition~\ref{prop:coupletrans}, $\langle H_x,\,t\rangle$
is transitive on $Y$ and contains its full point stabilizer $G_{x,\,y}$,
so equals $G_x$. Finally $g\in G_x$ and $t=gVgW$ with $V\in H_x$ and $W\in H_x$.
\end{proof}

\subsection{The elementary incidence count}\label{sec:incidencecount}
Let $N_2$ and $N_Q$ denote the numbers of couples and quartets.

\begin{corollary}[The canonical-frame count]\label{cor:canonicalcount}
There are $3\,562\,650$ couples, $2\,968\,875$ quartets, and
$424125$ canonical frames. Each cross lies in $2925$ canonical frames
and each couple in $45$.
\end{corollary}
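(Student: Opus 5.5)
We obtain all five numbers from the orbit and fibre data already proved, by double counting incidences between crosses, couples, quartets and canonical frames.

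\emph{Couples.} By Theorem~\ref{thm:shellproved} there are $4060$ crosses. By Proposition~\ref{prop:coupletrans}, $G$ is transitive on crosses, so every cross $x'$ has exactly $|x'^\perp\cap\cX|=|Y|=1755$ orthogonal crosses, using \eqref{eq:1755}. Counting ordered orthogonal pairs and halving gives
\[
N_2=\frac{4060\cdot1755}{2}=3\,562\,650 .
\]

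\emph{Quartets.} Count incidences between couples and quartets. By Lemma~\ref{lem:quartetexhaustion}, every couple lies in exactly five quartets. A quartet consists of four mutually orthogonal crosses, so it contains exactly $\binom42=6$ couples. Hence $6N_Q=5N_2$, and
\[
N_Q=\frac{5\cdot3\,562\,650}{6}=2\,968\,875 .
\]

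\emph{Canonical frames.} Proposition~\ref{prop:canonicalbases} shows that the map $Q\mapsto F(Q)$ from quartets onto $\cF$ has every fibre of size exactly seven. Therefore
\[
|\cF|=\frac{N_Q}{7}=424125 .
\]

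\emph{Local counts.} All canonical frames form one $G$-orbit, and $G$ is transitive on crosses and on couples. So the number of canonical frames through a cross, and through a couple, is constant. Each frame contains $28$ crosses, and since its crosses are mutually orthogonal it contains $\binom{28}2=378$ couples. Double counting then gives:
\begin{itemize}
\item frames through a cross: $424125\cdot28/4060=2925$;
\item frames through a couple: $424125\cdot378/3\,562\,650=45$.
\end{itemize}

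The divisibility and the arithmetic are routine. The only point needing care is the fibre size seven. The denominator must count the quartets \emph{determining} $F$, not the $91$ quartets contained in $F$; this is exactly the distinction settled in Proposition~\ref{prop:canonicalbases} and the remark following it.
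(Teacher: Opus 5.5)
Your proposal is correct and follows the paper's proof step for step. It halves the $4060\cdot1755$ ordered orthogonal pairs, double counts couples against quartets to get $6N_Q=5N_2$, divides by the fibre size seven from Proposition~\ref{prop:canonicalbases}, and double counts frames against crosses and couples using transitivity, with all the arithmetic checking out.
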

\begin{proof}
\emph{Couples.}
There are $1755$ orthogonal crosses at the reference cross, by the
code calculation, and cross transitivity gives the same degree at
every cross. Hence there are $4060\cdot1755$ ordered orthogonal
pairs. Forgetting the ordering counts every couple twice, giving
\begin{equation}\label{eq:couplecount}
 N_2=\frac{4060\cdot1755}{2}=3\,562\,650.
\end{equation}

\emph{Quartets.}
Count incidences $(\kappa,\,Q)$ with $\kappa\subset Q$. The five-quartet theorem
gives five incidences for each couple, while each quartet contains
$\binom42=6$ couples. Thus $6N_Q=5N_2$.

\emph{Canonical frames.}
Every quartet determines one canonical frame, and
Proposition~\ref{prop:canonicalbases} gives exactly seven quartets
in each fibre. Thus $7|\cF|=N_Q$. Substitution yields
\begin{equation}\label{eq:totalbases}
\begin{aligned}
 N_Q&=\frac{5N_2}{6}
     =\frac{4060\cdot1755\cdot5}{2\cdot6}=2\,968\,875,\\
 |\cF|&=\frac{N_Q}{7}
     =\frac{4060\cdot1755\cdot5}{2\cdot6\cdot7}=424125.
\end{aligned}
\end{equation}

\emph{Incidences with a fixed cross or couple.}
Cross transitivity makes the number of frames through a cross
constant. Counting $(x,\,\Phi)$ with $x\in \Phi\in\cF$ gives
\[
 \#\{\Phi\in\cF:x\in \Phi\}=\frac{424125\cdot28}{4060}=2925.
\]
Similarly, couple transitivity makes the number through a couple
constant. Every frame contains $\binom{28}{2}$ couples, so
\[
 \#\{\Phi\in\cF:\kappa\subset \Phi\}
 =\frac{424125\binom{28}{2}}{3\,562\,650}=45.
\]

\end{proof}

\subsection{The full Gaussian isometry group}\label{sec:order}
\begin{theorem}\label{thm:order}
The group $G=\langle H,\,r_*\rangle=\langle H,\,S\rangle$ is the full
Gaussian isometry group, and
\begin{equation}\label{eq:orderfinal}
 G=\Aut_{\Z[i]}(L,\,h),\qquad
 |G|=424125\,(2^{13}\cdot168)=583\,704\,576\,000.
\end{equation}
Its quotient by the four Gaussian scalars has order
$145\,926\,144\,000$.
\end{theorem}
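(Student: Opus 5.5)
The plan is an orbit--stabilizer argument applied to the action of the full Gaussian isometry group $\Aut_{\Z[i]}(L,\,h)$ on the set $\cF$ of canonical frames. By \eqref{eq:configurationequivariance}, every Gaussian isometry $a$ permutes quartets and satisfies $aF(Q)=F(aQ)$. Hence the whole group $\Aut_{\Z[i]}(L,\,h)$ acts on $\cF$, and so does its subgroup $G$.

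By Proposition~\ref{prop:canonicalbases}, $G$ is already transitive on $\cF$. Since $G\le\Aut_{\Z[i]}(L,\,h)$, the full group has the same single orbit. Corollary~\ref{cor:canonicalcount} gives $|\cF|=424125$. The stabilizer of $F=F(Q_1)$ in the full group is the Gaussian frame stabilizer. Corollary~\ref{cor:framestabilizer} identifies it with $H$, and $H\le G$.

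Orbit--stabilizer then gives
\[
 |\Aut_{\Z[i]}(L,\,h)|=|\cF|\,|H|=|G|.
\]
The inclusion $G\le\Aut_{\Z[i]}(L,\,h)$ of finite groups of equal order is therefore an equality. This equality can also be seen directly, without counting. Given any $a$, transitivity of $G$ on $\cF$ yields $g\in G$ with $gaF=F$. Then $ga\in H\le G$, so $a\in G$.

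The numerical value follows from Proposition~\ref{prop:H0}, which gives $|H|=2^{13}\cdot168=1\,376\,256$. Multiplying,
\[
 424125\cdot1\,376\,256=583\,704\,576\,000.
\]
The identity $G=\langle H,\,r_*\rangle=\langle H,\,S\rangle$ is Lemma~\ref{lem:dihedralrecovery}.

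For the quotient, the scalars $\mu_4=\langle I\rangle$ lie in $\mathcal J\subseteq H\le G$ and act faithfully. Dividing the order by four gives $145\,926\,144\,000$.

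I expect no real obstacle. The substantive inputs, namely quartet transitivity, the fibre size seven, and the frame-stabilizer identification, were all established earlier. The one point needing care is that the stabilizer in the full isometry group, not merely in $G$, equals $H$. This is exactly what Corollary~\ref{cor:framestabilizer} supplies, because it was proved for arbitrary Gaussian isometries preserving $F$.
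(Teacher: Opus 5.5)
Your proposal is correct and follows essentially the same route as the paper. You use transitivity of $G$ on the intrinsic family $\cF$ together with Corollary~\ref{cor:framestabilizer}, whose frame stabilizer is taken in the full Gaussian group, to get $a\in G$ for every Gaussian isometry $a$. Orbit--stabilizer on $\cF=G\cdot F$ with stabilizer $H$ then gives the order, and the central scalars $\mu_4\le H$ give the quotient order.
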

\begin{proof}
First we prove fullness. Let $a\in\Aut_{\Z[i]}(L,\,h)$ be arbitrary.
It preserves the quartet condition, so $aQ_1$ is a quartet. By
\eqref{eq:configurationequivariance},
\[
 aF=aF(Q_1)=F(aQ_1).
\]
Thus $aF$ lies in the intrinsic family $\cF$.
Proposition~\ref{prop:canonicalbases} proves that $G$ is transitive
on this family, so choose $a_0\in G$ with $a_0F=aF$.
Then $a_0^{-1}a$ fixes $F$. Its full stabilizer was determined
from the code in Corollary~\ref{cor:framestabilizer}, and equals
$H\subseteq G$. Hence $a_0^{-1}a\in G$, and therefore $a\in G$.
This proves $G=\Aut_{\Z[i]}(L,\,h)$.

We now use orbit--stabilizer on $\cF=G\cdot F$. Its size is
$424125$, by the incidence count, and its stabilizer is $H$, of the
independently determined order $2^{13}\cdot168$. Consequently
\[
 |G|=|\cF|\,|H|
     =424125\,(2^{13}\cdot168)=583\,704\,576\,000.
\]
The four Gaussian scalar units lie in $H$ and are central.
Division by four gives the quotient order.
\end{proof}

Orbit--stabilizer for crosses and ordered orthogonal pairs now gives
the stabilizer orders and factorizations used below:
\begin{equation}\label{eq:pointorders}
\begin{aligned}
 |G_x|&=\frac{|G|}{4060}=143\,769\,600,&
 |G_{x,\,y}|&=\frac{|G|}{4060\cdot1755}=81\,920,\\
 |G|&=2^{16}3^3 5^3\cdot7\cdot13\cdot29,&
 |(G/\mu_4)_x|&=2^{12}3^3 5^2\cdot13.
\end{aligned}
\end{equation}
In particular the projective cross stabilizer has order prime to $7$.

\subsection{Intrinsic reflections in dozens}\label{sec:intrinsicreflections}
Conway associates a reflection to a couple, fixing its dozen and
negating the complementary $16$-space \cite[p.~79]{Conway}; see also
\cite[\S1.2]{Wilson}. We now obtain the reflection in every couple and
identify both the constructed reflection $r_*$ and the frame exchange $S$
in this geometry.

\begin{theorem}\label{thm:couplereflections}
For every couple $\kappa$ the coordinate-free operator
\begin{equation}\label{eq:couplereflection}
 r_\kappa=2\pr_{\langle\Delta(\kappa)\rangle_{\C}}-\id
\end{equation}
is a Gaussian lattice isometry and an involution. Its eigenspaces have
complex dimensions $12$ and $16$, and $r_{a\kappa}=ar_\kappa a^{-1}$ for every
Gaussian lattice isometry $a$.
\end{theorem}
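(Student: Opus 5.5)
Because $G=\Aut_{\Z[i]}(L,\,h)$ is transitive on couples (Proposition~\ref{prop:coupletrans}, Theorem~\ref{thm:order}), I would reduce everything to one couple and transport. The equivariance $r_{a\kappa}=ar_\kappa a^{-1}$ is formal. The operator in \eqref{eq:couplereflection} is built only from $\Delta(\kappa)$. By \eqref{eq:configurationequivariance}, $a\Delta(\kappa)=\Delta(a\kappa)$, and a unitary $a$ carries the orthogonal projection onto a complex span $W$ to the projection onto $aW$. Hence $a\,\pr_{\langle\Delta(\kappa)\rangle}a^{-1}=\pr_{\langle\Delta(a\kappa)\rangle}$. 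Once $r_{\kappa_0}$ is a lattice isometry for one couple $\kappa_0$, every $r_\kappa=ar_{\kappa_0}a^{-1}$ is one, and the eigenspace dimensions transport as well.

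For the reference couple $\kappa_0=\{x,\,y\}$, Lemma~\ref{lem:quartetexhaustion} gives $\Delta(\kappa_0)=\DD$, the twelve coordinate crosses in rows $0$, $4$, $5$ (labels $6$, $4$, $2$). Since these are coordinate crosses of full rows, $\langle\DD\rangle_{\C}$ is the span of those three coordinate rows. Therefore $r_{\kappa_0}$ is the block sign change negating the other four rows, namely those with labels $1$, $3$, $5$, $7$. This is the pattern $(-1)^{\ell(p)}$ with $\ell$ the functional vanishing on $\{2,\,4,\,6\}$, i.e.\ $\ell(p)=p_1$, which is $E_1$ in the paper's encoding. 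By Theorem~\ref{thm:signkernel}, $E_1\in E_C\subseteq H$, so it preserves $C$ and hence $L$. Alternatively, the complement $\{1,\,3,\,5,\,7\}$ has point sum zero, so its indicator lies in $\cH_7$, which again gives a Hamming block sign.

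The remaining claims are then immediate. $r_{\kappa_0}$ is a self-adjoint involution commuting with $I$. Its $+1$-eigenspace is the $3\cdot4=12$ complex coordinate directions of the dozen, and its $-1$-eigenspace is the remaining $16$. This uses that the dozen is twelve mutually orthogonal crosses, so its complex span has dimension exactly $12$.

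The only step needing care is identifying $\langle\DD\rangle_{\C}$ with a union of full coordinate rows and matching the resulting sign pattern with $\cH_7$. This is a direct check of the labels $\{6,\,4,\,2\}$, which together with $0$ form a plane of $E$, so it should not be a real obstacle. Alternatively, one could observe that for the line $\lambda$ the reflection $D=E_2$ of \eqref{eq:tworeflections} already has this shape, and transport it via $\GT$ or $P$.
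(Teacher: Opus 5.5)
Your proposal is correct and follows the paper's proof: you identify $\Delta(\{x,\,y\})=\DD$ as the three full coordinate rows with labels $6$, $4$, $2$, recognize $r_{\{x,\,y\}}$ as the Hamming block sign $E_1=(-1)^{p_1}\in E_C$, and transport to an arbitrary couple using couple transitivity and the equivariance \eqref{eq:configurationequivariance}. The involution property, the eigenspace dimensions $12$ and $16$, and the conjugation formula then follow from the projection formula, exactly as in the paper.
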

\begin{proof}
For $\kappa=\{x,\,y\}$ its dozen consists of rows with binary labels $6$, $4$, $2$.
The operator is therefore exactly $E_1\in E_C$, whose row signs are
$(-1)^{p_1}$. The sign-kernel theorem gives $E_1(L)=L$.
For a general couple, choose $a\in G$ taking the standard couple to it.
The intrinsic definition of the dozen gives $r_\kappa=aE_1a^{-1}$, proving
lattice preservation. The formula itself proves involutivity, the
eigenspace dimensions, independence of this choice, and equivariance
under the full Gaussian isometry group.
\end{proof}

\begin{corollary}[Intrinsic identification of both reflections]\label{cor:intrinsicgenerator}
Let $\kappa_\lambda=\{X_5,\,X_7\}=P\GT\{x,\,y\}$, and put
\[
 \kappa_*=S\kappa_\lambda
      =\{X(P^{-1}b),\,X(P^{-1}J_2b)\},\qquad
 \kappa^\circ=(DS)^2\kappa_\lambda.
\]
Then
\begin{equation}\label{eq:seeddozen}
 \Delta(\kappa_*)=
 \{X(P^{-r}J_sb):r\in\mathcal Q,\ 0\le s\le3\},
\end{equation}
and
\begin{equation}\label{eq:intrinsicgenerators}
 r_*=r_{\kappa_*},\qquad -S=r_{\kappa^\circ},\qquad
 G=\langle H,\,r_{\kappa_*}\rangle=\langle H,\,r_{\kappa^\circ}\rangle.
\end{equation}
Both reflections lie outside $H$. Moreover
\begin{equation}\label{eq:exchangedozen}
 \Delta(\kappa^\circ)=(DS)^2\Delta(\kappa_\lambda),\qquad
 \ker(S+\id)=\langle\Delta(\kappa^\circ)\rangle_{\C},\qquad
 F'=r_{\kappa^\circ}(F).
\end{equation}
\end{corollary}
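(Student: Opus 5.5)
The plan is to transport everything from the standard couple $\{x,\,y\}$ by equivariance. The first step concerns $\kappa_\lambda=\{X_5,\,X_7\}=P\GT\{x,\,y\}$. Since $P\GT\in H$, the equivariance \eqref{eq:configurationequivariance} gives $\Delta(\kappa_\lambda)=P\GT\DD$. Lemma~\ref{lem:quartetexhaustion} shows that $\DD$ consists of the coordinate rows $0,\,4,\,5$. The map $\GT$ sends row $r$ to $2r$, and $P$ shifts by one, so $P\GT$ sends these rows to $1,\,2,\,4$, namely $\mathcal Q=\mathcal R_\lambda$. Hence
\[
\langle\Delta(\kappa_\lambda)\rangle_{\C}=V_\lambda,\qquad r_{\kappa_\lambda}=D=E_2 .
\]
This agrees with $r_\kappa=aE_1a^{-1}$ from Theorem~\ref{thm:couplereflections}, since $P\GT$ conjugates $E_1$ to $E_2$. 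I would check this row bookkeeping directly against the orders $\ell\mapsto\ell\circ T_\cdot$.

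The second step treats $\kappa_*=S\kappa_\lambda$. Lemma~\ref{lem:dihedralrecovery} shows $S\in G$, so equivariance gives $\Delta(\kappa_*)=S\Delta(\kappa_\lambda)$. Applying \eqref{eq:S} to the crosses $X(P^rJ_sf)$ with $r\in\mathcal Q$ produces exactly the crosses $X(P^{-r}J_sb)$, which is \eqref{eq:seeddozen}. The two members of $\kappa_*$ are read off from $X_5=X(Pf)$ and $X_7=X(PJ_2f)$, using the basis convention $J_2w_0=iw_4$. The span of this dozen is $V'_\lambda$, so
\[
r_{\kappa_*}=2\pr_{V'_\lambda}-\id=r_*,
\]
or equivalently $SDS^{-1}$ by \eqref{eq:ambientreflection}.

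The third step handles $-S$ using the structure found in the proof of Lemma~\ref{lem:dihedralrecovery}. There, $u=DS$ satisfies $u^5=-\id$, and $S$ is a self-adjoint involution of trace $4$. Its $(-1)$-eigenspace therefore has complex dimension $12$, so $-S$ is the reflection $2\pr_{\ker(S+\id)}-\id$. Next I use the dihedral relation $SuS=u^{-1}$, which gives $S=u^2Du^{-2}\cdot(\text{sign})$. Concretely, $u^{5}=-\id$ gives $u^2Du^{-2}=DSDSDSDSD$. Combining $Du^kD=u^{-k}$ with $u^{-5}=-\id$ reduces this to $-S$, up to a check of signs; this check is the one place where I expect a sign slip.

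Once $-S=u^2Du^{-2}$ is established, the rest follows. The reflection $D=r_{\kappa_\lambda}$ and $u^2\in G$ together with equivariance give
\[
-S=r_{u^2\kappa_\lambda}=r_{\kappa^\circ}.
\]
Its fixed space is $\ker(S+\id)=u^2V_\lambda=\langle\Delta(\kappa^\circ)\rangle_{\C}$, and $\Delta(\kappa^\circ)=(DS)^2\Delta(\kappa_\lambda)$. Finally $F'=S(F)=-S(F)=r_{\kappa^\circ}(F)$, because crosses are $\mu_4$-stable.

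For generation, $G=\langle H,\,r_*\rangle=\langle H,\,S\rangle$ holds by Lemma~\ref{lem:dihedralrecovery}, and $-\id\in H$ lets us replace $S$ by $-S$. Both reflections lie outside $H$. For $r_*$ this is Corollary~\ref{cor:reflectionoutsideH}. For $-S$, if it lay in $H$ it would fix $F$, yet $S(F)=F'$ is disjoint from $F$. The main obstacle is the exact sign bookkeeping in $-S=u^2Du^{-2}$ and in $u^5=-\id$, together with confirming the row labels under $P\GT$. If the conjugation sign fails, I would fall back on proving $\ker(S+\id)=u^2V_\lambda$ directly by a dimension argument. In that approach, both spaces have dimension $12$. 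To show $S$ acts as $-1$ on $u^2V_\lambda$, I would use the $S$-invariant decomposition $V_\lambda\oplus SV_\lambda$ and the eigenvalues of $A_D$ coming from \eqref{eq:compressionquadratic}.
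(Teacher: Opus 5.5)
Your proposal is correct and follows the paper's proof essentially step for step. You transport the dozen of $\{x,\,y\}$ by $P\GT$ and then by $S$ to get \eqref{eq:seeddozen} and $r_*=r_{\kappa_*}$, conjugate $D$ by $(DS)^2$ to get $-S=r_{\kappa^\circ}$, and read off the remaining claims from equivariance, $-\id\in H$, and $S(F)\cap F=\varnothing$. The sign you were worried about comes out right: $Du^kD=u^{-k}$ gives $u^2Du^{-2}=u^4D=-u^{-1}D=-S$, so the fallback dimension argument is unnecessary.
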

\begin{proof}
The reference dozen $\DD$ has rows $0$, $4$, $5$. The map $P\GT$ takes
these to rows $1$, $2$, $4$, so equivariance gives
$\langle\Delta(\kappa_\lambda)\rangle_{\C}=V_\lambda$ and
$r_{\kappa_\lambda}=E_2=D$. The map $S$ takes these twelve crosses
to the set in \eqref{eq:seeddozen}, proving that identity.
Its span is $V'_\lambda$, so the defining projections give
$r_*=r_{\kappa_*}$.

The identities $(DS)^5=-\id$ and $D(DS)D=(DS)^{-1}$ give
\[
 (DS)^2D(DS)^{-2}=(DS)^4D=-(DS)^{-1}D=-S.
\]
Equivariance identifies the left side with $r_{\kappa^\circ}$ and
also gives the dozen identity in \eqref{eq:exchangedozen}.
The projection formula identifies its positive space with $\ker(S+\id)$.
Since $-1$ fixes every cross, $r_{\kappa^\circ}(F)=S(F)=F'$.
Nonmonomiality of $r_*$ is Corollary~\ref{cor:reflectionoutsideH};
that of $-S$ follows from $F'\ne F$. Finally $-\id\in H$ and
$G=\langle H,\,S\rangle$, giving the generating descriptions.

The couple $P\kappa_*$ is $\{X(b),\,X(J_2b)\}$. Since $\GT$ fixes $b$
and cycles $J_1$, $J_2$, $J_3$, conjugation by $P$ and then powers of $\GT$
shows that $H$ together with the reflection in any of these three seed
couples also generates $G$.
\end{proof}

\begin{remark}[The seed quartet and the second-frame code]\label{cor:secondframecode}
The second frame is also the canonical frame of the seed quartet:
\[
 F'=F(SQ_1)=F\bigl(\{X(J_sb):0\le s\le3\}\bigr).
\]
Indeed the quartet is $SQ_1$, and $F(Q_1)=F$, so this follows
from equivariance. With the chosen labels and phases,
$C(F')=C(F)=C$: the lattice isometry $S$ takes each real representative
$2\mathbf e_{r,\,j}$ to $b_{r,\,j}$. Thus the residue and torsion codes
are $\cD$, $\cC$; without labels and phases these statements are signed
coordinate equivalences. More generally, every canonical frame has
frame code equivalent to $C$, because the canonical frames form $G\cdot F$.

These canonical frames are $I$-compatible: their $56$ real coordinate directions are paired into
$28$ Gaussian crosses.
\end{remark}

\section{The Rudvalis group}\label{sec:simplicity}
Put
\[
 Z=\mu_4,\qquad R=G/Z,\qquad \overline H=H/Z,
 \qquad \overline{P}_0=P_0/Z.
\]
The simplicity argument uses the row-group structure obtained from the
code and the order theorem.

\subsection{The kernel on crosses}\label{sec:crosskernel}
\begin{lemma}[The kernel on an orthogonal shell]\label{lem:shellkernel}
An element of $G_x$ fixing every cross in $Y$ belongs to $Z=\mu_4$.
Equivalently, the kernel of $G_x\curvearrowright Y$ is $Z$.
\end{lemma}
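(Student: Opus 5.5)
Let $k\in G_x$ act trivially on $Y$. Since $k$ also fixes $x$, it fixes every cross of $F$: the coordinate crosses other than $x$ lie in $O_3\cup O_{24}\subseteq Y$. Hence $k$ stabilizes $F$, and Corollary~\ref{cor:framestabilizer} gives $k\in H$. Moreover $k$ fixes each of the $28$ coordinate crosses individually, so it induces the identity on rows and on Gaussian coordinates within each row. In particular its binary image in $\Gamma$ lies in the translation subgroup $N$ and fixes every Gaussian pair $\{e,\,e+p\}$ in every block. I will therefore first show that the row kernel elements fixing every coordinate cross are exactly the diagonal sign/phase maps permitted by $H$, and then use the noncoordinate crosses of $Y$ to cut these down to scalars.

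\emph{Step 1: the translation part.} The translation $t_\tau$ fixes every unordered pair $\{e,\,e+p\}$ in block $p$ exactly when $\tau p\in\langle p\rangle$ for every $p\in\Pi$. A linear $\tau$ with every nonzero vector as an eigenvector is $0$ or $\id$. So the binary image of $k$ is $t_0$ or $t_{\id}$, the latter being the translation of $I$. Multiplying by $I\in Z$ if necessary, I reduce to $k$ having trivial binary image. By Theorem~\ref{thm:signkernel}, $k$ then lies in the sign kernel $E_C$, so $k=\pm E_\ell$ for some $\ell\in E^*$.

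\emph{Step 2: killing the block signs.} It remains to show that $E_\ell$, for $\ell\ne0$, moves some cross in $Y$. Take a heavy cross orthogonal to $x$, with heavy label $c\ne p_0$; these exist by Proposition~\ref{prop:localorbits}. Its vector $v$ has support in all seven blocks. Then $E_\ell v$ differs from $v$ by negating the blocks with $\ell(p)=1$. Since $\ell\ne0$, such blocks exist, and the blocks with $\ell(p)=0$ include at least one other, so $E_\ell v$ is not a multiple $\mu v$ with $\mu\in\mu_4$. The only Gaussian unit that could match is $\pm1$, and $\pm i$ would change which real positions are occupied, because $I$ swaps the positions in each pair. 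Thus $E_\ell$ moves this cross, so $\ell=0$ and $k=\pm\id$, or $\pm I$ before the reduction. Hence $k\in Z$.

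The converse, that $Z$ acts trivially on crosses, is immediate. The main point needing care is the claim in Step 2 that $E_\ell v\notin\mu_4v$. I would check it by comparing real supports: $iv$ has support equal to $\iota$ of the support of $v$. The heavy support has pairs that are not $I$-stable in the light blocks (the pair there has difference $c\ne p$), so $iv$ is not supported where $v$ is. The remaining case $-v$ is ruled out because $E_\ell$ fixes the blocks where $\ell(p)=0$.
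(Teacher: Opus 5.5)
Your proof is correct, but it takes a different route from the paper. You pass straight to the code group. Since $k$ fixes $F$, Corollary~\ref{cor:framestabilizer} puts it in $H$. Fixing every coordinate cross forces its binary image to be a translation $t_\tau$ with $\tau p\in\langle p\rangle$ for all $p$, hence $\tau\in\{0,\,\id\}$. The exact sequence of Proposition~\ref{prop:H0} and Theorem~\ref{thm:signkernel} then leave only $ZE_C$, and a single heavy cross of $Y$, which meets all seven rows, kills the block signs.

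The paper never uses the structure of $H$ at this point. It only observes that the element is Gaussian-diagonal with entries in $\mu_4$. The $H_x$-translates of one heavy support are pairwise intersecting $16$-subsets covering the $27$ positions other than $x$, so they force a common entry $\zeta$ there. The entry $\zeta'$ at $x$, which no cross of $Y$ sees, is then pinned down by integrality of $h(b,\,kb)=(15\zeta+\zeta')/4$.

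In your argument that last entry comes for free, because sign-kernel elements are constant on blocks. In effect you have replaced the paper's direct integrality check by the previously proved code-level sign-kernel theorem. Your version is shorter given the Section~2 machinery. The paper's is more self-contained and lattice-intrinsic: it needs only diagonal entries, supports, and one Hermitian pairing, not the identification of $H$ with the full signed code group.
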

\begin{proof}
Let $u\in G_x$ act trivially on $Y$. It fixes every coordinate cross:
it fixes $x$ by definition, and the other $27$ belong to $Y$.
Hence $u$ is Gaussian-diagonal.
Choose a heavy cross in $Y$, which exists by
Proposition~\ref{prop:localorbits}. Its heavy row is not row $0$;
its support therefore has two Gaussian positions in row $0$, both
different from $x$, and also meets the other six rows. Thus it meets
both coordinate $H_x$-orbits $O_3$ and $O_{24}$. The supports of its
$H_x$-translates consequently cover all $27$ coordinate positions
other than $x$.

Each such support has size $16$. Any two intersect, since
$16+16>27$. Fixing a cross forces all diagonal entries on its support
to be equal, and the intersections force the resulting values to
agree for all translates. The diagonal entries on all $27$ positions
therefore have a common value $\zeta\in\mu_4$.
Write $\zeta'\in\mu_4$ for the entry at $x$.  The seed $b=B/2$ has squared
norm four and squared coordinate norm $1/4$ at $x$, so integrality gives
\[
 h(b,\,ub)=\frac{15\zeta+\zeta'}{4}\in\Z[i].
\]
Thus $\zeta'-\zeta\in4\Z[i]$, which forces $\zeta'=\zeta$.
Thus $u$ is scalar. Conversely all four scalar units fix $x$ and every
cross in $Y$, so the kernel is $Z$.
\end{proof}

\begin{corollary}\label{cor:crosskernel}
The kernel of $G\curvearrowright\cX$ is $Z$. In particular $R$ acts
faithfully on the $4060$ crosses, and $G_x/Z$ acts faithfully on $Y$.
\end{corollary}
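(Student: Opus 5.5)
The plan is to reduce everything to Lemma~\ref{lem:shellkernel} using cross transitivity (Proposition~\ref{prop:coupletrans}).

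Let $u\in G$ fix every cross in $\cX$. In particular $u$ fixes $x$, so $u\in G_x$. It also fixes every cross of $Y=x^\perp\cap\cX\subseteq\cX$. Lemma~\ref{lem:shellkernel} then gives $u\in Z$. Conversely, each scalar in $\mu_4$ multiplies every minimal vector by a Gaussian unit, and $X(v)=\mu_4v$ is stable under such multiplication. Hence $Z$ fixes every cross, and the kernel of $G\curvearrowright\cX$ is exactly $Z$. Since $Z$ is normal (indeed central), $R=G/Z$ acts faithfully on the $4060$ crosses of Theorem~\ref{thm:shellproved}.

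For the second faithfulness assertion, $G_x$ preserves $Y$, because isometries preserve Hermitian orthogonality and $G_x$ fixes $x$. Lemma~\ref{lem:shellkernel} identifies the kernel of $G_x\curvearrowright Y$ with $Z$. Moreover $Z\le G_x$, since scalars fix $x$. Therefore $G_x/Z$ acts faithfully on $Y$.

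No step is a genuine obstacle. The only point needing care is that $Z$ is contained in $G_x$ and acts trivially on crosses, so that the quotients are well defined and the kernels are exactly $Z$ rather than merely contained in it. This follows from the definition of a cross as a $\mu_4$-orbit.
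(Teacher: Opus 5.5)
Your proof is correct and is essentially the paper's argument: an element fixing every cross lies in $G_x$ and fixes all of $Y$, so Lemma~\ref{lem:shellkernel} puts it in $Z$, and the faithfulness of $G_x/Z$ on $Y$ is that lemma itself. The cross transitivity you mention in your opening sentence is never actually used and can be dropped.
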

\begin{proof}
An element fixing every cross fixes $x$ and all of $Y$, so the lemma
puts it in $Z$. Conversely $Z$ fixes every cross. The shell assertion
is the lemma itself.
\end{proof}

\subsection{Primitivity from divisibility of suborbits}\label{sec:primitive}
Recall that $x=X_1$ lies in the row labelled $p_0=6$, and
$|P_{0,\,x}|=2^{11}$. The fixed-label $P_0$-orbits established in
Section~\ref{sec:codeinput} have sizes $64$ for four-block crosses and
$512$ for heavy crosses.

\begin{lemma}\label{lem:nonorthdivisibility}
Every $H_x$-orbit on the crosses nonorthogonal to $x$, other than $x$
itself, has size divisible by $64$.
\end{lemma}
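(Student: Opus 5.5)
The plan is to reduce to the normal subgroup $P_{0,x}=P_0\cap H_x$ and compute its orbits by the same translation-and-sign bookkeeping as in Proposition~\ref{prop:localorbits}.

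\emph{Reduction.} $P_{0,x}$ is the kernel of the row action of $H_x$, so it is normal in $H_x$. Hence every $H_x$-orbit is a union of $P_{0,x}$-orbits of one common size. It therefore suffices to show that every $P_{0,x}$-orbit on the crosses nonorthogonal to $x$, other than $x$, has size divisible by $64$.

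\emph{Which crosses occur.} Coordinate crosses other than $x$ are orthogonal to $x$, so only two families matter.
\begin{itemize}
\item Four-block crosses with label $\ell$ satisfying $\ell(p_0)=1$. Row $p_0$ is active, and the support meets every Gaussian pair of that row, hence meets $x$.
\item Heavy crosses. These are all crosses of heavy label $c=p_0$, together with, for $c\ne p_0$, those whose affine $c$-line in row $p_0$ passes through a position of $x$.
\end{itemize}
Each vector family is a free $\mu_4$-set. So for each family I count supports in a $P_{0,x}$-orbit, multiply by the number of signed lifts reached, and divide by four.

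\emph{Heavy crosses.} I reuse the computation in the proof of Proposition~\ref{prop:localorbits}. The point-fixing translation space $\{\tau:\tau p_0\in\langle p_0\rangle\}$ has dimension $7$, and the support stabilizer is $\{c\ell^\top:\ell(c)=0\}$.
\begin{itemize}
\item For $c\ne p_0$, imposing the point condition leaves a stabilizer of dimension $1$. The support orbit therefore has $2^6=64$ supports. With the free transitive $E_C$ action on the sixteen lifts (Lemma~\ref{lem:heavysupports}), this gives $64\cdot16/4=256$ crosses in each orbit.
\item For $c=p_0$, the condition $\tau p_0=p_0\ell(p_0)=0$ holds automatically, so the stabilizer keeps dimension $2$. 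The support orbit has $2^5=32$ supports, giving $32\cdot16/4=128$ crosses.
\end{itemize}
In both cases I need to check that the orbit sizes are exact, which follows because $E_C\le P_{0,x}$ acts freely on lifts. Both sizes are divisible by $64$.

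\emph{Four-block crosses (main obstacle).} For a label $\ell$ with $\ell(p_0)=1$, the translations change the active constants $\epsilon_p$ by $\ell(\tau p)$ for $p$ in the active plane $\ell(p)=1$, and $p_0$ lies in that plane.
\begin{itemize}
\item \emph{Supports.} The condition $\tau p_0\in\langle p_0\rangle$ still leaves $\ell(\tau p_0)$ free, since it can be $0$ or $1$. The values $\ell(\tau p)$ at three independent active points, including $p_0$, can therefore be prescribed, and I expect $P_{0,x}$ to remain transitive on all eight admissible supports of label $\ell$.
\item \emph{Signs.} On a fixed support, $E_C$ supplies the kernel of the moment map, of order $8$. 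As in Lemma~\ref{lem:foursupports}, I then need two support-fixing elements of $P_{0,x}$ whose sign differences have common moments spanning $\ker\ell$. I would first test the translations from Lemma~\ref{lem:foursupports} (the $U_m$ and their $P$-conjugates, or products like $k_1,k_2$ of Proposition~\ref{prop:localorbits}) for the condition $\tau_Up_0\in\langle p_0\rangle$, reading $\tau$ off \eqref{eq:nineA}. This would give all $32$ lifts and an orbit of $8\cdot32/4=64$ crosses.
\end{itemize}

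\emph{Fallback for the four-block case.} If this direct check is awkward, I would argue through the stabilizer instead. By Lemma~\ref{lem:foursupports} the stabilizer $P_{0,z}$ of such a cross has order $2^{13}/64$. I would show it already acts transitively on the four coordinate crosses of row $p_0$; a translation with $\tau p_0\notin\langle p_0\rangle$ and $\ell\circ\tau=0$, corrected by an $E_C$ sign, should work. Then $P_0=P_{0,z}P_{0,x}$, so $P_{0,x}$ is transitive on the $64$ crosses of label $\ell$.

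The delicate point, either way, is controlling the sign moments while keeping $x$ fixed.
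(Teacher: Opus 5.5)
Your reduction to the normal subgroup $P_{0,x}\lhd H_x$, your list of the nonorthogonal families, and your heavy-case counts ($256$ crosses per orbit for $c\ne p_0$, $128$ for $c=p_0$) are correct. The gap is in the four-block case. Because you require every $P_{0,x}$-orbit to have size divisible by $64$, you need $P_{0,x}$ to be transitive on all $64$ crosses of a label $\ell$ with $\ell(p_0)=1$, and you never prove the sign part of this. Since $\ker\ell\cap\langle p_0\rangle=0$, the stabilizer in $P_{0,x}$ of a support of label $\ell$ consists of lifts of translations with $\ell\circ\tau=0$ and $\tau p_0=0$. You would need their common moments to fill $\ker\ell$. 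The elements you propose do not settle this:
\begin{itemize}
\item $k_1$ and $k_2$ stabilize the support of $P^3A$, whose label vanishes at $p_0$, so they belong to the orthogonal case.
\item $U_1$ and $U_2$ fix $\operatorname{supp}(A)$, which is a nonorthogonal support, but only $U_2$ fixes $x$: in binary labels $\tau_{U_1}p_0=1\notin\{0,\,6\}=\langle p_0\rangle$.
\item In your fallback, correcting by an $E_C$ sign works only if the chosen translation already has zero moment shift, because $E_C$ lies in the kernel of the moment map. That is exactly the point left open.
\end{itemize}

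The missing idea is that exact transitivity is unnecessary. You already note $|P_{0,z}|=2^{13}/64=2^7$. Orbit--stabilizer then shows that every $P_{0,x}$-orbit of four-block crosses has size $2^{11}/|P_{0,x}\cap P_{0,z}|$, a multiple of $16$. The remaining factor $4$ comes from the labels. The group $H_x$ maps onto $\operatorname{GL}_3(2)_{p_0}$, whose transvections fixing $p_0$ act transitively on the four functionals with $\ell(p_0)=1$. Hence each $H_x$-orbit meets these four labels equally, and each fibre is a union of label-preserving $P_{0,x}$-orbits. This is the paper's proof. Its heavy case is likewise one line: $|P_{0,v}|=16$ gives multiples of $2^{11}/16=128$.

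Your stronger claim is in fact true, so your route can be repaired, but it needs an argument such as the following.
\begin{itemize}
\item The image $W$ of $P_{0,z}$ in the abelian quotient $P_0/E_C\cong\operatorname{End}(E)$ is the same for all crosses of label $\ell$. It is therefore normalized by $\{a:\ell\circ a=\ell\}$.
\item The part of $W$ with $\ell\circ\tau=0$ has order at least four.
\item Conjugation by the transvections $\id+k\ell$, $k\in\ker\ell$, adds $(\tau k)\ell$. This forces $W$ to contain every $t\ell$ with $t\in\ker\ell$.
\item These elements move $x$ through all four crosses of row $p_0$, so $P_0=P_{0,z}P_{0,x}$.
\end{itemize}
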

\begin{proof}
There are no other coordinate crosses in this set. For a four-block cross
$v$, the stabilizer $P_{0,\,v}$ has order $2^{13}/64=2^7$.
Consequently
\[
 |P_{0,\,x}v|=\frac{2^{11}}{|P_{0,\,x}\cap P_{0,\,v}|}
\]
is divisible by $16$. Such a cross is nonorthogonal to $x$ exactly when
its label $\ell$ satisfies $\ell(p_0)=1$. Indeed its support in that row is
a plane of normal $\ell$ and meets exactly one position of the Gaussian pair
$\{0,\,p_0\}$. There are four such labels, permuted transitively by $H_x$:
the map onto $\operatorname{GL}_3(2)_{p_0}$ is surjective, and its dual action
is transitive on these four forms. Each $H_x$-orbit therefore meets all four
labels in equal numbers. Each intersection is a union of $P_{0,\,x}$-orbits,
so the whole orbit has size divisible by $4\cdot16=64$.

For a heavy cross $v$, the order of $P_{0,\,v}$ is $2^{13}/512=16$.
Thus every $P_{0,\,x}$-orbit on heavy crosses has size divisible by
$2^{11}/16=128$. An $H_x$-orbit is a union of these orbits, which proves the
remaining assertion.
\end{proof}

\begin{proposition}\label{prop:primitive}
The action of $R$ on $\cX$ is primitive.
\end{proposition}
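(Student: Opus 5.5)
The plan is to use the standard block criterion. Since $R$ is transitive on $\cX$ (Proposition~\ref{prop:coupletrans}), primitivity is equivalent to the absence of a nontrivial block containing $x$. Equivalently, every subgroup strictly between $R_x$ and $R$ must be all of $R$. So let $\mathcal B\ni x$ be a block with $1<|\mathcal B|<4060$. Then $|\mathcal B|$ divides $4060=2^2\cdot5\cdot7\cdot29$. Moreover $\mathcal B$ is a union of $G_x$-orbits, and $G_x$ contains $H_x$.

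The first step records the $G_x$-orbits on $\cX$. These are $\{x\}$ and the orthogonal shell $Y$ of size $1755$; $Y$ is a single $G_x$-orbit by Proposition~\ref{prop:coupletrans}. The remaining $4060-1-1755=2304$ crosses are those nonorthogonal to $x$, other than $x$ itself. This set is $G_x$-stable, and it splits into $G_x$-orbits, each of which is a union of $H_x$-orbits. By Lemma~\ref{lem:nonorthdivisibility}, each such $G_x$-orbit therefore has size divisible by $64$.

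The second step shows that $\mathcal B$ does not meet $Y$. If it did, it would contain all of $Y$, so $|\mathcal B|\ge1756$. A proper divisor of $4060$ that is at least $1756$ would have to be $2030$. But $2030-1756=274$ would have to be a sum of $G_x$-orbit sizes from the nonorthogonal set, and each of these is a multiple of $64$; since $274$ is not, this is impossible. Hence $\mathcal B\setminus\{x\}$ lies in the nonorthogonal set. It follows that $|\mathcal B|\equiv1\pmod{64}$ and $|\mathcal B|\le 2305$. The divisors of $4060$ that are greater than $1$ and at most $2305$ are $2,4,5,7,10,14,20,28,29,35,58,70,116,140,145,203,290,406,580,812,1015,2030$. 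Checking each against $64$: $145\equiv17$, $203\equiv11$, $580\equiv4$, $812\equiv44$, $1015\equiv55$, $2030\equiv46$, and the small ones are not $\equiv1$ either. So no divisor satisfies $|\mathcal B|\equiv1\pmod{64}$, and $\mathcal B=\{x\}$.

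The only real obstacle is keeping the arithmetic airtight. In particular, the divisor list and the congruences modulo $64$ should be rechecked carefully. It also matters that Lemma~\ref{lem:nonorthdivisibility} is applied to $H_x$-orbits inside $G_x$-orbits, rather than assuming orbit sizes directly. The passage from $G$ to $R$ causes no difficulty, because $Z$ acts trivially on crosses (Corollary~\ref{cor:crosskernel}), so $R$ has exactly the same blocks as $G$.
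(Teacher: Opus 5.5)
Your proof is correct and follows the paper's argument. A block through $x$ is $G_x$-invariant, so it either contains all of $Y$ or avoids it, and Lemma~\ref{lem:nonorthdivisibility} makes the rest of its size a multiple of $64$; the divisors of $4060$ then leave only the trivial blocks. The differences are cosmetic: the paper rules out $2030$ by noting that $1756+64m$ is divisible by $4$, and in the other case it checks only the odd divisors, since $d\equiv1\pmod{64}$ forces $d$ to be odd.
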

\begin{proof}
The actions of $G$ and $R$ have the same blocks. Let $B$ be a block containing
$x$, and write $d=|B|$. Transitivity gives $d\mid4060$. A block containing
$x$ is invariant under $G_x$. By Proposition~\ref{prop:coupletrans}, either
it contains all $1755$ crosses orthogonal to $x$ or it contains none.
It is also invariant under $H_x$. Lemma~\ref{lem:nonorthdivisibility} gives
\begin{equation}\label{eq:blockdivisibility}
 d=1+1755\epsilon+64m,\qquad \epsilon\in\{0,\,1\},\quad m\ge0.
\end{equation}
If $\epsilon=0$, then $d$ is an odd divisor of $4060$ and $d\equiv1\pmod{64}$.
The odd divisors are
\[
 1,\,5,\,7,\,29,\,35,\,145,\,203,\,1015;
\]
only $1$ has that residue. Thus $B=\{x\}$.
If $\epsilon=1$, then $d\ge1756$. A proper divisor of $4060$ this large must
be $2030$, since its complementary divisor is an integer strictly between
$1$ and $4060/1756<3$. But \eqref{eq:blockdivisibility} makes $d$ divisible
by $4$, whereas $2030$ is not. Hence $d=4060$ and $B=\cX$.
\end{proof}

\subsection{Perfectness of the projective frame group}\label{sec:frameperfect}
Let
\[
 \overline E=(ZE_C)/Z\leq\overline{P}_0.
\]
The row-group calculation gives two exact sequences
\begin{equation}\label{eq:projectivefiltration}
\begin{aligned}
 1&\longrightarrow \overline E\longrightarrow\overline{P}_0
   \longrightarrow M_3(\F_2)/\langle \id\rangle\longrightarrow1,\\
 1&\longrightarrow\overline{P}_0\longrightarrow\overline H
   \longrightarrow\operatorname{GL}_3(2)\longrightarrow1.
\end{aligned}
\end{equation}
Here the additive group in the first line has order $2^8$, and
$\overline E\cong \cH_7/\langle\mathbf1\rangle$ has order $2^3$.
Indeed $I$ induces the translation $\tau=\id$, and
$Z\cap E_C=\{\pm \id\}$. On the two abelian factors the row quotient acts,
respectively, by permutation of the Hamming coordinates and conjugation of
matrices. The latter assertion follows directly from the flag action:
conjugating $e\mapsto e+\tau p$ by a map with common linear part $a$ in the two
coordinates gives $\tau\mapsto a\tau a^{-1}$. Its affine correction cancels.

\begin{lemma}\label{lem:frameperfect}
The group $\overline H$ is perfect.
\end{lemma}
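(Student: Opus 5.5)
We prove $[\overline H,\overline H]=\overline H$ by working up the filtration \eqref{eq:projectivefiltration}, showing that the commutator subgroup $N=[\overline H,\overline H]$ contains each layer in turn. The top quotient $\overline H/\overline P_0\cong\operatorname{GL}_3(2)$ is perfect; it is simple of order $168$, and the transvections generate it and are commutators in it. Hence $N\overline P_0=\overline H$, and it suffices to prove $\overline P_0\le N$.

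\textbf{Middle layer.} For a translation $\tau$ and $a\in\operatorname{GL}(E)$, lift both to $\overline H$. The commutator of the lifts lies in $\overline P_0$, and its image in $M_3(\F_2)/\langle\id\rangle$ is $a\tau a^{-1}-\tau$, by the conjugation action recorded after \eqref{eq:projectivefiltration}. By Lemma~\ref{lem:translationspan}, these differences span the trace-zero endomorphisms. That subspace is complementary to $\langle\id\rangle$, so it maps isomorphically onto $M_3(\F_2)/\langle\id\rangle$. Consequently $N\cap\overline P_0$ maps onto the middle quotient, and $N\overline E\supseteq\overline P_0$.

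\textbf{Bottom layer.} It remains to show $\overline E\le N$. Here $\overline E\cong\cH_7/\langle\mathbf1\rangle$ carries the permutation action of $\operatorname{GL}_3(2)$ on the seven Hamming coordinates. Commutators $[s,a]$ with $s\in\overline E$ and $a$ a lift of a row permutation give $s+a(s)$, which lies in $N$. I would show that these span $\overline E$, i.e.\ that the coinvariants of $\operatorname{GL}_3(2)$ on $\cH_7/\langle\mathbf1\rangle$ vanish. This module is the natural $3$-dimensional module (or its dual), since $\cH_7/\langle\mathbf1\rangle$ is dual to the simplex code $\cH_7^\perp\cong E^*$. It is irreducible and nontrivial, so the span of the elements $s+a(s)$ is a nonzero submodule and hence everything. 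Concretely, the block sign $E_\ell$ corresponds to the form $\ell$. A transvection fixing $\ell$'s kernel appropriately sends $E_\ell$ to $E_{\ell'}$ with $\ell+\ell'=\ell''$, which already produces every nonzero class.

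\textbf{Main obstacle.} The delicate point is the middle-layer lift: the group commutator of lifts in $\overline H$ reduces to $a\tau a^{-1}-\tau$ only modulo $\overline E$. This is harmless, since we only need surjectivity onto the quotient, with the bottom layer handled separately. One must also check that the identification $\overline E\cong\cH_7/\langle\mathbf1\rangle$ as a module really is the nontrivial irreducible $3$-dimensional one, rather than one with trivial quotient. I would verify this using the explicit row action of $P$ (order $7$, acting fixed-point-freely on nonzero classes), which rules out a trivial quotient, since $3$ is the smallest faithful degree. Combining the three layers gives $N=\overline H$.
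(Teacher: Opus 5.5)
Your proof is correct and follows the paper's argument essentially step for step: the same filtration \eqref{eq:projectivefiltration}, Lemma~\ref{lem:translationspan} for the middle layer, the nontrivial action on $\overline E$ for the bottom layer, and transvection commutators for $\operatorname{GL}_3(2)$ (the paper simply treats the layers bottom-up). One small slip is that $\cH_7/\langle\mathbf1\rangle$ is isomorphic to the simplex code $\cH_7^\perp\cong E^*$ (the dual natural module), not dual to it; this is harmless because of your hedge, and because the fixed-point-free action of $P$ already makes $s\mapsto s+P(s)$ bijective on $\overline E$.
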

\begin{proof}
First,
\[
 \cH_7=\langle\mathbf1\rangle\oplus
 \{(\ell(p))_{p\ne0}:\ell\in E^*\}.
\]
The linear-function code has dimension three and even weights; it is
contained in $\cH_7$ because the sum of any polynomial of degree at most two
on $E$ is zero in $\F_2$. The all-one word has odd weight. Thus $\overline E$ is the
dual natural module. Differences $a\ell-\ell$ span that module:
elementary transvections give each coordinate functional as such a
difference. Consequently $[\overline H,\,\overline E]=\overline E$.

By Lemma~\ref{lem:translationspan}, the conjugation differences
$a\tau a^{-1}-\tau$ span the trace-zero matrices, which map
isomorphically onto $M_3(\F_2)/\langle\id\rangle$.
It follows that the image of $[\overline H,\,\overline{P}_0]$ on that
quotient is the whole quotient. Together with $\overline E\leq\overline H'$, this
gives $\overline{P}_0\leq\overline H'$.

Finally $\operatorname{GL}_3(2)$ is perfect. Its elementary transvections
generate it, and for pairwise distinct $i$, $j$, $k$,
\[
 [\id+E_{ik},\,\id+E_{kj}]=\id+E_{ij}.
\]
Hence the image of $\overline H'$ on the last quotient of
\eqref{eq:projectivefiltration} is also the whole quotient, proving
$\overline H'=\overline H$.
\end{proof}
We also need the simplicity of the row group.
\begin{lemma}\label{lem:GL32simple}
The group $Q=\operatorname{GL}_3(2)$ is simple.
\end{lemma}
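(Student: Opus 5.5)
The plan is to prove simplicity of $Q=\operatorname{GL}_3(2)$ by the same self-contained style used so far: a transitive action, a perfect group, and Iwasawa's criterion. Alternatively, we can run a direct normal-subgroup argument on the Fano plane. I would use Iwasawa's lemma on the doubly transitive action on the seven points of $\Pi$ from Proposition~\ref{prop:binaryaut}.

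First, note that double transitivity gives primitivity, since a nontrivial block system would contradict 2-transitivity. Second, $Q$ is perfect: the commutator identity $[\id+E_{ik},\,\id+E_{kj}]=\id+E_{ij}$ recorded in the proof of Lemma~\ref{lem:frameperfect} shows every elementary transvection is a commutator, and these generate $Q$. Third, I need an abelian normal subgroup $A_p$ of the point stabilizer $Q_p$ whose conjugates generate $Q$. Take $p=(1,0,0)^{\top}$ after a basis change. The transvections $\id+u\ell$ with $\ell(p)=0$ and $u\in\langle p\rangle$ form the group $\{\id+p\ell:\ell(p)=0\}\cong\F_2^2$. This group is abelian, since $(p\ell)(p\ell')=p\,\ell(p)\,\ell'=0$. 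It is normal in $Q_p$, since $a(\id+p\ell)a^{-1}=\id+p(\ell\circ a^{-1})$ when $ap=p$. Its conjugates are all transvections $\id+u\ell$ with $\ell(u)=0$, and these include every $\id+E_{ij}$, so they generate $Q$.

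Iwasawa's criterion then says that any normal subgroup $N\ne1$ acts nontrivially, because the action on seven points is faithful: an element of $\operatorname{GL}(E)$ fixing all nonzero vectors is the identity. By primitivity, $N$ is transitive, so $Q=NQ_p$. Hence $NA_p$ is normal and contains all conjugates of $A_p$, which gives $NA_p=Q$. Then $Q/N\cong A_p/(A_p\cap N)$ is abelian, and perfectness forces $N=Q$. Since $|Q|=168\ne1$ and $Q$ is nonabelian, $Q$ is simple.

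The main obstacle is bookkeeping rather than anything conceptual. The Iwasawa lemma itself must either be cited as standard or be proved in a few lines, and the only delicate part there is the step that primitivity makes a nontrivial normal subgroup transitive. As a fallback avoiding Iwasawa, I would count conjugacy classes of $Q$, of sizes $1,21,42,56,24,24$, and check that no union containing $1$ has size dividing $168$ except $1$ and $168$.
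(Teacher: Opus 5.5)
Your proposal is correct and is essentially the paper's proof. Both apply Iwasawa's criterion to the faithful, doubly transitive action on the seven nonzero vectors, using the abelian group $A_v=\{\id+v\ell:\ell(v)=0\}$ normalized by $Q_v$, whose conjugates include the elementary transvections, with perfectness coming from $[\id+E_{ik},\,\id+E_{kj}]=\id+E_{ij}$. Your step $Q=NA_p$, giving $Q/N\cong A_p/(A_p\cap N)$, is just a restatement of the paper's observation that all conjugates of $A_v$ have the same abelian image in $Q/N$.
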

\begin{proof}
The action on the seven nonzero vectors is faithful and primitive: any two
distinct vectors are independent, so the action is doubly transitive.
For $v\ne0$ let
\[
 A_v=\{\id+v\ell:\ell\in E^*,\ \ell(v)=0\}.
\]
This is an abelian subgroup of order four, normalized by $Q_v$.
Its conjugates generate $Q$, since they include the elementary transvections.
If $N\lhd Q$ is nontrivial, faithfulness and primitivity make $N$ transitive,
so $Q=NQ_v$. In $Q/N$ the images of all conjugates of $A_v$ are consequently
the same abelian subgroup. They generate $Q/N$, making it abelian.
The preceding transvection commutator identity proves that $Q$ is perfect;
therefore $Q/N=1$.
\end{proof}

\subsection{The normal-subgroup argument}\label{sec:simpcriterion}
\begin{theorem}[Simplicity of the scalar quotient]\label{thm:simplicity}
The projective group $R=\langle H,\,r_*\rangle/\mu_4$ is nonabelian simple, of order
\[
 |R|=145\,926\,144\,000
     =2^{14}3^3 5^3\cdot7\cdot13\cdot29.
\]
\end{theorem}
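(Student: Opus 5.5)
We will follow the Iwasawa-style normal-subgroup argument adapted to the canonical-frame stabilizer. The order is Theorem~\ref{thm:order} divided by four. For simplicity, let $N\lhd R$ be a nontrivial normal subgroup. By Corollary~\ref{cor:crosskernel}, $R$ acts faithfully on $\cX$, so $N$ acts nontrivially. Proposition~\ref{prop:primitive} makes this action primitive, so $N$ is transitive on $\cX$ and $R=NR_x$.

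We will then transport transitivity to canonical frames. Since $N$ is normal in $R$ and $R$ is transitive on $\cF$ (Proposition~\ref{prop:canonicalbases}), the $N$-orbits on $\cF$ form a system of blocks. The plan is to show $R=N\overline H$, which needs $N$ transitive on $\cF$.

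\textbf{First route.} Show that $R$ acts primitively on $\cF$, i.e.\ $\overline H$ is maximal in $R$. This seems hard to do directly.

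\textbf{Alternative route.} Count instead. $N\overline H$ is a subgroup containing $\overline H$, so its orbit $N\overline H\cdot F$ has size $[N\overline H:\overline H]=[N:N\cap\overline H]$. We will use $N\cap\overline H\lhd\overline H$ together with the structure of $\overline H$. Its normal subgroups are constrained by the filtration \eqref{eq:projectivefiltration}: $\overline E$ is the irreducible dual natural module, $M_3(\F_2)/\langle\id\rangle$ is the trace-zero module (irreducible of dimension $8$ in characteristic two), and Lemma~\ref{lem:GL32simple} handles the top factor.

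The main plan, and the step we expect to be hardest, is to show that $N\cap\overline H\ne1$, and in fact that it contains $\overline P_0$ or $\overline H$. We would first try a reflection argument. If $N$ contains no reflection $r_\kappa Z$, then commutators $[n,r_\kappa]$ with $n\in N$ are products of two reflections. Choosing $n$ so that $n\kappa$ is a suitable couple (possible since $N$ is transitive on crosses, and then on couples via primitivity arguments at $x$) gives a nonscalar element of $N$ fixing $F$. For example, $r_\kappa r_{n\kappa}$ for $\kappa,n\kappa$ both contained in $F$ is a product of sign changes in $E_C$, which is nontrivial modulo $Z$. Hence $N\cap\overline H\ne1$.

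Since $\overline H$ acts on $\overline E$ irreducibly, and $N\cap\overline H$ is normal, we will argue that $N\cap\overline H\ge\overline E$. A normal subgroup meeting $\overline P_0$ nontrivially meets its centre-like piece; we would verify that the minimal normal subgroup of $\overline H$ is $\overline E$, using the irreducibility above. Conjugating by $r_*$ then shows $N$ contains the reflection $E_1Z$ (since $E_1=r_{\{x,y\}}$ lies in $E_C$). Hence $N$ contains all couple reflections by transitivity on couples, and these with $\overline H$ generate $R$ by Corollary~\ref{cor:intrinsicgenerator}.

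It then remains to show $\overline H\le N$. Since $N\ge\overline E$ and the $G$-conjugates of $E_1$ include $r_*$, the group $N$ contains $\langle\overline E,\,r_*Z\rangle^{R}$. We show $N\cap\overline H$ maps onto $\operatorname{GL}_3(2)$ and onto the trace-zero module. Then Lemma~\ref{lem:frameperfect} together with the filtration forces $N\ge\overline H$, and so $N\ge\langle\overline H,r_*Z\rangle=R$. Nonabelianness is clear from Lemma~\ref{lem:GL32simple}.

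As a fallback for the middle step, we would use the orbit-size arithmetic. $|R:N\overline H|$ divides $424125=3^2\cdot5^3\cdot13\cdot29$ and equals the number of $N\overline H$-orbits, and this can be played against the cross and couple counts from Corollary~\ref{cor:canonicalcount}.
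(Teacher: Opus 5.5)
Your proposal has a genuine gap at the step you flag as hardest, and its final step is asserted rather than proved.

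The reflection-commutator argument needs some $n\in N$ carrying a same-row couple $\kappa$ of $F$ to another same-row couple of $F$ with a different dozen. Only for such couples are the dozens three coordinate rows, so that $r_{n\kappa}r_\kappa Z$ lies in $\overline E$. A couple meeting two rows lies in exactly one coordinate quartet, so its dozen leaves $F$. Transitivity of $N$ on crosses gives no control over $ny$. ``Primitivity at $x$'' helps only once $N_x\ne1$: then $N_x\lhd R_x$, and the faithful primitive action of Proposition~\ref{prop:localrankfive} makes $N_x$ transitive on $Y$. Nothing in your argument excludes that $N$ acts regularly on the $4060$ crosses, so the transitivity of $N$ on couples that you need is unsupported.

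Later, after $\overline E\le N$ and the couple reflections are in $N$, you assert that $N\cap\overline H$ maps onto $\operatorname{GL}_3(2)$. You never exhibit an element of order $7$ in it. Your appeal to $\overline E$ being the minimal normal subgroup of $\overline H$ would also need $Z(\overline P_0)=\overline E$, which is not established. Irreducibility of $\overline E$ is enough, since your element already lies in $\overline E$.

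The missing idea, and the paper's actual argument, is the prime $7$. From $R=NR_x$ you get that $[R:N]$ divides $|R_x|=2^{12}3^35^2\cdot13$, which is prime to $7$ by \eqref{eq:pointorders}. In $R/N$, the image of $\overline H$ modulo the image of $\overline P_0$ is a quotient of the simple group $\overline H/\overline P_0\cong\operatorname{GL}_3(2)$ of order prime to $7$, hence trivial. So the image of $\overline H$ is a $2$-group, and as an image of the perfect group $\overline H$ (Lemma~\ref{lem:frameperfect}) it is trivial. Thus $\overline H\le N$ immediately, with no need to show $N\cap\overline H\ne1$ via reflections. Then $R/N$ is generated by the image of the involution $r_*$, while $[R:N]$ divides the odd index $[R:\overline H]=424125$, so $N=R$.

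Your fallback with $424125$ does not supply this, since that index controls $[R:N]$ only after $\overline H\le N$ is known. If you want to keep your structure, the same prime repairs it. Since $7\mid[N:N_x]=4060$ and $7^2\nmid|R|$, $N$ contains a Sylow $7$-subgroup of $R$, hence $PZ$. So $N\cap\overline H$ is nontrivial and maps onto $\operatorname{GL}_3(2)$. For the last step alone, $SZ\in N$ together with $SPS^{-1}=P^{-1}$ would also give $P^{-2}Z\in N$.
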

\begin{proof}
Let $1\ne N\lhd R$. By Corollary~\ref{cor:crosskernel} and
Proposition~\ref{prop:primitive}, $N$ is transitive on crosses, so $R=NR_x$ and
\[
 |R/N|\mid |R_x|=35\,942\,400=2^{12}3^3 5^2\cdot13.
\]
Write tildes for images in $R/N$. The quotient
$\widetilde H/\widetilde{P_0}$ is an image of the simple group
$\overline H/\overline{P}_0\cong\operatorname{GL}_3(2)$.
Its order is prime to $7$, so it is trivial. Hence $\widetilde H$ is a
$2$-group. It is also perfect by Lemma~\ref{lem:frameperfect}; a finite
perfect $2$-group is trivial, since finite $2$-groups are solvable by
induction on their order using their nontrivial centres. Thus
$\overline H\le N$.

Now $R/N$ is generated by the image of the involution $r_*$. Its order also divides
\[
 [R:\overline H]=424125=3^2 5^3\cdot13\cdot29,
\]
which is odd. Thus $N=R$. The order theorem shows that $R$ has composite
order, so it is nonabelian.
\end{proof}
\begin{corollary}
The centre of $G$ is exactly $\mu_4$.
\end{corollary}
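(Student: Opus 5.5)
The scalars $\mu_4$ are central in $G$, since every element of $G$ is $\Z[i]$-linear: each element of $H$ commutes with $I$ by Proposition~\ref{prop:H0}, and $r_*$ is an orthogonal projection formula on a complex subspace. So $\mu_4\le Z(G)$.

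For the reverse inclusion, let $z\in Z(G)$ and consider its image $\bar z$ in $R=G/\mu_4$. This image is central in $R$. By Theorem~\ref{thm:simplicity}, $R$ is nonabelian simple, so its centre is a proper normal subgroup and hence trivial. Therefore $\bar z=1$, that is, $z\in\mu_4$.

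If one prefers not to lean on simplicity, there is a direct argument. A central element commutes with the transitive action on crosses. It therefore fixes every cross: it fixes $x$ because it commutes with $G_x$, which has $x$ as its unique fixed cross by primitivity (Proposition~\ref{prop:primitive}). Corollary~\ref{cor:crosskernel} then puts it in $\mu_4$.

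The only point needing care is centrality of $\mu_4$, which is immediate from $\Z[i]$-linearity. No step is a genuine obstacle.
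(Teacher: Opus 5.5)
Your main argument is the paper's proof: $\mu_4$ is central by $\Z[i]$-linearity, and a central element has central, hence trivial, image in the nonabelian simple quotient $R$ of Theorem~\ref{thm:simplicity}. Your alternative route is also sound. The one unstated step is that the fixed-point set of $G_x$ is a block, so primitivity together with the nontrivial action of $R_x$ (for instance its transitivity on the $1755$ crosses of $Y$) forces that set to be $\{x\}$, after which Corollary~\ref{cor:crosskernel} applies.
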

\begin{proof}
A central element of $G$ has central image in the nonabelian simple quotient
$R$, so it lies in $\mu_4$. The reverse inclusion holds by construction.
\end{proof}

\section{The Tits group}\label{sec:titsmain}
Put
\[
 R_x=G_x/Z\le R,\qquad \overline H_x=H_x/Z,\qquad \overline H_{x,\,y}=H_{x,\,y}/Z,
\]
where $Z=\mu_4$, $x=X_1$, $y=X_2$. The preceding order calculation gives
\begin{equation}\label{eq:localorders}
 |R_x|=35\,942\,400=2^{12}3^3 5^2\cdot13,\quad
 |\overline H_x|=12\,288,\quad |\overline H_{x,\,y}|=4096,\quad |Y|=1755.
\end{equation}
We shall prove that this cross stabilizer has a simple derived subgroup
of index two.

\subsection{Generators and a small normal quotient of the monomial stabilizer}
Define
\begin{equation}\label{eq:localgens}
 q=P^{-1}VJ_2 P,\qquad n=\rho^4.
\end{equation}
Each fixes $x$. The row permutations of $R_0$, $\rho$, $\beta$ are
\[
 (1\ 2)(3\ 6),\qquad (1\ 6\ 5)(2\ 4\ 3),\qquad
 (1\ 6\ 3\ 2)(4\ 5),
\]
respectively; row $0$ is fixed. On the three lines through $p_0$, the first two induce a transposition
and a three-cycle. The square of their product and its two conjugates by
$\rho$ induce, on the rows,
\[
 (2\ 6)(4\ 5),\qquad(1\ 3)(4\ 5),\qquad(1\ 3)(2\ 6).
\]
These form the nonidentity elements of the four-group kernel. Thus the
row image is all of $\operatorname{GL}_3(2)_{p_0}\cong S_4$, of order
$4\cdot6=24$. Direct multiplication gives
\begin{equation}\label{eq:smallorders}
 R_0^2=U^2=V^2=W^2=\id,\qquad \rho^{12}=q^4=n^3=\id.
\end{equation}

\begin{lemma}\label{lem:localsmallquot}
Let $N_0$ be the normal closure of $\rho^4Z$ in $\overline H_x=H_x/Z$. Then
$\overline H_x/N_0$ is an abelian group of exponent at most two, generated by the
images of $R_0$ and $q$. The images of $U$, $V$, $W$, $\rho$ and $E_C$ are trivial.
\end{lemma}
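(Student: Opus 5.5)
The plan is to work in the abelianization-type quotient $\overline H_x/N_0$ and to show that all of the listed elements become trivial there, except $R_0$ and $q$.

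\textbf{Generators of $H_x$.} First I would pin down a generating set. By Proposition~\ref{prop:H0} and the discussion in Section~\ref{sec:codeinput}, $H_x$ is an extension of $P_{0,x}$ by the row image $\operatorname{GL}_3(2)_{p_0}\cong S_4$. The row image is generated by the images of $R_0$ and $\rho$, as just computed. For the row kernel $P_{0,x}$ (order $2^{11}$), I would take $E_C$, $I$, the fixing translations $U_m$ that fix $x$, and $V$, $W$. I would check from \eqref{eq:nineA} and Lemma~\ref{lem:translationspan} that their translation parts, together with conjugates under the row image, span the two-condition subspace $\{\tau:\tau p_0\in\langle p_0\rangle\}$.

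\textbf{Killing the row part except $R_0$.} Next I would treat the row-level part. Since $n=\rho^4$ has order three in $\overline H_x$, its normal closure maps onto the normal closure of a $3$-cycle in $S_4$, namely $A_4$. Hence $\overline H_x/N_0$ has row image of order at most two, generated by the image of $R_0$. The image of $\rho$ is then trivial. Its row image is the $3$-cycle, already killed, and $\rho^4\in N_0$; so I must show that $\rho$ itself lies in $N_0$. Since $\rho^{12}=1$, I would write $\rho=\rho^{4}\cdot\rho^{-3}$ and show that $\rho^{3}$ (row image trivial, an element of $P_{0,x}$ of order dividing four) lies in $N_0$ by the kernel argument below.

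\textbf{The row kernel.} The key step is to show $P_{0,x}Z/Z\le N_0\langle q\rangle$, and more precisely that modulo $N_0$ only the class of $q$ survives.

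The commutators $[n,k]$ for $k\in P_{0,x}$ lie in $N_0$. Modulo $E_C$ these realize the translations $a\tau a^{-1}-\tau$ with $a$ the order-three row element. Conjugating by the whole $S_4$, they span $[\,S_4,\,T_x]$, where $T_x$ is the translation space fixing $x$.

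I would compute this as a module. Over $\F_2$ the fixed-point-free part of the $S_4$-module $T_x$ on which $A_4$ acts nontrivially is generated by the $3$-element's commutators. The $A_4$-coinvariants of $T_x$ should be one-dimensional, spanned by $\tau=\id$ (the class of $I$) or by the translation part of $q$. Since $I$ is a scalar, it is trivial in $\overline H_x$. This would leave at most one extra $\F_2$-class, represented by $q$.

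The same argument applies to $\overline E$: $\overline E$ is the dual natural module restricted to $S_4$, and a $3$-element has no fixed points on its nonzero $p_0$-trivial part. This puts $\overline E$ in $N_0$, so $E_C$ becomes trivial.

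Then $U$, $V$, $W$ are handled individually. Each is an involution in $P_{0,x}$, and I would check from Table~\ref{tab:UVW} and \eqref{eq:nineA} that its translation class lies in the span of the $n$-commutators. For example, $\tau_U$ and $\tau_V$ are rank-one and should be differences $a\tau a^{-1}-\tau$. After modifying by $E_C\subseteq N_0$, this gives $U,V,W\in N_0$.

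\textbf{Abelian of exponent two.} Finally, the quotient is generated by the images of $R_0$ and $q$, with $R_0^2=1$. The image of $q^2$ has trivial row image and lies in the span of commutators, so $q^2\in N_0$. The commutator $[R_0,q]$ lies in $P_{0,x}$ with translation part of the form $a\tau a^{-1}-\tau$, hence in $N_0$. Therefore the quotient is abelian of exponent at most two.

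\textbf{Main obstacle.} I expect the main difficulty to be the explicit module computation: that the $A_4$-commutator span in $T_x$ has codimension exactly accounting for $q$, with the correct sign lifts so that the genuine elements, not merely their translation classes, lie in $N_0$. I would first try to do this by direct matrix multiplication with the listed signed row maps.
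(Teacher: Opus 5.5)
\textbf{There is a genuine gap in the row-kernel step.} Your overall shape matches the paper: reduce the row image to $S_4/A_4$ (represented by $R_0$), then measure the row kernel by $\operatorname{tr}(\overline\tau)$ on $E_p=E/\langle p_0\rangle$. But the module count you expect is false. It fails in two places, and these are exactly the places where the paper needs extra identities.

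\textbf{The translation module.} The commutators $[n',k]$ with $n'\in N_0$ and $k\in P_{0,x}$, together with their $S_4$-conjugates, give modulo $E_C$ only $[A_4,T_x]$, not $[S_4,T_x]$. Indeed, conjugating $[\overline n,\tau]$ by $g$ just gives a commutator with the $3$-cycle $g\overline n g^{-1}$.

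Write $\tau=\begin{pmatrix}0&\ell\\0&B\end{pmatrix}$ modulo $\langle\id\rangle$. Then $[A_4,T_x]=E_p^*\oplus[\overline n,\operatorname{End}(E_p)]$, which has dimension four. So the $A_4$-coinvariants modulo $\id$ are two-dimensional, not one. Besides the trace-one class of $q$ there is the class of $\id_{E_p}$. It lies in the centralizer $\F_2[\overline n]\cong\F_4$, which is complementary to $[\overline n,\operatorname{End}(E_p)]$.

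In the basis $1,2$ modulo $6$, that commutator space is $\{0,\ \id+E_{12},\ \id+E_{21},\ E_{12}+E_{21}\}$. Hence the missing class is carried by:
\begin{itemize}
\item $\rho^3$, with columns $(1,2,2)$, which induces $\id_{E_p}$;
\item $V$ and $W$, which induce $E_{21}$ and $E_{12}$;
\item $[R_0,q]$, whose translation induces $E_{21}$.
\end{itemize}
Your kernel argument therefore proves none of $\rho^3,V,W,[R_0,q]\in N_0$.

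What is needed is a row-kernel element of $N_0$ that is not of the form $[n',k]$. The paper uses $u=(nR_0)^4$. It lies in $N_0$ because $nR_0\equiv R_0$ modulo $N_0$ and $R_0^4=1$. Its translation induces $\begin{pmatrix}1&1\\1&1\end{pmatrix}$ on $E_p$, whose $\overline n$-orbit sum is $\id_{E_p}$. Since $u$ is the square of $(nR_0)^2$, a lift of a Klein-four element, this contribution depends on how the row group is lifted. It has to be computed; no coinvariant count produces it.

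\textbf{The sign kernel.} It has the same defect. $\overline E\cong E^*$ is three-dimensional. For $a$ fixing $p_0$ one has $(a\ell-\ell)(p_0)=0$, and $E_C$ centralizes $P_0$. So commutators of elements of $H_x$ with $\overline E$ give only the annihilator $p_0^\perp$, never a functional with $\ell(p_0)=1$. Your argument that ``a $3$-element has no fixed points'' covers only this two-dimensional part.

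The paper obtains the remaining direction from a commutator of two row-kernel elements, $[[n,V],V]Z=E_4Z$, using $4\cdot p_0=1$. Without it you do not get $\overline E\le N_0$. Then the following remain unproved: the triviality of $E_C$, your adjustment by $E_C$ for $U,V,W$, and $q^2\in N_0$ (note $q^2\in E_C$, since $\tau_q+\tau_q=0$).

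Once both identities are supplied, $N_0$ contains $\overline E$ and the full preimage of the trace-zero subspace. The rest of your argument then goes through as in the paper.
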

\begin{proof}
Write $p=p_0$ and $E_p=E/\langle p\rangle$. The translation factor of
the projective row kernel is
\[
 \{\tau\in\operatorname{End}(E):\tau p\in\langle p\rangle\}
                  /\langle \id\rangle.
\]
It has dimension six. Choose a splitting $E=\langle p\rangle\oplus E_p$ and
subtract the scalar part of $\tau$ on $\langle p\rangle$. Representatives
then have the form
\begin{equation}\label{eq:pointmoduleblocks}
 \tau=\begin{pmatrix}0&\ell\\0&B\end{pmatrix},
 \qquad \ell\in E_p^*,\ B\in\operatorname{End}(E_p).
\end{equation}
The functional
\begin{equation}\label{eq:translationhyperplane}
 \tau\longmapsto\operatorname{tr}(\overline \tau),\qquad
 \overline \tau\in\operatorname{End}(E/\langle p\rangle),
\end{equation}
is well-defined modulo $\id$, since $\dim E_p=2$.

The image of $n=\rho^4$ in the row group $S_4$ is a three-cycle, whose
normal closure is $A_4$. Let $\overline n$ denote its irreducible order-three
action on $E_p$. The $A_4$-commutators in \eqref{eq:pointmoduleblocks}
contain all of $E_p^*$ and the two-dimensional space
$[\overline n,\,\operatorname{End}(E_p)]$. Indeed $\overline n$ has no fixed vector on
$E_p^*$; conjugation on the endomorphisms has centralizer $\F_4$,
and averaging over $\langle\overline n\rangle$ gives its complementary
commutator space. The remaining trace-zero direction is the scalar
$\id_{E_p}$ in this centralizer.

Two further identities supply that direction and the sign kernel. Put $u=(nR_0)^4$. In the quotient basis represented
by $1$, $2$ modulo $p_0=6$, its translation induces
\begin{equation}\label{eq:localnormalcolumns}
 \overline \tau_u=\begin{pmatrix}1&1\\1&1\end{pmatrix},
 \quad\overline n=\begin{pmatrix}1&1\\1&0\end{pmatrix},
 \quad
 \overline \tau_u+\overline n\overline \tau_u\overline n^{-1}
             +\overline n^2\overline \tau_u\overline n^{-2}=\id_{E_p}.
\end{equation}
It belongs to the normal closure since setting $n=1$ gives $R_0^4=1$.
Its translation on $E$ has columns $(5,\,3,\,3)$; thus the first equality
is obtained simply by reducing these columns modulo $p_0$.
The second identity, a multiplication of monomial involutions, is
\begin{equation}\label{eq:signnormalmasks}
 [[n,\,V],\,V]Z=E_4Z.
\end{equation}
The commutators $[n,\,\overline E]$ give the two-dimensional
annihilator of $p_0$ in $E^*$. Since $4\cdot p_0=1$, the last identity
gives its remaining direction. Hence $\overline E\subseteq N_0$.
It follows from the preceding module calculation that $N_0$ contains
the full preimage of the trace-zero subspace in the projective row kernel.

The quotient row group is at most $S_4/A_4\cong\Z/2\Z$, represented by
$R_0$. Its remaining row kernel has order at most two, measured by
$\operatorname{tr}(\overline \tau)$ and represented by $q$: its columns $(2,\,4,\,2)$ give trace one.
This kernel is central, and $R_0^2=1$, so the quotient is elementary
abelian of order at most four. The maps $U$, $V$, $W$ induce respectively
zero and the two off-diagonal matrix units on $E_p$, hence have trace
zero and vanish in the quotient. Finally $\rho^3$ has translation columns
$(1,\,2,\,2)$, inducing $\id_{E_p}$ and again trace zero. Since $\rho^4=n$ also
vanishes, so does $\rho$. This proves all the assertions.
\end{proof}

Corollary~\ref{cor:localgenerators} gives, on passing to the projective groups,
\begin{equation}\label{eq:Jgenerated}
 R_x=\langle \overline H_x,\,tZ\rangle=\langle \overline H_x,\,gZ\rangle,\qquad
 R_{x,\,y}=\langle \overline H_{x,\,y},\,tZ\rangle,\qquad [R_{x,\,y}:\overline H_{x,\,y}]=5.
\end{equation}

\subsection{The local action on 1755 crosses}
\begin{proposition}\label{prop:localrankfive}
The action of $R_x$ on $Y$ is faithful and primitive. Its subdegrees are
\[
 1,\,\quad10,\,\quad80,\,\quad640,\,\quad1024.
\]
\end{proposition}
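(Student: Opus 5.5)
Faithfulness is immediate from Lemma~\ref{lem:shellkernel}: the kernel of $G_x$ on $Y$ is $Z$, so $R_x=G_x/Z$ acts faithfully. Transitivity on $Y$ is Proposition~\ref{prop:coupletrans}. For the subdegrees, the stabilizer of $y$ in $R_x$ is $R_{x,y}=G_{x,y}/Z$, and by Corollary~\ref{cor:localgenerators} this equals $G_0/Z$. Proposition~\ref{prop:couplelocalorbits} then gives the orbits of $G_0$ on $Y$ as $\{y\}$, $\DD\setminus\{x,y\}$, $Y_0$, the mixed set, and the $1024$ crosses nonorthogonal to $y$. Their sizes are $1$, $10$, $80$, $640$, $1024$, and they sum to $1755$, as required.

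For primitivity I would argue by suborbit arithmetic, as in Proposition~\ref{prop:primitive}. A block $B\ni y$ for $R_x$ on $Y$ is a union of $R_{x,y}$-orbits, and it must contain $y$. So $|B|=1+10a+80b+640c+1024d$ with $a,b,c,d\in\{0,1\}$, and $|B|$ divides $1755=3^3\cdot5\cdot13$. Since $1755$ is odd, $|B|$ is odd, and every partial sum is already $\equiv 1\pmod 2$. The divisors of $1755$ are $1,3,5,9,13,15,27,39,45,65,117,135,195,351,585,1755$. I would then enumerate the sixteen sums $1+10a+80b+640c+1024d$ and compare them with this list. Besides the sums $1$ and $1755$, the values $11,81,91,641,651,721,731,1025,1035,1105,1115,1665,1675,1745$ must all be checked. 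None of them is a divisor: for example $1745=5\cdot349$ and $1105=5\cdot13\cdot17$, neither of which divides $1755$, and the rest fail similarly. Hence $B=\{y\}$ or $B=Y$.

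The main obstacle is really only making sure the orbit list of $R_{x,y}$ is complete and correct, and that is supplied by Proposition~\ref{prop:couplelocalorbits} together with Corollary~\ref{cor:localgenerators}. One subtlety deserves a check: the paper's $Y$ is $x^\perp\cap\cX$, which contains $y$ itself, and the orbit of size $1$ is $\{y\}$. The $1024$-orbit consists of crosses of $Y$ nonorthogonal to $y$ and is a single $G_0$-orbit, because $G_0\supseteq H_{x,y}$ already has it as an orbit and Proposition~\ref{prop:couplelocalorbits} shows no further fusion occurs. With these facts the divisibility check finishes the proof.
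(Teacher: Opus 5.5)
Your proof is correct and follows the paper's argument. Faithfulness comes from Lemma~\ref{lem:shellkernel}, the subdegrees from Corollary~\ref{cor:localgenerators} and Proposition~\ref{prop:couplelocalorbits}, and primitivity from comparing unions of suborbits containing $y$ with the divisors of $1755$. The paper only shortens your sixteen-case enumeration: the largest proper divisor of $1755$ is $585$, so a proper block cannot contain the $640$- or $1024$-orbit, which leaves the sizes $1$, $11$, $81$, $91$.
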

\begin{proof}
Equation~\eqref{eq:Jgenerated} identifies $R_{x,\,y}$ with the image of
$G_0=\langle H_{x,\,y},\,t\rangle$. Its suborbits were proved in
Proposition~\ref{prop:couplelocalorbits}.

Faithfulness follows from Lemma~\ref{lem:shellkernel}.

For primitivity, a block containing $y$ is a union of its stabilizer orbits
and its size divides $1755$. The largest proper divisor of $1755$ is $585$.
Thus a proper block can contain neither the $640$- nor the $1024$-orbit.
Its possible sizes are $1$, $11$, $81$, $91$, of which only $1$ divides $1755$.
\end{proof}

\subsection{The normal-subgroup bound}
\begin{lemma}\label{lem:localnormalindex}
Every nontrivial normal subgroup $N$ of $R_x$ has index at most two.
\end{lemma}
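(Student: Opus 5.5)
Let $1\ne N\lhd R_x$. The plan mirrors the proof of Theorem~\ref{thm:simplicity}, with $Y$ in place of $\cX$, $\overline H_x$ in place of $\overline H$, and Lemma~\ref{lem:localsmallquot} in place of perfectness.

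First, by Proposition~\ref{prop:localrankfive} the action of $R_x$ on $Y$ is faithful and primitive. Hence $N$ acts nontrivially, and so transitively, on $Y$. This gives $R_x=NR_{x,\,y}$, and therefore
\[
 R_x/N\cong R_{x,\,y}/(N\cap R_{x,\,y}).
\]
From \eqref{eq:Jgenerated}, $R_{x,\,y}=\langle\overline H_{x,\,y},\,tZ\rangle$ with $[R_{x,\,y}:\overline H_{x,\,y}]=5$ and $|\overline H_{x,\,y}|=4096$. So $|R_x/N|$ divides $2^{12}\cdot5$, and it also divides $|R_x|/1755$.

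Second, I would push $\overline H_x$ into $N$ modulo a small quotient. Write tildes for images in $R_x/N$. The key is to kill $\widetilde n$, where $n=\rho^4$ has order three by \eqref{eq:smallorders}. Since $3\nmid 2^{12}\cdot5$, the quotient $R_x/N$ has no element of order three, so $\widetilde n=1$ and $nZ\in N$. Then $N_0\le N$, the normal closure of $\rho^4Z$ in $\overline H_x$. By Lemma~\ref{lem:localsmallquot}, $\widetilde{\overline H_x}$ is elementary abelian of order at most four, generated by $\widetilde R_0$ and $\widetilde q$. The images of $U$, $V$, $W$, $\rho$ and $E_C$ are trivial.

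Third, I would bring in the extra generator. Since $R_x=\langle\overline H_x,\,gZ\rangle$ and $g=SUS$ is an involution, $R_x/N$ is generated by at most three involutions $\widetilde R_0$, $\widetilde q$, $\widetilde g$. Here I would use $t=gVgW$: as $\widetilde V=\widetilde W=1$, this gives $\widetilde t=\widetilde g^2=1$. Then $R_{x,\,y}=\langle\overline H_{x,\,y},\,tZ\rangle$ maps into $\widetilde{\overline H_{x,\,y}}$, so $R_x/N$ is the image of $R_{x,\,y}$, which equals the image of $\overline H_{x,\,y}$. That image lies in $\widetilde{\overline H_x}$, of order at most four. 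Likewise $\widetilde g$, lying in the image of $R_{x,\,y}$, is in $\widetilde{\overline H_x}$.

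Finally, I must cut the order from four down to two. Two options:
\begin{itemize}
\item exhibit a relation expressing $q$ (modulo $N_0$, $t$ and the trivial generators) as a product involving $t$, so that $\widetilde q$ becomes trivial or equal to $\widetilde R_0$;
\item use that $\overline H_{x,\,y}$ has row image of order eight, generated by $R_0$ and $\beta$, and compare which of $R_0$, $q$ survive through $H_{x,\,y}$ and conjugates by $t$.
\end{itemize}
I would try first to verify that $\widetilde q=1$, using $t$ to fuse the $q$-class of translations with trace-zero ones. For example, I would check whether the trace functional \eqref{eq:translationhyperplane} is destroyed by conjugating $P_{0,\,x}$ by $t$, since $t$ moves $X_3$ to $X_{20}$ and so breaks the row-$0$ splitting.

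This last step, showing that the four-group quotient collapses to order at most two, is the main obstacle. It likely needs an explicit monomial identity in the style of \eqref{eq:signnormalmasks}. Once it is done, $|R_x/N|\le2$, as claimed.
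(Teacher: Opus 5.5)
Your first three steps are exactly the paper's. Primitivity gives $R_x=NR_{x,\,y}$ and $|R_x/N|\mid2^{12}\cdot5$, so $n=\rho^4$ and $N_0$ die. Lemma~\ref{lem:localsmallquot} then makes the image of $\overline H_x$ elementary abelian on $\widetilde R_0$, $\widetilde q$, and $t=gVgW$ dies. Hence $R_x/N$ is the image of $\overline H_{x,\,y}$, of order at most four.

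The content of the lemma, however, is the remaining reduction to order two, and you leave it open. Worse, the route you would try first cannot succeed, because $\widetilde q=1$ is false for some $N$. Take $N=\ker\nu$, which is nontrivial since $|R_x|>4$. Since $\F_5^\times$ has order four and $\rho^4$ has order three, $\rho^4Z\in\ker\nu$. Hence $N_0\le\ker\nu$, and so $WZ\in\ker\nu$ by Lemma~\ref{lem:localsmallquot}. Lemma~\ref{lem:nontrivialtransfer}, proved independently of the statement at hand, then gives $\nu(qZ)=\nu(qWZ)=-1$. So no relation obtained by conjugating with $t$ can make $q$ trivial. What must be killed is $R_0$ modulo $q$: you need an element already known to lie in $N$ whose image is $\widetilde R_0\widetilde q^{\epsilon}$, that is, an element of $\overline H_x$ with odd image in the row group $S_4$.

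The paper obtains such an element from $\rho^3$. It is trivial in the quotient by Lemma~\ref{lem:localsmallquot} and is the translation with columns $(1,\,2,\,2)$. Since $\tau p_0=0$ and $\tau(2)=\tau(4)=2\in E_{x,\,y}$, it fixes the four crosses of row $0$ and every quartet of $\Omega$. In row $4$, however, it swaps $X_{18}$ and $X_{20}$ inside $Q_3$. By \eqref{eq:onebridge} and Lemma~\ref{lem:quartetexhaustion}, $tQ_1=Q_3$.

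Hence $t^{-1}\rho^3t$ fixes $x$ and $y$, swaps $X_3$ and $X_4$, and stabilizes $Q_1$, hence $F=F(Q_1)$. It therefore lies in $H_{x,\,y}$. Its row action is a transposition of the three lines through $p_0$, so it is odd in $S_4$ and has image $\widetilde R_0\widetilde q^{\epsilon}$. Normality puts it in $N$, giving $\widetilde R_0=\widetilde q^{\epsilon}$ and $[R_x:N]\le2$.

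Your second bullet gestures toward conjugating by $t$. Without this specific element, and the quartet argument showing that its conjugate is monomial and row-odd, the proof is incomplete.
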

\begin{proof}
Primitivity makes $N$ transitive on $Y$, so
\[
 R_x=NR_{x,\,y},\qquad |R_x/N|\mid |R_{x,\,y}|=20480=2^{12}\cdot5.
\]
Thus $n=\rho^4$ and its normal closure $N_0$ in $\overline H_x$ have
trivial image. Lemma~\ref{lem:localsmallquot} kills $U$, $V$, $W$, $\rho$, $E_C$
and makes the image of $\overline H_x$ elementary abelian, generated
by $R_0$, $q$. Since $g^2=1$, the identity $t=gVgW$ also has trivial
image. By \eqref{eq:Jgenerated}, $R_x/N$ is therefore the image of
$\overline H_{x,\,y}$ and has order at most four.

To remove the possible second factor of order two, use $\rho^3$, whose
image is trivial by Lemma~\ref{lem:localsmallquot}. Its translation
matrix $\tau$ has columns $(1,\,2,\,2)$, as recorded in that lemma's proof. Thus
$\tau p_0=0$, so it fixes all four crosses of row $0$.
The values $\tau(2)=\tau(4)=2$ lie in $E_{x,\,y}=\{0,\,2,\,4,\,6\}$,
so $\rho^3$ fixes every quartet of $\Omega$. In row $4$ of binary label
$4$, translation by $2$ swaps the two crosses of each complementary
couple; in particular it swaps $X_{18}$, $X_{20}$ in $Q_3$.

Equation~\eqref{eq:onebridge} and Lemma~\ref{lem:quartetexhaustion} give
$tQ_1=Q_3$: the image contains $x$, $y$, $X_{20}$, and only $Q_3$ does so.
Consequently $t^{-1}\rho^3t$ fixes $x$, $y$ and swaps $X_3$, $X_4$. It stabilizes
$Q_1$ and therefore its canonical frame $F$, so belongs to $H_{x,\,y}$.
Since it fixes $x$, its affine offset in row $0$ is in $\langle p_0\rangle$.
Its row action therefore induces a transposition on the three nonzero
cosets of $\langle p_0\rangle$, or on the three lines through $p_0$.
It is thus odd in the row group $S_4$ and has image $R_0q^\epsilon$
for some $\epsilon\in\{0,\,1\}$ in the quotient of
Lemma~\ref{lem:localsmallquot}. But $(t^{-1}\rho^3t)Z\in N$ by normality.
This relation leaves at most one generator of order two, proving
$[R_x:N]\le2$.
\end{proof}

We next construct a nontrivial character, and hence a subgroup of
index two, by the local action on the five quartets.

\subsection{The local affine action and transfer}
The action of $R_{x,\,y}$ on $\Omega$ is transitive with point stabilizer
$\overline H_{x,\,y}$, a $2$-group. Its faithful image has order $5\cdot2^a$ with
$a\le3$, so the number of its Sylow $5$-subgroups divides eight and is
$1\pmod5$, hence is one. It therefore lies in the normalizer of a
$5$-cycle, namely $\operatorname{AGL}_1(5)$. The row map $\beta$ acts
as a four-cycle on $\Omega\setminus\{Q_1\}$, by
\eqref{eq:couplemonomialgens} and Table~\ref{tab:hamminglabels}. Thus
\begin{equation}\label{eq:affinefive}
 R_{x,\,y}^\Omega=\operatorname{AGL}_1(5)=5:4.
\end{equation}
Conjugation on its normal cyclic subgroup of order five defines
$m:R_{x,\,y}\to\F_5^\times$. Every nontrivial involution in the five-point
image has multiplier $-1$.

Transport gives the same multiplier homomorphism on each point stabilizer
$R_{x,\,v}$, $v\in Y$. It is independent of the transporter: the target is abelian,
so conjugating within $R_{x,\,y}$ does not change $m$. Define
\begin{equation}\label{eq:transferelementary}
 \nu(a)=\prod_{v\in Y}m(r_{av}^{-1}a r_v),
 \qquad r_vy=v.
\end{equation}
This is a homomorphism $R_x\to\F_5^\times$, because the factors for $ab$
are the products of the factors for $a$ at $bv$ and for $b$ at $v$.
Changing the representatives cancels factors. Equivalently, a cycle of $a$
on $Y$ of length $s$, represented by $v$, contributes the local multiplier
$m_v(a^s)$. This is the elementary transfer formula.

\begin{lemma}\label{lem:nontrivialtransfer}
Put $\alpha=qW\in P_{0,\,x}$.  The homomorphism in
\eqref{eq:transferelementary} satisfies $\nu(\alpha Z)=-1$.
\end{lemma}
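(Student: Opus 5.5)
We compute $\nu(\alpha Z)$ with the cycle form of the transfer: each cycle of $\alpha$ on $Y$ of length $s$, with representative $v$, contributes the local multiplier $m_v(\alpha^s)$. The multiplier is read from the action on the five quartets through the couple $\{x,v\}$.

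First we record the structure of $\alpha$. Both $q$ and $W$ lie in $P_{0,x}$. The element $q$ is a conjugate of $VJ_2$, and $q^4=1$ by \eqref{eq:smallorders}; $W$ is an involution. So $\alpha$ is a signed row-preserving translation with $\tau_\alpha=\tau_q+\tau_W$, read from the columns $(2,4,2)$ and $(0,1,1)$. We compute the order of $\alpha Z$ and check that it is a power of two, presumably $2$ or $4$. Then every cycle of $\alpha$ on $Y$ has length $1$, $2$ or $4$. Since $m$ takes values in $\F_5^\times$ of order four, and $m$ kills the Sylow $5$-subgroup, only the image of $\alpha^s$ in $\operatorname{AGL}_1(5)/C_5\cong C_4$ matters. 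Moreover $\alpha^s$ lies in the monomial group $H_{x,v}$ whenever $v$ is coordinate.

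Second, we split $Y$ into its four $H_x$-orbits, $O_3$, $O_{24}$, $O_{192}$, $O_{1536}$. For a fixed point $v$ in $O_3$ or $O_{24}$, the element $\alpha$ lies in $H_{x,v}$ and acts on the five quartets through $\{x,v\}$ given by Lemma~\ref{lem:coordinatequartets}. Its action fixes the row quartet, or fixes the corresponding coordinate quartet, and permutes the other four by translation. A fixed-point-free involution on those four is a product of two transpositions, so its multiplier is $-1$. A single transposition is odd in the point stabilizer $4$ and gives multiplier $\pm2$. An element fixing all five has multiplier $1$. For $v$ in a $2$-cycle, we compute $\alpha^2$ in the same way. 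For the four-block and heavy parts we avoid explicit quartets. There $\alpha$ lies in $P_{0,x}$, and $P_{0,x}$ acts on each fixed-label family through the abelian translation quotient. So the contribution of a whole $P_{0,x}$-orbit can be computed from the contribution at one representative, transported by translation elements that commute with $\alpha$ modulo signs. We expect that the large orbits contribute in pairs whose multipliers multiply to $1$. For example, the free action of $E_C$-signs on the $16$ lifts of a heavy support pairs up cycles on which $\alpha$ acts identically.

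Third, we multiply. The claim $\nu(\alpha Z)=-1$ then amounts to saying that an odd number of cycles contribute $-1$, and the rest pair off or contribute $1$. The expected source of the $-1$ is the coordinate part $O_3\cup O_{24}$. In row $0$, $\tau_\alpha p_0=2+\dots$ must lie in $\langle p_0\rangle$. We check how $\alpha$ moves the two other crosses of $Q_1$ and the rows outside $E_{x,y}$, and count the cycles whose local element acts as a double transposition on $\Omega$.

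The main obstacle is the noncoordinate part. For those $v$ the five quartets through $\{x,v\}$ are not monomial, and identifying $m_v(\alpha^s)$ requires transporting $v$ to $y$ by elements like $t$ or $g$ and tracking the conjugate. To handle it, we first try to show that for $v$ in $O_{192}\cup O_{1536}$ the cycles of $\alpha$ come in pairs $v$, $kv$ with $k\in P_{0,x}$ commuting with $\alpha$, so that their contributions are equal. Then every such orbit contributes a square in $\F_5^\times$, that is $\pm1$ squared. Showing that these squares are $1$, rather than $-1=2^2$, would still need the parity of the number of pairs. This parity follows from the orbit sizes $192$ and $1536$, both divisible by $4$ after accounting for cycle lengths. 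With that, the computation reduces to the explicit coordinate contribution.
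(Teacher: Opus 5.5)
\textbf{There is a genuine gap in the noncoordinate part, and your proposed fix cannot close it.} Pairing cycles $v$, $kv$ with equal contributions only shows that $O_{192}\cup O_{1536}$ contributes a product of squares in $\F_5^\times$, that is, some element of $\{\pm1\}$. The parity of the number of pairs cannot decide which sign occurs unless you already know the individual values, and those are exactly what you wanted to avoid computing.

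The missing idea is to compute $\alpha^2$ itself. Direct multiplication gives $\alpha^4=\id$ and $\alpha^2=E_3$, the block sign negating rows $0$, $1$, $2$, $5$. A cross fixed by a sign change is supported in one eigenspace. No noncoordinate minimum lies on the three positive rows. The only ones on the four negative rows are four-block crosses of label $3$, which are nonorthogonal to $x$ because $3\cdot p_0=1$. Heavy crosses meet all seven rows. Hence $\alpha^2$ fixes no noncoordinate cross of $Y$. Every such cross therefore lies in a $4$-cycle of $\alpha$ and contributes $m_v(\alpha^4)=1$. You never need quartets through noncoordinate couples, nor any transport by $t$ or $g$.

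\textbf{The coordinate part also needs repair.} You assume the five quartets through a coordinate couple $\{x,v\}$ are given by Lemma~\ref{lem:coordinatequartets}, but this holds only for $v\in O_3$. For $v$ in another row, that lemma supplies just one coordinate quartet through $\{x,v\}$, so the other four are noncoordinate. Consequently ``computing $\alpha^2$ in the same way'' on the $2$-cycles does not go through.

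Instead, use the columns $(2,5,3)$ of $\tau_\alpha$. They give $\tau_\alpha p_0=p_0$ and $\tau_\alpha p\notin\langle p\rangle$ for $p\ne p_0$, so $\alpha$ fixes $X_2$, $X_3$, $X_4$ and has two $2$-cycles in each other row. Since $\alpha^2=E_3$ is central in $P_0$ and $P_{0,x}$ is transitive on the four crosses of each such row, $m_v(\alpha^2)$ is a constant $\pm1$ along the row. Each row therefore contributes a square, namely $1$.

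The sign comes only from the three fixed points. Transport $\Omega$ by $\id$, $\GT$, $\GT^2$ to the couples $\{x,X_2\}$, $\{x,X_3\}$, $\{x,X_4\}$. In each case, check that the translation values on the other two rows of the dozen lie outside the direction plane. This shows that $\alpha$ acts as a double transposition, with multiplier $-1$, at each fixed point, giving $(-1)^3=-1$.

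Incidentally, a point-fixing element of $\operatorname{AGL}_1(5)$ is never a single transposition; multiplier $\pm2$ corresponds to a $4$-cycle on the remaining four points.
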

\begin{proof}
The translation columns of $q$ and $W$ are $(2,\,4,\,2)$ and $(0,\,1,\,1)$;
those of $\alpha$ are therefore $(2,\,5,\,3)$.  Direct multiplication gives
\[
 \alpha^4=\id,\qquad
 \alpha^2=E_3,
\]
where $E_3\in E_C$ acts as $(-1)^{3\cdot p}$ on row $p$.
Thus $\alpha^2$ negates coordinate rows $0$, $1$, $2$, $5$, the affine plane
$3\cdot p=1$, and fixes rows $3$, $4$, $6$.

A cross fixed by $\alpha^2$ has its support in one of these two eigenspaces.
No noncoordinate minimum is supported on three rows.  The four-block
crosses on the four negative rows have label $3$, with $3\cdot p_0=1$,
and so are nonorthogonal to $x$.  A heavy cross uses all seven rows.
Consequently the fixed points of $\alpha^2$ on $Y$ are exactly the $27$
coordinate crosses other than $x$.

Let $\tau$ have columns $(2,\,5,\,3)$.  Then $\tau p_0\in\langle p_0\rangle$, whereas
$\tau p\notin\langle p\rangle$ for every nonzero $p\ne p_0$.
Hence $\alpha$ fixes $X_2$, $X_3$, $X_4$, has two two-cycles in each of the
other six coordinate rows, and has only four-cycles on the remaining points
of $Y$.  Every four-cycle contributes $1$ to the transfer.
The sign change $\alpha^2\in E_C$ is central in $P_0$.
Point-fixing translations can take any prescribed value in another row,
so $P_{0,\,x}$ is transitive on its four coordinate crosses.
Equivariance of the transported homomorphisms therefore makes
$m_v(\alpha^2Z)$ constant on each row other than row $0$.  This value is $\pm1$, so the
two two-cycles in a row contribute its square, namely $1$.

It remains to evaluate the three fixed points.  Transport the five four-sets
$\Omega$ by $1$, $\GT$, $\GT^2$ to the couples $\{x,\,X_2\}$, $\{x,\,X_3\}$,
$\{x,\,X_4\}$, using $\GT\in H_x$, which sends $X_2\mapsto X_3\mapsto X_4$.
Their direction planes and the translation values on the
other two rows of the corresponding dozen are
\[
\begin{array}{c|c|c|c}
\text{fixed point}&\text{direction plane }E_{x,\,X_j}&\text{other row labels }p&\tau p\\\hline
 X_2&\{0,\,2,\,4,\,6\}&2,\,4&5,\,3\\
 X_3&\{0,\,1,\,6,\,7\}&1,\,7&2,\,4\\
 X_4&\{0,\,3,\,5,\,6\}&3,\,5&7,\,1.
\end{array}
\]
These are obtained from Table~\ref{tab:hamminglabels}: the plane $E_{x,\,y}$ is
mapped by the linear action of $\GT$ on $E$, with column triple $(5,\,7,\,1)$,
to $\{0,\,1,\,6,\,7\}$ and then to $\{0,\,3,\,5,\,6\}$. In each of the two
other rows, the two coordinate couples
correspond to the two cosets of $E_{x,\,X_j}$ in $E$.  A translation swaps them
exactly when its value lies outside $E_{x,\,X_j}$; adjoining the initial couple
then swaps the corresponding members of $\Omega$.
All six displayed translation values lie outside their direction planes.
Thus $\alpha$ induces a nontrivial double transposition on each of the
three transported five-sets, with multiplier $-1$.  The transfer formula gives
\[
 \nu(\alpha Z)=(-1)^3=-1.
\]
\end{proof}

\subsection{The simple derived subgroup}
\begin{theorem}\label{thm:localTits}
The group $R_x'=(G_x/Z)'$ is nonabelian simple, with
\[
 [R_x:R_x']=2,\qquad |R_x'|=17\,971\,200=2^{11}3^3 5^2\cdot13.
\]
It is the unique proper nontrivial normal subgroup of $R_x$, and equals
$\ker\nu$ for the explicit character \eqref{eq:transferelementary}.
\end{theorem}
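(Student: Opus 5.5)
The plan combines the transfer character with the normal-subgroup bound. By Lemma~\ref{lem:nontrivialtransfer}, the homomorphism $\nu:R_x\to\F_5^\times$ of \eqref{eq:transferelementary} takes the value $-1$ at $\alpha Z$. So $\nu$ is nontrivial and $\ker\nu$ is a proper normal subgroup. We first check that its index is exactly two. Lemma~\ref{lem:localnormalindex} applies to $\ker\nu$, provided $\ker\nu\ne1$; this holds because $|R_x|=35\,942\,400$ exceeds $|\F_5^\times|=4$. It gives $[R_x:\ker\nu]\le2$, and nontriviality of $\nu$ then forces the index to be exactly two. In particular the image of $\nu$ is $\{\pm1\}$.

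Next we identify the derived subgroup. Since $R_x/R_x'$ is abelian and $R_x'\ne1$, Lemma~\ref{lem:localnormalindex} gives $[R_x:R_x']\le2$. To see that $R_x'\ne1$, note that $R_x$ is nonabelian: it acts faithfully and primitively on $Y$ with a nontrivial point stabilizer, by Proposition~\ref{prop:localrankfive}. On the other hand $R_x'\le\ker\nu$, because the image of $\nu$ is abelian. Comparing indices gives $R_x'=\ker\nu$, with index two and order $35\,942\,400/2=17\,971\,200$.

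For uniqueness, let $N$ be any proper nontrivial normal subgroup of $R_x$. Lemma~\ref{lem:localnormalindex} gives $[R_x:N]=2$. Then $R_x/N$ is abelian, so $N\supseteq R_x'$, and comparing orders gives $N=R_x'$.

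The main obstacle is simplicity of $R_x'$. A normal subgroup $M$ of $R_x'$ need not be normal in $R_x$, so Lemma~\ref{lem:localnormalindex} does not apply directly. I would handle this by a conjugation argument. Choose $s\in R_x\setminus R_x'$ and suppose $1\ne M\lhd R_x'$ with $M$ proper. The product $MM^s$ and the intersection $M\cap M^s$ are both normal in $R_x$, because $s$ swaps $M$ and $M^s$ and $R_x'$ normalizes both.

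Each of these two subgroups is therefore $1$, $R_x'$ or $R_x$; the last is impossible since both lie inside $R_x'$. If $M\cap M^s=1$ and $MM^s=R_x'$, then $R_x'\cong M\times M^s$. In that case $M$ is a minimal normal subgroup of $R_x$ up to this swap, and $M\times M^s$ is a normal subgroup of $R_x$ acting transitively on $Y$ by primitivity. Then $|M|^2=|R_x'|=2^{11}3^35^2\cdot13$, which is impossible because the exponents $11$, $3$ and $1$ are odd. If instead $MM^s=1$, then $M=1$. If $M\cap M^s=R_x'$, then $M=R_x'$. Either way $M$ is not a proper nontrivial normal subgroup, so $R_x'$ is simple.

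Finally, $R_x'$ is nonabelian because its order $17\,971\,200$ is not prime. An abelian simple group has prime order, so a simple group of composite order cannot be abelian. The only genuinely new computation is the $|M|^2$ parity check, which is immediate from the factorization in \eqref{eq:localorders}.
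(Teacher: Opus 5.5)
Your proof is correct and uses the same ingredients as the paper: nontriviality of $\nu$ at $\alpha Z$, the index-two bound of Lemma~\ref{lem:localnormalindex}, and the contradiction $|M|^2=|R_x'|$ from a decomposition $R_x'=M\times M^s$. The only difference is the order of steps: you obtain $R_x'=\ker\nu$ and uniqueness directly by index comparison before simplicity, whereas the paper deduces both from simplicity and perfectness; your aside that $M$ is ``minimal normal in $R_x$'' and that $M\times M^s$ is transitive on $Y$ is unnecessary and inaccurate ($M$ is not normal in $R_x$), but nothing depends on it.
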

\begin{proof}
Put $M=\ker\nu$. It is proper by Lemma~\ref{lem:nontrivialtransfer}, and
nontrivial because $|R_x|>4=|\F_5^\times|$. Lemma~\ref{lem:localnormalindex}
therefore gives $[R_x:M]=2$ and the displayed order for $M$.
The element $\alpha Z$ lies outside $M$, so $R_x=\langle M,\,\alpha Z\rangle$.

Suppose $1\ne A\lhd M$ is proper, and put
$A_1=(\alpha Z)A(\alpha Z)^{-1}$. Both $A$ and $A_1$ are normal in $M$.
Since $(\alpha Z)^2\in M$, conjugation by $\alpha Z$ interchanges them. Consequently
$A\cap A_1$ and $AA_1$ are normal in $R_x$. A nontrivial intersection
would have order at least $|R_x|/2=|M|$ by
Lemma~\ref{lem:localnormalindex}, contrary to $A<M$; hence
$A\cap A_1=1$. The product is nontrivial and contained in $M$, so the
same lemma gives $AA_1=M$. Their commutator lies in their intersection,
and therefore
\[
 M=A\times A_1,\qquad |M|=|A|^2.
\]
This is impossible because $13$ occurs to exponent one in $|M|$.
Thus $M$ is simple, and its composite order makes it nonabelian.
It is consequently perfect. Since $R_x/M$ is abelian,
$M=M'\le R_x'\le M$, proving $R_x'=M$.

Finally, a nontrivial normal subgroup $N$ of $R_x$ has order at least
$|M|>2$. Its intersection with $M$ cannot be trivial, since then
$N$ would embed in $R_x/M$. Simplicity of $M$ gives $M\le N$, so
$N=M$ or $R_x$. This proves uniqueness.
\end{proof}

\begin{proof}[Completion of the proof of Theorem~\ref{thm:groupintro}]
It remains to compare the vector and cross stabilizers. For $a\in G_x$,
write $af=\chi(a)f$. The homomorphism $\chi:G_x\to\mu_4$ is the
identity on the central subgroup $\mu_4$ and has kernel $G_f$. Hence
$G_x=\mu_4\times G_f$ and $G_f\cong G_x/\mu_4=R_x$; taking derived
subgroups gives $G_x'=G_f'\cong R_x'$. The remaining assertions are
Corollary~\ref{cor:canonicalcount}, Theorem~\ref{thm:couplereflections},
Corollary~\ref{cor:intrinsicgenerator}, Theorems~\ref{thm:order}
and~\ref{thm:simplicity}, the centre corollary, and
Theorem~\ref{thm:localTits}.
\end{proof}
Under the classical-lattice identification of
Remark~\ref{rem:identify}, these groups are
$R\cong\Ru$, $R_x\cong{}^2F_4(2)$, and
$R_x'\cong{}^2F_4(2)'$ \cite[\S\S1.2,1.8]{Wilson}.
Wilson's absolute vector stabilizer is taken in the double cover of $\Ru$;
our $G_f$ is taken in the full Gaussian isometry group and has twice the
order of $R_x'$.

\section*{Related constructions and historical notes}
Ree constructed the groups ${}^2F_4(q)$ for $q=2^{2m+1}$, simple for $m\ge1$
\cite{Ree}. Tits proved that the exceptional smallest group ${}^2F_4(2)$
has a simple derived subgroup of index two \cite{Tits}. In the classification of finite simple groups (CFSG), the
Tits group is conventionally included with the groups of Lie type, rather
than among the twenty-six sporadic groups \cite{WilsonFSG}. Parrott and
Stroth gave characterizations \cite{ParrottTits,StrothTits}; Wilson determined
its maximal subgroups \cite{Wilson}. Wilson
also gave an elementary construction and order calculation for the Ree
family, with a simplicity proof for $q>2$ \cite{WilsonRee}.

Local characterizations of $\Ru$ were obtained by Dempwolff, Parrott, Assa,
and O'Nan \cite{Dempwolff,ParrottRu,Assa,ONan}. Rudvalis's detailed rank-three
and character-theoretic treatment appeared in 1984 \cite{Rud84II,Rud84I},
as did Wilson's geometric and maximal-subgroup analysis \cite{Wilson}.
In the CFSG, $\Ru$ is one of the twenty-six sporadic groups
\cite{WilsonFSG}; it is also one of the six pariahs, not involved in the
Monster \cite{Griess}.

Other constructions include that of Stroth and Weiss \cite{SW} and
Coolsaet's construction from the automorphism group $U_3(5){:}2$
of the Hoffman--Singleton graph \cite{Coolsaet}.
Conway had already exhibited a $50$-vector Hoffman--Singleton
configuration inside the quaternionic model \cite[\S7, Table~5]{Conway}.
Coolsaet constructs a rank-three group on $4060$ vertices without computer
calculation, then identifies it as $\Ru$ using the classification of
rank-three groups \cite[\S7, Theorem~3]{Coolsaet}.
Chigira, reporting joint work with Kitazume, gives a lattice construction
based on the $28$-point action of $U_3(3)$ \cite{Chigira}, while Duncan realizes cyclic covers of
$\Ru$ as full automorphism groups of enhanced vertex operator superalgebras
\cite{DuncanI,DuncanII}.

The exceptional-group approach includes the work of Griess and Ryba on
embeddings in type $E_7$ \cite{GR} and the construction of
$\Ru<E_7(5)$ by Kleidman, Meierfrankenfeld, and Ryba \cite{KMR}.
The invariant alternating product on the $133$-dimensional module in
characteristic five, suggested in the latter work and verified to be a Lie
product by Craven \cite[\S5.2.2]{Craven}, gives another proof of the
embedding.
Bradley, Curtis, and Malik give symmetric presentations using $105$
involutions with control group $L_4(2)$ or seven with control group $L_3(2)$;
in the latter realization inside $\Ru$, four generators indexed by a Fano-plane
quadrangle generate the Tits simple group \cite[Abstract]{BCM}.

\end{document}